\newcommand{\QQ}{\mathbf{Q}}
\newcommand{\uu}{\mathbf{u}}
\newcommand{\ttt}{\mathbf{t}}
\newcommand{\TT}{\mathbf{T}}
\newcommand{\F}{\mathcal F}
\newcommand{\G}{\mathcal G}
\renewcommand{\H}{\mathcal{H}}
\renewcommand{\L}{\mathcal{L}}
\renewcommand{\P}{\mathcal{P}}
\newcommand{\V}{\mathcal{V}}
\newcommand{\jj}{\mathbf{J}}
\newcommand{\GGG}{\mathbf{G}}
\newcommand{\UU}{\mathbf{U}}
\newcommand{\vv}{\mathbf{v}}
\newcommand{\N}{\mathbb{N}}
\newcommand{\Q}{\mathbb{Q}}
\newcommand{\R}{\mathbb{R}}
\renewcommand{\SS}{\mathbb{S}}
\newcommand{\Om}{\Omega}
\renewcommand{\a}{\alpha}
\renewcommand{\b}{\beta}
\newcommand{\g}{\gamma}
\newcommand{\de}{\delta}
\newcommand{\e}{\varepsilon}
\renewcommand{\l}{\lambda}
\newcommand{\s}{\sigma}
\newcommand{\om}{\omega}
\newcommand{\vphi}{\varphi}
\newcommand{\Lip}{{\rm Lip}}
\newcommand{\Div}{{\rm div}\,}
\newcommand{\dist}{{\rm dist}}
\newcommand{\sdist}{{\rm sd}}
\newcommand{\loc}{{\rm loc}}
\newcommand{\diam}{{\rm diam}\,}
\newcommand{\spt}{{\rm spt}}
\newcommand{\weak}{\rightharpoonup}
\newcommand{\weakstar}{\stackrel{\scriptscriptstyle{*}}{\rightharpoonup}}
\newcommand{\toloc}{\stackrel{\scriptscriptstyle{{\rm loc}}}{\to}}
\newcommand{\ov}{\overline}
\newcommand{\pa}{\partial}
\newcommand{\cc}{\subset\subset}
\newcommand{\cl}{\mathrm{cl}\,}
\newcommand{\KK}{\mathcal{K}}
\newcommand{\T}{\mathcal{T}}
\newcommand{\pp}{\mathbf{p}}
\newcommand{\h}{\H^n}
\newcommand{\C}{\mathcal{C}}
\newcommand{\ac}{\mathcal{AC}_\e}
\newcommand{\acj}{\mathcal{AC}_{\e_j}}
\newcommand\restr[2]{{
  \left.\kern-\nulldelimiterspace 
  #1 
  \right|_{#2} 
  }}
\newcommand{\K}{\mathcal{K}}
\newcommand{\RR}{\mathcal{R}}
\newcommand{\one}{{\scriptscriptstyle{(1)}}}
\newcommand{\zero}{{\scriptscriptstyle{(0)}}}
\newcommand{\half}{{\scriptscriptstyle{(1/2)}}}
\newcommand{\mres}{\mathbin{\vrule height 1.6ex depth 0pt width 
0.13ex\vrule height 0.13ex depth 0pt width 1.3ex}}
\theoremstyle{plain}
\newtheorem{theorem}{Theorem}[section]
\newtheorem{lemma}[theorem]{Lemma}
\newtheorem{proposition}[theorem]{Proposition}
\newtheorem*{theorem*}{Theorem}
\newtheorem*{corollary*}{Corollary}
\theoremstyle{definition}
\newtheorem{remark}[theorem]{Remark}
\newtheorem*{notation*}{Notation}
\numberwithin{equation}{section}
\numberwithin{figure}{section}
\newcommand{\id}{{\rm id}\,}
\newcommand{\wire}{\mathbf{W}}
\newcommand{\shn}{\overset{\scriptscriptstyle{\H^n}}{\subset}}
\newcommand{\ehn}{\overset{\scriptscriptstyle{\H^n}}{=}}
\newcommand{\tnl}{\theta^{n}_*}
\title[]{A hierarchy of Plateau problems \\ and the approximation of Plateau's laws \\ via the Allen--Cahn equation}
    \author{Francesco Maggi}
    \address{Department of Mathematics, The University of Texas at Austin, Austin, TX, United States of America}
    \email{maggi@math.utexas.edu}
    \author{Michael Novack}
    \address{Department of Mathematical Sciences, Carnegie Mellon University, Pittsburgh, PA, United States of America}
    \email{mnovack@andrew.cmu.edu}
    \author{Daniel Restrepo}
    \address{Department of Mathematics, Johns Hopkins University, Baltimore, MD, United States of America}
    \email{drestre1@jh.edu}
\begin{document}

\begin{abstract} {\rm We introduce a diffused interface formulation of the Plateau problem, where the Allen--Cahn energy $\mathcal{AC}_\varepsilon$ is minimized under a volume constraint $v$ and a spanning condition on the level sets of the densities. We discuss two singular limits of these Allen--Cahn Plateau problems: when $\varepsilon\to 0^+$, we prove convergence to the Gauss' capillarity formulation of the Plateau problem with positive volume $v$; and when $\varepsilon\to 0^+$, $v\to 0^+$ and $\varepsilon/v\to 0^+$, we prove convergence to the classical Plateau problem (in the homotopic spanning formulation of Harrison and Pugh). As a corollary of our analysis we resolve the incompatibility between Plateau's laws and the Allen--Cahn equation implied by a regularity theorem of Tonegawa and Wickramasekera. In particular, we show that  Plateau-type singularities can be approximated by energy minimizing solutions of the Allen--Cahn equation with a volume Lagrange multiplier and a transmission condition on a spanning free boundary.}
\end{abstract}

\maketitle

\setcounter{tocdepth}{1}

\tableofcontents

\section{Introduction} \subsection{Overview} The convergence of solutions to the Allen--Cahn equation $\e^2\,\Delta u=W'(u)$ to limit minimal surfaces is a result of basic importance in the study of the van der Waals-Cahn-Hilliard theory of phase transitions \cite{gurtin,modicaARMA,sternberg,kohnsternberg,HT}. A regularity result of Tonegawa and Wickramasekera \cite{tonegawaCAG,TW} shows that, in low dimensions, minimal surfaces arising as limits of stable solutions to the Allen--Cahn equation are necessarily {\it smooth}. While this result makes the Allen--Cahn equation a useful tool for constructing minimal surfaces in Riemannian manifolds, see e.g.~ \cite{guaracoJDG}, it also stands as a limitation to its descriptive power when studying {\it soap films}. Indeed, according to Plateau's laws, soap films can be modeled as two-dimensional smooth minimal surfaces joining in threes at equal angles along lines of ``$Y$-points'', which, in turn, are either closed or meet in fours at isolated ``$T$-points'' where they asymptotically form regular tetrahedral angles. The Tonegawa--Wickramasekera theorem implies in particular that no minimal surface with Plateau-type singularities can arise as the limit of stable solutions to the Allen--Cahn equation.

\medskip

Here we prove that minimal surfaces with Plateau-type singularities can indeed be approximated by energy-minimizing solutions to the Allen--Cahn equation modified by the inclusion of a Lagrange multiplier term corresponding to a {\it small volume constraint}, and with the introduction of a {\it transmission condition} along a ``spanning'' level set. These solutions are constructed as minimizers of a ``diffused interface'' soap film model $\Upsilon(v,\e,\de)$, which is introduced here for the first time. The introduction of a small volume constraint has its origin in the Physics literature, where a distinction between ``dry'' and ``wet'' soap films is made \cite{weaireBOOK,foamchapter}. While dry soap films are two dimensional surfaces obeying Plateau's laws, in the wet soap film model Plateau-type singularities are resolved as {\it Plateau borders} -- constant mean curvature channels of liquid developing around lines of $Y$-points, that are supposed to attach tangentially to smooth interfaces with zero mean curvature; see Figure \ref{fig ske} below. In the companion paper \cite{MNR1} we have recently validated the wet soap film model in the framework of Gauss' capillarity theory. The diffused interface soap film model introduced here thus completes a hierarchy of Plateau-type problems including wet and dry soap film models.

\medskip

The main result of this paper is showing how one can move along this hierarchy of models by taking singular limits. In more concrete terms, and coming back to the problem of approximating Plateau-type singularities by solutions to the Allen--Cahn equation, our main results can be roughly described as follows. {\it First}, given a compact set $\wire\subset\R^{n+1}$ (the ``wire frame''), a non-degenerate double-well potential $W:[0,1]\to[0,\infty)$, a related volume potential $V(t)=(\int_0^t\sqrt{W})^{(n+1)/n}$, and interface length scales $\e_j\to 0^+$ and volumes $v_j\to 0^+$ with $\e_j/v_j\to 0^+$, we construct energy minimizing solutions $\{u_j\}_j$ to the free boundary problems
\begin{equation}\label{ace modified}
  \left\{
  \begin{split}
    &2\,\e_j^2\,\Delta u_j=W'(u_j)-\e_j\,\l_j\,V'(u_j)\,,&\quad\mbox{on $\Om\cap\{u_j<1\}$}\,,
    \\
    &|\pa_\nu^+u_j|=|\pa_\nu^-u_j|\,,&\quad\mbox{on $\Om\cap\{u_j=1\}$}\,,
    \\
    &\mbox{subject to $\int_\Om V(u_j)=v_j$ and $\{u_j=1\}$ spans $\wire$}\,,
  \end{split}
  \right .
\end{equation}
where $\Om=\R^{n+1}\setminus\wire$, $\l_j\in\R$ are suitable Lagrange multipliers with $\e_j\,\l_j\to 0$, and $\pa_\nu^\pm$ denote the one-sided directional derivative operators along the hypersurfaces $\{u_j=1\}$. {\it Second}, we show that, up to extracting subsequences in $j$, for every such $\{u_j\}_j$ there is a (possibly singular) minimal surface $S$, which is area minimizing among surfaces spanning $\wire$, and is such that, as $j\to\infty$,
\begin{equation}
  \label{limit}
  \frac12\,\int_\Om\,\vphi\,\Big\{\e_j\,|\nabla u_j|^2+\frac{W(u_j)}{\e_j}\Big\}\to2\,\int_S \vphi\,d\H^n
\end{equation}
for every $\vphi\in C^0_c(\R^{n+1})$; see
\begin{figure}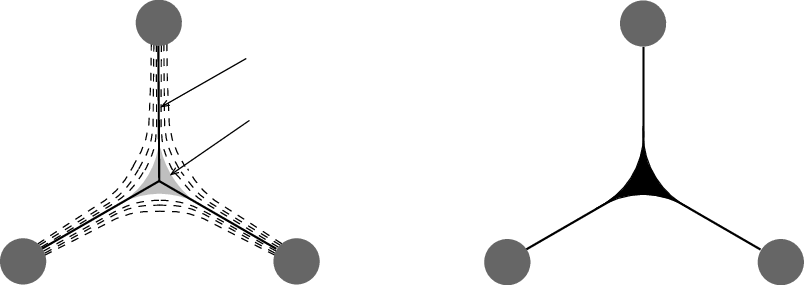\caption{\small{When $\wire$ consists of three disks in the plane, the only possible limit $S$ in \eqref{limit} consists of three segments, each orthogonal to one of the disks, and meeting at a common endpoint at equal angles: (a) Heuristic arguments suggest that, if $u_j$ is a solution to \eqref{ace modified}, then $\{u_j=1\}$ should be equal to $S$, with $u_j$ taking values close to $1$ on a negatively curvilinear triangle $E$ centered at the triple point of $S$  (depicted in gray), and then sharply transitioning to near zero values on a small neighborhood of $S\cup E$ (depicted by dashed lines). The normal derivatives $\pa_\nu^+u_j$ and $\pa_\nu^-u_j$ of $u_j$ should take non-zero, non-constant and opposite values along $S$. (b) As $j\to \infty$, the pointwise limit of $u_j$ should be equal to $1$ on $S\cup E$ (depicted in black). }}\label{fig uj}\end{figure}
Figure \ref{fig uj}.  For various choices of $\wire$ there will be only one such area minimizing surface $S$, which will indeed possess Plateau-type singularities. Actually, since our construction passes through the intermediate wet soap film model of \cite{MNR1}, in a situation where the Plateau problem defined by $\wire$ admits multiple area minimizing surfaces, some smooth and some with Plateau-type singularities, the only possible limits $S$ in \eqref{limit} will be surfaces with Plateau-type singularities.

\medskip

One can of course think of other possible modifications of the Allen--Cahn equation that lead to a PDE-description of Plateau-type singularities. A well-known possibility consists in working with an Allen--Cahn {\it system} \cite{sistobaldo}. From the physical viewpoint this approach corresponds to describing the three regions locally defined by a $Y$-singularity as occupied by three different immiscible fluids. In this sense, the approach followed here, which insists on the use of a single {\it scalar} equation and is based on the introduction of a small volume constraint and of a spanning condition, seems more true to the actual nature of soap films. The emergence, in this approach, of the physically meaningful wet soap film model studied in \cite{MNR1}, is yet another indication of its naturalness.

\medskip

In Section \ref{section homotopic spanning intro} we recall the homotopic spanning formulation $\ell$ of the Plateau problem introduced by Harrison and Pugh in \cite{harrisonpughACV}. In Section \ref{section capillarity intro} and Section \ref{section the model} we introduce, respectively, the capillarity approximation $\Psi_{\rm bk}(v)$ of $\ell$ studied in \cite{MNR1} and the new diffused interface problems $\Upsilon(v,\e,\de)$. In Section \ref{section statements} we state the main result of this paper, Theorem \ref{theorem upsilon existence convergence}, where we prove the existence of minimizers of $\Upsilon(v,\e,\de)$ and their convergence towards minimizers of $\Psi_{\rm bk}(v)$ and $\ell$ in the limits as $\e\to 0^+$, and as $\e\to0^+$, $v\to 0^+$ and $\e/v\to 0^+$, respectively. Additionally, in Theorem \ref{theorem main regularity} we derive the distributional form of \eqref{ace modified}, see \eqref{EL outer intro}, and in Proposition \ref{proposition conditional regularity} we deduce \eqref{ace modified} from its distributional form under some conditional regularity assumptions.

\medskip
 
The results of this paper open the study of Plateau's laws by means of free boundary problems. This point, which seems very interesting, is discussed in detail in Section \ref{section future}.

\subsection{Plateau's laws, the Plateau problem, and homotopic spanning}\label{section homotopic spanning intro} The properties of solutions to Plateau's problem of finding area minimizing surfaces with a given boundary depend subtly on the notions of ``area'' and ``boundary'' employed. The classical formulation of the Plateau problem based on the theory of currents leads, in physical dimensions, to {\it smooth} area minimizing surfaces, so that surfaces with Plateau singularities will be ``invisible'' even when having lower area.

\medskip

Finding a formulation of the Plateau problem whose minimizers may actually show Plateau-type singularities is a delicate task, with a long history, see \cite{davidshouldwe}. An effective approach has been proposed by Harrison and Pugh in \cite{harrisonpughACV}, with the introduction of the notion of {\it homotopic spanning}. Following the presentation given in \cite{DLGM}, given a closed set $\wire\subset\R^{n+1}$ (the wire frame to be spanned), and setting $\Om=\R^{n+1}\setminus\wire$, we say that a family $\C$ of smooth embeddings $\g:\SS^1\to\Om$ defines a {\bf spanning class for $\wire$} if $\Phi(\cdot,1)\in\C$ whenever $\Phi\in C^\infty(\SS^1\times[0,1];\Om)$, $\Phi(\cdot,t)$ is a smooth embedding of $\SS^1$ into $\Om$ for every $t$, and $\Phi(\cdot,0)\in\C$. Then a relative closed set $S\subset\Om$ is said to be {\bf $\C$-spanning $\wire$} if
\begin{equation}
  \label{spanning closed}
  S\cap\g(\SS^1)\ne\varnothing\,,\qquad\forall \g\in\C\,,
\end{equation}
and the following homotopic spanning formulation of {\bf the Plateau problem} is given,
\begin{equation}
  \label{def of ell}
  \ell=\inf\big\{\H^n(S):\mbox{$S$ is relatively closed in $\Om$, $S$ is $\C$-spanning $\wire$}\big\}\,,
\end{equation}
where $\H^n$ denotes the $n$-dimensional Hausdorff measure in $\R^{n+1}$. Minimizers of $\ell$ exist as soon as $\ell<\infty$ \cite{harrisonpughACV,DGM}, and they are {\bf Almgren minimal sets}, that is to say,  they satisfy $\H^n(S)\le\H^n(f(S))$ whenever $f$ is a Lipschitz map, not necessarily injective, with $\{f\ne\id\}\cc\Om$. As proved by Taylor \cite{taylor76}, when $n=2$ an Almgren minimal set $S$ is locally $C^{1,\a}$-diffeomorphic either to a plane, or to a $Y$-cone, or to a $T$-cone, that is, it obeys Plateau's laws. An analogous result is available, by elementary means, in the other important physical case, $n=1$; and similar results also hold in dimension $n\ge 3$, see \cite{ColomboEdelenSpolaor}. In particular, in physical dimensions $n=1,2$, minimizers of $\ell$ may satisfy Plateau's laws, and, for suitable choices of $\wire$ and $\C$, one can prove that this indeed the case when $n=1$ -- see also \cite{bernsteinmaggi} for an analysis of the appearance of singular catenoids when $n=2$. For all these reasons, our analysis will be based on the Harrison--Pugh formulation of the Plateau problem.

\subsection{Capillarity approximation of the Plateau problem}\label{section capillarity intro} In \cite{MaggiScardicchioStuvard,KingMaggiStuvard} a model for soap films as three-dimensional regions with small but positive volume has been introduced, based on Gauss' capillarity theory. Let us recall that, in Gauss' capillarity theory, one minimizes $\H^n(\Om\cap\pa E)$ among open sets $E\subset\Om$ with smooth boundary under a volume constraint $|E|=v$. When $v$ is small such minimizers are close to half-balls \cite{maggimihaila}. To avoid droplet-like minimizers, and actually observe soap film-like minimizers, in \cite{MaggiScardicchioStuvard,KingMaggiStuvard} the following problem
\[
\psi(v)=\inf\Big\{\H^n(\Om\cap\pa E):\mbox{$|E|=v$ and $\Om\cap\pa E$ is $\C$-spanning $\wire$}\Big\}\,,
\]
where $E$ ranges among subsets of $\Om$ with Lipschitz regular boundary, has been introduced. As proved in \cite{KingMaggiStuvard,KMS2,KMS3}, $\psi(v)$ admits minimizers only in a generalized sense, and such generalized minimizers converge to minimizers of $\ell$, with $\psi(v)\to 2\,\ell$ as $v\to 0^+$. The existence of generalized minimizers in $\psi(v)$ corresponds to the actual physical description \cite{weaireBOOK,foamchapter} of soap films as either ``dry'' soap films (minimizers of $\ell$) or ``wet'' soap films (minimizers of $\psi(v)$); see
\begin{figure}
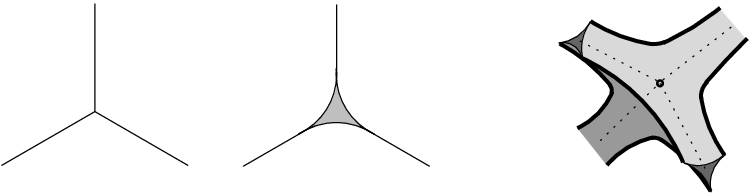
\caption{\small{(a) A ``dry'' soap film $S$ in $\R^2$ with a $Y$-type singularity; (b) a corresponding ``wet'' soap film $(K,E)$: the $Y$-singularity has been wetted by a negatively curved region $E$ (``planar'' Plateau border); (c) in $\R^3$, nearby a $T$-point of a dry film $S$, a wet film $(K,E)$ is placing a negatively curved tube-like structure $E$ (Plateau border). Plateau borders are important, for example, to understand drainage phenomena in soap films.}}
\label{fig ske}
\end{figure}
Figure \ref{fig ske}. Establishing the sharp regularity of these generalized minimizers, and in particular the validity of a sort of ``third Plateau law'' for their characteristic structures known as {\it Plateau borders}, is the subject of \cite{MNR1}. We now review the approach developed in \cite{MNR1}, which is crucial for setting up the Allen--Cahn Plateau problem studied in this paper.

\medskip

The starting point of \cite{MNR1} is reinterpreting the notion of $\C$-spanning set introduced in \eqref{spanning closed}, which is a condition sensitive to pointwise modifications, so to make it stable under modifications by $\H^n$-null sets and under the operation of taking weak limits in the sense of Radon measures. Postponing to Section \ref{section basic properties of C spanning sets} a detailed discussion of this matter, it suffices to notice here that the work done in \cite{MNR1} gives a meaning to the statement ``$S$ is $\C$-spanning $\wire$'' whenever $S$ is a Borel subset of $\Om$, and does so in such a way that: (i) if $S$ is relatively closed in $\Om$, the new condition is equivalent to \eqref{spanning closed}; (ii) if $S$ is $\C$-spanning $\wire$ and $S'$ is $\H^n$-equivalent to $S$, then $S'$ is $\C$-spanning $\wire$; (iii) if $S_j$ are $\H^n$-finite sets that are $\C$-spanning $\wire$, $\mu$ is a Radon measure in $\Om$, and $\H^n\mres S_j\weakstar\mu$ as Radon measures in $\Om$ as $j\to\infty$, then
\begin{eqnarray*}
&&\mbox{$S=\{x\in\Om:\theta_n^*(\mu)(x)\ge 1\}$ is $\C$-spanning $\wire$}\,,
\\
&&\mbox{and}\,\,\,\H^n(S)\le\liminf_{j\to\infty}\H^n(S_j)\,;
\end{eqnarray*}
and (iv) the homotopic spanning Plateau problem $\ell_{\rm B}$ obtained by minimizing $\H^n(S)$ among Borel sets $S$ that $\C$-spans $\wire$ actually coincides with problem $\ell$ introduced in \eqref{def of ell}, that is, $\ell=\ell_{\rm B}$ and the two problems have the same minimizers (modulo $\H^n$-equivalence of sets).

\medskip

Based on this definition we can directly consider Gauss' capillarity energy under homotopic spanning conditions in the class of sets of finite perimeter, and formulate the problem
\[
\psi_{\rm bk}(v)=\inf\Big\{\H^n(\Om\cap\pa^*E):\mbox{$|E|=v$ and $\Om\cap(E^\one\cup\pa^* E)$ is $\C$-spanning $\wire$}\Big\}\,,
\]
where $\pa^*E$ denotes the reduced boundary of a set of locally finite perimeter $E\subset\Om$ and $E^\one$ is the set of points of density $1$ of $E$. The subscript ``bk'' stands for ``bulk'' to reflect the fact that, in formulating $\psi_{\rm bk}(v)$, we are now imposing the burden of achieving the spanning condition {\it not} on the boundary of $E$ alone, as done with $\psi(v)$, but rather on the whole bulk of $E$. The two approaches are evidently related (to the point that one naturally conjectures they should lead to the same minimizers when $v$ is small enough), and we are not aware of a physical reason to prefer one to the other. However, the bulk variant is much more natural to work with in view of the formulation of an Allen--Cahn Plateau problem, which is the decisive reason for us to work with the bulk spanning condition, and to consider $\psi_{\rm bk}(v)$ in place of $\psi(v)$ in \cite{MNR1}, and in what follows.

\medskip

It is now convenient to recall the main result from \cite{novackGENMIN}, a companion paper to \cite{MNR1}. Introducing the class
\[
\KK_{\rm B}
\]
of those pairs $(K,E)$ of Borel subsets of $\Om$ such that
\begin{equation}
  \label{def of KB}
  \mbox{$E$ is of locally finite perimeter in $\Om$ and $\Om\cap\pa^*E$ is $\H^n$-contained in $K$}\,,
\end{equation}
and the {\it relaxed energy}
\begin{equation}
  \label{def of Fb}
  \F_{\rm bk}(K,E;A)=\H^n(A\cap\pa^*E)+2\,\H^n(A\cap K\cap E^\zero)\,,\qquad \F_{\rm bk}(K,E):=\F_{\rm bk}(K,E;\Om)\,,
\end{equation}
(where\footnote{It is important to keep in mind that when $E$ is of locally finite perimeter in $\Om$, then $\{E^\one, E^\zero,\Om\cap\pa^* E\}$ is an $\H^n$-partition of $\Om$ by a theorem of Federer.} $E^\zero$ is the set of points of density $0$ of $E$, and with $A\subset\Om$), the main result proved in \cite{novackGENMIN} is that $\psi_{\rm bk}(v)=\Psi_{\rm bk}(v)$, where
\begin{equation}
  \label{def of Psibk}
  \Psi_{\rm bk}(v):=\inf\Big\{\F_{\rm bk}(K,E;\Om):\mbox{$(K,E)\in\KK_{\rm B}$, $|E|=v$, $K\cup E^\one$ is $\C$-spanning $\wire$}\Big\}\,.
\end{equation}
Notice that if $E$ is a competitor of $\psi_{\rm bk}(v)$, then $(\varnothing,E)\in\KK_{\rm B}$ with $\F_{\rm bk}(\varnothing, E)=\H^n(\Om\cap\pa^*E)$, so that, trivially $\psi_{\rm bk}(v)\ge\Psi_{\rm bk}(v)$. The equality  $\psi_{\rm bk}(v)=\Psi_{\rm bk}(v)$ thus expresses the fact that minimizers of $\psi_{\rm bk}(v)$ may fail to exist in a proper sense, and may thus be found only in a relaxed sense as minimizers of $\Psi_{\rm bk}(v)$.
The following theorem summarizes \cite[Theorem 1.5, 1.6, B.1]{MNR1}:

\begin{theorem}[Main results from \cite{MNR1}]
  \label{theorem from MNR1} If $\wire\subset\R^{n+1}$ is compact, $\C$ is a spanning class for $\wire$, and $\ell<\infty$, then $\Psi_{\rm bk}(v)\to 2\,\ell=2\,\ell_{\rm B}=\Psi_{\rm bk}(0)$ as $v\to 0^+$. Moreover:

  \medskip

  \noindent {\bf (i):} for every $v>0$ there exist a minimizer $(K,E)$ of $\Psi_{\rm bk}(v)$, and up to an $\H^n$-null modification of $K$ and a Lebesgue null modification of $E$, $K$ is relatively closed in $\Om$, $E$ is open with $\Om\cap\cl(\pa^*E)=\Om\cap\pa E\subset K$, $K\cup E$ is $\C$-spanning $\wire$, and $K\cap E^\one=\varnothing$, so that, with disjoint unions,
  \[
  K=\big[K\setminus\pa E\big]\cup\big[\Om\cap(\pa E\setminus\pa^*E)\big]\cup[\Om\cap\pa^*E]\,;
  \]
  moreover, there exists a closed set $\Sigma\subset K$, with $\Sigma=\varnothing$ if $1\le n\le 6$, $\Sigma$ locally finite in $\Om$ if $n=7$, and $\H^s(\Sigma)=0$ for every $s>n-7$ if $n\ge 8$, such that:

  \medskip

  \noindent {\bf (a):} $(K\setminus\pa E)\setminus\Sigma$ is a smooth minimal surface;

  \medskip

  \noindent {\bf (b):} $\Om\cap\pa^*E$ is a smooth hypersurface with constant mean curvature denoted by $\l$ if computed with respect to the outer unit normal $\nu_E$ to $E$;

  \medskip

  \noindent {\bf (c):} if $\Om\cap(\pa E\setminus\pa^*E)\setminus\Sigma\ne\varnothing$, then $\l<0$, and for every $x\in \Om\cap(\pa E\setminus\pa^*E)\setminus\Sigma$ there is $r>0$ such that $K\cap B_r(x)$ is the union of two ordered $C^{1,1}$-graphs which detach tangentially along $\Om\cap(\pa E\setminus\pa^*E)$; moreover, $\Om\cap(\pa E\setminus\pa^*E)$ is locally $\H^{n-1}$-rectifiable;

  \medskip

  \noindent {\bf (ii):} if $v_j\to 0^+$ and $(K_j,E_j)$ are minimizers of $\Psi_{\rm bk}(v_j)$, then, up to extracting subsequences, there is a minimizer $S$ of $\ell$ such that, as $j\to\infty$,
  \[
  \int_{\Om\cap\pa^*E_j}\vphi\,d\H^n+ 2\, \int_{\Om\cap K_j\cap E_j^\zero}\vphi\,d\H^n\to 2\,\int_S\,\vphi\,d\H^n\,,
  \]
  for every $\vphi\in C^0_c(\R^{n+1})$.

\medskip

\noindent {\bf (iii):} if, in addition, $\wire$ is the closure of a bounded open set with $C^2$-boundary, then for every $v>0$ and every minimizing sequence $\{(K_j,E_j)\}_j$ of $\Psi_{\rm bk}(v)$ there is a minimizer $(K,E)$ of $\Psi_{\rm bk}(v)$ such that $K$ is $\H^n$-rectifiable and, up to extracting subsequences and as $j\to\infty$,
\begin{equation}
\label{what K and E do}
E_j\to E\,,\qquad \mu_j\weakstar  \H^n\mres(\Om\cap \pa^*E)+ 2\,\H^n\mres(K\cap E^\zero)\,,
\end{equation}
where $\mu_j=\H^n\mres(\Om\cap \pa^*E_j)+2\,\H^n\mres(\RR(K_j)\cap E_j^\zero)$.
\end{theorem}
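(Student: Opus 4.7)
For the existence in part (i), I would run the direct method on a minimizing sequence $(K_j,E_j)$ for $\Psi_{\rm bk}(v)$. The uniform bound $\H^n(\Om\cap\pa^*E_j)\le\F_{\rm bk}(K_j,E_j)\to\Psi_{\rm bk}(v)$ gives $L^1_{\rm loc}$-compactness for $\{E_j\}$ with limit $E$ of finite perimeter and $|E|=v$. The measures $\mu_j=\H^n\mres(\Om\cap\pa^*E_j)+2\,\H^n\mres(K_j\cap E_j^\zero)$ have uniformly bounded mass on compacta, so up to a subsequence $\mu_j\weakstar\mu$ in $\Om$. Lower semicontinuity of perimeter yields $\mu\ge\H^n\mres(\Om\cap\pa^*E)$, and setting $\nu:=\mu-\H^n\mres(\Om\cap\pa^*E)$ I would define $K$ as the $\Om$-relative closure of $(\Om\cap\pa^*E)\cup\{x\in\Om\cap E^\zero:\theta_n^*(\nu/2)(x)\ge 1\}$. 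The non-classical crux is passage of the spanning condition to the limit: this is precisely property (iii) of the Borel spanning notion recalled from \cite{MNR1}, by which density-$1$ threshold sets of weak-$*$ limits of $\H^n$-rectifiable $\C$-spanning sets remain $\C$-spanning. Combined with lower semicontinuity of the density-$1$ $\H^n$-measure, this gives minimality; $\H^n$- and Lebesgue-null modifications then realize the canonical representative with closed $K$, open $E$, $K\cap E^\one=\varnothing$, and $K\cup E$ closed and $\C$-spanning $\wire$.

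For the regularity claims (a)--(c), I would localize. On $(K\setminus\pa E)\setminus\Sigma$, variations of $K$ supported in $\Om\cap E^\zero$ preserve the volume and, via small collar thickenings of newly created pieces, also the spanning; hence $K\setminus\pa E$ is locally Almgren-minimizing, and Taylor's theorem together with the Colombo--Edelen--Spolaor refinement delivers (a) with the stated bounds on $\Sigma$. On $\Om\cap\pa^*E$, volume-preserving variations yield the CMC equation with multiplier $\l$, proving (b). The main obstacle is (c): near a regular point of $\Om\cap(\pa E\setminus\pa^*E)$, I would first show $\l<0$ by a barrier argument (otherwise a normal inward push of $\pa^*E$ into the collapsed region would decrease energy while preserving the spanning), then realize each sheet of $K$ as a solution to a one-sided CMC/minimal-surface inequality with the opposite sheet as obstacle; a Caffarelli-type thin-obstacle analysis combined with a Hopf comparison then forces tangential attachment and $C^{1,1}$-regularity, with the $\H^{n-1}$-rectifiability of the coincidence set following from the structure of two tangentially-meeting $C^{1,1}$ graphs.

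For the limit $\Psi_{\rm bk}(v)\to 2\ell$ and assertions (ii)--(iii), the upper bound comes from thickening an $\ell$-minimizer $S$ into a tubular neighborhood $E$ of volume $v$ with $K=\pa E$, giving $\F_{\rm bk}(K,E)=\H^n(\pa E)\to 2\H^n(S)=2\ell$ as $v\to 0^+$. For the lower bound and (ii), take minimizers $(K_j,E_j)$ of $\Psi_{\rm bk}(v_j)$ with $v_j\to 0^+$ and extract a weak-$*$ limit $\mu$ of $\mu_j$; since $|E_j|\to 0$, the set $S:=\{\theta_n^*(\mu/2)\ge 1\}$ is $\C$-spanning by property (iii), hence $\ell\le\H^n(S)\le\mu(\Om)/2\le\liminf\Psi_{\rm bk}(v_j)/2\le\ell$, which closes all inequalities and simultaneously yields $\Psi_{\rm bk}(v)\to 2\ell$ and the measure convergence asserted in (ii). For (iii), smoothness of $\wire$ provides a global barrier preventing concentration of $\mu_j$ at $\pa\wire$, and standard rectifiability theorems applied to the density-$1$ threshold set give the $\H^n$-rectifiability of $K$ and the explicit form \eqref{what K and E do}. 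The decisive difficulty throughout remains (c): the tangential attachment sits outside classical Almgren--Taylor theory and is a non-standard free boundary problem, since the ``obstacle'' for one sheet is itself part of the unknown.
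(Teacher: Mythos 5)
This theorem is not proved in the present paper: it is explicitly imported from the companion work \cite{MNR1}, the paper's ``proof'' being the single sentence stating that the theorem summarizes \cite[Theorem 1.5, 1.6, B.1]{MNR1}. So there is nothing here against which to compare step-by-step, and your attempt should be read as a reconstruction of the argument of \cite{MNR1} rather than a proof that parallels anything in this text.

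As an outline, your sketch is broadly plausible and touches the right ingredients, but there is a structural slip in part (i). You define $K$ as the $\Om$-relative \emph{closure} of $(\Om\cap\pa^*E)\cup\{\theta_n^*(\nu/2)\ge 1\}$ before proving any regularity. At that stage you have no control on how much $\H^n$-measure the closure adds, so lower semicontinuity of $\F_{\rm bk}$ (which requires $\H^n(K\cap E^\zero)\le\nu(\Om\cap E^\zero)/2$) may fail for the closed set. The correct order, reflected in the paper's own Theorem~\ref{theorem first closure theorem}, is to take $K_{\rm bk}=(\Om\cap\pa^*E)\cup\{x\in\Om\cap E^\zero:\theta_n^*(\mu_{\rm bk})(x)\ge 2\}$ as a Borel set, establish minimality, and only then use the regularity theory to conclude that this set is $\H^n$-equivalent to a relatively closed one. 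Apart from this, you correctly identify the weak-$*$ closure of the bulk spanning condition as the non-classical crux, the Almgren/Taylor route for (a), the volume-constrained CMC equation for (b), and the thin-obstacle-type difficulty of (c); and your handling of the limit $\Psi_{\rm bk}(v)\to 2\ell$, the compactness in (ii), and the no-concentration-at-$\pa\wire$ barrier in (iii) is the expected strategy. Whether these steps match the actual arguments in \cite{MNR1} in detail cannot be checked from this paper alone.
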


\subsection{A diffused interface formulation of the Plateau problem}\label{section the model} In the diffused interface approximation of capillarity theory, the position of a liquid at equilibrium is represented, rather than by a set $E\subset\Om$, by a density function $u:\Om\to[0,1]$. Surface tension energy is then represented by the Allen--Cahn energy of $u$,
\[
\ac(u;\Om)=\int_\Om\mathrm{ac}_\e(u(x))\,dx\,,\qquad\mathrm{ac}_\e(u)=\e\,|\nabla u|^2+\frac{W(u)}\e\,,
\]
where $\e>0$ has the dimensions of a length (in particular, $\ac(u)$ has the dimensions of surface area), and $W\in C^{\,2,1}[0,1]$ is a (dimensionless) double-well potential. We assume $W$ to satisfy the basic structural properties
\begin{equation}\label{W nondegeneracy assumptions}
W(0)=W(1)=0\,,\qquad\mbox{$W>0$ on $(0,1)$}\,,\qquad W''(0), W''(1)>0\,,
\end{equation}
as well as the normalization
\begin{equation}\label{W normalization}
\int_0^1 \sqrt{W(t)}\,dt =1\,.
\end{equation}
We now introduce volume and homotopic spanning constraints on densities $u$.

\medskip

\noindent {\bf Volume constraint:} To impose a volume constraint on $u$, we consider a (dimensionless) ``volume density potential'' $V:[0,1]\to[0,\infty)$, with $V(0)=0$ and $V$ increasing and positive on $(0,1]$. Given a choice of $V$, $u$ corresponds to a soap film of volume $v$ if
\[
\V(u;\Om)=v\,,\qquad\mbox{where}\quad \V(u;\Om):=\int_{\Om}V(u(x))\,dx\,.
\]
The choice of $V$ is really a matter of convenience, since any choice of $V$ leads to recover the correct volume constraint in the sharp interface limit $\e\to 0^+$, and since the model is purely phenomenological. When working on bounded domains $\Om$, a common choice of $V$ made in the literature is taking $V(t)=t$. This choice does not work well on unbounded domains, since in that case $\ac(u;\Om)$ can be made arbitrarily small (while keeping $\int_\Om u$ fixed) by simply ``spreading'' $u$. Following the treatment of the {\it diffused interface isoperimetric problem on $\R^{n+1}$} naturally associated with $\ac$, see \cite{maggirestrepo}, we will set
\[
V(t)=\Phi(t)^{(n+1)/n}\,,\qquad\Phi(t)=\int_0^t\,\sqrt{W(s)}\,ds\,,
\]
for $t\in[0,1]$ and $u\in L^1_{{\rm loc}}(\Om)$. This choice is of course motivated by the $BV$-Sobolev embedding and by the ``Modica--Mortola identity''
\begin{equation}\tag{MM}
  \label{modica mortola identity}
  \ac(u;\Om)=2\,|D(\Phi\circ u)|(\Om)+\int_\Om\Big(\sqrt\e\,|\nabla u|-\sqrt{W(u)/\e}\Big)^2\ge2\,|D(\Phi\circ u)|(\Om)\,.
\end{equation}
Notice also that we have $\Phi(1)=V(1)=1$ thanks to the normalization \eqref{W normalization} on $W$.

\medskip

\noindent {\bf Homotopic spanning constraint:} Deciding how to impose an homotopic spanning conditions on densities $u$ is of course a delicate choice in the setting of our model. The idea explored here is requiring, given $\de\in(1/2,1]$, that all the superlevel sets $\{u\ge t\}$ corresponding to\footnote{The lower bound $t>1/2$ is assumed here for the sake of definiteness. It could have been replaced by any other positive lower bound since the condition of being $\C$-spanning is monotone by set inclusion.} $t\in(1/2,\de)$ are $\C$-spanning $\wire$. Having extended the notion of $\C$-spanning from a pointwise unstable condition to an $\H^n$-stable condition is of course a crucial feature to discuss the existence of minimizers\footnote{An alternative approach would have course been working on $W^{1,2}\cap C^0$ and the original definition by Harrison and Pugh. Since this approach requires proving the regularity of minimizers in the process of showing their existence, it seems somehow conceptually less direct and certainly less flexible than first discussing a robust weak formulation, and then proving regularity statements.}. This kind of stability is natural in our problem since $W^{1,2}(\Om)$ is the natural energy space for working with the Allen--Cahn energy and since the {\bf Lebesgue representative} $u^*$ of a Sobolev function $u\in W^{1,2}(\Om)$ is well-defined $\H^n$-a.e. on $\Om$, so that, given two functions $u_1,u_2\in W^{1,2}_{\rm loc}(\Om)$ that are $\L^{n+1}$-equivalent (and thus have same $\ac$ energy), the Borel sets $\{u_1^*\ge t\}$ and $\{u_2^*\ge t\}$ will be $\H^n$-equivalent for every $t\in[0,1]$.

\medskip

All this said, we come to introduce the following family of {\bf Allen--Cahn Plateau problems},
\begin{eqnarray}
  \label{new model intro}
  \Upsilon(v,\e,\de)\!\!\!&=&\!\!\!\inf\big\{\ac(u;\Om)\big/2:\,\,\,u\in W^{1,2}_{\rm loc}(\Om)\,,\,\V(u;\Om)=v\,,
   \\\nonumber
   &&\hspace{2.6cm}\mbox{$\{u^*\ge t\}$ is $\C$-spanning $\wire$ for every $t\in(1/2,\de)$}\big\}\,,
\end{eqnarray}
where $v$ and $\e$ are positive parameters and where $\de\in(1/2,1]$.

\medskip

For arbitrary values of $(v,\e,\de)$, we do not expect minimizers of $\Upsilon(v,\e,\de)$ to have anything to do with soap films. In other words, we need to identify a  {\bf soap film regime} for $(v,\e,\de)$. A first constraint is that $v$ should not be too large with respect to the size of the boundary wire frame $\wire$: indeed, we want to avoid the ``isoperimetric regime'', where minimizers will tend to look like droplets touching $\wire$, rather than like soap films (see \cite{MN}). A second constraint, borne out by heuristic calculations\footnote{In \eqref{justifies SFR} it is rigorously proved that $\Upsilon(v(\e),\e,\de)\to+\infty$ if $v(\e)/\e\to 0$ as $\e\to 0^+$, so that one definitely wants to require, to the least, that $\e\le C\,v$.} involving the optimal Allen-Cahn profile, and aimed at ensuring the boundedness of the minimum energy at small values of $v$ and $\e$, is that $\e<\!\!<v$. Correspondingly, given positive $\tau_0\ge\tau_1>0$, we introduce the family of triples $(v,\e,\de)\in(0,\infty)\times(0,\infty)\times(1/2,1]$ defined by
\begin{eqnarray}
\nonumber
{\rm SFR}(\tau_0,\tau_1)=\Big\{(v,\e,\delta)\!\!\!\!\!\!\!\!
&&: 0<\frac{v}{(\diam\wire)^{n+1}}\le \tau_0\,,\quad 0<\e\,(\diam\wire)^n  \leq \tau_1\,v\,,
\\ \label{def of physical regime}
&&\hspace{3.1cm}\min\Big\{1-\delta, \frac{v}{(\diam \wire)^{n+1}}\Big\} \leq \tau_1 \Big\}\,.
\end{eqnarray}
Given $\tau_0>0$ we will work with $\tau_1$ sufficiently small in terms of $\tau_0$ (and $\wire$, $\C$ and $W$). From this viewpoint, the third constraint defining ${\rm SFR}$ reflects the fact that if we want to keep $v$ ``of order one'', then, in order to be close to the soap film capillarity model with bulk spanning condition $\Psi_{\rm bk}(v)$, we need $\de$ to be sufficiently close to $1$; if, otherwise, we wish to keep the possibility of working with $\de$ close to $1/2$ (thus imposing the spanning condition only on a thin layer of level sets around $t=1/2$), then we will need to work with $v$ sufficiently small.

\subsection{Main results for the diffused interface model}\label{section statements} We are now in the position of formally stating the main results of our paper.

\begin{theorem}\label{theorem upsilon existence convergence}
If $\wire\subset \mathbb{R}^{n+1}$ is the closure of an open bounded set with smooth boundary, $\C$ is a spanning class for $\wire$ such that $\ell<\infty$, $\tau_0>0$, and $W\in C^{\,2,1}[0,1]$ satisfies \eqref{W nondegeneracy assumptions} and \eqref{W normalization}, then there exists $\tau_1>0$, depending on $W$, $\wire$, $\C$, and $\tau_0$ with the following properties:

\medskip

\noindent {\bf (i) Existence of minimizers:}  if $(v,\e,\delta)\in {\rm SFR}(\tau_0,\tau_1)$, then there are minimizers $u$ of $\Upsilon(v,\e,\delta)$, which, for suitable $\l\in\R$, satisfy
\begin{equation}
  \label{EL inner}
      \int_\Omega\mathrm{ac}_\e(u)\,\Div X - 2\,\e
    \nabla u\,\cdot\, \nabla X[\nabla u]=\lambda\, \int_\Omega V(u)\,\Div X\,,
\end{equation}
whenever $X\in C_c^\infty(\mathbb{R}^{n+1};\mathbb{R}^{n+1})$ with $X \cdot \nu_\Omega= 0$ on $\partial \Omega$;

\medskip

\noindent {\bf (ii) Convergence to bulk-spanning capillarity:} if $\e_j\to 0^+$, $v_j\to v_0>0$, and $\de_j\to 1^-$ as $j\to\infty$, and if $u_j$ are minimizers of $\Upsilon(v_j,\e_j,\de_j)$, then there is a minimizer $(K,E)$ of $\Psi_{\rm bk}(v_0)$ such that, up to extracting subsequences, $u_j\to 1_E$ in $L^1(\Om)$ and
\begin{eqnarray*}
&&\frac{\mathrm{ac}_{\e_j}(u_j)}2\,\L^{n+1}\mres \Omega \weakstar
2\, \,\H^n\mres \big(K\cap E^{\zero}\big)+\H^n\mres\pa^*E
\end{eqnarray*}
as Radon measures in $\Om$. In particular, for every $v_0>0$,
\[
\lim_{{\rm SFR}(\tau_0,\tau_1)\ni(v,\e,\de)\to (v_0,0,1)}\Upsilon(v,\e,\de)=\Psi_{\rm bk}(v_0)\,;
\]

\medskip

\noindent {\bf (iii) Convergence to the Plateau problem:} if $v_j\to 0^+$, $\e_j/v_j\to 0^+$, and $\de_j\to \de_0\in[1/2,1]$ as $j\to\infty$,  and if $u_j$ are minimizers of $\Upsilon(v_j,\e_j,\de_j)$, then there is a minimizer $S$ of $2\,\ell=\Psi_{\rm bk}(0)$ such that, up to extracting subsequences,
\begin{eqnarray*}
&&\frac{\mathrm{ac}_{\e_j}(u_j)}2\,\,\L^{n+1}\mres \Omega \weakstar
2\,\Phi(\delta_0)\,\H^n\mres S
\end{eqnarray*}
as Radon measures in $\Om$ and $\Upsilon(v_j,\e_j,\de_j)\to 2\,\Phi(\de_0)\,\ell$, as $j\to\infty$;

\medskip

\noindent {\bf (iv) Equipartition of energy:} in both conclusions (ii) and (iii), we also have
\begin{equation}
  \label{equipartition}
  \lim_{j\to \infty} \e_j\,\int_\Omega |\nabla u_j|^2 = \lim_{j\to \infty} \frac1{\e_j}\,\int_\Omega  W(u_j)\,.
\end{equation}
\end{theorem}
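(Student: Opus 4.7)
\emph{Existence and Euler--Lagrange equation.} For \textbf{(i)}, I would apply the direct method to $\Upsilon(v,\e,\de)$. Given a minimizing sequence $\{u_j\}$, the Modica--Mortola identity \eqref{modica mortola identity} yields a uniform $BV_{\rm loc}(\Om)$ bound on $\Phi\circ u_j$; combined with $\V(u_j;\Om)=v$ and the $BV$-Sobolev embedding (recall $V=\Phi^{(n+1)/n}$), this gives $L^1_{\rm loc}$ compactness. Extracting a subsequence, $u_j\to u$ a.e.\ and $\nabla u_j\weak\nabla u$ in $L^2_{\rm loc}$; Fatou together with weak lower semicontinuity of the Dirichlet integral yields $\ac(u;\Om)\le\liminf_j\ac(u_j;\Om)$, and dominated convergence passes the volume constraint. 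The subtle point is the spanning condition: by coarea,
\[
|D(\Phi\circ u_j)|(\Om)\ge\int_{1/2}^{\de}P(\{u_j>s\};\Om)\,\sqrt{W(s)}\,ds,
\]
so a diagonal extraction yields a full-measure set of $s\in(1/2,\de)$ along which $\H^n\mres\pa^*\{u_j>s\}$ converges weakly-$*$. Combined with the hypothesis that $\{u_j^*\ge s\}$ is $\C$-spanning $\wire$ and the $\H^n$-stability of $\C$-spanning (property (iii) from the introduction), this passes the spanning to $\{u^*\ge s\}$ for a.e.\ $s$, and monotonicity in $s$ closes the argument. The Euler--Lagrange equation \eqref{EL inner} arises from inner variations $u_t=u\circ\Phi_t^{-1}$, where $\Phi_t$ is the flow of $X$ with $X\cdot\nu_\Om=0$ on $\pa\Om$: since $\Phi_t$ is a smooth isotopy with $\Phi_t(\Om)=\Om$, the image under $\Phi_t$ of every $\g\in\C$ is again in $\C$, so $\Phi_t(\{u^*\ge s\})$ still $\C$-spans $\wire$, and imposing the volume constraint yields the Lagrange multiplier $\l$.

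\emph{Sharp interface limits.} For \textbf{(ii)}, the upper bound is built from a minimizer $(K,E)$ of $\Psi_{\rm bk}(v_0)$ given by Theorem~\ref{theorem from MNR1}: near $\pa^*E$ paste the standard optimal one-dimensional AC profile transitioning $0\to 1$, while near $K\cap E^\zero$ paste a ``bump'' profile $0\to 1\to 0$, which contributes twice the Modica--Mortola cost and matches the factor $2$ in $\F_{\rm bk}$; a small rescaling enforces $\V(u_\e;\Om)=v_j$, and setting $\de_\e$ just below the bump's peak enforces the spanning. The lower bound uses Modica--Mortola compactness: $\Phi\circ u_j\to 1_E$ in $L^1$ (forced into $\{0,1\}$ by $\de_j\to 1^-$), so $|E|=v_0$, and the measures $\mathrm{ac}_{\e_j}(u_j)\,\L^{n+1}/2$ weak-$*$ converge to a measure $\mu$ that decomposes, by standard diffuse-interface blow-up, as $\H^n\mres\pa^*E$ plus at least $2\,\H^n\mres K$ on a rectifiable set $K\subset E^\zero$. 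Property (iii) applied to $\pa^*\{u_j>s\}$ for a.e.\ $s\in(1/2,\de_j)$ forces $K\cup E^\one$ to $\C$-span $\wire$, so $(K,E)$ is admissible in $\Psi_{\rm bk}(v_0)$ and the chain of inequalities closes. For \textbf{(iii)}, the upper bound comes from placing a $0\to\de_j\to 0$ bump in a thin tube around a minimizer $S$ of $\ell$, with width chosen so the enclosed $V$-volume equals $v_j$; here $\e_j/v_j\to 0^+$ is decisive, making the bump wide compared to the AC scale and concentrating the leading Modica--Mortola cost at $2\,\Phi(\de_j)$ per unit area, so $\limsup\le 2\,\Phi(\de_0)\,\ell$. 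The matching lower bound uses coarea in the window $s\in(1/2,\de_j)$: property (iii) forces $\H^n\mres\pa^*\{u_j>s\}$ to limit to a measure of mass at least $\ell$ for a.e.\ such $s$, and integrating against $\sqrt{W(s)}\,ds$ produces the prefactor $\Phi(\de_0)$.

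\emph{Equipartition and principal obstacle.} Part \textbf{(iv)} is immediate once the Modica--Mortola inequality is saturated in the limit: the slack term $\int_\Om(\sqrt{\e_j}\,|\nabla u_j|-\sqrt{W(u_j)/\e_j})^2$ in \eqref{modica mortola identity} must vanish, and expanding the square yields \eqref{equipartition}. I expect the hardest point throughout to be the rigorous promotion of the spanning of the volumes $\{u_j^*\ge t\}$ to the spanning of the $n$-dimensional concentration set of $\mu$: the Borel definition of $\C$-spanning for volumes does not interact automatically with property (iii), which is stated for $\H^n$-finite sets, so one has to channel the spanning through the reduced boundaries $\pa^*\{u_j>t\}$ via $\H^n$-null modification arguments. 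Ensuring that the resulting diagonal extraction behaves uniformly across both the $\de\to 1^-$ regime and the $\de\to\de_0\in(1/2,1]$ regime, while remaining stable over the entire soap-film regime $\mathrm{SFR}(\tau_0,\tau_1)$, is expected to account for the bulk of the technical effort.
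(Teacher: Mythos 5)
Your sketch of parts (ii), (iii), (iv) is directionally reasonable and broadly matches the paper's strategy (upper bounds via profile pasting in Theorems~\ref{theorem approximation of gen soap}--\ref{theorem approximation of gen soap vuoto}, lower bounds via Modica--Mortola and the closure theory of Section~\ref{section closure theorem diffused}, equipartition from saturation of \eqref{modica mortola identity}). You also correctly identify that passing spanning through $\pa^*\{u_j>s\}$ is delicate. However, your treatment of part (i) has a genuine and fatal gap, and it is precisely the point the paper spends most of its effort on.

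You write that ``dominated convergence passes the volume constraint.'' It does not. The domain $\Omega=\R^{n+1}\setminus\wire$ is unbounded, and the compactness you obtain from the Modica--Mortola bound is only in $L^1_{\rm loc}(\Om)$. There is no domination available: a minimizing sequence can carry a droplet of fixed $\V$-mass off to infinity while keeping its $\ac$-energy bounded, so the $L^1_{\rm loc}$ limit $u$ satisfies only $\V(u;\Om)\le v$, with possibly strict inequality. This is the classical concentration--compactness obstruction, and there is no obvious reason it cannot occur here; in fact, controlling it is \emph{exactly} why the theorem requires the soap-film regime ${\rm SFR}(\tau_0,\tau_1)$ with $\tau_1$ small. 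Your argument never uses $\tau_1$, which is a strong hint that something is missing. The paper's actual proof of (i) works by contradiction: if the limit $u_j^0$ of a minimizing sequence loses volume $\vv_j^\infty>0$, then one shows $\Upsilon(v_j,\e_j,\de_j)=\ac(u_j^0;\Om)/2+\Theta(\vv_j^\infty,\e_j)$, where $\Theta$ is the full-space diffused isoperimetric profile. The contradiction is then built (in the hard case $v_j\to 0^+$) by proving $\e_j\l_j\to 0$ (Theorem~\ref{theorem lambdaepsj to zero}, a long blow-up argument), deducing $\e_j/(\vv_j^\infty)^{1/(n+1)}\to 0$, and constructing a strictly better competitor by relocating the escaped droplet as a \emph{half}-bubble against $\pa\wire$ at a point where $u_j^0$ has negligible mass (which requires the non-wetting property \eqref{not full wire} of minimizers of $\ell$). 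This is the hard core of the existence proof and is entirely absent from your proposal.

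The same gap reappears in your part (ii): you assert $|E|=v_0$ from $L^1$ convergence, but again the convergence is local, and one must rule out mass escape. The paper does this via Lemma~\ref{lemma isoperimetry at infinity} combined with the fact (Theorem~\ref{theorem from MNR1}-(iii)) that no minimizing sequence of $\Psi_{\rm bk}(v_0)$ loses volume at infinity. Without that external input, the claim $|E|=v_0$ does not follow.

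Finally, while you flag the ``promotion of spanning'' as the principal obstacle, the paper channels that through the specific closure machinery of Theorems~\ref{theorem fixed epsilon compactness} and~\ref{theorem epsilon to zero compactness} (and ultimately Theorems~\ref{theorem first closure theorem}, \ref{theorem second closure theorem}), whose proof involves the partition-limit structure rather than a naive diagonal extraction over level sets; a direct application of ``property (iii)'' stated for $\H^n$-finite sets would not close the argument by itself. This is a secondary issue, but worth noting since you correctly sensed some friction there.
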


Theorem \ref{theorem upsilon existence convergence} establishes the existence of minimizers of $\Upsilon(v,\e,\de)$ if the soap film regime and organizes problems $\ell$, $\Psi_{\rm bk}(v)$, and $\Upsilon(v,\e,\de)$ into a {\it hierarchy of Plataeu problems}. The first two problems corresponds to modeling soap films as dry or wet accordingly to the physics descriptions given in \cite{weaireBOOK,foamchapter}, while the last problem can be used to provide a diffused interface approximation of both problems which is of definite mathematical interest both from the theoretical and the numerical viewpoint.

\medskip

Theorem \ref{theorem upsilon existence convergence} does not discuss which Allen--Cahn-type equation is solved by minimizers of $\Upsilon(v,\e,\de)$, nor discusses any qualitative property of such minimizers, like their regularity, but for their convergence as Radon measures to minimizers of $\Psi_{\rm bk}(v)$ and $\ell$. In the following theorem we answer the first question (at distributional level) and derive some basic regularity properties.

\begin{theorem}[Euler--Lagrange equation for minimizers of $\Upsilon(v,\e,\delta)$]\label{theorem main regularity} Let $\wire\subset\R^{n+1}$ be compact and let $\C$ be a spanning class for $\wire$. If $u$ is a minimizer of $\Upsilon(v,\e,\de)$ for some $v>0$, $\e>0$ and $\de\in (1/2, 1]$, then, in the sense of distributions, we have (with $\l$ as in \eqref{EL inner}),
\begin{equation}\label{EL outer intro}
(\delta-u)\,\big\{2\e^2\Delta u-W'(u)-\e\lambda V'(u)\big\}=0\,,\qquad\mbox{in $\Omega$}\,;
\end{equation}
that is, for every $\varphi \in C_c^\infty(\Omega)$,
\begin{equation}\label{EL outer}
2\,\e\,\int_{\Omega}|\nabla u|^2\,\varphi  = 	\int_{\Omega} (\delta-u)\,\Big\{ 2\,\e\, \nabla u\cdot \nabla \varphi +\Big(\frac{W'(u)}{\e}-\lambda\, V'(u)\Big)\,\varphi\Big\}\,,
\end{equation}
In particular, $u$ is lower-semicontinuous in $\Om$. Moreover, if $\Om'$ is a connected component of $\Om$, then either $u\equiv 0$ on $\Om'$, or $u>0$ in $\Om'$; and, if $\de<1$, then $u<1$ in $\Om$.
\end{theorem}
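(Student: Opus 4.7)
The plan is to derive the weak identity \eqref{EL outer} (equivalent to \eqref{EL outer intro}) through a carefully chosen outer variation of $u$ that preserves the Borel homotopic spanning condition; the qualitative conclusions will then follow from standard elliptic regularity and maximum principles applied to the ensuing Allen--Cahn equation on $\Omega\setminus\{u=\delta\}$.

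Given $\varphi\in C^\infty_c(\Omega)$ and $|s|<1/\|\varphi\|_\infty$, I would work with the affine interpolation
\[
u_s := u + s\,(\delta-u)\,\varphi = (1-s\varphi)\,u + s\varphi\,\delta.
\]
The key point is the identity $\{u_s\ge t\}=\{u\ge T_s(\cdot,t)\}$ with $T_s(x,t)=(t-s\varphi(x)\delta)/(1-s\varphi(x))$, and the elementary computation $T_s(x,t)-\delta=(t-\delta)/(1-s\varphi(x))<0$ for $t<\delta$. Hence $t_0(s,t):=\sup_x T_s(x,t)\in(t,\delta)\subset(1/2,\delta)$ for $|s|$ small, so $\{u_s^*\ge t\}\supset\{u^*\ge t_0(s,t)\}$ is $\C$-spanning by the hypothesis on $u$ and the monotonicity of the Borel $\C$-spanning condition under set inclusion. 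The only subtlety is that $u_s$ may exit $[0,1]$ near $\{u=0\}\cup\{u=1\}$; this is handled by truncating to $\tilde u_s:=\max(0,\min(1,u_s))$ and noting that $|\{\tilde u_s\neq u_s\}|\to 0$ as $s\to 0$, contributing only $\littleo(s)$ to both $\ac(\cdot;\Omega)$ and $\V(\cdot;\Omega)$.

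With this admissible variation in place, I would invoke the Lagrange multiplier $\lambda$ already furnished by \eqref{EL inner} and use the stationarity of $\ac(\cdot;\Omega)-\lambda\,\V(\cdot;\Omega)$ along $\tilde u_s$. Setting $\psi:=(\delta-u)\varphi$, a standard first-order computation yields
\[
\int_\Omega \Big\{2\,\e\,\nabla u\cdot\nabla\psi+\Big(\tfrac{W'(u)}{\e}-\lambda\,V'(u)\Big)\,\psi\Big\}\,dx = 0.
\]
Expanding $\nabla\psi=(\delta-u)\nabla\varphi-\varphi\,\nabla u$ and reorganising produces exactly \eqref{EL outer}, the weak form of \eqref{EL outer intro}. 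The hardest step is the admissibility argument: the affine interpolation with target $\delta$ is what produces the uniform superlevel-set inclusion valid for every $t\in(1/2,\delta)$, and what keeps the truncation contributions second-order in $s$.

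For the remaining assertions, \eqref{EL outer intro} implies that the Allen--Cahn equation $2\e^2\Delta u = W'(u)-\e\lambda V'(u)$ holds distributionally on the open set $\Omega\setminus\{u=\delta\}$, so Schauder estimates (using $W\in C^{2,1}$) upgrade $u$ to $C^{2,\alpha}_{\rm loc}$ there; choosing the canonical representative $u(x):=\liminf_{r\to 0^+}\mathrm{ess\,inf}_{B_r(x)}u$ then yields lower-semicontinuity everywhere while agreeing with the smooth representative on $\{u\neq\delta\}$. The dichotomy per connected component follows from Hopf's strong minimum principle on $\{u<\delta\}$, using $W''(0)>0$ to linearise near $u=0$ with a coefficient of the correct sign; any local vanishing $u\equiv 0$ propagates by connectedness, and because $u\in W^{1,2}$ forbids jumps, also across $\{u=\delta\}$ to the whole component of $\Omega$. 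The bound $u<1$ when $\delta<1$ is the dual argument on $\{u>\delta\}$: at a putative point where $u=1$, the identities $W'(1)=V'(1)=0$ and the sign $W''(1)>0$ permit linearisation in $w=1-u\ge 0$, forcing $u\equiv 1$ on the connected component of $\Omega$ containing the point and contradicting $\V(u;\Omega)=v<\infty$ on unbounded components.
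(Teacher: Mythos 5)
There is a genuine gap, and it is exactly the one the paper's proof is structured to avoid. Your assertion that the truncation $\tilde u_s=\max(0,\min(1,u_s))$ contributes only $\littleo(s)$ is unsupported and in fact the preparatory claim $|\{\tilde u_s\neq u_s\}|\to 0$ is false in general: when $\delta<1$ one has $u_s(x)=1-s\,\varphi(x)(1-\delta)>1$ wherever $u(x)=1$ and $s\,\varphi(x)<0$, so $\{u_s>1\}\supset\{u=1\}\cap\{s\,\varphi<0\}$, a set whose measure does not shrink with $s$; similarly $\{u_s<0\}$ need not vanish if $|\{u=0\}|>0$. Even where the truncated set does shrink, the energy defect $\int_{\{u_s\notin[0,1]\}}\mathrm{ac}_\e(u)$ is $\littleo(1)$ by dominated convergence but not visibly $\littleo(s)$, and no estimate is offered. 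Ruling this out quantitatively would require precisely the a priori information — that $u$ is lower semicontinuous and bounded strictly away from $0$ and $1$ on the support of the test function — that the paper establishes \emph{before} attempting any two-sided variation. The paper first extracts a one-sided inequality $2\e^2\Delta u\le W'(u)-\e\lambda V'(u)$ from monotone variations $u+\sigma h(u)\varphi$ with $h\ge 0$ compactly supported in $[0,1)$ (no truncation ever occurs), uses this to prove the mean-value monotonicity $r\mapsto e^{-kr}\fint_{B_r}u$ decreasing, hence lower semicontinuity, hence the dichotomy $u\equiv 0$ or $u>0$ on each component and $u<1$ when $\delta<1$, and only then performs the two-sided variation with $h$ supported away from $\delta$ and integrates in the level to reach \eqref{EL outer}. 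Your proposal inverts this order and has nothing to substitute for the missing control.

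Two secondary objections. First, ``the Allen–Cahn equation holds distributionally on the open set $\Omega\setminus\{u=\delta\}$'' presumes $\{u=\delta\}$ is closed, i.e.\ $u$ continuous — which is not established (and is the explicit \emph{hypothesis} of Proposition~\ref{proposition conditional regularity}(i), not a consequence of \eqref{EL outer intro}); the lower semicontinuity in the theorem comes from the mean-value argument applied to the one-sided inequality, not from Schauder theory on $\{u\neq\delta\}$. Second, invoking Hopf's strong minimum principle for the dichotomy again presupposes classical regularity on a neighborhood of $\{u=0\}$ that is not available at that stage; the paper obtains both facts directly from the discrete monotonicity estimate. I will say that the observation $T_s(x,t)-\delta=(t-\delta)/(1-s\varphi(x))$ giving $\{u_s^*\ge t\}\supset\{u^*\ge t_0\}$ with $t_0\in(1/2,\delta)$ is a neat, correct way to see that the affine interpolation towards $\delta$ preserves the spanning constraint, and it is genuinely different from the paper's cutoff $h$ device; but it does not, by itself, rescue the variation from the truncation problem.
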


The relation between \eqref{ace modified} and \eqref{EL outer intro} is clarified in the following {\it conditional} regularity statement.

\begin{proposition}[Strong form of the Euler--Lagrange equation]
  \label{proposition conditional regularity} Under the assumptions of Theorem \ref{theorem main regularity}:

  \medskip

  \noindent {\bf (i):} if $u$ is continuous in $\Om$, then $u\in C^{3,\a}_{\rm loc}(\{u\neq \delta\})$ for every $\a<\min\{1,2/n\}$;

  \medskip

\noindent {\bf (ii):} if in addition $|\{u=\de\}|=0$, then
\begin{equation}\label{eq fb condition}
\lim_{t\to 0^+} \int_{\partial^*\{u>\delta +t\}}|\nabla u|\,(X\cdot\nabla u)\,d\mathcal{H}^n
-\int_{\partial^*\{u<\delta -t\}} \,|\nabla u|\,(X\cdot \nabla u)\, d\mathcal{H}^n=0\,,
\end{equation}
where the limit is taken along those values of $t>0$ such that $\{u>\de+t\}$ and $\{u<\de-t\}$ are sets of finite perimeter (i.e., a.e. $t>0$);

\medskip

\noindent {\bf (iii):} if in addition $u(x_0)=\de$, $\{u=\de\}$ is a $C^1$-hypersurface in a neighborhood $U$ of $x_0$ with unit normal $\nu\in C^0(\{u=\de\}\cap U;\SS^n)$, and $u\in C^1(\{u\le\de\}\cap U)\cap C^1(\{u\ge\de\}\cap U)$, then
\[
|\pa^+_\nu u(x)|=|\pa^-_\nu u(x)|\,,\qquad\mbox{$\forall x\in\{u=\de\}\cap U$}\,,
\]
where we have set
\[
\pa^\pm_\nu u(x)=\lim_{t\to 0^+}\frac{u(x\pm t\,\nu(x))-u(x)}t\,.
\]
\end{proposition}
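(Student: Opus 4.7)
For part (i), since $u$ is continuous by hypothesis, $\{u\neq\de\}$ is open and the multiplier $\de-u$ appearing in \eqref{EL outer intro} is locally continuous and nonvanishing there; dividing out yields the classical PDE $2\e^2\Delta u = W'(u)+\e\,\l\,V'(u)$, distributionally and hence classically, on $\{u\neq\de\}\cap\Om$. Standard elliptic bootstrap then applies: from $W\in C^{2,1}[0,1]$ and $V'(t)=\tfrac{n+1}{n}\Phi(t)^{1/n}\sqrt{W(t)}$, combined with $\Phi(t)\sim c\,t^{2}$ near $t=0$ (since $W(t)\sim W''(0)\,t^2/2$), one checks that $V'\in C^{0,\alpha}$ for every $\alpha<\min\{1,2/n\}$; a $W^{2,p}$-plus-Schauder iteration then delivers $u\in C^{3,\alpha}_{\rm loc}(\{u\neq\de\})$ for such $\alpha$.

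For part (ii), my plan is to extract \eqref{eq fb condition} from the inner-variation identity \eqref{EL inner} via an application of the divergence theorem on the level sets $\{u=\de\pm t\}$. Rewrite \eqref{EL inner} as $\int_\Om\tilde T:\nabla X=0$ for every $X\in C_c^\infty(\Om;\rn)$, where
\[
\tilde T:=(\mathrm{ac}_\e(u)-\l V(u))\,I-2\e\,\nabla u\otimes\nabla u
\]
is the pressure-corrected stress--energy tensor; by part (i), $\Div\tilde T=0$ classically on $\{u\neq\de\}$. For a.e.\ $t>0$ the sets $\{u>\de+t\}$ and $\{u<\de-t\}$ are of finite perimeter (by coarea), so splitting $\int_\Om\tilde T:\nabla X=0$ over these two regions and the intermediate strip $\{|u-\de|\le t\}$, and applying the divergence theorem on the first two, produces boundary integrals of $\tilde T\nu^\pm\!\cdot X$ on $\{u=\de\pm t\}$ with outer unit normals $\nu^\pm=\mp\nabla u/|\nabla u|$. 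A direct algebraic computation yields
\[
\tilde T\nu^+\!\cdot X=\e\,|\nabla u|(X\!\cdot\!\nabla u)-\Bigl(\tfrac{W(\de+t)}{\e}-\l V(\de+t)\Bigr)\tfrac{X\!\cdot\!\nabla u}{|\nabla u|}\quad\text{on }\{u=\de+t\}
\]
and the analogue on $\{u=\de-t\}$ (first term negated and $\de+t$ replaced by $\de-t$). The first contribution on each side is precisely the integrand in \eqref{eq fb condition}; the scalar-potential terms are handled by the identity $\int_{\{u=c\}}(X\!\cdot\!\nabla u)/|\nabla u|\,d\h=-\int_{\{u>c\}}\Div X=\int_{\{u<c\}}\Div X$, obtained by the divergence theorem applied to $X$ alone (valid for the selected $t$'s by $|\{u=c\}|=0$), whose combined contribution tends to $(W(\de)/\e-\l V(\de))\int_\Om\Div X=0$ using the compact support of $X$ and $|\{u=\de\}|=0$. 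The strip integral vanishes as $t\to 0^+$ by absolute continuity of the integral of $\tilde T:\nabla X$ combined with $|\{u=\de\}|=0$. Dividing through by $\e$ yields \eqref{eq fb condition}. The main obstacle here is exactly this bookkeeping: without the hypothesis $|\{u=\de\}|=0$ (and without restricting $t$ so that the level sets have finite perimeter), residual jump terms would survive and the identity would not reduce to \eqref{eq fb condition}.

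For part (iii), the tangential gradient of $u|_\pm$ along $\{u=\de\}$ vanishes (since $u\equiv\de$ there), so $\nabla u|_\pm$ is parallel to $\nu$ with magnitude $|\pa^\pm_\nu u|$. At any point of $\{u=\de\}\cap U$ where $|\pa^+_\nu u|>0$, the implicit function theorem parameterizes $\{u=\de+t\}$ as a $C^1$-graph over $\{u=\de\}\cap U$ for small $t>0$, and dominated convergence (using the one-sided $C^1$ extension of $\nabla u$) yields
\[
\int_{\{u=\de+t\}}|\nabla u|(X\!\cdot\!\nabla u)\,d\h\to\int_{\{u=\de\}\cap U}|\pa^+_\nu u|^2\,(X\!\cdot\!\nu)\,d\h\,,
\]
with the analogous identity on the $-$ side; at points where $|\pa^\pm_\nu u|=0$ the integrand is $O(|\nabla u|^2)$ and contributes nothing to the limit. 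Substituting into \eqref{eq fb condition} gives
\[
\int_{\{u=\de\}\cap U}(|\pa^+_\nu u|^2-|\pa^-_\nu u|^2)(X\!\cdot\!\nu)\,d\h=0\qquad\forall X\in C_c^\infty(U;\rn)\,.
\]
Since $\nu$ is continuous and nonzero, $X\!\cdot\!\nu$ can realize any prescribed continuous function on $\{u=\de\}\cap U$; continuity of $\pa^\pm_\nu u$ then upgrades this integral identity to the pointwise equality $|\pa^+_\nu u|=|\pa^-_\nu u|$.
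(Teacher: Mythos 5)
Your proof follows the same route as the paper in all three parts: re-derive the strong PDE $\Delta u=F_\e'(u)$ on the open set $\{u\ne\delta\}$ and bootstrap; integrate the stress--energy tensor by parts over the level sets $\{u=\de\pm t\}$; and pass to the limit to localize (iii) from (ii). Your $\tilde T$ is exactly $\e$ times the vector field $Y$ used in the paper's proof of (ii), and your direct cancellation of the scalar potential terms via $\int_{\{u=c\}}(X\cdot\nabla u)/|\nabla u|\,d\H^n=-\int_{\{u>c\}}\Div X$ is a legitimate alternative to the paper's device of splitting $Y=Y_1+Y_2$ with $Y_2=2F_\e(u)X\in W^{1,1}(\Om)$ so that $\int_{S_t}\Div Y_2\to\int_\Om\Div Y_2=0$.

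There is, however, a genuine gap in (i): the regularity you compute for $V$ is one derivative short of what the conclusion needs. Knowing $V'\in C^{0,\alpha}$ gives only $F_\e'\in C^{0,\alpha}$ and hence $u\in C^{2,\alpha}_{\rm loc}$ from $\Delta u=F_\e'(u)$; to reach $C^{3,\alpha}_{\rm loc}$ you need $F_\e'\in C^{1,\alpha}$, i.e.\ $V\in C^{2,\gamma(n)}$ with $\gamma(n)=\min\{1,2/n\}$, which the paper imports from \cite{maggirestrepo}. The missing step is one more derivative of $V$: $V''=\tfrac{n+1}{n}\big[\tfrac1n\Phi^{1/n-1}W+\Phi^{1/n}W'/(2\sqrt W)\big]$, and both terms scale like $t^{2/n}$ as $t\to0^+$ (using $W(t)\sim W''(0)t^2/2$), so $V''\in C^{0,\gamma(n)}$. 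The bootstrap then runs $u\in C^{1,\beta}_{\rm loc}\Rightarrow F_\e'(u)\in C^{1,\beta\gamma(n)}_{\rm loc}\Rightarrow u\in C^{3,\beta\gamma(n)}_{\rm loc}$, and one sends $\beta\to1^-$. Two smaller points: your ``dividing out'' of $\de-u$ from \eqref{EL outer} requires first extending the admissible test class by density to $W^{1,2}_c\cap L^\infty$, since $\varphi/(\de-u)$ is not $C^\infty_c$ (the cross terms with $\nabla(\varphi/(\de-u))$ then cancel as expected); the paper instead re-derives the PDE from minimality directly on $\{u\gtrless\de\}$. And in (ii) the parenthetical ``first term negated'' on $\{u=\de-t\}$ is incomplete --- the scalar term changes sign as well when $\nu$ flips --- but your subsequent cancellation statement uses the correct signs, so this appears to be a slip in the write-up rather than in the computation.
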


\subsection{Plateau's laws and free boundary problems}\label{section future} We finally describe some future directions that naturally stem from the main results of this paper, and that generally concern the study of Plateau's laws in the context of free boundary problems.

\medskip

A first natural class of problems concerns the regularity of solutions to \eqref{EL outer intro} needed to trigger Proposition \ref{proposition conditional regularity}. For example, continuity of minimizers (conditional assumption (i)) is expected in general, and, indeed, it is possible to show that minimizers in the planar case $n=1$ are locally H\"older continuous in $\Om$. The regularity of the free boundaries $\{u=\de\}$ seems also very interesting. Heuristic considerations (based on the maximum principle) suggest that $\{u=\de\}$ should always have zero Lebesgue measure (conditional assumption (ii)), but, in general, we definitely do not expect $\{u=\de\}$ to a be a $C^1$-hypersurface (conditional assumption (iii)). It is actually natural to conjecture that, in physical dimensions $n=1$ and $n=2$, $\{u=\de\}$ should obey Plateau's laws. Should this be correct, do solutions $u$ to \eqref{EL outer intro} have canonical blow-ups at $Y$-points and $T$-points of $\{u=\de\}$?

\medskip

A second type of problem concerns the precise description  of solutions to \eqref{EL outer intro}. In this direction, the first problem one wants to solve is the construction, as small modifications of some well-prepared {\it Ansatz}, of solutions to \eqref{ace modified} that converge to a $Y$-cone in $\R^2$, or to a $Y$-cone or a $T$-cone in $\R^3$. This kind of result should elucidate several interesting points, like what should be the characteristic length scales of the transition regions of $u_j$ and of the $\{u_j\approx 1\}$-regions depicted in Figure \ref{fig uj}. In turn, once these fundamental examples have been understood, one would like to prove such qualitative properties for generic minimizers of $\Upsilon(v,\e,\de)$.

\subsection{Organization of the paper} In Section \ref{section basic properties of C spanning sets} we gather the main results from \cite{MNR1} that concern measure theoretic homotopic spanning. These results are used in Section \ref{section closure theorem diffused} to prove some closure theorems for densities $u$ satisfying homotopic spanning conditions. In Section \ref{section limsup theorems} we discuss the approximation of ``wet soap films'', meant as competitors in $\Psi_{\rm bk}(v)$, by competitors in $\Upsilon(v,\e,\de)$. Section \ref{section lambdaepsilon} contains one of the more delicate arguments of the paper, where we prove that the Lagrange multipliers $\l_j$ of minimizers $u_j$ of $\Upsilon(v_j,\e_j,\de_j)$ are such that $\e_j\,\l_j\to 0^+$ whenever $v_j\to 0^+$, $\e_j\to 0^+$, $\e_j/v_j\to 0^+$ and $\de_j\to \de_0\in[1/2,1]$. Finally, in Section \ref{section existence and convergence for diffused} and Section \ref{section EL proof} we prove Theorem \ref{theorem upsilon existence convergence} and Theorem \ref{theorem main regularity} (plus Proposition \ref{proposition conditional regularity}), respectively.

\subsection*{Acknowledgements} FM has been supported by NSF Grant DMS-2247544. FM, MN, and DR have been supported by NSF Grant DMS-2000034 and NSF FRG Grant DMS-1854344. MN has been supported by NSF RTG Grant DMS-1840314.

\section{Measure theoretic homotopic spanning}\label{section basic properties of C spanning sets}

\subsection{Sets of finite perimeter, rectifiable sets, and essential disconnection} We generally adopt the (quite common) terminology and notation of \cite{maggiBOOK} for what concerns rectifiable sets and sets of finite perimeter. Given a locally $\H^k$-finite set $S$ in $\R^{n+1}$, we define the {\bf rectifiable part} $\RR(S)$ and the {\bf unrectifiable part} $\P(S)$ of $S$ as in \cite[13.1]{Simon}. Given a Borel set $E\subset\R^{n+1}$, we denote by $E^{{\scriptscriptstyle{(t)}}}$, $t\in[0,1]$, the {\bf points of density $t$} of $E$, by $\pa^*E$ the reduced boundary of $E$ (defined as the largest open set $A$ wherein $E$ is of locally finite perimeter -- it could of course be $A=\varnothing$), and by $\nu_E:\pa^*E\to\SS^n$ the measure theoretic outer unit normal to $E$. We shall repeatedly use that if $E$ is of finite perimeter in $\Om$, then $\Om\cap \pa^*E\subset E^\half\subset\pa^eE$ where $\pa^eE=\R^{n+1}\setminus(E^{\zero}\cup E^{\one})$ is the essential boundary of $E$, as well as the theorem by Federer stating that
\begin{equation}
  \label{federer theorem}
  \mbox{$\Om$ is $\H^n$-contained in $E^{\zero}\cup E^{\one}\cup(\Om\cap\pa^*E)$}\,,
\end{equation}
and, in particular, that $\Om\cap\pa^*E$ is $\H^n$-equivalent to $\Om\cap\pa^eE$.

\medskip

We also recall the following notion of what it means for a Borel set $K$ to disconnect a Borel set $G$, originating in the study of rigidity for symmetrization inequalities \cite{CCDPM17,CCDPMSteiner}, and lying at the heart of the notion of measure theoretic homotopic spanning. Given Borel sets $K$ and $G$, we say that $K$ {\bf essentially disconnects} $G$ if there is a Lebesgue partition $\{G_1,G_2\}$ of $G$ (i.e., $|G\Delta (G_1\cup G_2)|=0$, $|G_1\cap G_2|=0$) which is non-trivial (i.e., $|G_1|\,|G_2|>0$) and such that
\[
\mbox{$G^\one\cap\pa^eG_1\cap\pa^eG_2$ is $\H^n$-contained in $K$}\,.
\]
(Notice that $G^\one\cap\pa^eG_1\cap\pa^eG_2=G^\one\cap\pa^eG_k$ for every $k=1,2$.) For example, if $J\subset(0,1)$ with $\L^1(J)=1$, then $K=J\times\{0\}$ essentially disconnects the open unit disk $B_1^2$ of $\R^2$ (although, evidently, $B_1^2\setminus K$ will be connected in topological terms as soon as $(0,1)\setminus J\ne\varnothing$). We say that $G$ is {\bf essentially connected} when $\varnothing$ does not essentially disconnect $G$. In the special case when $G$ is of finite perimeter, being essentially connected is the same as being {\bf indecomposable} (according to the terminology introduced in \cite{ambrosiocaselles}).

\subsection{Homotopic spanning and induced essential partitions} We now recall the measure theoretic notion of homotopic spanning introduced in \cite{MNR1}; see
\begin{figure}
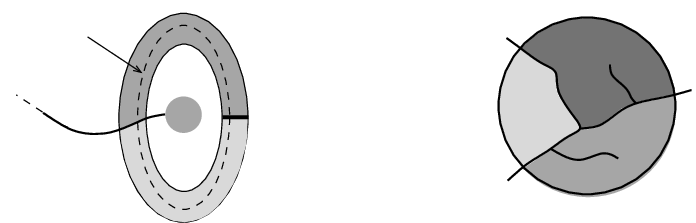
\caption{\small{(a) In the original homotopic spanning condition, $S$ has to intersect $\g(\SS^1)$; in the new measure theoretic version, $S\cup T[s]$ is (roughly speaking) required to essentially disconnect $T$ (for a.e. $s\in\SS^1$); (b) The induced essential partition $\{U_1,U_2,U_3\}$ by $S$ on a disk $U$. Notice that the tendrils of $S$ that do not contribute to bounding some subset of $U$ do not contribute to the boundaries of the the $U_i$'s, and are thus not part of ${\rm UBEP}(S;U)$.}}
\label{fig figurone}
\end{figure}
Figure \ref{fig figurone}-(a) for an illustration. Given a closed set $\wire\subset\R^{n+1}$ and a spanning class $\C$ for $\wire$, the {\bf tubular spanning class} $\mathcal{T}(\C)$  associated to $\C$ is the family of triples $(\gamma, \Phi, T)$ such that $\g\in\C$, $T = \Phi(\mathbb{S}^1 \times B_1^n)\cc\Om$, and (setting $B_1^n=\{x\in\R^n:|x|<1\}$) $\Phi:\SS^1\times\cl(B_1^n)\to\cl(T)$ is a diffeomorphism with $\Phi(s,0)=\g(s)$ for every $s\in\SS^1$. Given $s\in\SS^1$ we set
\[
T[s]=\Phi(\{s\}\times B_1^n)
\]
for the {\bf slice of $T$} corresponding to $s\in\mathbb{S}^1$. Finally, we say that a Borel set $S \subset\Omega$ is {\bf $\C$-spanning} $\wire$ if for each $(\gamma,\Phi, T)\in \mathcal{T}(\C)$, $\H^1$-a.e. $s\in \mathbb{S}^1$ has the following property:
\begin{eqnarray}\nonumber
&&\mbox{for $\H^n$-a.e. $x\in T[s]$}
\\
\label{spanning borel intro}
&&\mbox{$\exists$ a partition $\{T_1,T_2\}$ of $T$ s.t. $x\in\partial^e T_1 \cap \partial^e T_2$}
\\
\nonumber
&&\mbox{and s.t. $S \cup  T[s]$ essentially disconnects $T$ into $\{T_1,T_2\}$}\,.
\end{eqnarray}
As proved in \cite[Theorem A.1]{MNR1}, as soon as $S$ is closed in $\Om$, the notion of $\C$-spanning just introduced is equivalent to the one of Harrison and Pugh. The dependency of the partition $\{T_1,T_2\}$ on $x\in T[s]$ has a subtle reason, see \cite[Figure A.1]{MNR1}.

\medskip

Now, in the study of soap films, condition \eqref{spanning borel intro} is only applied to sets $S$ that are either locally $\H^n$-finite in $\Om$, or that are the bulk $E^\one\cup(\Om\cap\pa^*E)$ of a set $E$ of finite perimeter in $\Om$, or are a combination of these two cases, in the sense that $S=K\cup E^\one$ for some $(K,E)\in\KK_{\rm B}$ (the first two cases are then obtained by taking $S=K\cup E^\one$ with either $E=\varnothing$ or $K=\Om\cap\pa^*E$). In all these cases the geometric meaning of \eqref{spanning borel intro} can be greatly elucidated using the following results concerning partitions into indecomposable sets of finite perimeter.

\medskip

Given Borel sets $S,U\subset\R^{n+1}$, a Lebesgue partition $\{U_i\}_i$ of $U$ (that is, $U_i\subset U$ with $|U\setminus\bigcup_iU_i|=0$ with $|U_i\cap U_j|=\varnothing$ if $i\ne j$) is {\bf induced by $S$} if, for each $i$,
\begin{equation}
\label{induced partition}
\mbox{$U^\one\cap\partial^eU_i$ is $\H^n$-contained in $S$}\,.
\end{equation}
The following theorem is \cite[Theorem 2.1]{MNR1}:

\begin{theorem}[Induced essential partitions \cite{MNR1}]\label{theorem decomposition}
If $U\subset\R^{n+1}$ is a bounded set of finite perimeter and $S\subset\R^{n+1}$ is a Borel set with $\H^n(S \cap U^\one)<\infty$, then there exists a partition $\{U_i\}_i$ of $U$ induced by $S$ such that, for every $i$,
\begin{equation}
\label{essential partition}
\mbox{$S$ does not essentially disconnect $U_i$}\,.
\end{equation}
Moreover, if either $S^*=\RR(S)$ or $S^*$ is $\H^n$-equivalent to $S$, and if $\{U^*_j\}_j$ is a partition of $U$ induced by $S^*$ such that $S^*$ does not essentially disconnect $U_j^*$ for every $j$, then there is a bijection $\s$ such that $|U_i\Delta U^*_{\s(i)}|=0$ for every $i$. For this reason, $\{U_i\}_i$ is called {\bf the essential partition of $U$ induced by $S$}.
\end{theorem}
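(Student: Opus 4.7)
The plan is to proceed by a perimeter bookkeeping argument combined with a maximality argument on the total perimeter of admissible partitions, in the spirit of the decomposition of a finite perimeter set into indecomposable components.

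\medskip

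\textbf{A priori perimeter bound.} First I would prove that any Lebesgue partition $\{V_\a\}$ of $U$ satisfying \eqref{induced partition} is countable with
\[
\sum_\a P(V_\a)\,\le\,P(U)\,+\,2\,\H^n(S\cap U^\one)\,.
\]
The argument decomposes each $\pa^e V_\a$ according to Federer's theorem \eqref{federer theorem}: the part in $U^\zero$ is $\H^n$-null since $V_\a\subset U$; at $\H^n$-a.e. $x\in\pa^*U$ the densities of the $V_\a$'s sum to $1/2$, which (applying Federer to each $V_\a$) forces exactly one $V_\a$ to have density $1/2$ at $x$, yielding total contribution $\H^n(\pa^*U)=P(U)$; and at $\H^n$-a.e. $x\in U^\one\cap\bigcup_\a\pa^e V_\a$, the densities sum to $1$, so exactly two pieces share the boundary, contributing at most $2\,\H^n(S\cap U^\one)$ in view of \eqref{induced partition}.

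\medskip

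\textbf{Existence via maximality.} Let $M$ denote the supremum of $\sum_\a P(V_\a)$ over all good partitions $\{V_\a\}$ of $U$ (i.e., those satisfying \eqref{induced partition}); this is finite by the previous bound. I would pick a sequence $\{V^k_\a\}_\a$ of good partitions whose total perimeter approaches $M$ and replace them by successive common Lebesgue refinements (which remain good, since $\pa^e(V\cap W)\shn\pa^e V\cup\pa^e W$), obtaining a nested sequence. A $BV$-compactness argument on the characteristic functions of the pieces, followed by a diagonal extraction, produces a limit partition $\{U_i\}_i$ which is still good and attains $\sum_i P(U_i)=M$. To see no $U_i$ is essentially disconnected by $S$, I would argue by contradiction: a non-trivial $S$-splitting $\{G_1,G_2\}$ of some $U_i$ would produce a strictly finer good partition of $U$ with total perimeter $M+2\,\H^n(U_i^\one\cap\pa^e G_1)>M$, a contradiction.

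\medskip

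\textbf{Uniqueness and invariance.} For the uniqueness statement, given a second essential partition $\{U^*_j\}_j$ of $U$ induced by $S^*$, I would form the common Lebesgue refinement $\{U_i\cap U^*_j\}$, which satisfies \eqref{induced partition} with respect to $S^*$ thanks to the inclusion $\pa^e(U_i\cap U^*_j)\shn\pa^e U_i\cup\pa^e U^*_j$. The indecomposability of each $U_i$ under $S^*$ then forces $U_i\cap U^*_j$ to be Lebesgue-equivalent to either $U_i$ or $\varnothing$, producing the bijection $\s$. For the invariance under $S^*$ being $\RR(S)$ or any $\H^n$-equivalent modification: essential boundaries of sets of finite perimeter are countably $\H^n$-rectifiable, hence $\P(S)$ intersects every $\pa^e V_\a$ in an $\H^n$-null set, so both conditions \eqref{induced partition} and \eqref{essential partition} are insensitive to replacing $S$ by $\RR(S)$ or by any $\H^n$-equivalent set.

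\medskip

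\textbf{Main obstacle.} The principal technical subtlety lies in the limit step of the existence argument: one must ensure that the $BV$-limit of a nested maximizing sequence of good partitions is itself a good partition, i.e., that condition \eqref{induced partition} survives passage to the limit and that adjacent pieces do not silently merge in a way that violates the induced-boundary condition. This is handled by the lower semicontinuity of perimeter combined with the monotonicity afforded by the nesting, but it requires careful bookkeeping of how the reduced boundaries of the refining pieces behave in the $BV$-limit.
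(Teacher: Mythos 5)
This theorem is quoted from the companion paper \cite{MNR1} (as Theorem 2.1 there) and is not proved in the present paper, so there is no ``paper's own proof'' to compare against. Assessing your argument on its own merits: the overall strategy (maximize total perimeter over good partitions, in the spirit of the Ambrosio--Caselles--Masnou--Morel decomposition into indecomposable components) is the right one, and your a priori perimeter bound and the uniqueness/invariance arguments via common refinements and rectifiability are sound. However, the contradiction step in the existence argument has a genuine gap.

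You write that an $S$-splitting $\{G_1,G_2\}$ of some $U_i$ produces a good partition of total perimeter $M+2\,\H^n(U_i^\one\cap\pa^e G_1)>M$. The strict inequality requires $\H^n(U_i^\one\cap\pa^e G_1)>0$, but this is not automatic for a non-trivial Lebesgue partition of $U_i$: it is equivalent to saying that $U_i$ is indecomposable. If $U_i$ happens to be decomposable, one can take $\{G_1,G_2\}$ with $\H^n(U_i^\one\cap\pa^e G_1\cap\pa^e G_2)=0\shn S$; this is a legitimate essential disconnection of $U_i$ by $S$ (since $\varnothing\shn S$), yet the refined partition has the \emph{same} total perimeter $M$, so maximality gives no contradiction. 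Your nested-refinement construction does not obviously rule this out either, since a decreasing limit of indecomposable pieces $W^k_{\gamma_k}$ need not be indecomposable. The standard fix is to post-process: having obtained a perimeter-maximal good partition $\{V_\a\}$, replace each $V_\a$ by its ACMM decomposition into indecomposable components $\{V_{\a,\beta}\}_\beta$. This preserves the good-partition property (the new internal boundaries are $\H^n$-null, hence $\shn S$) and the total perimeter $M$. Then no piece can be essentially disconnected by $S$: a splitting with $\H^n$-positive shared boundary in $V_{\a,\beta}^\one$ contradicts maximality, while one with $\H^n$-null shared boundary contradicts indecomposability. Without this second ingredient (or some equivalent device ensuring all pieces are indecomposable), your contradiction does not close.
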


With $S$ and $U$ as in Theorem \ref{theorem decomposition}, the {\bf union of the (reduced) boundaries of the essential partition induced by $S$ on $U$} is uniquely defined as
\begin{equation}
\label{def of UBEP}
{\rm UBEP}(S;U)=U^\one\cap\bigcup_i\partial^*U_i\,,
\end{equation}
see Figure \ref{fig figurone}-(b), and correspondingly we can formulate the following characterization of measure-theoretic homotopic spanning, cf. with \cite[Theorem 3.1]{MNR1}.

\begin{theorem}[\cite{MNR1}]\label{theorem spanning with partition}
If $\wire\subset\R^{n+1}$ is a closed set in $\R^{n+1}$, $\C$ is a spanning class for $\wire$, and $(K,E)\in\KK_{\rm B}$, then
\begin{eqnarray}
\label{S equal to K union E}
\mbox{$\RR(K)\cup E^{\one}$ is $\C$-spanning $\wire$}\,,
\end{eqnarray}
if and only if, for every $(\gamma,\Phi, T)\in \mathcal{T}(\C)$ and $\H^1$-a.e. $s\in\SS^1$,
\begin{eqnarray}\label{spanning and the S partition equation}
&&\mbox{$T[s]\cap E^\zero$ is $\H^n$-contained in ${\rm UBEP}(K\cup T[s];T)$}\,;
\end{eqnarray}
see
\begin{figure}
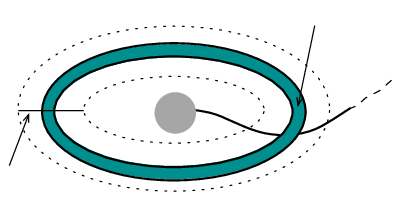
\caption{\small{Condition \eqref{spanning and the S partition equation} in a situation where $T[s]$ is {\it not} $\H^n$-contained in ${\rm UBEP}(K\cup T[s];T)$ -- indeed, $E$ itself is one of the elements of the essential partition of $T$ induced by $K\cup T[s]$ -- while at the same time $T[s]\cap E^\zero$ (the part of $T[s]$ outside of $E$) is $\H^n$-contained in ${\rm UBEP}(K\cup T[s];T)$. The key feature here is that $K$ contains the boundary of $E$, but no points inside $E$, while $E$ contains $\g(\SS^1)$.}}
\label{fig figurone2}
\end{figure}
Figure \ref{fig figurone2}.
\end{theorem}

\begin{remark}\label{remark RRK cup Eone is spanning too}
{\rm An immediate corollary of Theorem \ref{theorem spanning with partition} is that if $K$ is $\H^n$-finite and $(K,E)\in\KK_{\rm B}$ then $K\cup E^\one$ is $\C$-spanning $\wire$ if and only if $\RR(K)\cup E^\one$ is $\C$-spanning $\wire$. Indeed, $\RR(K\cup T[s])=\RR(K)\cup T[s]$, so that, by \eqref{def of UBEP}, ${\rm UBEP}(K\cup T[s])={\rm UBEP}(\RR(K)\cup T[s])$.}
\end{remark}

\subsection{Closure theorems for homotopic spanning} We finally state the two closure theorems for homotopically spanning sets that make the above definitions useful in the study of minimization problems. The first result corresponds to a particular case of \cite[Theorem 1.4]{MNR1}:

\begin{theorem}[\cite{MNR1}]\label{theorem first closure theorem} Let $\wire$ be a closed set in $\mathbb{R}^{n+1}$, $\C$ be a spanning class for $\wire$, and $\{(K_j,E_j)\}_j$ be a sequence in $\K_{\rm B}$ such that
\[
\mbox{$K_j\cup E_j^\one$ is $\C$-spanning $\wire$}\,,\qquad\sup_j\,\H^n(K_j)<\infty\,.
\]
Let $E$ be a Borel set and $\mu_{\rm bk}$ be a Radon measure in $\Om$ such that, as $j\to\infty$, $E_j\toloc E$ and
\[
\H^n\mres (\Om\cap\pa^*E_j) + 2\,\H^n \mres (\mathcal{R}(K_j) \cap E_j^\zero) \weakstar \mu_{\rm bk}\,,
\]
as Radon measures in $\Om$. Then the set
\[
K_{\rm bk}:=\big(\Om\cap\partial^* E\big) \cup \Big\{x\in \Omega \cap E^\zero : \theta^n_*(\mu_{\rm bk})(x)\geq 2 \Big\}\,,
\]
is such that $(K_{\rm bk},E)\in\K_{\rm B}$, $K_{\rm bk}\cup E^\one$ is $\C$-spanning $\wire$, and
\[
\liminf_{j\to\infty}\F_{\rm bk}(K_j,E_j)\ge\F_{\rm bk}(K_{\rm bk},E)\,.
\]
\end{theorem}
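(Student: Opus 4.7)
The statement contains three assertions: (1) $(K_{\rm bk},E)\in\K_{\rm B}$; (2) the lower-semicontinuity $\liminf_j\F_{\rm bk}(K_j,E_j)\geq\F_{\rm bk}(K_{\rm bk},E)$; and (3) that $K_{\rm bk}\cup E^\one$ is $\C$-spanning $\wire$. I would treat (1) and (2) first, then attack (3), which I expect to be the main obstacle.

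For (1), since $\Om\cap\pa^*E_j\subset K_j$, the hypothesis $\sup_j\H^n(K_j)<\infty$ gives a uniform bound on $|D 1_{E_j}|(\Om)$; combined with $E_j\toloc E$, BV lower semicontinuity forces $E$ to be of locally finite perimeter in $\Om$, while the inclusion $\Om\cap\pa^*E\subset K_{\rm bk}$ holds by construction.

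For (2), set $\mu_j:=\H^n\mres(\Om\cap\pa^*E_j)+2\,\H^n\mres(\RR(K_j)\cap E_j^\zero)$, so that $\mu_j\weakstar\mu_{\rm bk}$ and weak-$*$ lower semicontinuity on the open set $\Om$ gives $\liminf_j\F_{\rm bk}(K_j,E_j)=\liminf_j\mu_j(\Om)\geq\mu_{\rm bk}(\Om)$. Since $\Om\cap\pa^*E$ and $E^\zero$ are disjoint, I would bound $\mu_{\rm bk}$ on each piece separately: on $\pa^*E$, standard BV lower semicontinuity (testing $\mu_j$ against $C^0_c$ functions localized near $\pa^*E$) gives $\mu_{\rm bk}\mres\pa^*E\geq\H^n\mres(\Om\cap\pa^*E)$; on $E^\zero$, the density-threshold definition $K_{\rm bk}\cap E^\zero=\{\theta^n_*(\mu_{\rm bk})\geq 2\}\cap E^\zero$, combined with the classical density lemma (if $\theta^n_*(\mu)\geq t$ on a Borel set $A$, then $\mu\mres A\geq t\,\H^n\mres A$), yields $\mu_{\rm bk}\mres(K_{\rm bk}\cap E^\zero)\geq 2\,\H^n\mres(K_{\rm bk}\cap E^\zero)$. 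Adding these disjoint contributions gives $\mu_{\rm bk}(\Om)\geq\F_{\rm bk}(K_{\rm bk},E)$, and in particular $\H^n(K_{\rm bk})<\infty$.

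For the spanning (3), by Theorem \ref{theorem spanning with partition} and Remark \ref{remark RRK cup Eone is spanning too} (which applies because $\H^n(K_{\rm bk})<\infty$), it suffices to prove that for every $(\g,\Phi,T)\in\T(\C)$ and $\H^1$-a.e.\ $s\in\SS^1$, $T[s]\cap E^\zero$ is $\H^n$-contained in ${\rm UBEP}(K_{\rm bk}\cup T[s];T)$. For each $j$, the same characterization applied to $K_j\cup E_j^\one$ supplies an essential partition $\{T^i_j\}_i$ of $T$ induced by $K_j\cup T[s]$, with $T[s]\cap E_j^\zero$ $\H^n$-contained in $\bigcup_i T^\one\cap\pa^*T^i_j$, and the uniform bound $\H^n(K_j)\leq C$ controls $\sum_i\H^n(T\cap\pa^*T^i_j)$ uniformly in $j$. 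Using BV-compactness piece by piece and a diagonal extraction, along a subsequence each $T^i_j$ converges in $L^1(T)$ to some $T^i_\infty$, and $\{T^i_\infty\}_i$ forms a Lebesgue partition of $T$. The crucial step is then to identify the boundaries of this limit partition with $K_{\rm bk}\cup\pa^*E\cup T[s]$: a wall $\pa^*T^i_j$ whose limit location sits in $E^\zero$ but which is \emph{not} a boundary of any surviving limit piece (because the two adjacent partition pieces merge as $j\to\infty$) nonetheless contributes twice its $\H^n$-mass to $\mu_j$ (once per adjacent side's inclusion in $E_j^\zero$); hence any accumulation point of such collapsed walls satisfies $\theta^n_*(\mu_{\rm bk})\geq 2$ and must lie in $K_{\rm bk}$. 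This is the exact role of the factor $2$ in the definition of $K_{\rm bk}$; persistent walls appear instead as reduced boundaries in the limit partition, hence again as $\H^n$-subsets of $K_{\rm bk}\cup\pa^*E\cup T[s]$. The main obstacle is to exchange rigorously the ``$\H^1$-a.e.\ $s$'' quantifier with the limit in $j$ (via Fubini and a diagonal procedure), and to turn the ``collapsed-wall $\leftrightarrow$ density-2 point'' heuristic into a precise measure-theoretic statement at the level of individual partition elements; once this is achieved, the inclusion of $T[s]\cap E^\zero$ in ${\rm UBEP}(K_{\rm bk}\cup T[s];T)$ follows and Theorem \ref{theorem spanning with partition} closes the argument.
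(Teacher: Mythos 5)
Before anything else, you should be aware that the paper you are working from does not actually prove this theorem: it is Theorem 2.4 in Section 2, recalled from the companion paper [MNR1] (as a particular case of Theorem 1.4 there), and no proof is given here. So there is no paper-internal proof to compare against; what follows is an assessment of your proposal on its own merits.

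Your treatment of (1) and (2) is sound. For (1), uniform perimeter bounds and $L^1_{\rm loc}$ convergence do yield $E$ of locally finite perimeter, and $\Om\cap\pa^*E\subset K_{\rm bk}$ holds by definition, which gives $(K_{\rm bk},E)\in\K_{\rm B}$. For (2), the decomposition $\mu_{\rm bk}(\Om)\ge\mu_{\rm bk}(\Om\cap\pa^*E)+\mu_{\rm bk}(\Om\cap E^\zero)$ combined with perimeter lower semicontinuity on the first piece and the density lemma (\cite[Theorem 6.4]{maggiBOOK}) on the second gives exactly $\F_{\rm bk}(K_{\rm bk},E)$; one should insert a subsequence extraction to get the weak-$*$ limit of $|D1_{E_j}|\mres\Om$, but this is harmless since the target inequality is subsequence-independent.

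The gap is in (3). What you have written there is a plan, not a proof: you correctly identify the two mechanisms -- persistent walls surviving as reduced boundaries of a limit partition, collapsed walls accumulating density $\ge 2$ in $\mu_{\rm bk}$ -- but you explicitly defer the two hard steps (``exchange rigorously the $\H^1$-a.e.\ $s$ quantifier with the limit in $j$'' and ``turn the collapsed-wall $\leftrightarrow$ density-2 heuristic into a precise measure-theoretic statement''). Those two steps are precisely the content of the closure theorem; without them nothing is proved. Concretely: you need a diagonal extraction producing a \emph{single} subsequence along which the induced essential partitions of every $\Om_k$ in the countable covering \eqref{def of Omega i} converge (this is what Theorem~\ref{theorem second closure theorem} packages), and you then need the density-two estimate at $\H^n$-a.e.\ point of $\pa^*U_i\cap\pa^*U_{i'}\cap E^\zero$, which is a genuine blow-up argument exploiting that two disjoint limit pieces each carry a full unit of boundary density -- the present paper runs exactly this kind of argument in the proof of Theorem~\ref{theorem epsilon to zero compactness} (inequality \eqref{suffices showing the delta bound 2} via \eqref{see in particular}), and you could adapt that computation here. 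A cleaner route than re-deriving partition limits from scratch would be to invoke Theorem~\ref{theorem second closure theorem} directly to get that $(\Om\cap\pa^*E)\cup S_0$ paired with $E$ is in $\K_{\rm B}$ and $\C$-spans $\wire$ for some subsequential partition limit $S_0$ of $\{K_j\}_j$, and then reduce (3) to proving the $\H^n$-containment $S_0\cap E^\zero\shn\{\theta^n_*(\mu_{\rm bk})\ge 2\}$; this isolates the one nontrivial estimate and lets you reuse the paper's machinery. As it stands, though, your proposal does not establish the spanning conclusion.
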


The second closure theorem we shall need from \cite{MNR1} requires the introduction of some additional terminology. Given an open set $\Omega$ and an $\H^n$-finite subset $S$ of $\Omega$, we define the {\bf essential spanning part ${\rm ESP}(S)$ of $S$ in $\Om$} as the $\H^n$-rectifiable set defined by
\begin{equation}
  \label{esp def}
  {\rm ESP}(S)=\bigcup_k\,{\rm UBEP}(S;\Om_k)=\bigcup_k\,\Big\{\Om_k\cap\bigcup_i\pa^*U_i[\Om_k]\Big\}\,,
\end{equation}
where $\{\Om_k\}_k$ is the open covering of $\Om$ defined by
\begin{equation}
\label{def of Omega i}
\{\Om_k\}_k=\{B_{r_{mh}}(x_m)\}_{m,h}\,,
\end{equation}
where $\{x_m\}_m=\Q^{n+1}\cap\Om$ and $\{r_{mh}\}_h=\Q\cap(0,\dist(x_m,\pa\Om))$, and where $\{U_i[\Om_k]\}_i$ denotes the essential partition of $\Om_k$ induced by $S$. In light of Theorem \ref{theorem spanning with partition}, the intuition behind this definition is that, by adding up all the unions of boundaries of essential partitions induced by $S$ over smaller and smaller balls we are capturing all the parts of $S$ that may potentially contribute to achieve a spanning condition on $\wire=\R^{n+1}\setminus\Omega$. It is thus natural to expect that ${\rm ESP}(S)$ is $\C$-spanning $\wire$ whenever $S$ is. This is correct, and follows indeed as a particular case of Theorem \ref{theorem second closure theorem} below. The more general situation considered in Theorem \ref{theorem second closure theorem} requires the introduction of a notion of subsequential limit for $\{{\rm ESP}(S_j)\}_j$. More precisely, given a sequence $\{S_j\}_j$ of Borel subsets of $\Om$ such that $\sup_j\H^n(S_j)<\infty$, we say that $S$ is a {\bf subsequential partition limit of $\{S_j\}_j$ in $\Om$} if
\begin{equation}
\label{spl def}
S=\bigcup_k\,\Big\{\Om_k\cap\bigcup_i\pa^*U_i[\Om_k]\Big\}\,,
\end{equation}
where $\{U_i[\Om_k]\}_i$ is a Lebesgue partition of $\Om_k$ such that, denoting by $\{U_i^j[\Om_k]\}_i$ the essential partition of $\Om_k$ induced by $S_j$, and up to extracting a subsequence in $j$, for every $i$ and $k$ we have $|U_i^j[\Om_k]\Delta U_i[\Om_k]|\to 0$ as $j\to\infty$. The natural expectation is of course that if each $S_j$ is $\C$-spanning $\wire$, then every subsequential partition limit $S$ of $\{S_j\}_j$ should be $\C$-spanning $\wire$ too. The next theorem, which corresponds to \cite[Theorem 5.1]{MNR1}, proves this and, actually, an even more general fact:

\begin{theorem}[\cite{MNR1}]
\label{theorem second closure theorem} Let $\wire$ be a closed set in $\mathbb{R}^{n+1}$, $\C$ a spanning class for $\wire$, and $\{(K_j,E_j)\}_j$ a sequence in $\K_{\rm B}$ such that $\sup_j\H^n(K_j)<\infty$ and $K_j\cup E_j^\one$ is $\C$-spanning $\wire$ for every $j$.

\medskip

If $S_0$ and $E_0$ are, respectively, a subsequential partition limit of $\{K_j\}_j$ in $\Om$ and an $L^1$-subsequential limit of $\{E_j\}_j$ (corresponding to a same not relabeled subsequence in $j$), then the set
\[
K_0=(\Om\cap\pa^*E_0)\cup S_0\,,
\]
is such that $(K_0,E_0)\in\KK_{\rm B}$ and $K_0\cup E_0^\one$ is $\C$-spanning $\wire$. In particular:

\medskip

\noindent {\bf (i):} if $S$ is $\C$-spanning $\wire$, then ${\rm ESP}(S)$ is $\C$-spanning $\wire$;

\medskip

\noindent {\bf (ii):} if $S_j$ is $\C$-spanning $\wire$ for each $j$ and $S$ is a subsequential partition limit of $\{S_j\}_j$ in $\Om$, then $S$ is $\C$-spanning $\wire$.
\end{theorem}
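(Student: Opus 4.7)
My plan is to verify the hypotheses of Theorem \ref{theorem spanning with partition} for the pair $(K_0,E_0)$. First, since $(K_j,E_j)\in\KK_{\rm B}$ gives $P(E_j;\Om)=\H^n(\Om\cap\pa^*E_j)\le\H^n(K_j)\le C$ uniformly in $j$, the lower semicontinuity of perimeter combined with $E_j\to E_0$ in $L^1_{\rm loc}(\Om)$ implies that $E_0$ is of locally finite perimeter in $\Om$; as $K_0\supset\Om\cap\pa^*E_0$ by construction, $(K_0,E_0)\in\KK_{\rm B}$. It then suffices to show that for every $(\g,\Phi,T)\in\cd(\C)$ and $\H^1$-a.e.\ $s\in\SS^1$, $T[s]\cap E_0^\zero$ is $\H^n$-contained in ${\rm UBEP}(K_0\cup T[s];T)$.

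Fix such a tube $T\cc\Om$. A Fubini argument singles out a full-measure set of ``good'' $s\in\SS^1$ for which simultaneously $T[s]\cap E_j^\zero\shn{\rm UBEP}(K_j\cup T[s];T)$ for every $j$, $\H^n(T[s])<\infty$, and $T[s]$ is $\H^n$-rectifiable. For such an $s$, let $\{V_i^j\}_i$ denote the essential partition of $T$ induced by $K_j\cup T[s]$ (Theorem \ref{theorem decomposition}). Since $\Om\cap\pa^*V_i^j$ is $\H^n$-contained in $K_j\cup T[s]\cup\pa T$, the total perimeter $\sum_i P(V_i^j;\R^{n+1})$ is bounded by $2\,\H^n(K_j)+2\,\H^n(T[s])+P(T)$, uniformly in $j$. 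A diagonal $BV$-compactness argument then produces a not relabeled subsequence and a Lebesgue partition $\{V_i\}_i$ of $T$ with $V_i^j\to V_i$ in $L^1(T)$ for every $i$ and $\sum_i P(V_i;T)<\infty$.

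The core step, and the main obstacle, is to identify the limit partition $\{V_i\}_i$ as one induced by $K_0\cup T[s]$, i.e.\ to show that $T^\one\cap\pa^e V_i$ is $\H^n$-contained in $K_0\cup T[s]$ for every $i$. The difficulty lies in coordinating two a priori unrelated partition structures: the partitions of the balls $\Om_k$ from \eqref{def of Omega i} whose $L^1$-limits define $S_0$, and the partitions of the tube $T$ induced by $K_j\cup T[s]$ whose $L^1$-limits produce $\{V_i\}$. I plan to bridge them by covering $T\cc\Om$ with finitely many balls $\Om_{k_1},\dots,\Om_{k_N}$ from the defining family: on each $\Om_{k_m}$, the restriction of $\{V_i^j\}$ refines the essential partition $\{U_i^j[\Om_{k_m}]\}_i$ induced by $K_j$ alone (since $K_j\cup T[s]\supset K_j$), which by the definition of subsequential partition limit converges in $L^1$ to the essential partition induced by $S_0$. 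The uniqueness clause of Theorem \ref{theorem decomposition} applied in the limit then forces the restriction of $\{V_i\}$ to $\Om_{k_m}\cap T$ to be induced by $S_0\cup T[s]$; patching across the finite cover and adding the reduced boundary of $E_0$ (which appears because $E_0\cap T$ is a union of some $V_i$'s, as $E_j\cap T$ is a union of $V_i^j$'s and $E_j\to E_0$ in $L^1$) yields containment in $S_0\cup T[s]\cup(\Om\cap\pa^*E_0)=K_0\cup T[s]$.

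With $\{V_i\}_i$ induced by $K_0\cup T[s]$, $E_0\cap T$ is $\L^{n+1}$-equivalent to a union $\bigcup_{i\in I}V_i$, so $E_0^\zero\cap T$ agrees $\H^n$-a.e.\ with $\bigcup_{i\notin I}V_i$. The sequence-level containment $T[s]\cap E_j^\zero\shn{\rm UBEP}(K_j\cup T[s];T)$ together with $L^1$-convergence of $V_i^j\cap T[s]$ (inside the rectifiable slice for good $s$) places $T[s]\cap E_0^\zero$ on reduced boundaries of the essential partition of $T$ induced by $K_0\cup T[s]$, i.e.\ in ${\rm UBEP}(K_0\cup T[s];T)$. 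Conclusions (i) and (ii) are specializations of the main statement: for (i), take the constant sequence $K_j\equiv S$ and $E_j\equiv\varnothing$, so that $S_0={\rm ESP}(S)$ by construction; for (ii), take $K_j=S_j$ and $E_j\equiv\varnothing$, so that $K_0=S_0=S$.
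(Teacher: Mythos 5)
Your general strategy---reducing the $\C$-spanning claim to the criterion of Theorem \ref{theorem spanning with partition}, extracting $L^1$-limits $\{V_i\}$ of the essential partitions $\{V_i^j\}$ of $T$ induced by $K_j\cup T[s]$, and attempting to transfer the partition-limit information encoded in $S_0$ from the balls $\Om_k$ to the tube $T$---is a reasonable one, and the reductions of conclusions (i) and (ii) to the main body (constant sequence $K_j\equiv S$, $E_j\equiv\varnothing$, respectively $K_j=S_j$) are correct. The opening observation that $(K_0,E_0)\in\KK_{\rm B}$ is also fine.

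The central bridging step, however, contains an error that leaves a genuine gap. You assert that on each ball $\Om_{k_m}$ the restriction of $\{V_i^j\}$ to $\Om_{k_m}$ \emph{refines} $\{U_i^j[\Om_{k_m}]\}_i$, the essential partition of $\Om_{k_m}$ induced by $K_j$, citing $K_j\cup T[s]\supset K_j$. Inclusion-monotonicity of essential partitions does give a refinement, but only when the two inducing sets act on \emph{the same domain}; it does not survive restriction to a subdomain. A single piece $V_i^j$ of the essential partition of $T$ by $K_j\cup T[s]$ can, upon intersection with $\Om_{k_m}\subset T$, straddle several $U_l^j[\Om_{k_m}]$'s: two regions of $\Om_{k_m}$ separated \emph{inside} the ball by $K_j$ may be joined \emph{outside} $\Om_{k_m}$ through a channel of $T$ that $K_j\cup T[s]$ does not block, and then lie in the same $V_i^j$. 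So $V_i^j\cap\Om_{k_m}$ need not sit inside a single $U_l^j[\Om_{k_m}]$, and the refinement you invoke is false. What does hold is the \emph{opposite} refinement---each $U_l^j[\Om_{k_m}]$ is $\L^{n+1}$-contained in a single $V_i^j$, so that $V_i^j\cap\Om_{k_m}$ is a union of $U_l^j[\Om_{k_m}]$'s---but only if $\Om_{k_m}\cap T[s]=\varnothing$, since the argument hinges on the interface between $U_l^j\cap V_{i_1}^j$ and $U_l^j\cap V_{i_2}^j$ lying in $K_j\cap\Om_{k_m}$ (not merely in $K_j\cup T[s]$) and hence essentially disconnecting $U_l^j$. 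Your plan of ``covering $T\cc\Om$ with finitely many balls'' does not accommodate this restriction, and in any case no finite subfamily of $\{\Om_k\}_k$ lying inside the open tube $T$ can cover $T$. The $T[s]$-portion of $\bigcup_i\pa^* V_i$ must therefore be isolated and treated separately, which the argument does not do. A further imprecision: the limit partition $\{U_i[\Om_{k_m}]\}_i$ entering the definition of $S_0$ need not be the essential partition of $\Om_{k_m}$ induced by $S_0$, so invoking the uniqueness clause of Theorem \ref{theorem decomposition} ``in the limit'' does not directly force the restriction of $\{V_i\}$ to be induced by $S_0\cup T[s]$. Until the bridging step is repaired---reversing the refinement direction, restricting to $T[s]$-avoiding balls, and separately controlling the contribution of $T[s]$ as well as the passage to the limit of the sequence-level containment $T[s]\cap E_j^\zero\shn{\rm UBEP}(K_j\cup T[s];T)$---the proof is incomplete.
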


\section{Closure theorems for homotopically spanning diffused interfaces}\label{section closure theorem diffused}

\subsection{The precise representative of a Sobolev function} Given an open set $\Om$ and a Lebesgue measurable function $u:\Om\to\R\cup\{\pm\infty\}$ the approximate upper and lower limits of $u$ at $x\in\Om$ are defined by
\[
u^+(x)=\inf\big\{t\in\R:x\in\{u>t\}^\zero\big\}\,,\qquad u^-(x)=\sup\big\{t\in\R:x\in\{u<t\}^\zero\big\}\,,
\]
(where $\{u>t\}=\{x\in\Om:u(x)>t\}$). Both $u^+$ and $u^-$ are Borel functions on $\Om$, with values in $\R\cup\{\pm\infty\}$, and their value {\it at any point in $\Om$} does not depend on the Lebesgue representative of $u$. It is easily seen that
\begin{eqnarray}\nonumber
  \{u^+<t\}\subset\{u<t\}^\one\subset \{u^+\le t\}\,,\qquad \{u^+<t\}^\one=\{u<t\}^\one\,,
  \\\label{utile}
  \{u^->t\}\subset\{u>t\}^\one\subset \{u^-\ge t\}\,,\qquad \{u^->t\}^\one=\{u>t\}^\one\,.
\end{eqnarray}
The approximate jump of $u$ is the Borel function $[u]:\Om\to [0,\infty]$ defined by $[u]=u^+-u^-$. Setting $\Sigma_u=\{x\in\Om:[u](x)>0\}$, we define the precise representative $u^*:\Om\setminus \Sigma_u\to\R\cup\{\pm\infty\}$ by taking
\[
u^*=u^+=u^-\qquad\mbox{on $\Om\setminus \Sigma_u$}\,.
\]
Since it always hold that $|\Sigma_u|=0$, it turns out that $u^+$, $u^-$ and $u^*$ are all Lebesgue representatives of $u$. If $u\in BV_{\rm loc}(\Om)$, then the distributional derivative $Du$ of $u$ can be decomposed as $Du=\nabla u\,d\L^{n+1}+[u]\,d\H^n\mres \Sigma_u+D^cu$ (where $\nabla u\in L^1_\loc(\R^{n+1};\R^{n+1})$ and $D^cu$ is the Cantorian part of $Du$). In particular, if $u\in W^{1,1}_{\rm loc}(\Om)$, then $\H^n(\Sigma_u)=0$. We shall repeatedly use the following fact: if $u\in W^{1,1}_{\rm loc}(\Om)$, then, for a.e. $t$, $\{u>t\}$ is a set of finite perimeter in $\Om$ (this is immediate from the coarea formula), is Lebesgue equivalent to $\{u\ge t\}$ (and thus such that $\Om\cap\pa^*\{u>t\}=\Om\cap\pa^*\{u\ge t\}$), and satisfies
\begin{equation}
  \label{second trace lemma}
  \mbox{$\Om\cap\pa^*\{u>t\}$ is $\H^n$-equivalent to $\{u^*=t\}$}\,.
\end{equation}
To prove \eqref{second trace lemma} we notice that if $x\in\pa^*\{u>t\}\subset\{u>t\}^\half$, then $x\not\in\{u>t\}^\zero$, hence $u^+(x)\ge t$; similarly, if $x\in\pa^*\{u<t\}\subset\{u<t\}^\half$, then $x\not\in\{u<t\}^\zero$, and thus $t\ge u^-(x)$; since $\{u>t\}$ is $\L^{n+1}$-equivalent to $\Om\setminus\{u<t\}$ for a.e. $t$, we find that $\Om\cap\pa^*\{u>t\}=\Om\cap\pa^*\{u<t\}$ for a.e. $t$, and thus
\[
\Om\cap\pa^*\{u>t\}=\{u^+\ge t\}\cap\{u^-\le t\}\,,\qquad\mbox{for a.e. $t$}\,.
\]
In particular, \eqref{second trace lemma} follows from $\H^n(\Sigma_u)=0$. We finally notice that if $u\in W^{1,1}_{\rm loc}(\Om)$ and $E$ is a set of finite perimeter in $\Om$, then $u^*$ is such that
\begin{equation}
  \label{div theorem}
  \int_E u\,\nabla\vphi=-\int_E\,\vphi\,\nabla u+\int_{\Om\cap\pa^*E}\vphi\,u^*\,\nu_E\,d\H^n\,,
\end{equation}
for every $\vphi\in C^\infty_c(\Om)$; see \cite{AFP}.

\subsection{Closure theorems for homotopically spanning densities} In this section we consider the following setting
\begin{eqnarray}\label{setting 1}
  &&\mbox{$\wire\subset\R^{n+1}$ is closed and $\C$ is a spanning class for $\wire$}\,,
  \\\label{setting 2}
  &&\mbox{$\{u_j\}_j\subset W^{1,2}(\Om)$ with $\sup_j\,\acj(u_j)<\infty$ and $u_j\to u$ in $L^1_{\rm loc}(\Om)$}\,,
  \\\label{setting 3}
  &&\mbox{$\{u_j^*\ge t\}$ is $\C$-spanning $\wire$ for all $t\in(1/2,\de_j)$}\,,
  \\\label{setting 4}
  &&\mbox{$\e_j>0$, $\de_j\in(1/2,1]$, $\e_j\to\e_0\ge0$, and $\de_j\to\de_0\in[1/2,1]$}\,,
\end{eqnarray}
where the limits hold as $j\to\infty$ and where $\Om=\R^{n+1}\setminus\wire$. We discuss the problem of showing that the spanning condition \eqref{setting 3} is transferred from $u_j$ to $u$. We consider separately the cases when $\e_0>0$ (Theorem \ref{theorem fixed epsilon compactness}) or $\e_0=0$ (Theorem \ref{theorem epsilon to zero compactness}). In both cases we use the following lemma.

\begin{lemma}\label{lemma I0}
  Let $\wire$, $\C$, $u_j$, $\e_j$, and $\de_j$ satisfy \eqref{setting 1}, \eqref{setting 2}, \eqref{setting 3}, and \eqref{setting 4}.

  \medskip

  If $I_0\subset(0,\de_0)$ is a closed interval of positive length, then there is $\{t_j\}_j\subset I_0$ such that
  \[
  t_j\to t_0\in I_0\,,\qquad E_j:=\{u_j>t_j\}\toloc E_0\,,
  \]
  as $j\to\infty$, where $E_0$  is a set of finite perimeter in $\Om$ and where $\{\Om\cap\pa^*E_j\}_j$ has a subsequential partition limit $S_0$ in $\Om$ such that
  \begin{equation}
    \label{lemma I0 conclusion}
      \mbox{$S_0\cup(\Om\cap\pa^*E_0)\cup E_0^\one$ is $\C$-spanning $\wire$}\,.
  \end{equation}
\end{lemma}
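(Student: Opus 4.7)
The plan is to apply the second closure theorem (Theorem \ref{theorem second closure theorem}) to the pairs $(K_j,E_j)=(\Om\cap\pa^*E_j,E_j)$ for a judicious choice of level $t_j\in I_0$, where $E_j:=\{u_j>t_j\}$. The main steps are: (i) use coarea together with Modica--Mortola to select $t_j$ with $\sup_j P(E_j;\Om)<\infty$; (ii) extract $E_j\to E_0$ in $L^1_\loc$ by BV compactness, with $t_j\to t_0\in I_0$; (iii) identify $\{u_j^*\ge t_j\}$ with $E_j^\one\cup K_j$ modulo $\H^n$-null sets and use the monotonicity of measure-theoretic spanning under set inclusion to show that $K_j\cup E_j^\one$ is $\C$-spanning $\wire$; and (iv) conclude via Theorem \ref{theorem second closure theorem}.

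For the level selection, the Modica--Mortola identity \eqref{modica mortola identity} gives $|D(\Phi\circ u_j)|(\Om)\le\acj(u_j)/2\le C$. Applying the coarea formula to $\Phi\circ u_j$ with the substitution $s=\Phi(t)$, $ds=\sqrt{W(t)}\,dt$, we obtain
\[
\int_0^1 P(\{u_j>t\};\Om)\,\sqrt{W(t)}\,dt\le C.
\]
Since $I_0$ is a compact subset of $(0,\de_0)\subset(0,1)$, the nondegeneracy \eqref{W nondegeneracy assumptions} yields $\sqrt{W}\ge c_0>0$ on $I_0$, hence $\int_{I_0}P(\{u_j>t\};\Om)\,dt\le C/c_0$. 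A pigeonhole (and diagonal) argument then produces $t_j\in I_0$ such that $P(E_j;\Om)\le M$ for some constant $M$ independent of $j$ and such that \eqref{second trace lemma} holds at $t=t_j$ for each $j$. Up to a subsequence, $t_j\to t_0\in I_0$, and BV compactness (local $L^1$-bounds on $1_{E_j}$ are trivial) yields $E_j\to E_0$ in $L^1_\loc(\Om)$ for some $E_0$ of finite perimeter in $\Om$.

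For the spanning transfer, since $\bar t:=\sup I_0<\de_0$ and $\de_j\to\de_0\ge 1/2$, for $j$ large we may choose $s_j\in(\max(1/2,t_j),\de_j)$ (using $\de_j>1/2$ always and $\de_j>\bar t\ge t_j$ eventually). By hypothesis \eqref{setting 3}, $\{u_j^*\ge s_j\}$ is $\C$-spanning $\wire$; since $s_j>t_j$ implies $\{u_j^*\ge s_j\}\subset\{u_j^*\ge t_j\}$ and being $\C$-spanning (in the sense of \eqref{spanning borel intro}) is monotone under set enlargement, $\{u_j^*\ge t_j\}$ is also $\C$-spanning $\wire$. Using $\H^n(\Sigma_{u_j})=0$ for $u_j\in W^{1,2}(\Om)$, together with \eqref{utile} and \eqref{second trace lemma}, $\{u_j^*\ge t_j\}$ is $\H^n$-equivalent to $E_j^\one\cup(\Om\cap\pa^*E_j)=E_j^\one\cup K_j$; hence $K_j\cup E_j^\one$ is $\C$-spanning $\wire$, $(K_j,E_j)\in\KK_{\rm B}$ trivially, and $\sup_j\H^n(K_j)\le M$. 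Theorem \ref{theorem second closure theorem} then provides a further subsequence, a subsequential partition limit $S_0$ of $\{K_j\}$ in $\Om$, and the conclusion that $K_0\cup E_0^\one=S_0\cup(\Om\cap\pa^*E_0)\cup E_0^\one$ is $\C$-spanning $\wire$, yielding \eqref{lemma I0 conclusion}.

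The delicate point is the endpoint regime $\de_0=1/2$: then $I_0\subset(0,1/2)$ falls outside the shrinking window $(1/2,\de_j)$ on which hypothesis \eqref{setting 3} directly asserts spanning, so one cannot pick $t_j$ inside this window and still have $t_j\to t_0\in I_0$. The monotonicity of measure-theoretic spanning under set inclusion, used to transfer the property from auxiliary levels $s_j\in(1/2,\de_j)$ down to the chosen $t_j$, is precisely what makes the lemma hold uniformly across the full parameter range.
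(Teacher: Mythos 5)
Your proof is correct and follows essentially the same route as the paper: level selection via coarea and Modica--Mortola, BV compactness to extract $E_0$, monotonicity of measure-theoretic spanning under set inclusion to pass from a level $s_j\in(1/2,\delta_j)$ down to $t_j$, and then Theorem \ref{theorem second closure theorem} with $(K_j,E_j)=(\Om\cap\pa^*E_j,E_j)$. Your identification of $\{u_j^*\ge t_j\}$ with $E_j^\one\cup(\Om\cap\pa^*E_j)$ is slightly more than is needed (the paper only uses $\{u_j^*\ge s_j\}\subset E_j^\one$), but this does not change the argument.
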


\begin{proof}
Indeed, by the co-area formula and \eqref{modica mortola identity}
\begin{eqnarray*}
\fint_{\Phi(I_0)}P(\{\Phi\circ u_j> s\};\Om)\,ds
= C(I_0)\,\int_\Om|\nabla(\Phi\circ u_j)|\le C(I_0)\,\acj(u_j;\Om)\,,
\end{eqnarray*}
so that $\sup_j\acj(u_j;\Om)<\infty$ and the strict monotonicity of $\Phi(t)=\int_0^t\sqrt{W}$ imply the existence of $\{t_j\}_j\subset I_0$ such that $\{P(E_j;\Om)\}_j$ is bounded, where $E_j=\{u_j>t_j\}$ and, by \eqref{utile},
\begin{equation}
\label{little claim}
\{u_j^*>t\}\subset\{u_j^->t\}\subset\{u_j>t\}^\one\subset E_j^\one\,,\qquad\forall t>t_j\,.
\end{equation}
Since $\sup_j P(E_j,\Om)<\infty$ we can find $\{t_j\}_j\subset I_0$ and limits $t_0$, $E_0$ and $S_0$ as in the statement, and are left to prove \eqref{lemma I0 conclusion}. Indeed, since $\delta_j\to \delta_0$ and $I_0\subset(0,\de_0)$ is closed, we have $t_j<\delta_j$ for $j$ large enough. In particular, by exploiting \eqref{little claim} with $t\in(t_j,\de_j)$ we find that $E_j^\one$ is $\C$-spanning $\wire$, so that \eqref{utile} implies that $E_j^\one$ is $\C$-spanning $\wire$. We can thus apply Theorem \ref{theorem second closure theorem} to $\{(K_j,E_j)\}_j$ with $K_j=\Om\cap\pa^*E_j$ and conclude the proof.
\end{proof}

\begin{theorem}\label{theorem fixed epsilon compactness} Let $\wire$, $\C$, $u_j$, $\e_j$, and $\de_j$ satisfy \eqref{setting 1}, \eqref{setting 2}, \eqref{setting 3}, and \eqref{setting 4}. If
\[
\e_0>0\,,\qquad \de_0>\frac12\,,
\]
then $u\in W^{1,2}_{\rm loc}(\Om)$ and $\{u^*\ge t\}$ is $\C$-spanning $\wire$ for all $t\in(1/2,\delta_0)$.
\end{theorem}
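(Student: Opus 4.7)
My plan is to use the $\e_0>0$ assumption to promote the Allen--Cahn bound to $W^{1,2}_{\rm loc}$-compactness, and then to apply Lemma~\ref{lemma I0} to transfer the spanning property from the $u_j$'s to $u$. For the Sobolev regularity, $\sup_j\acj(u_j)<\infty$ combined with $\e_j\to\e_0>0$ gives $\sup_j\|\nabla u_j\|_{L^2(\Om)}<\infty$; together with $u_j\in[0,1]$ and $u_j\to u$ in $L^1_{\rm loc}(\Om)$, Banach--Alaoglu and Rellich--Kondrachov yield $u\in W^{1,2}_{\rm loc}(\Om)$, $u_j\weak u$ in $W^{1,2}_{\rm loc}$, and (after a further extraction) $u_j\to u$ a.e.\ in $\Om$.

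\medskip

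To prove the spanning property, I would fix $t\in(1/2,\de_0)$ and choose a closed interval $I_0\subset(t,\de_0)$ of positive length. Applying Lemma~\ref{lemma I0} along the a.e.-convergent subsequence furnishes $t_j\in I_0$, $t_j\to t_*\in I_0$, $E_j:=\{u_j>t_j\}\to E_0$ in $L^1_{\rm loc}$, and a subsequential partition limit $S_0$ of $\{\Om\cap\pa^* E_j\}_j$ such that $K_{\rm bk}:=S_0\cup(\Om\cap\pa^* E_0)\cup E_0^\one$ is $\C$-spanning $\wire$. Since $s\mapsto|\{u=s\}|$ is zero outside an at most countable set, minor adjustments to $I_0$ let me arrange $|\{u=t_*\}|=0$, forcing $E_0\overset{\L^{n+1}}{=}\{u>t_*\}$. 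Then \eqref{utile} gives $E_0^\one=\{u>t_*\}^\one\shn\{u^*\ge t_*\}$ and \eqref{second trace lemma} gives $\Om\cap\pa^* E_0\ehn\{u^*=t_*\}$; since $t_*>t$, both are $\H^n$-contained in $\{u^*\ge t\}$.

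\medskip

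The main obstacle will be verifying $S_0\shn\{u^*\ge t\}$. Writing $S_0=\bigcup_k\Om_k\cap\bigcup_i\pa^* U_i[\Om_k]$, every Lebesgue-limit piece $U_i[\Om_k]$ is Lebesgue-contained in $E_0$ or in $\Om\setminus E_0$: in the first case $\pa^* U_i[\Om_k]\shn E_0^\one\cup(\Om\cap\pa^* E_0)\shn\{u^*\ge t\}$. The delicate case is $U_i[\Om_k]\overset{\L^{n+1}}{\subset}\Om\setminus E_0$: here I would argue that the weak $W^{1,2}_{\rm loc}$ convergence (whose availability depends crucially on $\e_0>0$) prevents the reduced-boundary measures $\H^n\mres\pa^* E_j=\H^n\mres\{u_j^*=t_j\}$ from concentrating on $E_0^\zero=\{u<t_*\}^\one$, so no ``internal'' partition boundary can form inside $(\Om\setminus E_0)^\one$, and the only contributions to $\pa^* U_i[\Om_k]$ are on $\Om\cap\pa^* E_0\ehn\{u^*=t_*\}\subset\{u^*\ge t\}$. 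The quantitative statement should come from combining the Modica--Mortola identity \eqref{modica mortola identity}, the strong $L^1_{\rm loc}$ convergence given by Rellich, and a co-area argument. Granting this, $K_{\rm bk}\cup E_0^\one\shn\{u^*\ge t\}$, and the $\H^n$-stability and monotonicity of $\C$-spanning yield that $\{u^*\ge t\}$ is $\C$-spanning $\wire$.
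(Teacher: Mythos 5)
Your overall strategy matches the paper's: invoke Lemma~\ref{lemma I0} to produce $t_j\to t_*$, $E_j=\{u_j>t_j\}\toloc E_0$, and a subsequential partition limit $S_0$, then reduce the statement to showing $S_0\cup(\Om\cap\pa^*E_0)\cup E_0^\one\shn\{u^*\ge t\}$. Your treatment of $E_0^\one$ and $\Om\cap\pa^*E_0$ via \eqref{utile} and \eqref{second trace lemma} is correct and essentially what the paper does.

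The gap is in the $S_0$ step, which is the heart of the proof. Two problems. First, the mechanism you propose is mischaracterized: you claim that ``no internal partition boundary can form inside $(\Om\setminus E_0)^\one$,'' but partition boundaries \emph{can} and generically do lie inside $E_0^\zero$ (e.g., $u$ can have a local maximum at value $t_*$ on a set of positive $\H^n$-measure inside $\{u<t_*\}^\one$). The correct statement, which the paper proves, is that wherever such a boundary does form one has the \emph{trace} condition $u^*=t_*$ there — a different claim, and one that is compatible with the boundary sitting inside $E_0^\zero$. Second, and more importantly, you give no actual argument for the key inclusion; you write that ``the quantitative statement should come from combining [MM], strong $L^1_{\rm loc}$ convergence, and a co-area argument,'' which is not a proof. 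The paper's argument at this point is genuinely nontrivial: for each $i,k$ one sets $G_1^j=U_i^j[\Om_k]$, $G_2^j$ its complement, builds the Lipschitz truncations $L_j$ with $L_j(t_j)=t_j$, defines $z_j=(L_j\circ u_j)1_{G_1^j}+t_j\,1_{G_2^j}$, and uses the divergence theorem to compute $Dz_j$, showing that the jump part across $\pa^*G_1^j$ vanishes precisely because $(L_j\circ u_j)^*=t_j$ there. Then $\e_0>0$ is used to bound $\|\nabla z_j\|_{L^2}$, giving $z\in W^{1,2}$, hence $Dz\ll\L^{n+1}$, and matching the jump term in the expression for $Dz$ against zero forces $(L_0\circ u)^*=t_0$ $\H^n$-a.e.\ on $\pa^*G_1$, i.e., $u^*=t_0$ there. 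None of this machinery appears in your sketch, and I do not see how your heuristic could be turned into a proof without it.

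A smaller point: your step arranging $|\{u=t_*\}|=0$ by ``minor adjustments to $I_0$'' is shaky — $t_*$ is a limit point extracted by compactness inside Lemma~\ref{lemma I0}, not a value you get to choose, so avoiding the countable exceptional set of levels is not straightforward. Fortunately the paper's proof never needs $|\{u=t_0\}|=0$ or the identification $E_0\overset{\L^{n+1}}{=}\{u>t_0\}$; it works directly with the weaker inclusions from \eqref{utile} and \eqref{second trace lemma}. I would drop that detour.
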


\begin{remark}\label{remark vanishing volume with non vanishing transition is degenerate}
  {\rm Notice that in the situation of Theorem \ref{theorem fixed epsilon compactness} it must be that $\V(u;T)>0$ for every $(\g,\Phi,T)\in\T(\C)$. Indeed,  $\V(u;T)=0$ would imply $\{u^*\ge t\}\cap T=\varnothing$ for every $t>0$, and $\{u^*\ge t\}$ could not be  $\C$-spanning $\wire$ for any $t\in(1/2,\delta_0)$, a contradiction. As a consequence,
  \begin{equation}
    \label{vanishing volume with non vanishing epsilon is degenerate}
      \liminf_{(v,\e,\de)\to(0,\e_0,\de_0)}\Upsilon(v,\e,\de)=+\infty\,,\qquad \e_0>0\,,\de_0\in(1/2,1]\,,
  \end{equation}
  that is to say, {\it the vanishing volume limit of $\Upsilon$ with non-vanishing phase transition length is always degenerate}.}
\end{remark}

\begin{proof}[Proof of Theorem \ref{theorem fixed epsilon compactness}] Since $\e_0>0$, \eqref{setting 2} implies that $u_j\weak u$ in $W^{1,2}_{\rm loc}(\Om)$, and in particular that $u\in W^{1,2}_{\rm loc}(\Om)$. Given $N\in\N$, let us apply Lemma \ref{lemma I0} to the interval $I_0=[\de_0-2/N,\de_0-1/N]$, and correspondingly find $\{t_j\}_j\subset I_0$ such that $t_j\to t_0\in I_0$, $E_j=\{u_j>t_j\}\toloc E_0$, $\{\Om\cap\pa^*E_j\}_j$ has a subsequential partition limit $S_0$ in $\Om$, and \eqref{lemma I0 conclusion} holds. In particular, we can prove the theorem by showing that
\begin{equation}
  \label{first reduction}
S_0\cup(\Om\cap\pa^*E_0)\cup E_0^\one\,\,\shn\,\, \{u^* \geq t_0\}\,,
\end{equation}
and by then applying the fact that $t_0=t_0(N)\to \de_0^-$ as $N\to\infty$. We divide the proof of \eqref{first reduction} in three parts:

\medskip

\noindent {\it To check that $E_0^\one$ is $\H^n$-contained in $\{u^*\ge t_0\}$}: Up to extract a further subsequence in $j$, the set
\[
E_0^*=\big\{x\in E_0:\mbox{$1_{E_j}(x)\to 1$ and $u_j(x)\to u(x)$ as $j\to\infty$}\big\}\,,
\]
is Lebesgue equivalent to $E_0$. If $x\in E_0^*$, then $u_j(x)\ge t_j$ for every $j\ge j(x)$; letting $j\to\infty$ we find $u(x)\ge t_0$, and prove that $E_0^*\subset\{u\ge t_0\}$. In particular, by \eqref{utile},
\[
E_0^\one=(E_0^*)^\one\subset\{u\ge t_0\}^\one\subset\{u^-\ge t_0\}
\]
and then we find the claimed $\H^n$-containment by intersecting with $\Om\setminus \Sigma_u$ (and recalling that $\H^n(\Sigma_u)=0$).

\medskip

\noindent {\it To check that $\Om\cap\pa^*E_0$ is $\H^n$-contained in $\{u^*\ge t_0\}$}: We combine the general fact that
\[
\Om\cap\pa^*A_2\shn\Om\cap(\pa^*A_1\cup A_1^\one)\,,\qquad\forall A_1\subset A_2\subset\Om\,,
\]
with the inclusion $E_0^\one\subset\{u^*\ge t_0\}$ to find that
\begin{eqnarray*}
  \Om\cap\pa^*E_0=  \Om\cap\pa^*E_0^\one\shn \big(\Om\cap\pa^*\{u^*\ge t_0\}\big)\cup\{u^*\ge t_0\}^\one\,,
\end{eqnarray*}
where the latter set is $\H^n$-contained in $\{u^*\ge t_0\}$ thanks to \eqref{utile}, \eqref{second trace lemma}, and $\H^n(\Sigma_u)=0$.

\medskip

\noindent {\it To check that $S_0$ is $\H^n$-contained in $\{u^*\ge t_0\}$}: Let us recall that given the decomposition $\{\Om_k\}_k$ of $\Om$ introduced in \eqref{def of Omega i}, and denoting by $\{U_i^j[\Om_k]\}_i$ the essential partition of $\Om_k$ induced by $\Om\cap\pa^*E_j$, the fact that $S_0$ is a partition limit of $\{\Om\cap\pa^*E_j\}_j$ means that
\[
S_0=\bigcup_k\Big\{\Om_k\cap\bigcup_i\pa^*U_i[\Om_k]\Big\}\,,
\]
where, for each $k$, $\{U_i[\Om_k]\}_i$ is a partition of $\Om_k$ such that $U_i^j[\Om_k]\to U_i[\Om_k]$ as $j\to\infty$ and for every $i$. Thus, if we fix $k$ and consider $i$ such that $\Om_k\cap\pa^*U_i[\Om_k]\ne\varnothing$, then it suffices to show that
\begin{equation}
  \label{ganzo}
  \mbox{$\Om_k\cap\pa^*U_i[\Om_k]$ is $\H^n$-contained in $\{u^*=t_0\}$.}
\end{equation}
Since $\Om_k=B_r(x)$ (for some $x\in\Om$ and $r>0$) if we set $G_1=U_i[\Om_k]$, $G_1^j=U_i^j[\Om_k]$, $G_2=B_r(x)\setminus G_1$ and $G_2^j=B_r(x)\setminus G_2^j$, then we see that $\{G_1,G_2\}$ is a non-trivial Borel partition of $B_r(x)$ (indeed $0<|G_1|<|B_r(x)|$ thanks to $B_r(x)\cap\pa^*G_1\ne\varnothing$), and $\{G_1^j,G_2^j\}$ is a non-trivial Borel partition of $B_r(x)$ for every $j$ large enough (thanks to $G_1^j,G_2^j\to G_1,G_2$ as $j\to\infty$). In particular,
\begin{eqnarray}
  \label{fact 1}
  \nu_{G_1}=-\nu_{G_2}\qquad\mbox{$\H^n$-a.e. on $B_r(x)\cap\pa^*G_1=B_r(x)\cap\pa^*G_2$}\,,
  \\
  \label{fact 1 j}
  \nu_{G_1^j}=-\nu_{G_2^j}\qquad\mbox{$\H^n$-a.e. on $B_r(x)\cap\pa^*G_1^j=B_r(x)\cap\pa^*G_2^j$}\,.
\end{eqnarray}
Define $L_j:[0,1]\to[0,1]$ by taking $L_j(t)=t$ for $t\in[t_j,1]$ and $L_j$ to be affine on $[0,t_j]$ with $L_j(0)=1$ and $L_j(t_j)=t_j$, and similarly $L_0:[0,1]\to[0,1]$ using $t_0$ in place of $t_j$. Since $t_j,t_0\in I_0\cc(1/2,\de_0)$, we have $\Lip(L_0),\Lip(L_j)\le 1$, and thus $L_0\circ u,L_j\circ u_j\in W^{1,2}_{\rm loc}(\Om)$. If we set
\begin{equation}
  \label{formula for z}
z_j=(L_j\circ u_j)\,1_{G_1^j}+t_j\,1_{G_2^j}\,,\qquad z=(L_0\circ u)\,1_{G_1}+t_0\,1_{G_2}\,,
\end{equation}
then we easily see that $z_j\to z$ in $L^1(B_r(x))$. Moreover, by combining \eqref{fact 1 j} and \eqref{fact 1} with the divergence theorem \eqref{div theorem} we see that, for every $\vphi\in C^\infty_c(B_r(x))$,
\begin{eqnarray}\nonumber
Dz_j[\vphi]&=&-t_j\,\int_{B_r(x)\cap G_2^j}\nabla\vphi-\int_{B_r(x)\cap G_1^j}(L_j\circ u_j)\,\nabla\vphi
\\\nonumber
&=&-t_j\,\int_{B_r(x)\cap\pa^*G_1^j}\vphi\,\nu_{G_2^j}\,d\H^n-
\int_{B_r(x)\cap\pa^*G_1^j}\vphi\,(L_j\circ u_j)^*\,\nu_{G_1^j}\,d\H^n
\\\nonumber
&&+\int_{B_r(x)\cap G_1^j}\vphi\,\nabla (L_j\circ u_j)
\\\label{formula for Dzj}
&=&\int_{B_r(x)\cap G_1^j}\vphi\,\nabla (L_j\circ u_j)+\int_{B_r(x)\cap\pa^*G_1^j}\vphi\,\Big\{t_j-(L_j\circ u_j)^*\Big\}\,\nu_{G_1^j}\,d\H^n\,,\hspace{1cm}
\end{eqnarray}
and, similarly, that
\begin{eqnarray}\label{formula for Dz}
Dz[\vphi]=\int_{B_r(x)\cap G_1}\vphi\,\nabla (L_0\circ u)+\int_{B_r(x)\cap\pa^*G_1}\vphi\,\Big\{t_0-(L_0\circ u_0)^*\Big\}\,\nu_{G_1}\,d\H^n\,.
\end{eqnarray}
Now, since, by construction, $K_j=\Om\cap\pa^*E_j$ essentially disconnects $B_r(x)$ into $\{G_1^j,G_2^j\}$, we have that $B_r(x)\cap\pa^*G_1^j$ is $\H^n$-contained in $B_r(x)\cap\pa^*E_j$, which, in turn, by \eqref{second trace lemma}, is $\H^n$-contained in $\{u_j^*=t_j\}$; thus, by the Lipschitz continuity of $L_j$ and by $L_j(t_j)=t_j$, we conclude that
\begin{equation}
  \label{fact 2}
  \mbox{$(L_j\circ u_j)^*=t_j$ $\H^n$-a.e. on $B_r(x)\cap\pa^*G_1^j$}\,.
\end{equation}
Combining \eqref{formula for Dzj} and \eqref{fact 2} we conclude that $z_j\in W^{1,2}(B_r(x))$ with
\[
\nabla z_j=1_{G_1^j}\,\nabla (L_j\circ u_j)=1_{G_1^j}\,(L_j'\circ u_j)\,\nabla u_j\,.
\]
As a consequence, ${\rm Lip}(L_j)\le 1$, \eqref{setting 2}, and the fact that $\e_j\to\e_0>0$ combined give $\sup_j\|\nabla z_j\|_{L^2(B_r(x))}<\infty$, and thus, thanks to $z_j\to z$ in $L^1(B_r(x))$, $z\in W^{1,2}(B_r(x))$. In particular $Dz\ll\L^{n+1}$, so that \eqref{formula for Dz} implies $(L_0\circ u)^*=t_0$ $\H^n$-a.e. on $B_r(x)\cap\pa^*G_1$. Since $L_0(t)=t_0$ if and only if $t=t_0$ and $L_0$ is Lipschitz continuous, this proves that $B_r(x)\cap\pa^*G_1$ is $\H^n$-contained in $\{u^*=t_0\}$. Since this is \eqref{ganzo}, we have concluded the proof of the theorem.
\end{proof}

\begin{theorem}\label{theorem epsilon to zero compactness}
Let $\wire$, $\C$, $u_j$, $\e_j$, and $\de_j$ satisfy \eqref{setting 1}, \eqref{setting 2}, \eqref{setting 3}, and \eqref{setting 4}, and assume (as it can always be done up to extracting a further subsequence) that for some $v_0\ge0$ and $\mu$ a Radon measure in $\Om$, as $j\to\infty$, it holds that
\[
\V(u_j;\Om)\to v_0\,,\qquad |\nabla(\Phi\circ u_j)|\,d\L^{n+1}\mres\Om\weakstar \mu\,.
\]
If
\[
\e_0=0\,,
\]
then there exists $(K,E) \in \mathcal{K}_{\rm B}$ such that
\[
u=1_{E}\,,\qquad |E|\leq v_0\,,\qquad\mbox{$K\cup E^{\one}$ is $\C$-spanning $\wire$}\,,
\]
and such that
\begin{equation}
\label{weak star in diffuse compactness theorem}
\mu \geq 2\,\Phi(\de_0)\,\H^n\mres (K\cap E^\zero)+\H^n\mres(\Om\cap\pa^*E)\,.
\end{equation}
Moreover, in the particular case when $v_0=0$, it must be
\begin{equation}
\label{cool liminf}
\liminf_{j\to\infty}\frac{v_j}{\e_j}>0\,.
\end{equation}
\end{theorem}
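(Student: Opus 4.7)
The plan is to carry out the sharp-interface limit for diffused densities satisfying a spanning constraint. First, I would apply the Modica--Mortola identity \eqref{modica mortola identity} to obtain $\sup_j|D(\Phi\circ u_j)|(\Om) < \infty$; combined with the assumed $L^1_{\rm loc}$ convergence $u_j\to u$, BV compactness gives $\psi := \Phi\circ u \in BV_{\rm loc}(\Om)$. The potential bound $\int_\Om W(u_j) \leq \e_j\,\acj(u_j) \to 0$ then forces $W(u)=0$ a.e., so \eqref{W nondegeneracy assumptions} yields $u = 1_E$ for a Borel set $E$; since $V(1)=1$ by \eqref{W normalization}, Fatou applied to $V(u_j)$ gives $|E|\leq v_0$. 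Next, testing $\int_\Om \psi\,\mathrm{div}\,\phi = \lim_j\int_\Om \Phi(u_j)\,\mathrm{div}\,\phi$ against $\phi\in C_c^\infty(\Om;\R^{n+1})$ with $|\phi|\leq 1$ and using $|\nabla\Phi(u_j)|\,d\L^{n+1} \weakstar \mu$ yields $|D\psi|\leq \mu$ on $\Om$; since $\psi=1_E$, this means $\H^n\mres(\Om\cap\pa^*E) \leq \mu$. I then set
\begin{equation*}
K := (\Om\cap\pa^*E)\cup\bigl\{x\in\Om\cap E^\zero : \theta^n_*(\mu)(x)\geq 2\Phi(\de_0)\bigr\},
\end{equation*}
so that $(K,E)\in\K_{\rm B}$ automatically, and a standard Besicovitch-type density estimate produces $\mu \geq 2\Phi(\de_0)\,\H^n\mres(K\cap E^\zero)$; summing with the previous bound on the disjoint set $\Om\cap\pa^*E$ then yields \eqref{weak star in diffuse compactness theorem}.

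The main obstacle is verifying that $K\cup E^\one$ is $\C$-spanning $\wire$. For fixed $N\in\N$, I would apply Lemma \ref{lemma I0} to the interval $I_0 = [\alpha,\de_0-1/N] \subset (0,\de_0)$, producing $t_j \to t_0\in I_0$ and level sets $E_j = \{u_j>t_j\}\toloc E$, together with a subsequential partition limit $S_0$ of $\{\Om\cap\pa^*E_j\}_j$ such that $S_0\cup(\Om\cap\pa^*E)\cup E^\one$ is $\C$-spanning $\wire$. The proof then reduces to showing that $\H^n$-a.e. $x\in S_0\cap E^\zero$ lies in $K$, i.e., satisfies $\theta^n_*(\mu)(x) \geq 2\Phi(t_0)$; sending $N\to\infty$ upgrades this to $2\Phi(\de_0)$. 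For this density estimate, I would use the BV coarea formula $\int_{B_r(x)}|\nabla\Phi(u_j)|\,dy = \int_0^1 P(\{\Phi(u_j)>s\};B_r(x))\,ds$ in small balls around $x$: for each $s\in(0,\Phi(t_0))$ the set $\{u_j>\Phi^{-1}(s)\}$ converges to $E$ in $L^1_{\rm loc}$ and is $\C$-spanning for $j$ large (by monotonicity of $\C$-spanning under Borel supersets, using $\de_j\to\de_0$), so the two-sided ``peak'' structure of $u_j$ at a point of $E^\zero$ (in analogy with the factor of two appearing on $K\cap E^\zero$ in $\F_{\rm bk}$ in Theorem \ref{theorem first closure theorem}) yields $\liminf_j P(\{u_j>\Phi^{-1}(s)\};B_r(x)) \geq 2\,\H^n(S_0\cap B_r(x))$. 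Integrating in $s$ and passing to weak-$*$ limits gives $\mu(B_r(x)) \geq 2\Phi(t_0)\,\H^n(S_0\cap B_r(x))$, and the $\H^n$-rectifiability of $S_0$ with unit density at $\H^n$-a.e. point delivers the density bound.

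Finally, for \eqref{cool liminf}: when $v_0=0$, $E=\varnothing$ and $S_0$ itself $\C$-spans $\wire$, so $\H^n(S_0)\geq\ell>0$. By monotonicity, for a.e. $t\in(0,\de_j)$ the pair $(\Om\cap\pa^*\{u_j>t\},\{u_j>t\})\in\K_{\rm B}$ satisfies the bulk spanning condition, and thus $P(\{u_j>t\};\Om)\geq\Psi_{\rm bk}(|\{u_j>t\}|)$. Fixing $a\in(0,\de_0)$, we have $|\{u_j>t\}|\leq v_j/V(a)\to 0$ for $t\geq a$, and by Theorem \ref{theorem from MNR1} $\Psi_{\rm bk}(v')\to 2\ell$ as $v'\to 0^+$; hence $P(\{u_j>t\};\Om)\geq 2\ell-o(1)$ uniformly for $t\in(a,\de_j)$. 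Integrating by coarea gives
\begin{equation*}
(\de_j - a)(2\ell - o(1)) \leq \int_a^{\de_j} P(\{u_j>t\};\Om)\,dt = \int_{\{a<u_j<\de_j\}}|\nabla u_j|\,dx,
\end{equation*}
and Cauchy--Schwarz combined with $\int_\Om|\nabla u_j|^2 \leq \acj(u_j)/\e_j \leq C/\e_j$ yields $|\{u_j>a\}|\geq c\,\e_j$ for $j$ large. Since $v_j\geq V(a)|\{u_j>a\}|$, this gives $\liminf_j v_j/\e_j \geq V(a)\,c > 0$.
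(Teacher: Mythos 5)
Your opening steps (BV compactness, $u=1_E$, $|E|\le v_0$, the lower bound $\mu\ge\H^n\mres(\Om\cap\pa^*E)$, the definition of $K$, the Besicovitch density argument, and the reduction of the spanning claim to a density lower bound along the partition limit $S_0$ from Lemma~\ref{lemma I0}) all match the paper. However, the central density estimate is asserted rather than proved, and the form in which you state it cannot be justified by a naive argument.

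The crux is the inequality you invoke as a consequence of a ``two-sided peak structure'':
\[
\liminf_{j\to\infty} P\big(\{u_j>\Phi^{-1}(s)\};B_r(x)\big)\ge 2\,\H^n(S_0\cap B_r(x))\,,\qquad s\in(0,\Phi(t_0))\,.
\]
This is precisely the nontrivial part of the argument. A direct attempt by lower semicontinuity of perimeter fails: on $E^\zero$ one has $u_j\to 0$ in $L^1_{\rm loc}$, so if $G_1^j,G_2^j$ denote the two essential-partition elements adjacent along $S_0$ near $x$, then $\{u_j>\Phi^{-1}(s)\}\cap G_m^j\cap B_r(x)\to\varnothing$ in $L^1$, and lower semicontinuity of perimeter for these vanishing sets yields nothing. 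The paper's mechanism is different: the trace condition $u_j^*=t_j$ $\H^n$-a.e.\ on $\pa^*G_m^j$ (from \eqref{second trace lemma} and the $\H^n$-containment of $\pa^*G_m^j$ in $\pa^*E_j$) allows one to glue $u_j$ on $G_m^j$ with the constant $t_j$ on $\Om_k\setminus G_m^j$ into a \emph{jump-free Sobolev} function whose $L^1_{\rm loc}$-limit is the genuine $BV$ function $t_0\,1_{\Om_k\setminus G_m}$; applying lower semicontinuity of the total variation to the corresponding $\Phi$-composed gluing produces $\liminf_j\int_{A\cap G_m^j}|\nabla(\Phi\circ u_j)|\ge\Phi(t_0)\,P(G_m;A)$, and summing over $m=1,2$ delivers the factor of two at $\H^n$-a.e.\ $x\in\pa^*G_1\cap\pa^*G_2$. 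Your sketch supplies neither this gluing construction nor any substitute for it, so the density bound remains a conjecture.

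Two further gaps. First, your interval $I_0=[\alpha,\de_0-1/N]$ with $\alpha$ fixed does not force the value $t_0$ produced by Lemma~\ref{lemma I0} to tend to $\de_0$ as $N\to\infty$; $t_0$ could remain near $\alpha$, giving only $\theta^n_*(\mu)\ge 2\Phi(\alpha)<2\Phi(\de_0)$. You need an interval whose \emph{both} endpoints approach $\de_0$, as in the paper's $[\de_0-1/N,\de_0-1/(2N)]$. Second, the $N$-th round only shows that $K_N=(\Om\cap\pa^*E)\cup\{x\in\Om\cap E^\zero:\theta^n_*(\mu)(x)\ge 2\Phi(\de_0-1/N)\}$ together with $E^\one$ is $\C$-spanning; passing from $K_N$ to $K=\bigcap_N K_N$ requires a closure argument (the paper applies Theorem~\ref{theorem first closure theorem} to the measures $\l_N=\H^n\mres(\Om\cap\pa^*E)+2\H^n\mres(\RR(K_N)\cap E^\zero)\weakstar\l$), which your proof does not address.

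Your treatment of \eqref{cool liminf} is a valid alternative to the paper's and genuinely different in route: you bound $P(\{u_j>t\};\Om)\ge\Psi_{\rm bk}(|\{u_j>t\}|)\ge 2\ell-{\rm o}(1)$ uniformly in $t$ (using that the spanning hypothesis propagates to all $t\in(0,\de_j)$ by monotonicity and that $\Psi_{\rm bk}(v)\to 2\ell$ from Theorem~\ref{theorem from MNR1}), integrate in $t$ via coarea, and apply Cauchy--Schwarz against $\int_\Om|\nabla u_j|^2\le C/\e_j$. This is shorter than the paper's slicing argument along spanning tubes, though it imports $\Psi_{\rm bk}(v)\to 2\ell$ as a black box and requires $\ell>0$, whereas the paper's proof is self-contained and yields the strictly stronger $\liminf_j\V(u_j;T)/\e_j>0$ for every $(\gamma,\Phi,T)\in\T(\C)$.
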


\begin{remark}\label{divergence remark 2}
  {\rm As a consequence of \eqref{cool liminf} in Theorem \ref{theorem epsilon to zero compactness} we see that if $v(\e)$ and $\de(\e)$ are functions of $\e>0$ such that
  \[
  \lim_{\e\to 0^+}\frac{v(\e)}\e=0\,,\qquad\lim_{\e\to 0^+}\de(\e)=\de_0\in[1/2,1]\,,
  \]
  then
  \begin{equation}
    \label{justifies SFR}
      \lim_{\e\to 0^+}\Upsilon(v(\e),\e,\de(\e))=+\infty\,.
  \end{equation}}
\end{remark}

\begin{proof}[Proof of Theorem \ref{theorem epsilon to zero compactness}] By $\e_j\to 0^+$, \eqref{modica mortola identity}, and \eqref{setting 2} there is $E\subset\Om$ with $|E|\le v_0$ such that $u=1_{E}$, $\{u_j>t\}\to E$ as $j\to\infty$ for a.e. $t\in(0,1)$, and
\begin{equation}\label{first mu bound}
  \mu\ge\H^n\mres(\Om\cap\pa^*E)\,.
\end{equation}
If we set
\begin{align}\notag
    K = \big(\Om\cap\partial^* E\big) \cup \Big\{x\in \Omega \cap E^\zero: \tnl(\mu)(x) \geq 2\Phi(\de_0)\Big\}\,.
\end{align}
then \cite[Theorem 6.4]{maggiBOOK} implies $\mu\mres E^\zero \geq 2\,\Phi(\de_0)\,\H^n \mres (K\cap E^\zero)$, which combined with \eqref{first mu bound} implies \eqref{weak star in diffuse compactness theorem}. We now want to prove that
\begin{equation}
  \label{diffused closure due fine}
  \mbox{$K\cup E^\one$ is $\C$-spanning $\wire$}\,.
\end{equation}
We divide the proof of \eqref{diffused closure due fine} into three steps.

\medskip

\noindent {\it Step one}: We prove that for every $N\in\N$ the Borel set
\begin{align}\notag
    K_N= \big(\Om\cap\partial^* E\big) \cup\Big\{x\in \Omega \cap E^\zero: \tnl (\mu)(x) \geq 2\Phi(\de_0- 1/N)\Big\}\,,
\end{align}
is such that
\begin{equation}
  \label{diffused closure due fine N}
  \mbox{$K_N\cup E^\one$ is $\C$-spanning $\wire$}\,.
\end{equation}
To this end, we apply Lemma \ref{lemma I0} to the interval $I_0=[\de_0-1/N, \de_0-1/(2N)]$ to find  $\{t_j\}_j\subset I_0$ such that $t_j\to t_0\in I_0$, $E_j=\{u_j>t_j\}\toloc E_0$, $\{\Om\cap\pa^*E_j\}_j$ has a partition limit $S_0$ in $\Om$, and \eqref{lemma I0 conclusion} holds. By monotonicity of $t\mapsto\{u_j>t\}$ and since $\{u_j>t\}\toloc E$ for a.e. $t\in(0,1)$, we easily see that $E_0=E$. Hence, we have proved that
\begin{equation}
  \label{compare with}
  \mbox{$E^\one\cup(\Om\cap\pa^*E)\cup S_0$ is $\C$-spanning $\wire$}\,.
\end{equation}
Thanks to Federer's theorem \eqref{federer theorem} and to $\Om\cap\pa^*E\subset K_N$ we can deduce \eqref{diffused closure due fine N} from \eqref{compare with} once we prove that
\begin{equation}\label{compare with 2}
\mbox{$E^{\zero}\cap S_0$ is $\H^n$-contained in $\big\{\theta_*^n(\mu) \geq 2\,\Phi(\de_0-1/N)\big\}$}\,.
\end{equation}
We begin the proof of \eqref{compare with 2} by recalling that $S_0=\bigcup_k\{\Om_k\cap\bigcup_i\pa^*U_i[\Om_k]\}$, with $\{\Om_k\}_k$ as in \eqref{def of Omega i}, $\{U_i^j[\Om_k]\}_i$ the essential partition of $\Om_k$ induced by $\Om\cap\pa^*E_j$, and with $\{U_i[\Om_k]\}_i$ a Lebesgue partition of $\Om_k$ such that $U_i^j[\Om_k]\to U_i[\Om_k]$ as $j\to\infty$ for every $k$ and $i$. Therefore \eqref{compare with 2} can be further reduced to proving that, for each $k$ and $i$,
\begin{align}\label{suffices showing the delta bound}
\mbox{$\Om_k\cap E^{\zero}\cap \pa^* U_i[\Omega_k]$ is $\H^n$-contained in $\big\{\theta_*^n(\mu)\geq 2\,\Phi(\de_0-1/N)\big\}$}\,.
\end{align}
Since $k$ will be fixed from now on, we just set for brevity $U_i=U_i[\Om_k]$, $U_i^j=U_i^j[\Om_k]$, and consider the sets
\begin{eqnarray*}
&&
X^j_0=\{i:|U_i^j|>0\,,\,(U_i^j)^{\one} \subset E_j^\zero\}\,,\qquad X^j_1=\{i:|U_i^j|>0\,,\,(U_i^j)^{\one} \subset E_j^\one\}\,,
\\
&&
X_0=\{i:|U_i^j|>0\,,\,U_i^{\one} \subset E^\zero\}\,,\qquad X_1=\{i:|U_i|>0\,,\,U_i^{\one} \subset E^\one\}\,.
\end{eqnarray*}
Since $\{U_i^j\}_i$ is the essential partition of $\Om_k$ induced by $\Om\cap\pa^*E_j$, it follows by Federer's theorem and by the $\H^n$-containment of $\Om_k\cap\pa^*U_i^j$ into $\Om\cap\pa^*E_j$ that for each $i$ such that $|U_i^j|>0$ we either have $(U_i^j)^\one\subset E_j^\one$ or $(U_i^j)^\one\subset E_j^\zero$. Therefore, if we set
\[
X^j:=\{i:|U_i^j|>0\}\,,\qquad X:=\{i:|U_i|>0\}\,,
\]
then we have $X^j=X_0^j\cup X_1^j$ (with disjoint union); moreover, by $U_i^j\to U_i$ and $E_j\toloc E$ for each $i\in X_0$ there exists $j(i)$ such that $i\in  X_0^j$ for every $j\geq j(i)$, so that $X=X_0\cup X_1$ (also with disjoint union), and thus
\[
\mbox{$\{U_i\}_{i\in X_0}$ is a Lebesgue partition of $\Omega_k \cap E^\zero$}\,,
\]
from which we deduce
\begin{eqnarray}\nonumber
\Om_k\cap E^\zero\cap\bigcup_i \pa^* U_i&\ehn&\Om_k\cap E^\zero\cap\bigcup_{i,i'\in X\,,i\ne i'} \pa^* U_i\cap\pa^*U_{i'}
\\\label{reduced boundary breakdown}
&\ehn&\Om_k\cap E^\zero\cap\bigcup_{i,i'\in X_0\,,i\ne i'} \pa^* U_i\cap\pa^*U_{i'}\,.
\end{eqnarray}
By \eqref{reduced boundary breakdown}, the proof of \eqref{suffices showing the delta bound} can be further reduced to showing that, for every fixed $(i,i')\in X_0\times X_0$ with $i\neq i'$,
\begin{align}\label{suffices showing the delta bound 2}
\mbox{$\Om_k\cap\pa^* U_i \cap \pa^* U_{i'}$ is $\H^n$-contained in $\big\{\tnl(\mu)\geq 2\,\Phi(\de_0-1/N)\big\}$\,.}
\end{align}
To prove \eqref{suffices showing the delta bound 2}, let us fix $i\ne i'\in X_0$, and set
\[
G_1=U_i\,,\qquad G_2=U_{i'}\,,\qquad G_1^j = U_i^j\,,\qquad G_2^j = U_{i'}^j\,.
\]
By  \eqref{second trace lemma} and the $\H^n$-inclusion of $\Om_k\cap\pa^* G_m^j$ into $\Om_k\cap\partial^* E_j$ (recall indeed that $\{U_i^j\}_i$ is the essential partition of $\Om_k$ induced by $K_j=\Om\cap\pa^*E_j$), it follows that
\begin{equation}
  \label{martedi}
  \mbox{$\Omega_k \cap \partial^* G_m^j $ is $\H^n$-contained in $\{u_j^*=t_j\}$}\,,
\end{equation}
for each $m=1,2$; if, correspondingly, we set
\[
u_m^j=u_j\,1_{\Om_k\cap G_m^j}+t_j\,1_{\Om_k\setminus G_m^j}\,,
\]
then by $t_j\to t_0$, $G_m^j \to G_m$ ($m=1,2$), the inclusion $(G_1^j)^\one \cup (G_2^j)^\one \subset E_j^\zero$ for $j\ge j(i)$, and the fact that $E_j^\zero\to\{u=0\}$ as $j\to\infty$, we see that
\begin{align}\label{L1 conv of pieces}
    \mbox{$u_m^j \to t_0\,1_{\Omega_k\setminus G_m}$ in $L^1_{\rm loc}(\Om)$}\,,\qquad\mbox{as $j\to\infty$}\,.
\end{align}
Now, by \eqref{martedi} and the divergence theorem \eqref{div theorem} we have
\begin{equation}
  \label{endless}
  Du_m^j\mres\Om_k=\nabla(\Phi\circ u_j)\,\L^{n+1}\mres(\Om_k\cap G_m^j);
\end{equation}
indeed, for every $\vphi\in C^\infty_c(\Om_k)$ we have
\begin{eqnarray*}
  &&\int_{\Om_k}u_m^j\,\nabla\vphi=\int_{\Om_k\cap G_m^j}u_j\,\nabla\vphi+t_j\int_{\Om_k\setminus G_m^j}\nabla\vphi
  \\
  &=&-\int_{\Om_k\cap G_m^j}\vphi\,\nabla u_j+\int_{\Om_k\cap\pa^*G_m^j}u_j^*\vphi\nu_{G_m^j}-t_j\int_{\Om_k\cap\pa^* G_m^j}\vphi\nu_{G_m^j}
  =-\int_{\Om_k\cap G_m^j}\vphi\,\nabla u_j\,.
\end{eqnarray*}
By combining \eqref{L1 conv of pieces} and \eqref{endless} with the lower semicontinuity of the total variation we find that, for every open set $A\subset\Om_k$,
\begin{eqnarray}\nonumber
\liminf_{j\to\infty}\int_{A\cap G_m^j}|\nabla(\Phi\circ u_j)|
&=&\liminf_{j\to\infty}|D(\Phi\circ u_j)|(A)
\\\label{see in particular}
&\ge&\big|D\big(\Phi\circ(t_0\,1_{A\setminus G_m})\big)\big|(A)=\Phi(t_0)\,P(G_m;A)\,.
\end{eqnarray}
Adding up over $m=1,2$ with $A=B_s(x)\cc\Om_k$ and recalling that $t_0\ge \de_0-1/N$ we find
\begin{eqnarray*}
  \mu(\cl B_{s}(x))
    \!\!\!&\geq&\!\!\!\liminf_{j\to \infty}\int_{B_{s}(x)}|\nabla (\Phi\circ u_j)|
    \ge\sum_{m=1}^2\,\liminf_{j\to \infty}\int_{B_{s}(x)\cap G_m^j}|\nabla (\Phi\circ u_j)|
    \\
    \!\!\!&\geq&\!\!\!\Phi(\de_0-1/N)\, \sum_{m=1}^2 P(G_m;B_{s}(x))\,.
\end{eqnarray*}
As soon as $x\in\Om_k\cap \pa^* G_1 \cap \pa^* G_2=\Om_k\cap \pa^* U_i \cap \pa^* U_{i'}$, if we divide by $\om_n\,s^n$, and let $s\to 0^+$ then we conclude that $\tnl(\mu)(x) \geq 2\,\Phi(\de_0 - 1/N)$, thus proving \eqref{suffices showing the delta bound 2}.

\medskip

\noindent {\it Step two}: We prove that \eqref{diffused closure due fine N} implies \eqref{diffused closure due fine}. Indeed, let us consider the Radon measures $\l_N=\H^n\mres(\Om\cap\pa^*E)+2\,\H^n\mres(\RR(K_N)\cap E^\zero)$ and $\l=\H^n\mres(\Om\cap\pa^*E)+2\,\H^n\mres(\RR(K)\cap E^\zero)$. Since $\{K_N\}_N$ is decreasing in $N$ and $K=\bigcap_N K_N$ we easily see that $\RR(K)=\bigcap_N\RR(K_N)$, and thus deduce that $\l_N\weakstar\l$ as $N\to\infty$. By \eqref{diffused closure due fine N} and Theorem \ref{theorem first closure theorem} we thus conclude that $K_{\rm bk}^\l\cup E^\one$ is $\C$-spanning $\wire$, where
\begin{eqnarray*}
K_{\rm bk}^\l&=&(\Om\cap\pa^*E)\cup\big\{x\in\Om\cap E^\zero:\theta_*^n(\l)\ge 2\big\}
\\
&\ehn&(\Om\cap\pa^*E)\cup (\RR(K)\cap E^\zero)\ehn \RR(K)\setminus E^\one\,.
\end{eqnarray*}
We have thus proved that $\RR(K)\cup E^\one$ is $\C$-spanning $\wire$, which, by Remark \ref{remark RRK cup Eone is spanning too} implies \eqref{diffused closure due fine}.

\medskip

\noindent {\it Step three}: We finally prove\footnote{This result is not needed in the remaining parts of the paper, and its proof can be omitted on a first reading.} that \eqref{cool liminf} holds if $v_0=0$. We shall actually prove a much stronger property, namely, that for every $(\g,\Phi,T)\in\T(\C)$ it holds
\[
\liminf_{j\to\infty}\frac{\V(u_j;T)}{\e_j}>0\,.
\]
If this is not the case then we can find $(\g,\Phi,T)\in\T(\C)$ and a subsequence in $j$ such that $\V(u_j;T)={\rm o}(\e_j)$. Let $z_j=u_j\circ\Phi$, so that $z_j\in W^{1,2}(Y)$ where $Y=\SS^1\times B_1^n$. By the slicing theory for Sobolev functions, if we set $z_j^y(s)=z_j(s,y)$ for $(s,y)\in\SS^1\times B_1^n$, then we can find a Borel set $F$ which is $\H^n$-equivalent to $B_1^n$ and is such that, for each $y\in F$ and each $j$, $z_j^y\in W^{1,2}(\SS^1)$ with $z_j^y=u_j(\Phi(\cdot,y))$ on a set $S_j(y)$ which is $\H^1$-equivalent to $\SS^1$. Notice, in particular, that $z_j^y$ is absolutely continuous on $\SS^1$ for each $y\in F$.

\medskip

We claim that, given $j$, for $\H^n$-a.e. $y\in F$ it holds
\begin{equation}
  \label{prova}
  \mbox{there is $s\in\SS^1$ such that $z_j^y(s)>1/2$}\,.
\end{equation}
If not, there is $F^*\subset F$ with $\H^n(F^*)>0$ and such that, for every $y\in F^*$, $u_j(\Phi(\cdot,y))\le 1/2$ on a set $S_j[y]$ which is $\H^1$-equivalent to $\SS^1$. We thus find that $\Phi(\SS^1\times F^*)$ is a set of positive volume which is $\L^{n+1}$-contained in $\{u_j\le 1/2\}$. In particular, if $t>1/2$, then $\{u_j^*>t\}$ is $\H^n$-disjoint from $\Phi(\SS^1\times F^*)$. However, since $\{u_j^*>t\}$ is $\C$-spanning $\wire$, for $\H^1$-a.e. $s$, $T[s]\cup \{u_j^*>t\}$ is essentially disconnecting $T=\Phi(\SS^1\times B_1^n)$; in particular, $\Phi(\SS^1\times F^*)$ is essentially disconnected by $(T[s]\cup \{u_j^*>t\})\cap \Phi(\SS^1\times F^*)$ which, in turn, is $\H^n$-equivalent to $T[s]\cap\Phi(\SS^1\times F^*)=\Phi(\{s\}\times F^*)$. We have thus concluded that for $\H^1$-a.e. $s\in\SS^1$, $\Phi(\{s\}\times F^*)$ is essentially disconnecting $\Phi(\SS^1\times F^*)$, a contradiction.

\medskip

To conclude, up to modify $F$ on an $\H^n$-null set we can assume that \eqref{prova} holds for every $j$ and every $y\in F$. Next we set
\begin{equation}
  \label{def of Fj}
  F_j=\Big\{y\in F:\H^1(\{z_j^y>1/4\})\le \frac{\V(u_j;T)}M\Big\}\,,
\end{equation}
and notice that
\[
C(\Phi)\,\V(u_j;T)\ge \int_Y V(z_j)\ge \H^n(B_1^n\setminus F_j)\,V(1/4)\,\frac{\V(u_j;T)}M\,,
\]
so that, setting $M=\H^n(B_1^n)\,V(1/4)/2\,C(\Phi)$, we find
\begin{equation}
  \label{Fj short}
  \H^n(F_j)\ge \frac{\H^n(B_1^n)}2\,,\qquad\forall j\,.
\end{equation}
Now, if $y\in F_j$, then by \eqref{prova} there is $s_j^y\in\SS^1$ such that $z_j^y(s_j^y)\ge 1/2$, and $s_j^y$ must lie in a non-empty connected component $I_j^y$ of $\{z_j^y>1/4\}$; by \eqref{def of Fj}, it must be $\H^1(I_j^y)\le \V(u_j,T)/M$, so that, if $j$ is large enough to ensure $\V(u_j,T)/M<\H^1(\SS^1)$, then there must be $t_j^y\in \pa_{\SS^1}I_j^y$. In particular, $z_j^y(t_j^y)=1/4$ and $\dist_{\SS^1}(s_j^y,t_j^y)\le\H^1(I_j^y)\le\V(u_j;T)/M$, which, combined with $z_j^y(s_j^y)\ge 1/2$, give
\[
\int_{\SS^1}|(z_j^y)'|^2\ge\int_{[s_j(y),t_j(y)]}|(z_j^y)'|^2\ge\frac{|z_j^y(s_j^y)-z_j^y(t_j^y)|^2}{\dist_{\SS^1}(s_j^y,t_j^y)}
\ge\frac{M}{16\,\V(u_j;T)}\,.
\]
for every $y\in F_j$, and thus, recalling \eqref{Fj short},
\[
\int_Y|\nabla z_j|^2\ge \int_{F_j}d\H^n_y\int_{\SS^1}|(z_j^y)'|^2\ge\frac{\H^n(B_1^n)}2\,\frac{M}{16\,\V(u_j;T)}\,.
\]
Finally,
\[
\acj(u_j;\Om)\ge \e_j\,\int_\Om|\nabla u_j|^2\ge\frac{\e_j}{C(\Phi)}\,\int_Y|\nabla z_j|^2\ge c(\Phi,M)\,\frac{\e_j}{\V(u_j;T)}
\]
from which we find $\acj(u_j;\Om)\to\infty$ (a contradiction) as $\V(u_j;T)={\rm o}(\e_j)$ as $j\to\infty$.
\end{proof}

\section{Wet and dry soap films as limits of diffused interface soap films}\label{section limsup theorems} In the section we address the approximation in Allen--Cahn energy of minimizers $(K,E)$ of $\Psi_{\rm bk}(v)$ in both the wet ($v>0$) and dry ($v=0$) cases. We shall actually be able to work with a slightly more general class of pairs $(K,E)$, namely, we shall work in the class of those $(K,E)\in\K$, with
\begin{eqnarray} \label{def of K}
&&\K=\Big\{(K,E):\mbox{$K$ is relatively closed and $\H^n$-rectifiable in $\Om$, $E$ is open,}
\\\nonumber
&&\hspace{2.7cm}\mbox{$E$ has finite perimeter in $\Om$, and $\Om\cap\cl(\pa^*E)=\Om\cap\pa E\subset K$}\Big\}\,,
\end{eqnarray}
which also satisfy condition \eqref{good ones} and \eqref{density reco prop} below. In Theorem \ref{theorem approximation of gen soap} we address the wet case ($|E|>0$), while Theorem \ref{theorem approximation of gen soap vuoto} concerns the dry case ($|E|=0$).

\begin{theorem}[Diffused interface approximation of wet soap films]\label{theorem approximation of gen soap} Let $\wire\subset\R^{n+1}$ be compact and such that $\Om=\R^{n+1}\setminus\wire$ has smooth boundary, and let $\C$ be a spanning class for $\wire$. Let $(K,E)\in\KK$ be such that
\begin{equation}
  \label{good ones}
  |E|>0\,,\qquad \mbox{$K\cup E$ is bounded and $\C$-spanning $\wire$}\,,\qquad K\cap E^\one=\varnothing\,,
\end{equation}
and such that there are $c$ and $r_0$ positive such that
\begin{equation}\label{density reco prop}
\h ( K \cap B_r(x) ) \geq c\,r^n\,,
\end{equation}
for every $x\in\cl(K)$ and $r<r_0$.

\medskip

If $\nu:(0,1)\to(0,\infty)$ and $\delta:(0,1)\to (1/2,1)$ are such that, as $\e\to 0^+$,
\[
\nu(\e)\to |E|\,,\qquad \delta(\e)\to \delta_0\in [1/2,1]\,,
\]
then there are $\e_j\to 0^+$ and $\{\{u^j_\e\}_{\e<\e_j}\}_j\subset {\rm Lip}(\Om;[0,1])$ such that $\{u_\e^j>0\}\cc\R^{n+1}$ and
\begin{eqnarray}\label{ueps C spans}
  &&\mbox{$\{u_\e^j\geq \delta(\e)\}$ is $\C$-spanning $\wire$}\,,\qquad\forall \e<\e_j\,,
  \\\label{ueps has volume veps}
  &&\V(u_\e^j;\Om)=\nu(\e)\,,\qquad\hspace{2.35cm}\forall \e<\e_j\,,
  \\\label{ueps goes to E}
  &&\lim_{j\to\infty}\sup_{\e<\e_j}\|u_\e^j-1_E\|_{L^1(\Om)}=0\,,
  \\
  \label{reco}
  &&\lim_{j\to\infty}\sup_{\e<\e_j}\frac{\ac(u_\e^j;\Om)}2=P(E;\Omega) + 2\Phi(\delta_0)\H^n(K \cap E^\zero)\,.
\end{eqnarray}
Moreover, if $\ell$ is finite, then $\Upsilon(v,\e,\delta)$ is finite for every $v>0$, $\e>0$, and $\de\in(0,1]$, and
\begin{equation}
  \label{thus giving}
  \limsup_{\e\to 0^+}\sup_{\de\in(0,1]}\Upsilon(v,\e,\de)\le\Psi_{\rm bk}(v)\,.
\end{equation}
\end{theorem}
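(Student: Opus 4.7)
The strategy is a Modica--Mortola recovery construction tailored to the two summands of the target energy $P(E;\Om)+2\,\Phi(\delta_0)\,\H^n(K\cap E^\zero)$. Let $\bar u\in C^2(\R;(0,1))$ be the optimal one-dimensional profile, i.e.\ $\bar u'=\sqrt{W(\bar u)}$, $\bar u(0)=1/2$, so that $\bar u\to 0$ (resp.\ $1$) exponentially as $s\to-\infty$ (resp.\ $+\infty$) because $W''(0),W''(1)>0$; for $\delta\in(1/2,1)$ let $\bar v_\delta:[0,\infty)\to(0,\delta]$ be the decreasing profile with $\bar v_\delta(0)=\delta$ and $\bar v_\delta'=-\sqrt{W(\bar v_\delta)}$. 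Young's inequality yields the sharp 1D identities $\int_\R\mathrm{ac}_1(\bar u)=2$ and $\int_0^\infty \mathrm{ac}_1(\bar v_\delta)=2\,\Phi(\delta)$.

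Using $K\cap E^\one=\varnothing$ and Federer's theorem \eqref{federer theorem}, decompose $K$ $\H^n$-essentially as $(\Om\cap\pa^*E)\sqcup(K\cap E^\zero)$. Pick scales $\eta_\e,\beta_\e\to 0^+$ with $\e=\littleo(\beta_\e)$ and $\beta_\e=\littleo(\eta_\e)$. With $d_E$ the signed distance to $\pa E$ (positive inside $E$) and $d_K(x)=\dist(x,K\cap E^\zero)$, define a preliminary recovery density $\tilde u_\e$ by setting $\tilde u_\e=\bar u(d_E/\e)$ in the wet collar $\{|d_E|\le\eta_\e\}\setminus\{d_K\le\eta_\e\}$; $\tilde u_\e=\bar v_{\delta(\e)}(\max(d_K-\beta_\e,0)/\e)$ in the dry tube $\{d_K\le\eta_\e\}\setminus\{|d_E|\le\eta_\e\}$; $\tilde u_\e=1$ on $\{d_E>\eta_\e\}$; $\tilde u_\e=0$ elsewhere; and using a Lipschitz interpolation on the overlap $\{|d_E|\le\eta_\e\}\cap\{d_K\le\eta_\e\}$, which sits in an $\eta_\e$-neighborhood of the Plateau-border locus $\pa E\setminus\pa^*E$. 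The density bound \eqref{density reco prop} combined with the $\H^n$-rectifiability of $K$ make the tubular foliations $\{d_E=c\}$ and $\{d_K=c\}$ Hausdorff-converge to $\pa^*E$ and $K\cap E^\zero$ respectively as $c\to 0^+$; the coarea formula, combined with the sharp 1D identities, then yields
\[
\frac{\ac(\tilde u_\e;\Om)}{2}=P(E;\Om)+2\,\Phi(\delta(\e))\,\H^n(K\cap E^\zero)+\littleo(1)\,,
\]
together with $\tilde u_\e\to 1_E$ in $L^1(\Om)$ and $\V(\tilde u_\e;\Om)=|E|+\littleo(1)$. The super-level set $\{\tilde u_\e\ge\delta(\e)\}$ $\H^n$-contains a genuine Euclidean tubular neighborhood of $K\cap E^\zero$ of radius $\beta_\e$ plus the interior of $E$ outside an $\eta_\e$-collar; by the tubular characterization in Theorem \ref{theorem spanning with partition}, the $\C$-spanning property of $K\cup E^\one$ transfers to $\{\tilde u_\e\ge\delta(\e)\}$.

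To impose the exact volume $\V(u_\e;\Om)=\nu(\e)$ I would translate the wet profile by $\tau_\e=\littleo(1)$, replacing $d_E$ by $d_E-\tau_\e$; this shifts the effective boundary of $E$ inward/outward and modifies the volume continuously in $\tau_\e$ with derivative $P(E;\Om)+\littleo(1)\ne 0$, so that an intermediate-value argument selects a unique $\tau_\e=\littleo(1)$ producing the desired volume, while preserving both the $L^1$-convergence and the energy upper bound. A diagonal extraction over $\e$ produces the $\e_j$ and $\{u_\e^j\}_{\e<\e_j}$ satisfying \eqref{ueps C spans}--\eqref{reco}. For \eqref{thus giving}, apply the construction to a minimizer $(K,E)$ of $\Psi_{\rm bk}(v)$ furnished by Theorem \ref{theorem from MNR1}(i), whose Almgren-type almost-minimality delivers \eqref{density reco prop}, with $\nu\equiv v$ and arbitrary $\delta(\e)\in(1/2,1]$; since $\Phi(\delta)\le 1$ and the estimate is uniform in $\delta(\e)$, we deduce $\Upsilon(v,\e,\delta(\e))\le\Psi_{\rm bk}(v)+\littleo(1)$, uniform in $\delta\in(1/2,1]$, giving \eqref{thus giving}.

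The chief obstacle is controlling the overlap/gluing region near the Plateau-border locus, where by Theorem \ref{theorem from MNR1}(i)(c) the dry component of $K$ meets $\pa^*E$ tangentially along an $\H^{n-1}$-rectifiable set: the Lipschitz interpolation must be arranged so that its $\mathrm{ac}$-cost is of order $\bigo(\eta_\e\,\H^{n-1}(\pa E\setminus\pa^*E))=\littleo(1)$ and so that the overlap neither destroys the sharp $2\Phi(\delta)$-cost of the dry tube nor breaks the spanning at the threshold $\delta(\e)$. This is ultimately the reason why the regularity statement of Theorem \ref{theorem from MNR1}(i)(c) (tangential attachment plus $\H^{n-1}$-rectifiability of the border) plays a decisive role, and why the construction is clean only under the density-lower-bound hypothesis \eqref{density reco prop}.
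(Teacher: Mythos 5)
Your construction takes a genuinely different route from the paper's, and the difference is exactly where the gap opens up. You build the recovery sequence from \emph{two} distance functions---the signed distance to $\pa E$ composed with the full Allen--Cahn profile, and the unsigned distance to $K\cap E^\zero$ composed with the one-sided profile truncated at $\delta$---and then try to glue them in an $\eta_\e$-neighborhood of $\pa E\setminus\pa^*E$. The paper instead uses a \emph{single} signed distance to the closed set $F_t=\cl(K_t\cup E_t)$ (which satisfies $\mathrm{int}\,F_t=E_t$ and $\Om\cap\pa F_t=\Om\cap K_t$), composed with a \emph{single} asymmetric, compactly supported profile $\eta_M^{\delta(\e)}$ with $\eta_M^{\delta(\e)}(0)=\delta(\e)$. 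The two summands $P(E;\Om)$ and $2\Phi(\delta_0)\H^n(K\cap E^\zero)$ then drop out automatically from the one-sided Minkowski contents of $F_t$ furnished by Villa's result \cite[Prop.~4.13]{V}: the inner Minkowski content is $P(E_t;\Om)$, the outer is $P(E_t;\Om)+2\H^n(K_t\cap E_t^\zero)$, and the coarea formula pairs these with $\int_{-\infty}^0$ and $\int_0^\infty$ of the 1D energy density of $\eta_M^{\delta_0}$, which evaluate to $2(1-\Phi(\delta_0))$ and $2\Phi(\delta_0)$ respectively. There is no gluing to control. Moreover $\{u_\e^{M,t}\ge\delta(\e)\}$ equals $K_t\cup E_t$ exactly, so the spanning is immediate by a homotopy argument, whereas in your construction the superlevel set at $\delta(\e)$ need not contain $K\cap E^\zero$ (nor $E^\one$) across the interpolation region.

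The concrete gap is that your bound on the gluing cost, together with the assertion that the spanning survives the overlap, appeals to the structure of $\pa E\setminus\pa^*E$---the tangential attachment and local $\H^{n-1}$-rectifiability from Theorem~\ref{theorem from MNR1}(i)(c). That regularity is available only when $(K,E)$ is a \emph{minimizer} of $\Psi_{\rm bk}(v)$. Theorem~\ref{theorem approximation of gen soap} is stated for an arbitrary $(K,E)\in\KK$ satisfying only \eqref{good ones} and the density lower bound \eqref{density reco prop}; these hypotheses do not constrain $\pa E\setminus\pa^*E$ at all, so for the general statement your estimate has no foundation. A second, related imprecision: what your energy computation actually requires is a Minkowski-content identity for the level sets of $d_E$ and $d_K$, not Hausdorff convergence of the foliations---this is precisely why the paper invokes Villa's proposition and why the lower density bound \eqref{density reco prop} is a hypothesis (it is one of the conditions for that proposition to apply). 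Finally, the paper has to handle one technical point that your plan omits: to apply Villa's result on $\Om$ one needs $\H^n(\pa\Om\cap\pa F)=0$, which may fail; the paper's diffeomorphic stretch $g_t$ (step two of its proof) exists precisely to arrange this for a.e.\ $t$.
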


\begin{remark}[Choice of spanning set for small $\e$]\label{spanning choice remark}
  Theorem \ref{theorem approximation of gen soap} shows that given any choice of $\de(\e)$ such that $\de(\e)\to[1/2,\de_0)<1$ as $\e\to 0^+$ we are bound to find
  \[
  \limsup_{\e\to 0^+}\Upsilon(v,\e,\delta(\e))\leq  P(E;\Omega) + 2\,\Phi(\delta_0)\,\H^n(K \cap E^\zero)
  <\F_{\rm bk}(K,E) = \Psi_{\rm bk}(v)\,.
  \]
  where we have applied \eqref{reco} to a minimizer $(K,E)$ of $\Psi_{\rm bk}(v)$. This explains why, for recovering the expected/correct limit surface tension energy along the collapsed region one has to require $\de(\e)\to 1^-$ as $\e\to 0^+$.
\end{remark}

\begin{theorem}[Diffused interface approximation of dry soap films]\label{theorem approximation of gen soap vuoto} Let $\wire\subset\R^{n+1}$ be compact and such that $\Om=\R^{n+1}\setminus\wire$ has smooth boundary, and let $\C$ be a spanning class for $\wire$. Let $K$ relatively closed in $\Om$, $\H^n$-rectifiable, bounded, $\C$-spanning $\wire$, and such that there are $c$ and $r_0$ positive with
\begin{equation}\label{density reco prop vuoto}
\h ( K \cap B_r(x) ) \geq c\,r^n\,,
\end{equation}
for every $x\in\cl(K)$ and $r<r_0$.

\medskip

If $\nu:(0,1)\to(0,\infty)$ and $\delta:(0,1)\to (1/2,1)$ are such that, as $\e\to 0^+$,
\begin{eqnarray}\label{hp on veps 1}
\nu(\e)\to 0^+\,,\qquad \delta(\e) \to \delta_0\in [1/2,1]\,,\qquad\frac{\e}{\nu(\e)}\to 0\,,
\end{eqnarray}
then there are $\e_j\to 0^+$ and $\{\{u^j_\e\}_{\e<\e_j}\}_j\subset (W^{1,2}_{\rm loc}\cap{\rm Lip})(\Om;[0,1])$ such that
\begin{eqnarray}\label{ueps C spans vuoto}
  &&\mbox{$\{u_\e^j\geq \delta(\e)\}$ is $\C$-spanning $\wire$}\,,\qquad\forall \e<\e_j\,,
  \\\label{ueps has volume veps vuoto}
  &&\V(u_\e^j;\Om)=\nu(\e)\,,\qquad\hspace{2.35cm}\forall \e<\e_j\,,
  \\
  \label{reco vuoto}
  &&\limsup_{j\to\infty}\sup_{\e<\e_j}\frac{\ac(u_\e^j;\Om)}2\le 2\Phi(\delta_0)\,\H^n(K)\,.
\end{eqnarray}
\end{theorem}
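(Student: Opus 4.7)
The core tension in the construction is that the prescribed volume $\nu(\e)$ is much larger than $\e$, so a bump profile concentrated in an $O(\e)$-tube around $K$ cannot carry enough mass; yet thickening such a bump at height $\delta(\e)$ is impossible as long as $\delta_0<1$, because the potential term $W(\delta(\e))/\e$ on a plateau at that height blows up. My plan is therefore to superimpose two ingredients: a \emph{plateau at height $1$} concentrated on a small sub-region of $K$ of vanishing $\H^n$-measure (where $W=0$ makes the potential cost negligible), which carries the volume; and a \emph{bump profile peaking close to $\delta_0$} along all of $K$, enforcing the spanning condition and carrying the dominant surface energy $2\,\Phi(\delta_0)\,\H^n(K)$.

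Fix $x_0\in K$ and pick $\rho(\e)\to 0^+$ so that
\[
a(\e):=\H^n(K\cap B_{\rho(\e)}(x_0))\,\to\, 0^+\,,\qquad \nu(\e)\,\ll\, a(\e)\,\ll\,\nu(\e)/\e\,;
\]
such $\rho(\e)$ exists by the density lower bound \eqref{density reco prop vuoto} and the $\H^n$-rectifiability and boundedness of $K$ (which together make $\rho\mapsto\H^n(K\cap B_\rho(x_0))$ take all values in $[0,\H^n(K)]$ up to at most countably many jumps), and because $\e/\nu(\e)\to 0^+$ forces $\e^2\ll\nu(\e)$, so that window is nonempty (for instance $a(\e)\sim\sqrt{\nu(\e)}$ fits). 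Set $K':=K\cap B_{\rho(\e)}(x_0)$ and take $r(\e)\sim\nu(\e)/(2\,a(\e))$, so $r(\e)\to 0$ and $r(\e)/\e\to\infty$; by Minkowski-content asymptotics for $K'$ (justified by the density lower bound and rectifiability), the tube $E_\e:=\{\dist(\cdot,K')<r(\e)\}$ satisfies $|E_\e|=\nu(\e)+o(\nu(\e))$.

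Now define
\[
u_\e\,:=\,\max\bigl\{u^{pl}_\e\,,\,u^{b}_\e\bigr\}\,,\qquad u^{pl}_\e(x):=P\Bigl(\frac{\dist(x,E_\e)}{\e}\Bigr)\,,\qquad u^{b}_\e(x):=Q_\e\Bigl(\frac{\dist(x,K)}{\e}\Bigr)\,,
\]
where $P$ and $Q_\e$ are the Allen--Cahn optimal profiles solving $f'=-\sqrt{W(f)}$ with $P(0)=1$ and $Q_\e(0)=h(\e)\in(\delta(\e),1)$, $h(\e)\to\delta_0^+$. Then $u_\e\in\Lip(\Om;[0,1])$, and $\{u_\e\ge\delta(\e)\}$ contains the open $O(\e)$-neighborhood $\{u^b_\e\ge\delta(\e)\}$ of $K$; since $\C$-spanning is monotone under Borel inclusion (through the essential-disconnection characterization of Theorem \ref{theorem spanning with partition}), condition \eqref{ueps C spans vuoto} follows. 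The exact equality \eqref{ueps has volume veps vuoto} is then secured by an intermediate-value adjustment of $r(\e)$, using that the bump contributes only $O(\e)$ to the total volume. Finally, the pointwise a.e. dichotomy $\nabla u_\e\in\{\nabla u^{pl}_\e,\nabla u^{b}_\e\}$ (and the analogous splitting for $W(u_\e)$) on the complementary sets $\{u^{pl}_\e>u^b_\e\}$ and $\{u^{pl}_\e\le u^b_\e\}$ yields
\[
\frac{\ac(u_\e;\Om)}{2}\,\le\,\frac{\ac(u^{pl}_\e;\Om)}{2}\,+\,\frac{\ac(u^{b}_\e;\Om)}{2}\,;
\]
the first summand is the classical Modica--Mortola recovery energy for $E_\e$, tending to $P(E_\e)\approx 2\,a(\e)\to 0$, while the second, via coarea and the Minkowski-content asymptotics $\H^n(\{\dist(\cdot,K)=\e s\})\to 2\,\H^n(K)$ as $\e\to 0^+$, converges to $2\,\Phi(h(\e))\,\H^n(K)\to 2\,\Phi(\delta_0)\,\H^n(K)$. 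A diagonal extraction along a sequence $\e_j\to 0^+$ then produces $\{u^j_\e\}_{\e<\e_j}$ satisfying \eqref{reco vuoto} uniformly on $\e<\e_j$.

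The main obstacle is the simultaneous calibration of the three small parameters $\e$, $a(\e)$, $r(\e)$: one must pick $a(\e)$ in the narrow window $\nu(\e)\ll a(\e)\ll\nu(\e)/\e$, which is only nonempty because $\e/\nu(\e)\to 0$ forces $\e^2\ll\nu(\e)$; one needs Minkowski-content asymptotics for both $K'$ and $K$, both relying crucially on \eqref{density reco prop vuoto} together with $\H^n$-rectifiability; and the exact volume constraint cannot be absorbed into the energy remainder, since the plateau's and the bump's volume contributions are of genuinely different orders in $\e$, requiring a careful intermediate-value step to balance them without disturbing the energy bound.
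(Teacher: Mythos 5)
Your construction is genuinely different from the paper's, and the underlying intuition is right: the volume must sit on a plateau at height $1$ (where $W$ vanishes) while a bump at height near $\delta_0$ along $K$ enforces spanning and carries the surface energy. But there is a concrete gap in how you localize the plateau. You need $\rho(\e)\to 0^+$ with $a(\e)=\H^n(K\cap B_{\rho(\e)}(x_0))$ in the window $\nu(\e)\ll a(\e)\ll\nu(\e)/\e$, and you justify this by claiming $\rho\mapsto\H^n(K\cap B_\rho(x_0))$ ``takes all values in $[0,\H^n(K)]$ up to at most countably many jumps.'' That is not what monotonicity with countable jump set gives: at each jump the function \emph{skips} an entire interval, and nothing in your hypotheses -- rectifiability, finite $\H^n$-measure, the density \emph{lower} bound \eqref{density reco prop vuoto} -- controls the jump sizes $\H^n(K\cap\partial B_\rho(x_0))$ from above near $\rho=0$. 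The density lower bound forces $a(\rho)\to 0$, but does not prevent $a$ from jumping over the entire required window for all small $\rho$. A density upper bound, or a finer exhaustion of $K$ into pieces of controlled $\H^n$-measure, would be needed to close this.

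A second soft spot is the uniform Minkowski-content estimate along a moving family: you need $|\{\dist(\cdot,K'(\e))<r(\e)\}|=(2+o(1))\,r(\e)\,\H^n(K'(\e))$ and the Modica--Mortola recovery of the plateau energy, with errors controlled \emph{simultaneously} in $K'(\e)$, $r(\e)$, and $\e$. The Villa-type limits \eqref{villa conclusion} are stated for a fixed set, and propagating them uniformly along $K'(\e)$ (including the lateral boundary of $E_\e$ and the $r(\e)/\e\to\infty$ rate) requires work you have not supplied.

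For comparison, the paper avoids both issues by reusing the wet-case construction \eqref{def of ueps M t} with $E=\varnothing$ applied to the \emph{fixed} set $K$, which already produces the spanning and energy properties, and then restoring the missing volume by superposing a translated and dilated minimizer of the diffused isoperimetric problem $\Theta(\e,w_*)$ placed at distance $\beta\to\infty$ from $K$. The resulting energy surplus is bounded by $\sup_{0<w\le 1}\Theta(\e,w)\le C(n,W)\,\e^{n/(n+1)}$ (Theorem \ref{theorem diffused isop}), which vanishes uniformly in the volume $w$, and the exact volume is fixed by a one-parameter dilation; no localization of $K$ or moving Minkowski-content estimate is needed.
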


\begin{remark}
  {\rm For the necessity of the third condition in \eqref{hp on veps 1}, see conclusion \eqref{cool liminf} in Theorem \ref{theorem epsilon to zero compactness}.}
\end{remark}

In the proof of both theorems, as well as in the sequel, we will make use of the following elementary lemma:

\begin{lemma}\label{lemma volume fixing}
  {\bf (i):} If $A\subset\R^{n+1}$ is open, $X\in C^\infty_c(A;\R^{n+1})$, and $f_t(x)=x+t\,X(x)$, then there are positive constants $t_0$ and $C_0$ depending on $X$ only, such that, for every $|t|<t_0$, $f_t:A\to A$ is a diffeomorphism, and for every $w\in W^{1,2}(A;[0,1])$ we have
  \begin{eqnarray}\nonumber
    &&\Big|\ac(w\circ f_t;A)-\ac(w;A)
    \\\nonumber
    &&\hspace{1cm}-t\,\int_A\,\e\,|\nabla w|^2+\frac{W(w)}\e\,\Div\,X-2\,\e\,(\nabla w)\cdot\nabla X[\nabla w]\Big|\le C_0\,\ac(w;A)\,t^2\,,
    \\\label{pushing}
    &&\Big|\V(w\circ f_t;A)-\V(w;A)-t\,\int_A\,V(w)\,\Div\,X\Big|\le C_0\,\V(w;A)\,t^2\,,
  \end{eqnarray}
  {\bf (ii):} If $u\in L^1(A;[0,1])$ and $u$ is not constant on $A$, then there are positive constants $\eta_0$, $\beta_0$ and $C_0$ (depending on $A$ and $u$) such that for every $w\in W^{1,2}(A;[0,1])$ with $\|u-w\|_{L^1(A)}\le\beta_0$ and every $|\eta|<\eta_0$ there is a diffeomorphism $f_\eta^w:A\to A$ with $\{f\ne\id\}\cc A$ such that $w_\eta=w\circ f_\eta^w$ satisfies
\[
\V(w_\eta;A)=\V(w;A)+\eta\,,\qquad |\ac(w_\eta;A)-\ac(w;A)|\le C_0\,\ac(w;A)\,|\eta|\,.
\]
\end{lemma}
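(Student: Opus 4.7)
\textbf{Proof plan for Lemma \ref{lemma volume fixing}.}

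For part (i), the strategy is a direct Taylor expansion via change of variables. For $|t|$ small enough (depending only on $\|X\|_{C^1}$), $f_t = \id + tX$ is a diffeomorphism of $A$ with $\det \nabla f_t = 1 + t\,\Div X + \mathrm{O}(t^2)$ and $(\nabla f_t)^{-1} = I - t\,\nabla X + \mathrm{O}(t^2)$, where the remainders are controlled uniformly in terms of $\|X\|_{C^2}$. Setting $y = f_t(x)$ and writing
\[
\ac(w\circ f_t;A) = \int_A \Big\{\e\,|(\nabla f_t \circ f_t^{-1})^T \nabla w|^2 + \frac{W(w)}{\e}\Big\}\,|\det \nabla f_t^{-1}(y)|\,dy,
\]
one expands the integrand to second order in $t$: the first-order term produces exactly the linear combination $[\e|\nabla w|^2 + W(w)/\e]\Div X - 2\e\,\nabla w\cdot \nabla X[\nabla w]$, and the quadratic remainder is bounded pointwise by a multiple of the integrand itself, so that integrating gives $|R_t| \le C_0\,\ac(w;A)\,t^2$ with $C_0$ depending only on $\|X\|_{C^2}$ (the $C^{2,1}$-regularity of $W$ is not even needed here, only $W \in C^0[0,1]$). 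The volume expansion \eqref{pushing} follows from the identical Jacobian computation, with only the $\Div X$ term surviving at first order. Note that uniformity in $w$ is automatic because the remainder is expressed relative to $\ac(w;A)$ and $\V(w;A)$.

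For part (ii), the key non-degeneracy observation is that since $V$ is strictly increasing on $[0,1]$ and $u$ is not a.e.\ constant on $A$, the composition $V\circ u$ is not a.e.\ constant on $A$, hence $\nabla(V\circ u)\ne 0$ as a distribution on $A$. Consequently one may choose $X\in C_c^\infty(A;\R^{n+1})$ with $c_0 := \int_A V(u)\,\Div X \ne 0$; WLOG $c_0 > 0$. Since $V\in C^0[0,1]$ with $V(0)=0$ and $\Div X \in L^\infty(A)$, the functional $w \mapsto \int_A V(w)\,\Div X$ is continuous on $L^1(A;[0,1])$, so there exists $\beta_0 > 0$ (depending on $c_0$ and $\|\Div X\|_{L^\infty}$) with
\[
\int_A V(w)\,\Div X \,\ge\, c_0/2 \qquad\text{whenever } \|w-u\|_{L^1(A)} \le \beta_0.
\]

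Applying \eqref{pushing} from part (i) with this fixed $X$, for every such $w$ and every $|t|<t_0$ one has
\[
\V(w\circ f_t;A) = \V(w;A) + t \int_A V(w)\,\Div X + r_t, \qquad |r_t| \le C_0\,|A|\,t^2.
\]
Shrinking $t_0$ depending on $c_0$, $C_0$, $|A|$ ensures that $t\mapsto \V(w\circ f_t;A)$ is a strictly increasing $C^1$ function of $t$ on $(-t_0,t_0)$ with derivative bounded below by $c_0/4$ uniformly in such $w$. By the intermediate value theorem there is $\eta_0 > 0$ (depending only on $u$, $X$, $A$) such that for every $|\eta|<\eta_0$ a unique $t = t(\eta,w) \in (-t_0,t_0)$ solves $\V(w\circ f_t;A) = \V(w;A)+\eta$, with $|t| \le 4|\eta|/c_0$. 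Setting $f_\eta^w := f_{t(\eta,w)}$ and applying the energy bound from part (i),
\[
|\ac(w\circ f_t;A) - \ac(w;A)| \le 2C_0\,\ac(w;A)\,|t| \le C_0'\,\ac(w;A)\,|\eta|,
\]
which yields the claimed estimate (after renaming $C_0$).

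The only nontrivial point is the uniformity of $\eta_0$, $\beta_0$, $C_0$ over $w$ in an $L^1$-neighborhood of $u$. This is handled by the two observations that (a) the remainder estimates of part (i) are quantified relative to $\ac(w;A)$ and $\V(w;A)$, which enter the final bound, and (b) the first-order coefficient $\int_A V(w)\,\Div X$ depends continuously on $w$ in $L^1$, so its non-vanishing at $u$ persists on a full $L^1$-ball.
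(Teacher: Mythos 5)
Your proof is correct and follows the same standard approach the paper merely gestures at: it declares part~(i) "a standard consequence of the area formula" and, for part~(ii), cites the non-constancy of $V\circ u$ and refers to \cite[Lemma 29.13, Theorem 29.14]{maggiBOOK}, which you have correctly unpacked (choice of $X$ via distributional non-vanishing of $\nabla(V\circ u)$, $L^1$-continuity of $w\mapsto\int_A V(w)\Div X$, then IVT plus the first-order expansion). One small slip worth repairing: in part~(ii) you bound the remainder by $C_0\,|A|\,t^2$, which is useless in the intended application $A=\Om$ (unbounded); you should instead use your own estimate $|r_t|\le C_0\,\V(w;A)\,t^2$ from \eqref{pushing} together with the observation that $\V(w;A)\le \Lip(V)\,(\|u\|_{L^1(A)}+\beta_0)$ is uniformly bounded on the $L^1$-ball around $u$, which is what makes $t_0$ and $\eta_0$ uniform in $w$.
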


\begin{proof}
   Statement (i) is a standard consequence of the area formula. Concerning statement (ii), we notice that since $u$ is not constant in $A$ and $V$ is strictly increasing on $[0,1]$, it follows that $V(u)$ is not constant in $A$. In turn, this implies the existence of $X\in C^\infty_c(A;\R^{n+1})$ such that $\int_A\, V(u)\,\Div X>0$, and then one can argue as in \cite[Lemma 29.13, Theorem 29.14]{maggiBOOK}.
\end{proof}

We now prove Theorem \ref{theorem approximation of gen soap} and Theorem \ref{theorem approximation of gen soap vuoto}.

\begin{proof}[Proof of Theorem \ref{theorem approximation of gen soap}] We start by proving \eqref{thus giving}. By Theorem \ref{theorem from MNR1}, $\ell<\infty$ implies the existence of a minimizer $(K,E)$ of $\Psi_{\rm bk}(v)$ ($|E|=v>0$) which satisfies the assumptions in the first part of the statement. Since $u_\e^j$ (corresponding to $\nu(\e)\equiv v$ and $\delta(\e)\equiv 1$) is admissible in $\Upsilon(v,\e,\de)$ for every $\de\in(0,1]$ and $\e<\e_j$, we easily deduce \eqref{thus giving}. The rest of the proof is divided in four steps, 
to which we premise the following result and a preliminary remark related to it:

\medskip

\noindent \cite[Proposition 4.13]{V}: {\it If $F$ is a Borel set in $\R^{n+1}$ such that} (a): {\it $\pa F$ is countably $\H^n$-rectifiable, and} (b): {\it there are $c'$ and $r_0'$ positive such that
\begin{eqnarray}
\label{villa hp}
\H^n(B_r(x)\cap\pa F)\geq c'\, r^n\,,\qquad\forall x\in \partial F\,,r <r_0'\,,
\end{eqnarray}
then for every Borel set $A\subset\R^{n+1}$ with
\begin{equation}\label{villa set A}
\H^n(\partial F \cap \partial A)=0\,,
\end{equation}
it holds
\begin{equation}\label{villa conclusion}
\lim_{r\to 0^+} \frac{|(I_r(F)\setminus F)\cap A|}{r} = P(F;A) + 2\H^n (\partial F \cap F^\zero  \cap A)\,,
\end{equation}
where $I_r(F)=\{x\in\R^{n+1}:\dist(x,F)<r\}$.}

\medskip

\noindent {\it A remark on \cite[Proposition 4.13]{V}}: In this remark, let us assume $F$ is closed and satisfies (a) and (b). We first point out that the open set $F^c$ satisfies these assumptions also. This is immediate from the fact that a set and its complement share the same topological boundary. We now record three facts to be used later (in each of them, $A$ satisfies \eqref{villa set A}). First, by \eqref{villa conclusion} applied to $F$ and the fact that $|F\setminus \mathrm{int}\, F|=|\partial F|=0$,
\begin{eqnarray}\notag
\lim_{r\to 0^+} \frac{|(I_r(F)\setminus  F)\cap A|}{r} &=& P(F;A) + 2\H^n (\partial F \cap F^\zero  \cap A) \\ \label{villa conclusion f}
&=& P(\mathrm{int}\, F;A) + 2\H^n(\partial F \cap (\mathrm{int}\,F)^\zero \cap A)\,.
\end{eqnarray}
Second, applying \eqref{villa conclusion} to $F^c$ and again using $|F\setminus \mathrm{int}\, F|=0$, we find
\begin{eqnarray}\notag
   \lim_{r\to 0^+} \frac{|I_r(F^c)\cap (\mathrm{int}\, F)\cap A|}{r}&=& \lim_{r\to 0^+} \frac{|(I_r(F^c)\setminus F^c )\cap A|}{r} \\ \notag
   &=& P(F^c;A) + 2\H^n (\partial(F^c)  \cap (F^c)^\zero  \cap A) \\ \label{villa conclusion g}
   &=&P(\mathrm{int}\, F;A) + 2\H^n (\partial F \cap (\mathrm{int}\, F)^\one \cap A)\,.
\end{eqnarray}
Third, setting $\sdist_{\pa F}(x) = -\dist(x,\partial F)$ if $x\in F$, $\sdist_{\pa F}(x) = \dist(x,\partial F)$ if $x\in F^c$, and $f_\e(s)=\H^n(A\cap\{\sdist_{\pa F}=\e\,s\})$ for $s\in\R$, we claim that, in the limit as $\e\to 0^+$ in the sense of Radon measures on $\R$,
\begin{eqnarray}\label{remark on villa}
f_\e\,\L^1&\weakstar&
\big\{P(\mathrm{int}\, F;A)+2\,\H^n(\partial F\cap (\mathrm{int}\, F)^\zero\cap A)\big\}
\,\L^1\mres (0,\infty)\\ \notag
&&\qquad+ \big\{P(\mathrm{int}\, F;A)+2\,\H^n(\pa F \cap (\mathrm{int}\, F)^\one\cap A)\big\}
\,\L^1\mres (-\infty,0)\,.
\end{eqnarray}
Indeed, setting for brevity $\a=\big\{P(\mathrm{int}\, F;A)+2\,\H^n(\partial F\cap (\mathrm{int}\, F)^\zero\cap A)\big\}$, we deduce from \eqref{villa conclusion g} that, that for every $b>0$,
\begin{eqnarray*}
  \int_0^b\,f_\e&=&\int_0^b\H^n(A\cap\{\sdist_{\pa F}=\e\,s\})\,ds
  =\frac1\e\,\int_0^{\e\,b}\,\H^n(A\cap\{\sdist_{\pa F}=t\})\,dt
  \\
  &=&b\,\frac{|(I_{\e\,b}(F)\setminus F)\cap A|}{\e\,b}\to b\,\a\,.
\end{eqnarray*}
In particular, for every $(a,b)\subset (0,\infty)$ we have $\int_a^b\,f_\e\to (b-a)\,\a$ as $\e\to 0^+$, and a similar argument based on \eqref{villa conclusion f} completes the proof of \eqref{remark on villa}.

\medskip

\noindent {\it Step one}: We prove that $F=\cl(E \cup K)=\cl E \cup \cl K$ satisfies assumptions (a) and (b) of \cite[Proposition 4.13]{V}. To prove this, we begin by showing that
\begin{align}\label{compute partial F 2}
    \partial F = \cl K \cup \partial E \subset \cl \Omega\,.
\end{align}
The containment in $\cl \Om$ is trivial by $K\cup E\subset \Omega$, so we compute $\partial F$. Towards this end, by the fact (since $E$ is open) that $E$, $\partial E$, and $(\cl E)^c$ partition $\mathbb{R}^{n+1}$, we decompose
\begin{align}\label{decompose partial F}
    \partial F = (\partial F \cap E) \cup (\partial F \cap \partial E) \cup (\partial F \setminus \cl E)
\end{align}
and evaluate each term individually. First, since $E$ is open and $E\subset F$,
\begin{align}\label{nothing in E}
    \partial F \cap E = \varnothing\,.
\end{align}
Second, we claim that $\partial E \subset \partial F$, so that
\begin{align}\label{part e in part f}
    \partial E \cap \partial F = \partial E\,.
\end{align}
To prove $\partial E \subset \partial F$, we must show that if $x\in \partial E$, then
\begin{align}\label{boundary def}
    B_r(x) \cap F \neq \varnothing\quad\textup{and}\quad B_r(x) \setminus F\neq \varnothing\quad \forall r>0\,.
\end{align}
Indeed, if $x\in \partial E$, then $B_r(x) \cap E\neq \varnothing$ for all $r>0$ by definition of $\partial E$, and so $E\subset F$ gives the first condition in \eqref{boundary def}. For the second, we first claim $x\notin E^\one$. Indeed, since $x\in\partial E \subset \partial \Omega \cup K$ and $\partial \Omega \cap E^\one=\varnothing=K \cap E^\one$ (due to $E\subset \Omega$, the smoothness of $\pa \Om$, and our assumption on $K$), we have $x\notin E^\one$. Therefore, noting that $E^\one = F^\one$ (by $|F \setminus E|=|\cl K \cup \partial E| \leq |K \cup \partial \Omega|=0$), we find that $x\notin F^\one$. In turn, this implies that $B_r(x) \setminus F\neq \varnothing$ for all $r>0$ as desired, finishing the proof of \eqref{boundary def} and thus \eqref{part e in part f}. For the last term in \eqref{decompose partial F}, we claim that
\begin{align}\label{boundary outside e}
    \partial F \setminus \cl E = \cl K \setminus \cl E\,.
\end{align}
To see this, we note that by the definition of $F$, $\partial F \setminus \cl E = \partial (\cl K) \setminus \cl E$. Now since $|\cl K|\leq |K| + |\partial \Omega|=0$, we have $\mathrm{int}\,(\cl K)=\varnothing$, and thus
\begin{align}\notag
    \partial (\cl K) = \cl K \setminus \mathrm{int}\,(\cl K) = \cl K\,.
\end{align}
Thus $\partial F \setminus \cl E = \cl K \setminus \cl E$, which is \eqref{boundary outside e}. We may now conclude the proof of \eqref{compute partial F 2}. By \eqref{decompose partial F}, \eqref{nothing in E}, \eqref{part e in part f}, and \eqref{boundary outside e}, $\partial F = \partial E \cup (\cl K \setminus \cl E)=\pa E \cup (\cl K \setminus \mathrm{int}\, E)$, so we would be done with \eqref{compute partial F 2} if $\cl K \cap \mathrm{int}\,E = \varnothing$. Now since $E$ is open, $\cl K \cap \mathrm{int}\, E= \varnothing$ if and only if $K \cap E=\varnothing$. The latter condition holds since, by the openness of $E$ and our assumption $E^\one \cap K = \varnothing$, $E \cap K \subset E^\one\cap K= \varnothing$.
\par
\medskip
As a first consequence of \eqref{compute partial F 2} and $E \cap K=\varnothing$, we have
\begin{align}\notag
    \mathrm{int}\, F = \cl F \setminus \partial F &= (\cl K \cup \cl E) \setminus (\cl K \cup \partial E) \\ \label{computation of int F}
    &= \cl E \setminus (\cl K \cup \partial E) = \mathrm{int}\,E \setminus \cl K=E \setminus K=E\,.
\end{align}
Also \eqref{compute partial F 2}, the relative closedness of $K$ in $\Omega$, and the containment $\Om \cap \partial E \subset K$ give
\begin{equation}
\label{bordi}
\Om\cap\pa F=\Om \cap (\cl K \cup \partial E)=\Om\cap (K\cup \partial E) = \Om \cap K\,.
\end{equation}
Since $\pa\Om$ is $\H^n$-rectifiable, we deduce from \eqref{bordi} and the fact that $K$ is $\H^n$-rectifiable that $\partial F=\cl K\cup \partial E$ is $\H^n$-rectifiable, and thus satisfies (a). The validity of \eqref{villa hp} from (b) at every $x\in \pa F\cap\cl(K)$ is a consequence of assumption \eqref{density reco prop}. The validity of \eqref{villa hp} at $x\in \pa F\setminus\cl(K)\subset \pa E\cap\pa \Om$ can be deduced as follows: with $c$ and $r_0$ as in \eqref{density reco prop}, if $r<r_0$ and there is $y\in B_{r/2}(x)\cap\cl(K)\ne\varnothing$, then by \eqref{density reco prop} and $K\subset \partial F$
\[
\H^n(B_r(x)\cap\pa F)\ge\H^n(B_{r/2}(y)\cap K)\ge c\,(r/2)^n\,;
\]
if, instead, $B_{r/2}(x)\cap\cl(K)=\varnothing$, then $\Om\cap\pa E\subset K$ and $E$ open imply that $B_{r/2}(x)\cap\pa F=B_{r/2}(x)\cap\pa\Om$, and we conclude by the fact that $\H^n(B_r(x)\cap\pa\Om)\ge c_\Om\,r^n$ for every $r<r_\Om$, provided $r_\Om$ and $c_\Om$ are suitable positive constants.

\medskip

\noindent {\it Step two}: We would like to apply \cite[Proposition 4.13]{V} to $F$ and $A=\Om$, although doing so would require checking that $\H^n(\pa\Om\cap\pa F)=0$, something that is potentially false (e.g., if $\H^n(\pa\Om\cap\pa E)>0$). To avoid this difficulty, we ``slightly stretch'' $K$ and $E$ as follows. Since $\Om$ has a (bounded) smooth boundary, there is $t_0>0$ such that if we define $g_t:\Om\to \Om_t:=I_t(\Om)$, $t\in(0,t_0)$, by setting $g_t(x)=x$ for $x\in\Om\cap\{\dist_{\pa\Om}>t\}$ and
\[
g_t(x)=x+\big(\dist_{\pa\Om}(x)-t\big)\,\nabla\dist_{\pa\Om}(x)\,,\qquad x\in\Om\cap\{\dist_{\pa\Om}<t\}\,,
\]
then $g_t$ is diffeomorphism with $g_t\to\id$ and $g_t^{-1}\to\id$ as $t\to 0^+$ (in every $C^k$-norm). Setting $K_t=g_t(K)$ and $E_t=g_t(E)$ we see that $F_t=\cl (K_t \cup E_t)$ satisfies assumptions (a) and (b) of \cite[Proposition 4.13]{V}. Also by \eqref{compute partial F 2} and the fact that $g_t^{-1}(\Omega) \subset \Omega$,
\begin{align}\label{computation of pa Ft}
    \partial (F_t) \cap \Omega = g_t((\cl K \cup \partial E) \cap g_t^{-1}(\Omega)) = g_t(K \cap g_t^{-1}(\Omega))= K_t \cap \Omega\,,
\end{align}
and by \eqref{computation of int F},
\begin{align}\label{computation of int Ft}
    \mathrm{int}\, F_t = E_t\,.
\end{align}
Moreover, since $\dist(g_t(x),\pa\Om)=2\,\dist(x,\pa\Om)-t$ for every $x\in\Om\cap\{\dist_{\pa\Om}<t\}$, we see that
\[
g_t^{-1}\big(\pa \Om\cap\pa (F_t)\big)=g_t^{-1}(\pa\Om)\cap\pa F\subset \big\{\dist_{\pa\Om}=t/2\big\}\cap \pa K \,.
\]
Since $\H^n(K\cap\{\dist_{\pa\Om}=t/2\})=0$ for a.e. $t\in(0,t_0)$ and $g_t^{-1}$ is a Lipschitz map we conclude that
\begin{equation}
  \label{applica villa}
  \H^n\big(\pa\Om\cap\pa (F_t)\big)=0\,,\qquad\mbox{for a.e. $t\in(0,t_0)$}\,.
\end{equation}
As a consequence, we may apply \eqref{villa conclusion f} to $F_t$ with $A=\Om$: by using \eqref{computation of pa Ft} and \eqref{computation of int Ft} to rewrite $\partial F_t \cap \Omega$ and $\mathrm{int}\, F_t$, respectively, for a.e. $t\in(0,t_0)$ and as $r\to 0^+$, we obtain
\begin{eqnarray}
    \nonumber
    \big|\big(I_r(F_t)\setminus F_t\big)\cap \Om\big|
    &=& r\,\big\{P(E_t;\Om) + 2\,\H^n (\Om\cap K_t \cap E_t^\zero)\big\}+r\,{\rm o}_t(1)
    \\\nonumber
    &=&r\,\F_{\rm bk}(K_t,E_t)+r\,{\rm o}_t(1)
    \\
    \label{thanks to villa}
    &=&
    r\,\big(1+\om(t)\big)\F_{\rm bk}(K,E)+r\,\om_t(r)\,,
\end{eqnarray}
where $\om(t)\to 0$ as $t\to 0^+$ and $\om_t(r)\to 0$ as $r\to 0^+$, and where in the last identity we have used the area formula and $\|g_t-\id\|_{C^1}\le C(\Om)\,t$. By the same logic applied to $F_t^c$ with \eqref{villa conclusion g} replacing \eqref{villa conclusion f}, we also have
\begin{eqnarray}\notag
    \big|I_r(F_t^c)\cap F_t \cap \Om\big|&=&
    r\,\big(1+\om(t)\big)[P(E;\Omega) + 2\,\H^n(K \cap E^\one\cap \Omega)]+r\,\om_t(r) \\ \label{thanks to villa 2}
    &=& r\,\big(1+\om(t)\big)P(E;\Omega) +r\,\om_t(r)\,,
\end{eqnarray}
where in the second line we have used our assumption that $K \cap E^\one=\varnothing$. Finally, we notice that
\begin{equation}
  \label{KtEt spans}
  \mbox{$\Omega \cap F_t = K_t\cup E_t$ is $\C$-spanning $\wire$}\,;
\end{equation}
indeed, for every $\g\in\C$, $\g(\SS^1)\cap(K_t\cup E_t)=g_t((g_t^{-1}\circ\g)(\SS^1)\cap(K\cup E))$ where this last set is non-empty since $g_t^{-1}\circ\g$ is homotopic to $\g$ relatively to $\Om$.

\medskip

\noindent {\it Step three}: We prove that for a.e. $t<t_0$ ($t_0$ depending on $(K,E)$), every $M>0$, and every $\e>0$ we can define
\[
u_\e^{M,t} \in{\rm Lip}(\Om;[0,1])\,,\qquad \{u_\e^{M,t}>0\}\cc\R^{n+1}\,,
\]
(see \eqref{def of ueps M t}) in such a way that $\{u_\e^{M,t}\geq \delta(\e)\}$ is $\C$-spanning $\wire$ and
\begin{eqnarray}
\label{uepsMs vol}
  \big|\V(u_\e^{M,t};\Om)-|E|\big|&\le& C\,\big(t\,P(E)+\e\,M\big)\,,
\\\label{uepsMs L1}
  \int_\Om|u_\e^{M,t}-1_E|&\le&  C\,\big(t\,P(E)+\e\,M\big)\,,
\\\label{uepsMs aceps}
 \Big|\frac{\ac(u_\e^{M,t};\Om)}2-P(E;\Omega) - 2\,\Phi(\delta_0)\,\H^n(K \cap E^\zero)\Big|&\le&\om(1/M)+\om(t)+\om_{t,M}(\e)\,,
\end{eqnarray}
where $C$ depends on the data of the theorem, and where $\om(r)$ ($\om_a(r)$) denotes a generic non-negative increasing function such that the limit $\om(r)\to 0$ ($\om_a(r)\to 0$) as $r\to 0^+$ holds at a rate that depends on the data of the theorem (and on the parameter $a$). The construction goes as follows. By the normalization \eqref{W normalization} of $W$, the  Allen--Cahn profile $\eta\in C^\infty(\R;(0,1))$ defined by $-\eta'= \sqrt{W(\eta)}$ on $\R$, $\eta(0)=1/2$, $\eta(-\infty)=1$ and $\eta(+\infty)=0$, is such that
\[
\int_\R (\eta')^2 + W(\eta)= 2\, \int_\R\,\sqrt{W(\eta)}\,|\eta'| = 2\,\int_0^1 \sqrt{W} = 2\,.
\]
Starting from $\eta$, for every $M>0$ we can easily construct $\eta_M\in C^\infty(\R;[0,1])$ with $\{\eta_M=1\}=(-\infty,0]$, $\{\eta_M=0\}=[M,\infty)$ and such that
\begin{eqnarray}
  \label{etaM energy 1}
  \int_{\eta_M^{-1}(\delta_0)}^\infty (\eta_M')^2+W(\eta_M)&=& 2\Phi(\delta_0)+\om(1/M) \\ \label{etaM energy2}
  \int^{\eta_M^{-1}(\delta_0)}_{-\infty} (\eta_M')^2+W(\eta_M)&=& 2(1-\Phi(\delta_0))+\om(1/M)\,,
\end{eqnarray}
where $\om(1/M)\to 0$ as $M\to\infty$. Let $\eta_M^{\delta(\e)}$ be the translation of $\eta_M$ such that $\eta_M^{\delta(\e)}(0)=\delta(\e)$, and similarly for $\delta_0$. Corresponding to $\e$, $M$, and $t$ positive, with $t<t_0$, we now set
\begin{equation}
  \label{def of ueps M t}
  u_\e^{M,t} (x)=\eta_M^{\delta(\e)}\,\Big(\frac{\sdist_{F_t}(x)}\e\Big)\,,\qquad x\in\Om\,.
\end{equation}
In this way, $u_\e^{M,t}\in {\rm Lip}(\Om;[0,1])$ with compact support on $\R^{n+1}$. Since $\{u_\e^{M,t}\geq \delta(\e)\}=\Om\cap F_t=K_t \cup E_t$, by \eqref{KtEt spans} we deduce that $\{u_\e^{M,t}\geq \delta(\e)\}$ is $\C$-spanning $\wire$. Next we notice that since $0=V(0)\le V(t)\le V(1)=1$ for every $t\in[0,1]$, by combining the area formula (to deduce $|E\Delta E_t|\le C\,P(E)\,t$) with \eqref{thanks to villa} we find
\begin{eqnarray}\nonumber
\V(u_\e^{M,t};\Omega \setminus E_t)
  &\le& \big|\big(I_{\e\,M}(F_t)\setminus F_t\big)\cap\Om\big|
  \\ \label{6.17 1}
  &\le&\e\,M\,\big(1+\om(t)\big)\F_{\rm bk}(K,E)+\e\,M\,\om_t(\e\,M)\,.\,
\end{eqnarray}
Similarly, using instead \eqref{thanks to villa 2}, we find
\begin{eqnarray}\nonumber
\hspace{-1cm}\V(u_\e^{M,t};\Omega \cap E_t)
  &=& |E_t|-\mathrm{O}\big(\big|I_{\e\,M}(F_t^c)\cap F_t\cap\Om\big|\big)
  \\ \label{6.17 2}
  &=&|E|-\mathrm{O}\big(C\,P(E)\,t-\e\,M\,\big(1+\om(t)\big)P(E;\Omega) -\e\,M\,\om_t(\e\,M)\big)\,.
\end{eqnarray}
Together, \eqref{6.17 1} and \eqref{6.17 2} give \eqref{uepsMs vol}; we deduce \eqref{uepsMs L1} similarly. Finally, by the coarea formula,
\begin{eqnarray*}
  \frac{\ac(u_\e^{M,t};\Om)}{2}&=&
  \frac{1}{2}\int_\R\,
  \frac{\big[(\eta_M^{\delta(\e)})'(z/\e)\big]^2+W(\eta_M^{\delta(\e)}(z/\e))}\e\,\,\,\H^n\big(\Om\cap\{\sdist_{F_t}=z\}\big)\,dz
   \\
  &=&
  \frac{1}{2}\int_\R\,
  \big[\big((\eta_M^{\delta(\e)})'\big)^2+W(\eta_M^{\delta(\e)})\big]\,  f_\e^t\,
\end{eqnarray*}
where we have set for brevity
\[
f_\e^t(s)=\H^n\big(\Om\cap\{\sdist_{F_t}=\e\,s\}\big)\,,\qquad s>0\,.
\]
Since, by \eqref{remark on villa} and \eqref{computation of pa Ft}-\eqref{computation of int Ft}, for a.e. $t\in(0,t_0)$, as $\e\to 0^+$,
\begin{eqnarray}\notag
f_\e^t\,\L^1\mres \R&\weakstar&
\big\{P(\mathrm{int}\,F_t;\Omega)+2\,\H^n(\partial F_t\cap (\mathrm{int}\,F_t)^\zero\cap \Omega)\big\}
\,\L^1\mres (0,\infty)\\ \notag
&&\qquad+ \big\{P(\mathrm{int}\,F_t;\Omega)+2\,\H^n(\pa F_t \cap (\mathrm{int}\,F_t)^\one\cap \Omega)\big\}
\,\L^1\mres (-\infty,0)\\ \notag
&=&\big\{P(E_t;\Omega)+2\,\H^n(K_t\cap E_t^\zero\cap \Omega)\big\}
\,\L^1\mres (0,\infty)\\ \notag
&&\qquad+ \big\{P(E_t;\Omega)+2\,\H^n(K_t \cap E_t^\one\cap \Omega)\big\}
\,\L^1\mres (-\infty,0)\quad\mbox{as $\e\to 0^+$}
\end{eqnarray}
and $\big((\eta_M^{\delta(\e)})'\big)^2+W(\eta_M^{\delta(\e)})$ converges uniformly to $\big((\eta_M^{\delta_0})'\big)^2+W(\eta_M^{\delta_0})$ in $C_c^\infty(\R;[0,1])$ as $\e\to 0^+$, we find in particular that
\begin{eqnarray} \notag
 &&\frac{1}{2}\int_\R\,
\big[\big((\eta_M^{\delta(\e)})'\big)^2+W(\eta_M^{\delta(\e)})\big]\,  f_\e^t\,ds \\ \notag
  &&\qquad=\frac{P(E_t;\Omega)+2\,\H^n(K_t\cap E_t^\zero\cap \Omega)}{2}\int_0^\infty\big((\eta_M^{\delta_0})'\big)^2+W(\eta_M^{\delta_0})\,ds+\om_{t,M}(\e) \\ \notag
  &&\qquad \qquad +\frac{P(E_t;\Omega)+2\,\H^n(K_t \cap E_t^\one\cap \Omega)}{2}\int_{-\infty}^0\big((\eta_M^{\delta_0})'\big)^2+W(\eta_M^{\delta_0})\,ds+\om_{t,M}(\e)\\ \notag
  &&\qquad = \F_{\rm bk}(K_t,E_t)[\Phi(\delta_0)+\om(1/M)+\om_{t,M}(\e)] \\ \notag
  &&\qquad \qquad +\big\{P(E_t;\Omega)+2\,\H^n(K_t\cap E_t^\one\cap \Omega)\big\}[1-\Phi(\delta_0)+\om(1/M)+\om_{t,M}(\e)]\,,
\end{eqnarray}
where $\om_{t,M}(\e)\to 0$ as $\e\to 0^+$ and we have used \eqref{etaM energy 1}-\eqref{etaM energy2}. Since, as noticed in \eqref{thanks to villa} and \eqref{thanks to villa 2},
\begin{eqnarray}\notag
 \F_{\rm bk}(K_t,E_t)&=&(1+\om(t))\,\F_{\rm bk}(K,E)\quad\textup{and}   \\ \notag
 P(E_t;\Omega)+2\,\H^n(K_t\cap E_t^\one\cap \Omega)&=& (1+\om(t))P(E;\Omega)
\end{eqnarray}
we conclude that \eqref{uepsMs aceps} holds.

\medskip

\noindent {\it Step four}: We conclude the proof. Given $j\in\N$, we can find $t_j\to 0^+$ and $M_j\to\infty$ (as $j\to\infty$) depending on the data of the problem, and then $\e_j$ depending on $t_j$, $M_j$ and the data of the problem, such that, $\e_j\to 0^+$ as $j\to\infty$ and, for every $\e<\e_j$,
\[
w_\e^j=u_\e^{M_j,t_j}\in{\rm Lip}(\Om;[0,1])
\]
with $\{w_\e^j>0\}\cc\R^{n+1}$, $\{w_\e^j=1\}$ $\C$-spanning $\wire$, and
\begin{eqnarray}\notag
&&\max\Big\{\big|\V(w_\e^j;\Om)-|E|\big|\,,\int_\Om|w_\e^j-1_E|\,,\Big|\frac{\ac(w_\e^j;\Om)}2- P(E;\Omega) - 2\,\Phi(\delta)\,
\H^n(K \cap E^\zero)\Big|\Big\} \\ \label{end j}
&&\qquad \qquad \le\frac1j\,.
\end{eqnarray}
Next, {\it we use $|E|>0$} to deduce that $u=1_E$ is non-constant in the open set $A=\Om$: by taking $j$ large enough we can thus apply Lemma \ref{lemma volume fixing} to $w=w_\e^j$ and with $\eta=\nu(\e)-\V(w_\e^j;\Om)$ for every $\e<\e_j$; denoting by $f_j$ the resulting diffeomorphism, we set $u_\e^j=w_\e^j\circ f_j$, and notice that, by \eqref{end j}, $u_\e^j$ satisfies \eqref{ueps C spans}, \eqref{ueps has volume veps}, \eqref{ueps goes to E}, and \eqref{reco}.
\end{proof}

Next we turn to the proof of Theorem \ref{theorem approximation of gen soap vuoto}. This will be the first situation where we make use of the properties of the minimizers in the diffused Euclidean isoperimetric problem $\Theta(v,\e)$, consisting of the minimization of $\ac(u):=\ac(u;\R^{n+1})$ under the volume constraint $\V(u):=\V(u;\R^{n+1})=v$; see Appendix \ref{section diffuse isop} for more details.

\begin{proof}[Proof of Theorem \ref{theorem approximation of gen soap vuoto}] Since the assumption ``$|E|>0$'' was not used in the proof of Theorem \ref{theorem approximation of gen soap} until the definition of $u_\e^j$ in step four, we notice that, for a.e. $t\in(0,t_0)$ and for every $M$ and $\e$ positive, if we define $u_\e^{M,t}$ as in \eqref{def of ueps M t}, then \eqref{uepsMs vol} and \eqref{uepsMs aceps} combined with $E=\varnothing$ give
\begin{eqnarray}
\label{uepsMs vol zero}
  \V(u_\e^{M,t};\Om)&\le& C\,\e\,M\,,
\\\label{uepsMs aceps zero}
 \Big|\frac{\ac(u_\e^{M,t};\Om)}2-2\,\Phi(\delta_0)\,\H^n(K)\Big|&\le&\om(1/M)+\om(t)+\om_{t,M}(\e)\,,
\end{eqnarray}
Given $M>0$, there is $\e_*=\e_*(M)>0$ be such that $C\,\e\,M<\nu(\e)$ for every $\e<\e_*$. In particular, for a.e. $t\in(0,t_0)$ and every $M>0$ and $\e<\e_*$ we have
\begin{eqnarray}
\label{uepsMs vol zero j}
  \V(u_\e^{M,t};\Om)\le \nu(\e)
\end{eqnarray}
and \eqref{uepsMs aceps zero}. Now, given $w>0$, let us denote by $\zeta_{\e,w}$ the unique minimizer of the diffused isoperimetric problem
$\Theta(w,\e)$ (see Appendix \ref{section diffuse isop}), and let us consider for a.e. $t\in(0,t_0)$, $M>0$, $\e<\e_*(M)$, $\a>0$, and $\b\in\R$
\[
u_\e^{M,t,\a,\b}=\max\big\{u_\e^{M,t},\zeta_{\e,w_*}\circ \l_\a\circ \tau_\beta\big\}\,,\quad x\in\Om\,,
\]
where $\tau$ is a fixed unit vector in $\R^{n+1}$, $\tau_\beta(x)=x-\beta\,\tau$ and $\l_\a(x)=x/\a$ ($x\in\R^{n+1}$), and where we have set $w_*=w_*(\e,M,t)=\e+\nu(\e)-\V(u_\e^{M,t};\Om)$; we immediately see that
\[
u_\e^{M,t,\a,\b}\in (W_{\rm loc}^{1,2}\cap {\rm Lip})(\Om;[0,1])\,,
\]
and that
\[
\mbox{$\{u_\e^{M,t,\a,\b}\geq\delta(\e)\}$ is $\C$-spanning $\wire$}\,,
\]
since it contains $\{u_\e^{M,t}\geq \delta(\e)\}$, as well as that
\begin{eqnarray}\nonumber
\ac\big(u_\e^{M,t,\a,\b};\Om\big)&\le& \ac(u_\e^{M,t};\Om)+\ac(\zeta_{\e,w_*}\circ\l_\a;\Om)
\\\label{energy1}
&\le&
\ac(u_\e^{M,t};\Om)+\big\{1+C(n,W)\,|\a-1|\big\}\, \Theta(\e,w_*)\,.
\end{eqnarray}
Now, since $\{u_\e^{M,t}>0\}$ is compactly contained in $\R^{n+1}$, we have, uniformly on $|\a-1|<1/2$,
\begin{eqnarray*}
\lim_{\b\to\infty}\V(u_\e^{M,t,\a,\b};\Om)
&=&\V(u_\e^{M,t};\Om)+\V(\zeta_{\e,w_*}\circ\l_\a)
\\
&=&\V(u_\e^{M,t};\Om)+\a^n\,\V(\zeta_{\e,w_*})
\\
&=&\V(u_\e^{M,t};\Om)+\a^n\,\big(\e+\nu(\e)-\V(u_\e^{M,t};\Om)\big)\,.
\end{eqnarray*}
This last expression, evaluated at $\a=1$, is an $\e$ above $\nu(\e)$. In summary, for for a.e. $t\in(0,t_0)$, for every $M>0$, $\e<\e_*(M)$, $\beta>\beta_*(M,t,\e)$ there is $\a(\b,\e)\le 1$, with $\a(\b,\e)\to 1$ as $\b\to\infty$ (uniformly in $\e<\e_*$), such that
\[
\V(u_\e^{M,t,\a(\b,\e),\b};\Om)=\nu(\e)\,.
\]
We can now pick $t_j\to 0^+$, $M_j\to\infty$, $\e_j=\min\{\e_*(M_j),\e^*_j\}\to 0^+$ (where $\e^*_j$ is such that the error $\om_{t,M}$ appearing in \eqref{uepsMs aceps zero} satisfies $\om_{t_j,M_j}(\e_j^*)\to 0^+$), $\beta_j=\beta_*(M_j,t_j,\e_j)$, $\a_j(\e)=\a(\beta_j,\e)$, and define, for every $\e<\e_j$,
\[
u_\e^j=u_\e^{M_j,t_j,\a_j(\e),\b_j}\in (W^{1,2}_{\rm loc}\cap{\rm Lip})(\Om;[0,1])
\]
so that $\{u_\e^j\geq \delta(\e)\}$ is $\C$-spanning $\wire$, $\V(u_\e^j;\Om)=\nu(\e)$, and combining \eqref{uepsMs aceps zero} with \eqref{energy1} such that
\begin{eqnarray*}
\ac(u_\e^j;\Om)&\le&2\,\H^n(K)+\om(1/M_j)+\om(t_j)+\om_{t_j,M_j}(\e_j^*)
\\
&&+\big\{1+C(n,W)\,|\a_j(\e)-1|\big\}\,\Theta(\e,w_j(\e))\,,
\end{eqnarray*}
where $w_j(\e)=w_*(\e,M_j,t_j)=\e+\nu(\e)-\V(u_\e^{M_j,t_j};\Om)$. Now, since \cite{maggirestrepo} implies
\[
\sup_{0<w\le 1}\Theta(\e,w)\le C(n,W)\,\e^{n/(n+1)}\,,\qquad\forall \e<\e_0(n,W)\,,
\]
we conclude from $\om_{t_j,M_j}(\e_j^*)\to 0^+$ and $\a_j(\e)\to 1$ (uniformly in $\e<\e_*$) that \eqref{reco vuoto} holds.
\end{proof}

\section{Lagrange multipliers of diffused interface soap films}\label{section lambdaepsilon} The following theorem is one of the key results of our analysis, as it provides an upper bound on the size of the Lagrange multipliers $\l_j$ appearing in \eqref{ace modified} -- precisely, we show that $\e_j\,\l_j\to 0$ as $j\to\infty$. This information, which is of course interesting in itself, is also useful in the proof of the existence of minimizers of $\Upsilon(v,\e,\de)$ (and the inclusion of the possibility that $\vv_j<v_j$ in the statement is needed in that proof). We notice that our analysis does not touch the very interesting problem of understanding the validity of positive lower bounds on the $|\l_j|$'s. Intuitively, one would indeed expect that they cannot be too small: indeed, since \eqref{ace modified} is compatible with convergence to Plateau-type singularities, it should not be possible to identify it as a too close approximation of the standard Allen--Cahn equation (for which convergence to Plateau-type singularities is indeed impossible).

\begin{theorem}[Lagrange multipliers estimate]\label{theorem lambdaepsj to zero}
If $\wire\subset\R^{n+1}$ is compact, $\C$ is a spanning class for $\wire$, $v_j$, $\e_j$, and $\de_j$ are sequences with
\[
v_j\to 0^+\,,\qquad \e_j\to 0^+\,,\qquad \frac{\e_j}{v_j}\to 0^+\,,\qquad \de_j\to \de_0\in[1/2,1]\,,
\]
as $j\to\infty$, and $u_j$ are minimizers of $\Upsilon(\vv_j,\e_j,\de_j)$ for some $\vv_j\in(0,v_j]$ such that
\begin{equation}
  \label{le hp}
  \acj(u_j;\Om)\le \Upsilon(v_j,\e_j,\de_j)\,,
\end{equation}
then
\begin{equation}
    \label{le tesi}
    \lim_{j\to\infty}\e_j\,\l_j=0\,,
\end{equation}
where $\l_j$ is the Lagrange multiplier in the inner variation Euler--Lagrange equation for $u_j$.
\end{theorem}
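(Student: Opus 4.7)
The plan is to combine the inner-variation Euler--Lagrange equation \eqref{EL inner} with the $\e\to 0^+$ compactness of Theorem \ref{theorem epsilon to zero compactness} and the stationarity of the limiting minimal set. By \eqref{le hp} together with the limsup bound of Theorem \ref{theorem approximation of gen soap vuoto} (applied diagonally with $\nu(\e)=v_j$ and $\de(\e)=\de_j$ to a minimizer of $\ell$), I would first establish the uniform energy bound $\acj(u_j;\Om)\leq C$. Since $\V(u_j;\Om)=\vv_j\to 0$, Theorem \ref{theorem epsilon to zero compactness} then applies with $v_0=0$ and yields, up to a subsequence, $u_j\to 0$ in $L^1_\loc(\Om)$, a Borel $\C$-spanning set $K\subset\Om$ with $\mathrm{ac}_{\e_j}(u_j)/2\,\L^{n+1}\wst\tilde\mu\geq 2\Phi(\de_0)\H^n\mres K$, and, crucially, the bound $\liminf_j\vv_j/\e_j>0$, so that $\e_j/\vv_j$ is uniformly bounded. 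A comparison of total masses identifies $K$ as a minimizer of $\ell$, hence an Almgren minimal set; in particular, for every $X\in C^\infty_c(\R^{n+1};\R^{n+1})$ with $X\cdot\nu_\Om=0$ on $\pa\Om$, the flow of $X$ preserves the $\C$-spanning class, so the limit varifold supported on $K$ is stationary: $\int_K\mathrm{div}_K X\,d\H^n=0$.

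The core technical step, which I expect to be the main obstacle, is showing that for every such $X$
\[
\mathrm{LHS}_j(X):=\int_\Om \mathrm{ac}_{\e_j}(u_j)\,\mathrm{div}\,X - 2\,\e_j\,\nabla u_j\cdot \nabla X[\nabla u_j]\,\longrightarrow\,0.
\]
Decomposing the integrand as $(\mathrm{ac}_{\e_j}(u_j)-2\e_j|\nabla u_j|^2)\,\mathrm{div}\,X + 2\e_j|\nabla u_j|^2\,\mathrm{tr}(Q_j\nabla X)$, with $Q_j:=I-\nabla u_j\otimes \nabla u_j/|\nabla u_j|^2$, the first (discrepancy) piece vanishes in the limit by equipartition of energy for minimizers, that is, asymptotic saturation of \eqref{modica mortola identity}; the second, combined with the varifold convergence $\e_j\,\nabla u_j\otimes \nabla u_j\,\L^{n+1}\wst 2\Phi(\de_0)\,\nu_K\otimes \nu_K\,\H^n\mres K$ (with $\nu_K$ the unit normal to $K$), converges to $4\Phi(\de_0)\int_K\mathrm{div}_K X\,d\H^n=0$ by the stationarity above. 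Justifying the discrepancy vanishing and the tangent-plane identification is routine in the unconstrained Modica--Mortola / Hutchinson--Tonegawa framework but requires care in our volume-constrained, homotopically spanning setting.

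With $\mathrm{LHS}_j\to 0$ in hand, I would pick a specific $X_0$ so that $F_j(X_0):=\int_\Om V(u_j)\,\mathrm{div}\,X_0$ is comparable to $\vv_j$. Fix $x_0\in K$ and $r>0$ with $B_r(x_0)\cc\Om$, and set $X_0(x):=\chi(|x-x_0|)\,(x-x_0)$ for $\chi\in C^\infty_c([0,r))$ with $\chi\equiv 1$ on $[0,r/2]$; then $X_0\in C^\infty_c(\Om;\R^{n+1})$, so the condition $X_0\cdot\nu_\Om=0$ on $\pa\Om$ is automatic, while $\mathrm{div}\,X_0\equiv n+1$ on $B_{r/2}(x_0)$. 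The Almgren density lower bound at $x_0\in K$ gives $\H^n(K\cap B_{r/2}(x_0))\geq c\,r^n>0$; combined with the concentration of $V(u_j)$ on $K$ (forced by the film structure and the energy bound), this yields $\liminf_j F_j(X_0)/\vv_j\geq c'>0$. Substituting $X_0$ into \eqref{EL inner} then gives $\l_j\, F_j(X_0)=\mathrm{LHS}_j(X_0)\to 0$, so $\l_j\,\vv_j\to 0$, and finally $\e_j\,\l_j=(\e_j/\vv_j)(\l_j\,\vv_j)=\mathrm{O}(1)\cdot\mathrm{o}(1)=\mathrm{o}(1)$, which is \eqref{le tesi}.
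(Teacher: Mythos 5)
Your high-level plan is sound in spirit — test \eqref{EL inner} against a well-chosen vector field, show that the first-variation side vanishes, and show that the volume side is comparable to $\vv_j$ — and your bookkeeping ($\e_j\l_j=(\e_j/\vv_j)(\l_j\vv_j)$, with $\e_j/\vv_j$ bounded by \eqref{cool liminf}) is correct. But both key claims have genuine gaps. First, the ``varifold convergence'' $\e_j\nabla u_j\otimes\nabla u_j\,\L^{n+1}\weakstar 2\Phi(\de_0)\,\nu_K\otimes\nu_K\,\H^n\mres K$ is not a routine consequence of equipartition in this setting: the tensorial (tangent-plane) identification in the Hutchinson--Tonegawa framework is proved via monotonicity formulas whose error terms are controlled precisely by an a priori bound on the first variation, i.e.\ on $\e_j\l_j$ --- which is the very quantity you are trying to estimate. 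Without such a bound the argument is circular, and your acknowledgement that the step ``requires care'' undersells the difficulty. Second, the claim $\liminf_j F_j(X_0)/\vv_j\ge c'>0$ is asserted but not proved. Since $\vv_j\to 0$, the normalized measures $V(u_j)\,\L^{n+1}/\vv_j$ need not converge, and even if they subconverge to a limit supported on $K$, you have not ruled out that the mass concentrates away from $B_{r/2}(x_0)$ (e.g.\ on another connected piece of $K$, or in the region where $\mathrm{div}\,X_0<0$), making $F_j(X_0)/\vv_j$ small or even negative; moreover, when $\vv_j/\e_j\to\infty$ the diffuse interface carries only $\mathrm{O}(\e_j)$ of the volume, so a definite fraction of $\vv_j$ need not sit near any fixed $x_0\in K$.

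The paper's proof sidesteps both issues with a hands-on double blow-up at a regular point of the limit set $K$: a first blow-up at scale $r_N$ identifies a flat piece of $K$ with the energy splitting $\Phi(\de_0)$-per-side into the two regions $F_N^\pm$ carved out by the essential partition (cf.~\eqref{acN 1}), and a second blow-up at scale $\e_{j_N}$ produces a one-dimensional Allen--Cahn profile $q_0$ (cf.~\eqref{theclaim}). Crucially, after the blow-ups the Euler--Lagrange equation \eqref{EL inner UUN} has multiplier exactly $\mathbf{\Lambda}_N=\e_{j_N}\l_{j_N}$; the first-variation terms vanish \emph{identically} in the limit because $q_0'=\sqrt{W(q_0)}$ (so $W(q_0)-(q_0')^2\equiv 0$), while the $V$-term $\int V(q_0(x_{n+1}))\,\pa_{n+1}\vphi$ has a definite nonzero limit for a radially symmetric decreasing $\vphi$, without any comparison to $\vv_j$. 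This replaces the two delicate global claims in your proposal with a single, self-contained local computation and avoids the circularity entirely.
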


\begin{proof} Without loss of generality we can assume that $\e_j\,\l_j$ admits a limit, and then prove \eqref{le tesi} by finding a subsequence $j_N\to\infty$ as $N\to\infty$ such that $\e_{j_N}\,\l_{j_N}\to 0$ as $N\to\infty$. Setting for the sake of brevity ${\rm ac}_\e(u)=\e\,|\nabla u|^2+W(u)/\e$, we are going to achieve this by making a suitable choice of $X\in C_c^\infty(\Om;\mathbb{R}^{n+1})$ in the Euler--Lagrange equation
\begin{equation}
\label{EL inner uj}
\int_\Omega{\rm ac}_{\e_j}(u_j)\,\Div X - 2\,\e_j\,\nabla u_j\,\cdot\, \nabla X[\nabla u_j]=\lambda_j\, \int_\Omega V(u_j)\,\Div X
\end{equation}
satisfied by $u_j$. Since the argument is long, it is convenient to first give an overview of it. In step one we prove the convergence of $\mathrm{ac}_{\e_j}j(u_j)\,\L^{n+1}\mres\Om$ to $2\,\Phi(\de_0)\,\H^n\mres K$, where $K$ is a minimizer of $\ell$, and characterize $K$ as the limit of super/sub-level sets of the $u_j$'s. In step two we blow-up near a regular point of $K$, say, $0$, and identify $r_N\to 0^+$ and $j_N\to\infty$ as $N\to\infty$ such that $K$, at a scale $r_N$ near $0$, is approximately flat and is ``sandwiched'' between two regions $F_N^+$ and $F_N^-$ on each of which $u_{j_N}$ concentrates a $\Phi(\de_0)$-amount of Allen--Cahn energy per unit area (compare with \eqref{acN 1}). In step three we consider the blow-ups $\tilde{u}_N$ of $u_{\e_{j_N}}$ near $0$ at scale $r_N$, and prove that they converge to a double transition from $0$ to $\de_0$ on each side of $K$ near $0$, along a length scale $\s_N\to 0$ (compare with \eqref{limit of tilde UN}). In preparation to further blow-up $\tilde{u}_N$ at the scale $\s_N$ we need to identify first a suitable thin cylinder for performing such blow-up (see the definition of $J_N$ in step four), and then suitable heights that locate the two transitions from $0$ to $\de_0$ (see the definition of $s_N$ and $t_N$ in step five). The resulting blow-ups $\uu_N$ of $\tilde{u}_N$ are defined in step five, where their local convergence to on one-dimensional Allen--Cahn profile is proved (see \eqref{theclaim}), and used to readily infer that $\e_{j_N}\,\l_{j_N}\to 0$ by testing the rescaled Euler--Lagrange equation \eqref{EL inner uj} for $\uu_N$ on a carefully selected vector field.

\medskip

\noindent {\it Step one}: By the assumptions on $\{(v_j,\e_j,\de_j)\}_j$, Theorem \ref{theorem approximation of gen soap vuoto} gives
\begin{equation}
    \label{le 1}
    2\,\Phi(\de_0)\,\ell\ge\limsup_{j\to\infty}\Upsilon(v_j,\e_j,\de_j)
    \ge
    \limsup_{j\to\infty} \acj(u_j;\Om)\big/2\,.
\end{equation}
In particular, $\sup_j\acj(u_j;\Om)<\infty$, so that if we extract a subsequence and denote by $\mu$ the weak-star limit of $|\nabla(\Phi\circ u_j)|\,\L^{n+1}\mres\Om$ as $j\to\infty$, then, by Theorem \ref{theorem epsilon to zero compactness}, the Borel subset of $\Om$ defined by
\[
K=\big\{x\in\Om:\theta_*^n(\mu)(x)\ge2\,\Phi(\de_0)\big\}
\]
is such that $K$ is $\C$-spanning $\wire$ and $\mu\ge2\,\Phi(\delta_0)\,\mathcal{H}^n\mres K$. Combining this last inequality with \eqref{le 1} and \eqref{modica mortola identity} we find that $K$ is a minimizer of $\ell$ (thus $\H^n$-equivalent to $S=\spt\,\H^n\mres K$) and
\begin{eqnarray}
\label{conv to min}
&&2\,\Phi(\delta_0)\,\ell
=\lim_{j\to\infty}\Upsilon(v_j,\e_j,\delta_j)
=\lim_{j\to\infty}\Upsilon(\vv_j,\e_j,\delta_j)
=\lim_{j\to\infty}\acj(u_j;\Om)\big/2\,;
\\
\label{mu limit}
&&\mu=2\,\Phi(\de_0)\,\H^n\mres K
={\rm w^*}\!\!\lim_{j\to\infty}|\nabla(\Phi\circ u_j)|\,\L^{n+1}\mres\Om
={\rm w^*}\!\!\lim_{j\to\infty}\frac{{\rm ac}_{\e_j}(u_j)}2\,\L^{n+1}\mres\Om\,.\hspace{1cm}
\end{eqnarray}
We now notice that, thanks to the minimality property in $\ell$, $K$ can actually be characterized as a partition limit. To see this, let us recall the construction used in the proof of Theorem \ref{theorem epsilon to zero compactness}. There, given $N\in\N$ we applied Lemma \ref{lemma I0} on the interval $I_0^N=[\de_0-(1/2N),\de_0-(1/N)]$ to find $\{t_j^N\}_j\subset I_0^N$ such that, setting $E_j^N=\{u_j>t_j^N\}$, then $\{\Om\cap\pa^*E_j^N\}_j$ admitted a partition limit $S_0^N$ with the property that
\begin{equation}
  \label{compare with 3}
  \mbox{$S_0^N$ is $\C$-spanning $\wire$ and is $\H^n$-contained in $K_N=\{\theta_*^n(\mu)\ge 2\,\Phi(\de_0-1/N)\}$}\,,
\end{equation}
compare with \eqref{compare with} and \eqref{compare with 2}. Having proved that in the present case $\mu=2\,\Phi(\de_0)\,\H^n\mres K$, we see that $K_N=K$, and therefore, by \eqref{compare with 3}, that
\[
\ell=\H^n(K)\ge\H^n(S_0^N)\ge\ell\,.
\]
Hence $S_0^N\shn K$ implies $S_0^N\ehn K$, that is, for every $N\in\N$,
\begin{equation}
  \label{decomposition of K}
  K\ehn\bigcup_k\{\Om_k\cap\bigcup_i\pa^*U_{N,i}[\Om_k]\}\,,
\end{equation}
where\footnote{Should $\{\Om_k\}_k$ be a disjoint family -- something it is definitely not! -- \eqref{decomposition of K} would imply, for each $N\ne N'$ and each $k$, the existence of a bijection $\s$ so that $U_{N,i}[\Om_k]$ is Lebesgue equivalent to $U_{N',\s(i)}[\Om_k]$ for every $i$, and we could thus drop the $N$-dependency from the following arguments. Quite the opposite happens though, since each $\Om_k$ intersects countably many different $\Om_{k'}$'s, and it seems there is no obvious way to trivialize the interaction between $k$ and $N$ in the building up of $K$.} $\{\Om_k\}_k$ is as in \eqref{def of Omega i}, and $\{U_{N,i}[\Om_k]\}_i$ is the limit of the essential partitions $\{U_{N,i}^j[\Om_k]\}_i$ of $\Om_k$ induced by $\Om\cap\pa^*E_j^N$ in the sense that for every $k$, $i$, and $N$, we have  $U_{N,i}^j[\Om_k]\to U_{N,i}[\Om_k]$ as $j\to\infty$. In particular, since, by \eqref{second trace lemma}, $\Om\cap\pa^*E_j^N$ is $\H^n$-equivalent to $\{u_j^*=t_j^N\}$, we find that
\begin{equation}
  \label{better to see that}
  \mbox{$\Om_k\cap\pa^*U_{N,i}^j[\Om_k]$ is $\H^n$-contained in $\{u_j^*=t_j^N\}$}\,,
\end{equation}
where $t_j^N\to t_0^N\in I_0^N$ as $j\to\infty$.

\medskip

\noindent {\it Step two}: From this step onward, we focus our analysis near a regular point of $K$. More precisely, since minimizers of $\ell$ are Almgren minimizing sets in $\Om$, by \cite{Almgren76} $K$ is a smoothly embedded minimal surface in a neighborhood of $\H^n$-a.e. of its points. In particular, setting
\begin{eqnarray*}
&&\QQ_r=\big\{x\in\R^{n+1}:\mbox{$|x_i|<r$ for $i=1,...,n+1$}\big\}=(-r,r)^{n+1}\,,
\\
&&\QQ^n_r=\QQ_r\cap\{x_{n+1}=0\}=(-r,r)^n\times\{0\}\,,
\end{eqnarray*}
denoting by $\pp$ the projection of $\R^{n+1}$ onto $\{x_{n+1}=0\}$, and up to a rigid motion, we can assume that $0\in K$ and that there are $r_0>0$ and a smooth solution to the minimal surfaces equation $f\in C^\infty(\QQ^n_{r_0};(-r_0,r_0))$ with $f(0)=0$, $\nabla f(0)=0$, and
\[
\QQ_r\cap K=\big\{x\in\QQ_r: x_{n+1}=f(\pp(x))\big\}\,,\qquad\forall r\in(0,r_0)\,.
\]
Let us now consider the epigraph and subgraph of $f$ in $\QQ_r$, that is, let us consider
\[
{\rm Epi}(f;r)=\big\{x\in\QQ_r:x_{n+1}>f(\pp(x))\big\}\,,\qquad {\rm Sub}(f;r)=\big\{x\in\QQ_r:x_{n+1}<f(\pp(x))\big\}\,,
\]
so that $\{{\rm Epi}(f;r_0),{\rm Sub}(f;r_0)\}$ is the essential partition of $\QQ_{r_0}$ induced by $K$. By \eqref{decomposition of K} and by the smoothness of $f$, for each $N$ there are $k_N$ and $i_N^+\ne i_N^-$ such that
\[
0\in \Om_{k_N}\cap\pa^*U_{N,i_N^\pm}[\Om_{k_N}]\,.
\]
Proceeding inductively in $N$, we can pick $r_N<\min\{r_0,r_1,...,r_{N-1}\}$ so that
\begin{equation}
  \label{def of rN}
  \lim_{N\to\infty}r_N=0\,,\qquad
\QQ_{2\,r_N}\cc\Om_{k_N}\,,\qquad \H^n(K\cap\pa\QQ_{r_N})=0\,,
\end{equation}
and thus
\[
\QQ_{r_N}\cap U_{N,i_N^+}[\Om_{k_N}]={\rm Epi}(f;r_N)\,,\qquad \QQ_{r_N}\cap U_{N,i_N^-}[\Om_{k_N}]={\rm Sub}(f;r_N)\,.
\]
In particular, for every $N$, as $j\to\infty$ we have
\begin{equation}
  \label{close to epi and sub}
  \QQ_{r_N}\cap U_{N,i_N^+}^j[\Om_{k_N}]\to {\rm Epi}(f;r_N)\,,\qquad \QQ_{r_N}\cap U_{N,i_N^-}^j[\Om_{k_N}]\to {\rm Sub}(f;r_N)\,.
\end{equation}
We now prove two additional properties of $U_{N,i_N^\pm}^j$, see \eqref{diagonal arg} and \eqref{they are lower} below, which will be used to suitably select $\{j_N\}_N$ such that $j_N\to\infty$ as $N\to\infty$:

\medskip

\noindent {\it First}, setting for brevity
\[
\a_{N,j}^\pm=\int_{\QQ_{r_N}\cap U_{N,i_N^\pm}^j[\Om_{k_N}]}\frac{{\rm ac}_{\e_j}(u_j)}2\,,
\]
and noticing that, thanks to \eqref{better to see that}, we can argue as in the proof of \eqref{suffices showing the delta bound 2} in Theorem \ref{theorem epsilon to zero compactness} -- see, in particular, \eqref{see in particular} -- we prove that
\begin{eqnarray}\label{lb dd}
&&\liminf_{j\to\infty}\a_{N,j}^\pm\ge
\liminf_{j\to\infty}\int_{\QQ_{r_N}\cap U_{N,i_N^\pm}^j[\Om_{k_N}]}|\nabla(\Phi\circ u_j)|
\\\nonumber
&&\hspace{2cm}\ge\Phi(t_0^N)\,P(U_{N,i_N^\pm}[\Om_{k_N}];\QQ_{r_N})
\ge\Phi\big(\de_0-(1/2\,N)\big)\,\H^n(K\cap\QQ_{r_N})\,.
\end{eqnarray}
Since \eqref{mu limit} and \eqref{def of rN} give
\begin{eqnarray}\label{ub dd}
\limsup_{j\to\infty}\a_{N,j}^++\a_{N,j}^-\le 2\,\Phi(\de_0)\,\H^n(K\cap\QQ_{r_N})\,,
\end{eqnarray}
by applying \eqref{lb dd} with $m=2$ and by \eqref{ub dd} we find
\begin{eqnarray}\label{ub new}
\limsup_{j\to\infty}\a_{N,j}^+&\le&
2\,\Phi(\de_0)\,\H^n(K\cap\QQ_{r_N})-\liminf_{j\to\infty}\a_{N,j}^-
\\\nonumber
&\le&
\Big\{2\,\Phi(\de_0)-\Phi\Big(\de_0-\frac1{2\,N}\Big)\Big\}\,\H^n(K\cap\QQ_{r_N})
\end{eqnarray}
By similarly applying \eqref{lb dd} with $m=1$ in combination with \eqref{ub dd}, we conclude that,
\begin{eqnarray}
  \label{diagonal arg}
  \Phi\Big(\de_0-\frac1{2\,N}\Big)\,\H^n(K\cap\QQ_{r_N})&\le&\liminf_{j\to\infty}\a_{N,j}^\pm\le\limsup_{j\to\infty}\a_{N,j}^\pm
  \\\nonumber
  &\le&\Big\{2\,\Phi(\de_0)-\Phi\Big(\de_0-\frac1{2\,N}\Big)\Big\}\,\H^n(K\cap\QQ_{r_N})\,.
\end{eqnarray}

\noindent {\it Second}, by \eqref{better to see that}, since each $U_{N,i}^j[\Om_{k_N}]$ is essentially connected, for each $N$, $j$ and $i$,
\[
\mbox{$U_{N,i}^j[\Om_{k_N}]$ is $\L^{n+1}$-contained either in $\{u_j>t_j^N\}$ or in $\{u_j<t_j^N\}$}\,.
\]
Setting
\[
\QQ_r^+=\QQ_r\cap\{x_{n+1}>0\}\,,\qquad \QQ_r^-=\QQ_r\cap\{x_{n+1}<0\}\,,
\]
since $f(0)=0$, $\nabla f(0)=0$, and the smoothness of $f$ imply that $|{\rm Epi}(f;r)\Delta\QQ_r^+|\le C\,r^{n+2}$ for every $r<r_0$,  we see that if $U_{N,i_N^+}^j[\Om_{k_N}]$ is $\L^{n+1}$-contained in $\{u_j>t_j^N\}$, then, by $\V(u_j;\Om)=v_j$ and \eqref{close to epi and sub},
\begin{eqnarray*}
|\QQ_1^+|\,r_N^{n+1}-C\,r_N^{n+2}&\le&|\QQ_{r_N}\cap {\rm Epi}(f;r_N)|\le |\QQ_{r_N}\cap U_{N,i_N^+}^j[\Om_{k_N}]|
+{\rm o}_j^N
\\
&\le&|\{u_j>t_j^N\}|+{\rm o}_j^N\le\frac{\V(u_j;\Om)}{V(1/4)}+{\rm o}_j^N={\rm o}_j^N\,,
\end{eqnarray*}
where ${\rm o}_j^N\to 0$ as $j\to\infty$ at a rate depending on $N$. In particular, up to further decrease the value of $r_0$ (so to have $C\,r_N\le C\,r_0<|\QQ_1^+|/2$ for each $N$), and by repeating the same considerations with ${\rm Sub}(f;r_N)$ in place of ${\rm Epi}(f;r_N)$, we have proved that for each $N$, if $j$ is large enough depending on $N$, then
\begin{eqnarray}
  \label{they are lower}
  \mbox{$U_{N,i_N^\pm}^j[\Om_{k_N}]$ is $\L^{n+1}$-contained in $\{u_j<t_j^N\}$}\,.
\end{eqnarray}

\medskip

\noindent {\it Step three (selection of $\{j_N\}_N$ and first blow-up)}: Using the estimates proved in step two, we can diagonally extract a subsequence $j_N\to\infty$ as $N\to\infty$ such that, if we set
\[
F_N^\pm=\QQ_{r_N}\cap U_{N,i_N^\pm}^{j_N}[\Om_{k_N}]\,,\qquad \s_N=\frac{\e_{j_N}}{r_N}\,,\qquad w_N=\frac{v_{j_N}}{r_N^{n+1}}\,,
\]
then the following holds: first, $\s_N\to0$ and $w_N\to 0$ as $N\to\infty$; second, by \eqref{close to epi and sub},
\begin{eqnarray}\label{FN to epi}
&&\lim_{N\to\infty}\max\Big\{\frac{|F_N^+\Delta {\rm Epi}(f;r_N)|}{r_N^{N+1}},\frac{|F_N^-\Delta {\rm Sub}(f;r_N)|}{r_N^{N+1}}\Big\}=0\,;
\end{eqnarray}
third, by \eqref{mu limit} and \eqref{def of rN} (which yield $\acj(u_j;\QQ_{r_N})\to 2\,\Phi(\delta_0)\,\mathcal{H}^n(K \cap \QQ_{r_N}$)),
\begin{eqnarray}
\label{acN 2}
&&\lim_{N\to\infty}\frac{1}{\H^n(K\cap\QQ_{r_N})}\int_{\QQ_{r_N}}\frac{{\rm ac}_{\e_{j_N}}(u_{j_N})}2=2\,\Phi(\de_0)\,;
\end{eqnarray}
fourth, by \eqref{diagonal arg},
\begin{eqnarray}
\label{acN 1}
&&\lim_{N\to\infty}\frac{1}{\H^n(K\cap\QQ_{r_N})}\int_{\QQ_{r_N}\cap F_N^\pm}\frac{{\rm ac}_{\e_{j_N}}(u_{j_N})}2=\Phi(\de_0)\,;
\end{eqnarray}
and, finally, by \eqref{they are lower},
\begin{eqnarray}
\nonumber
&&\mbox{$\QQ_{r_N}\cap\pa^*F_N^\pm$ is $\H^n$-contained in $\{u_{j_N}^*=t_{j_N}^N\}$}\,,
\\\nonumber
&&\mbox{$F_N^\pm$ is $\L^{n+1}$-contained in $\{u_{j_N}^*<t_{j_N}^N\}$}\,.
\end{eqnarray}
If we now set $\eta_r(y)=r\,y$ ($y\in\QQ_1$) and
\[
G_N^\pm=\frac{F_N^\pm}{r_N}\subset\QQ_1\,,\qquad \de_N=t_{j_N}^N\in I_0^N\,,\qquad \tilde{u}_N=u_{j_N}\circ\eta_{r_N}\,,
\]
then $\de_N\to\de_0$ as $N\to\infty$, while $|{\rm Epi}(f;r)\Delta\QQ_r^+|\le C\,r^{n+2}$ and \eqref{FN to epi} give
\begin{eqnarray*}
  |G_N^+\Delta\QQ^+_1|\le \frac{|F_N^+\Delta{\rm Epi}(f;r_N)|}{r_N^{n+1}}+\frac{|{\rm Epi}(f;r_N)\Delta\QQ_{r_N}^+|}{r_N^{n+1}}\le
   \frac{|F_N^+\Delta{\rm Epi}(f;r_N)|}{r_N^{n+1}}+C\,r_N\,,
\end{eqnarray*}
and an analogous estimate for $G_N^-$, so that,
\begin{eqnarray}
\label{GN to CC1plus}
&&\lim_{N\to\infty}|G_N^\pm\Delta\QQ^\pm_1|=0\,,
\\\label{good trace}
&&\mbox{$\QQ_1\cap\pa^*G_N^\pm$ is $\H^n$-contained in $\{\tilde{u}_N^*=\de_N\}$}\,,
\\\label{parisatlanta1}
&&\mbox{$G_N^\pm$ is $\L^{n+1}$-contained in $\{\tilde{u}_N<\de_N\}$}\,.
\end{eqnarray}
Similarly, taking into account that $\H^n(K\cap\QQ_r)/(2r)^n\to1$ as $r\to 0^+$, that $r_N\to0$ as $N\to\infty$, and that
\begin{eqnarray*}
  \int_{\QQ_{r_N}\cap F_N^\pm}{\rm ac}_{\e_{j_N}}(u_{j_N})
  =r_N^n\,\int_{\QQ_1\cap G_N^\pm}{\rm ac}_{\e_{j_N}/r_N}(\tilde{u}_N)
\end{eqnarray*}
we deduce from \eqref{acN 1} and \eqref{acN 2} that
\begin{eqnarray}
\label{frommm}
\lim_{N\to\infty}\Big|\Phi(\de_0)- 2^{-n}\,\frac{\mathcal{AC}_{\s_N}(\tilde{u}_N;G_N^\pm)}2\Big|=\lim_{N\to\infty}\Big|2\,\Phi(\de_0)- 2^{-n}\,\frac{\mathcal{AC}_{\s_N}(\tilde{u}_N;\QQ_1)}2\Big|=0\,.
\end{eqnarray}
Since $V(t)\ge c\,t^{2(n+1)/n}$ for some $c=c(W)>0$, we have
\[
v_{j_N}\ge\int_{\QQ_{r_N}}u_{j_N}^{2(n+1)/n}=r_N^{n+1}\,\int_{\QQ_1}\tilde{u}_N^{2(n+1)/n}\,,
\]
so that $w_N\to 0$ implies $\tilde{u}_N\to 0$ in $L^1(\QQ_1)$ as $N\to\infty$; hence, taking also \eqref{GN to CC1plus} and \eqref{good trace} into account, we find that, if we set
\begin{equation}\label{utwiddle pm def}
\tilde{u}^\pm_N:=\tilde{u}_N\,1_{G_N^\pm}+\de_N\,1_{\QQ_1\setminus G_N^\pm}\,,
\end{equation}
then $\tilde{u}_N^\pm\in W^{1,2}(\QQ_1)$, and, moreover,
\begin{equation}
  \label{limit of tilde UN}
\lim_{N\to\infty}\int_{\QQ_1}|\tilde{u}_N^+-\de_0\,1_{\QQ_1^-}|=\lim_{N\to\infty}\int_{\QQ_1}|\tilde{u}_N^--\de_0\,1_{\QQ_1^+}|=0\,.
\end{equation}
We also record for future use that, thanks to \eqref{EL inner uj}, it holds
\begin{equation}
  \label{EL inner utildeN}
  \int_{\QQ_1}{\rm ac}_{\s_N}(\tilde{u}_N)\,\Div Y
  - 2\,\s_N\,\nabla\tilde{u}_N\,\cdot\, \nabla Y[\nabla \tilde{u}_N]
  =\frac{\lambda_{j_N}\,\e_{j_N}}{\s_N}\, \int_{\QQ_1} V(\tilde{u}_N)\,\Div Y\,,
\end{equation}
for every $Y\in C^\infty_c(\QQ_1;\R^{n+1})$.

\medskip

\noindent {\it Step four (identification of the second blow-up)}: In this step, we show that for every $N$ large enough, there is $\xi_N\in\QQ_1^n$ such that, setting
\begin{equation}
  \label{QN and JN}
  Q_N=\xi_N+\QQ_{\s_N/2}^n\subset\QQ_1^n\,,\qquad J_N=Q_N\times(-1,1)
\end{equation}
we find
\begin{eqnarray}\label{final L1 close}
&&
\lim_{N\to\infty}\max\Big\{\frac1{\s_N^n}
\int_{J_N}|\tilde{u}_N^+-\de_0\,1_{\QQ_1^-}|\,,
\frac1{\s_N^n}\int_{J_N}|\tilde{u}_N^--\de_0\,1_{\QQ_1^+}|\Big\}=0\,,
\\
\label{final energy concentration}
&&\lim_{N\to\infty}\max\Big\{
\Big|2\Phi(\de_0)-\frac{\mathcal{AC}_{\sigma_N}(\tilde{u}_N;J_N)}{2\,\sigma_N^n}\Big|,
\Big|\Phi(\de_0)-\frac{\mathcal{AC}_{\sigma_N}(\tilde{u}_N;J_N\cap G_N^\pm)}{2\,\sigma_N^n}\Big|\Big\}=0\,.\hspace{1cm}
\end{eqnarray}
(We shall actually prove the existence of {\it many} $\xi_N$ with these properties). To begin with, let us define a sequence $\beta_N\to 0$ as $N\to\infty$ by setting
\[
\beta_N=\max\Big\{\int_{\QQ_1}|\tilde{u}_N^+-\de_0\,1_{\QQ_1^-}|\,,\int_{\QQ_1}|\tilde{u}_N^--\de_0\,1_{\QQ_1^+}|\Big\}\,,
\]
and, denoting by $M_N$ the integer part of $2/\s_N$, we consider a collection
\[
\{Q_{N,i}\}_{i=1}^{M_N^n}\,,\qquad Q_{N,i}=\xi_{N,i}+\QQ_{\s_N/2}^n\,,\qquad \xi_{N,i}\in\QQ_1^n\,,
\]
of disjoint open cubes contained in $\QQ^n_1$, with side length $\s_N$, and such that $\H^n(\QQ^n_1\setminus\bigcup_i Q_{N,i})\le C(n)\,\s_N$ (of course, if $2/\s_N$ itself is an integer, then such cubes can be chosen so that $\H^n(\QQ^n_1\setminus\bigcup_i Q_{N,i})=0$). Noticing that
\begin{equation}
  \label{MNeoN to 1}
  \Big|1-(M_N\,\s_N)^n/\H^n(\QQ_1^n)\Big|=\Big|1-(M_N\,\s_N)^n/2^n\Big|\le C(n)\,\s_N\,,
\end{equation}
we consider the open cylinders $\{J_{N,i}\}_{i=1}^{M_N^n}$, $J_{N,i}:=Q_{N,i}\times(-1,1)\subset\QQ_1$, so that
\[
|J_{N,i}|=2\,\s_N^n\,,\qquad \Big|\QQ_1\setminus\bigcup_i J_{N,i}\Big|\le C(n)\,\s_N\,,
\]
and let
\begin{eqnarray*}
&&\G_N^1=\Big\{1\le i\le M_N^n:\int_{J_{N,i}}|\tilde{u}_N^+-\de_0\,1_{\QQ_1^-}|\le\,\sqrt{\beta_N}\,\s_N^n\Big\}\,,
\\
&&\G_N^2=\Big\{1\le i\le M_N^n:\int_{J_{N,i}}|\tilde{u}_N^--\de_0\,1_{\QQ_1^+}|\le\,\sqrt{\beta_N}\,\s_N^n\Big\}\,.
\end{eqnarray*}
On combining \eqref{MNeoN to 1} with
\begin{eqnarray*}
(M_N^n-\#\,\G_N^1)\,\s_N^n\,\sqrt{\b_N}
\le
\sum_{i\not\in\G_N^1}\int_{J_{N,i}}|\tilde{u}_N^+-\de_0\,1_{\QQ_1^-}|\le\beta_N\,,
\end{eqnarray*}
we find
\begin{equation}\label{GNM exhaustion}
0\le 1-\frac{\#\G_N^m}{M_N^n}\le \frac{\sqrt{\beta_N}}{M_N^n\s_N^n}\le C(n)\,\sqrt{\beta_N}\,,\qquad m=1,2\,,
\end{equation}
provided $N$ is large enough. In particular,
\begin{equation}
  \label{fine a1}
  \lim_{N\to\infty}\frac{\#(\mathcal{G}_N^1 \cap \mathcal{G}_N^2)}{M_N^n}=1\,,
\end{equation}
so that, for $N$ large, the vast majority of the cubes $\{Q_{N,i}\}_{i=1}^{M_N^n}$ satisfies \eqref{final L1 close}. This suggests to consider a class $\G_{N,{\rm good}}$ of cubes in $\G_N^1\cap\G_N^2$ such that \eqref{final energy concentration} holds: more precisely, we set $\mathcal{G}_{N,{\rm good}}=(\mathcal{G}_N^1 \cap \mathcal{G}_N^2)\setminus \mathcal{G}_{N,{\rm bad}}$, where
\begin{equation}\notag
\mathcal{G}_{N,{\rm bad}}= \Big\{i\in \mathcal{G}_N^1 \cap \mathcal{G}_N^2:\frac{\mathcal{AC}_{\sigma_N}(\tilde{u}_N;J_{N,i})}{2\,\sigma_N^n}\ge 2\,\Phi(\delta_0)+\sqrt{\alpha_N}\Big\}\,,
\end{equation}
is defined in dependence of the quantity
\begin{eqnarray}\notag
    \alpha_N&:=&
    \max\Big\{
    \Big|\# (\mathcal{G}_N^1\cap \mathcal{G}_N^2)\,\sigma_N^n\, 2\,\Phi(\delta_0)-\frac{\mathcal{AC}_{\sigma_N}(\tilde{u}_N,\QQ_1)}2 \Big|\,,
    \\
    \notag
    &&\quad\qquad\qquad\qquad \Big[\Phi(\delta_0)-\inf_{i\in \mathcal{G}_N^1 \cap \mathcal{G}_N^2} \frac{\mathcal{AC}_{\sigma_N}(\tilde{u}_N^\pm;J_{N,i} \cap G_N^\pm)}{2\,\sigma_N^n}\Big]_+ \Big\}\,,
\end{eqnarray}
(where $t_+:=\max\{t,0\}$, $t\in\R$). We now claim that
\begin{eqnarray}
  \label{fine fine 1}
  &&\lim_{N\to\infty}\a_N=0\,,
  \\
  \label{fine fine 2}
  &&\lim_{N\to\infty}\frac{\#\mathcal{G}_{N,{\rm good}}}{M_N^n}=1\,,
\end{eqnarray}
and that {\it any choice} of $Q_N\in\G_{N,{\rm good}}$ satisfies \eqref{final L1 close} and \eqref{final energy concentration}.

\medskip

\noindent {\it To prove \eqref{fine fine 1}}: By \eqref{MNeoN to 1}, \eqref{GNM exhaustion} we have $\# (\mathcal{G}_N^1\cap \mathcal{G}_N^2)\,\sigma_N^n\to \H^n(\QQ_1^n)=2^n$ as $N\to\infty$, so that the first quantity in the definition of $\a_N$ is vanishing as $N\to\infty$ thanks to \eqref{frommm}. We are thus left to prove that, if we set
\begin{equation}\label{skinny rectangle lower bound}
   L:=\liminf_{N\to \infty} \inf_{i\in \mathcal{G}_N^1 \cap \mathcal{G}_N^2}
   \frac{\mathcal{AC}_{\sigma_N}(\tilde{u}_N^\pm;J_{N,i} \cap G_N^\pm)}{2\,\sigma_N^n}\,,
\end{equation}
then $L\ge\Phi(\de_0)$. To prove this, let us consider $N_k\to \infty$, $i_k\in \mathcal{G}_{N_k}^1\cap \mathcal{G}_{N_k}^2$, and $m_k\in\{+,-\}$ such that
\[
L= \lim_{k\to \infty} \frac{\mathcal{AC}_{\sigma_{N_k}}(\tilde{u}_{N_k}^{m_k};J_{{N_k},{i_k}} \cap G_{N_k}^{m_k})}{2\,\sigma_{N_k}^n}\,.
\]
Up to extracting a subsequence we can assume that either $m_k=+$ or $m_k=-$ for every $k$, and we can assume without loss of generality to be in the first case. Recalling that $Q_{N,i}=\xi_{N,i}+\QQ_{\s_N/2}^n$ and $J_{N,i}=Q_{N,i}\times(-1,1)$, if we set
\[
J_k^*:=[(J_{N_k,i_k}-\xi_{N_k,i_k})/\s_{N_k}]=\QQ_{1/2}^n\times\Big(-1\big/\s_{N_k},1\big/\s_{N_k}\Big)\,,
\]
and define
\[
 G_k^*=[(G_{N_k}^+-\xi_{N_k,i_k})/\s_{N_k}]\,,\qquad \uu_k(z)=\tilde{u}_{N_k}^+(\s_{N_k}\,z+\xi_{N_k,i_k})\,,\qquad z\in J_k^*\,,
\]
then we have
\begin{equation}\label{skinny reduction}
\frac{\mathcal{AC}_{\sigma_{N_k}}(\tilde{u}_{N_k}^{m_k};J_{{N_k},{i_k}} \cap G_{N_k}^{m_k})}{2\,\sigma_{N_k}^n}=
\frac{\mathcal{AC}_{1}(\uu_k;J_k^*\cap G_k^*)}2\,.
\end{equation}
Now, since $\tilde{u}_N^+\in W^{1,2}(\QQ_1)$ implies $\uu_k\in W^{1,2}(J_k^*)$, and, by definition of $\mathcal{G}_N^1$,
\begin{eqnarray*}
C(n)\,\sqrt{\beta_{N_k}}&\ge&  \frac1{\s_{N_k}^n}\,\int_{J_{N_k,i_k}}|\tilde{u}_{N_k}^+-\de_0\,1_{\QQ_1^-}|
=\s_{N_k}\,\int_{J_k^*}|\uu_k - \delta_0 1_{\QQ_{1/2}^n\times(-1/\sigma_{N_k},0)}|
\\
&=&2\,\int_{\QQ_{1/2}^n}\,d\H^n_y\,\fint_{-1/\s_{N_k}}^{1/\s_{N_k}}|\uu_k(y,t) - \delta_0 1_{(-1/\sigma_{N_k},0)}(t)|\,dt\,,
\end{eqnarray*}
we find that, for $\mathcal{H}^n$-a.e.~ $y\in \QQ_{1/2}^n$, $t\mapsto \uu_k(y,t)$ is absolutely continuous on $(-1/\sigma_{N_k},1/\sigma_{N_k})$ and there are $a_k^y $ and $b_k^y$ in $(-1/\sigma_{N_k},1/\sigma_{N_k})$ such that
\[
\begin{split}
  &\lim_{k\to\infty}a_k^y=-\infty\,,\quad\lim_{k\to\infty}\uu_k(y,a_k^y)=\de_0\,,
  \\
  &\lim_{k\to\infty}b_k^y=+\infty\,,\quad\lim_{k\to\infty}\uu_k(y,b_k^y)=0\,;
\end{split}
\]
as a consequence, by \eqref{modica mortola identity}, by the fact that $\uu_k$ is constant on $J_k^*\setminus G_k^*$, by Fubini's theorem, and by Fatou's lemma, we find that
\begin{eqnarray}\notag
    \lim_{k\to \infty}\frac{\mathcal{AC}_1(\uu_k;J_k^* \cap G_k^*)}2&\geq&
    \liminf_{k\to \infty}\int_{J_k^*\cap G_k^*}|\pa_{x_{n+1}} (\Phi \circ \uu_k)|
    =    \liminf_{k\to \infty}\int_{J_k^*}|\pa_{x_{n+1}} (\Phi \circ \uu_k)|
    \\ \notag
    &\geq& \int_{\QQ_{1/2}^n}\liminf_{k\to \infty} \int_{(a_k^y,b_k^y)} |\pa_{x_{n+1}} (\Phi \circ \uu_k)|\geq \Phi(\delta_0)\,,
\end{eqnarray}
which, combined with \eqref{skinny reduction}, proves that $L\ge \Phi(\de_0)$, and thus that \eqref{fine fine 1} holds.

\medskip

\noindent {\it To prove \eqref{fine fine 2}}: We can estimate that
\[
\frac{\# \mathcal{G}_{N,{\rm bad}}}{M_N^n}
\leq
\frac{1}{M_N^n\,\sqrt{\alpha_N}}
\sum_{i\in \mathcal{G}_{N,{\rm bad}}}
\frac{\mathcal{AC}_{\sigma_N}(\tilde{u}_N;J_{N,i})}{\sigma_N^n}
- 2\,\Phi(\delta_0)
=A_N+B_N\,,
\]
where, by definition of $\a_N$ and since $(M_N\,\sigma_N)^n\to \H^n(\QQ_1^n)$ as $N\to\infty$,
\begin{eqnarray*}
A_N&:=&\frac{1}{M_N^n \sqrt{\alpha_N}}\,
\sum_{i\in \mathcal{G}_N^1 \cap \mathcal{G}_N^2}\Big\{\frac{\mathcal{AC}_{\sigma_N}(\tilde{u}_N;J_{N,i})}{\sigma_N^n}
- 2\,\Phi(\delta_0)\Big\}
\\
&\le&\frac{\mathcal{AC}_{\sigma_N}(\tilde{u}_N;\QQ_1)-\#(\mathcal{G}_N^1 \cap \mathcal{G}_N^2)\,\sigma_N^n\,2\Phi(\de_0)}{M_N^n\,\sigma_N^n\, \sqrt{\alpha_N}}\le C(n)\,\sqrt{\a_N}\,,
\end{eqnarray*}
and, again by definition of $\a_N$,
\begin{eqnarray*}
&&B_N:=-\frac{1}{M_N^n \sqrt{\alpha_N}}\,\sum_{i\in \mathcal{G}_{N,{\rm good}}}\Big\{\frac{\mathcal{AC}_{\sigma_N}(\tilde{u}_N;J_{N,i})}{\sigma_N^n}- 2\,\Phi(\delta_0)\Big\}
\\
&&\le\frac{2}{M_N^n \sqrt{\alpha_N}}\,\sum_{i\in \mathcal{G}_{N,{\rm good}}}\Big\{\Phi(\de_0)-\sum_{m\in\{+,-\}}\frac{\mathcal{AC}_{\sigma_N}(\tilde{u}_N;J_{N,i}\cap G_N^m)}{2\,\sigma_N^n}\Big\}\le C(n)\,\sqrt{\a_N}\,.
\end{eqnarray*}
This shows that $\# \mathcal{G}_{N,{\rm bad}}/M_N^n\to 0$ as $N\to\infty$, and since $\#(\mathcal{G}_N^1\cap \mathcal{G}_N^2)/M_N^n\to 1$ as $N\to\infty$ by \eqref{GNM exhaustion}, we conclude  the proof of \eqref{fine fine 2}.

\medskip

\noindent {\it To conclude the proof of \eqref{final L1 close} and \eqref{final energy concentration}}: For an arbitrary choice of $i(N)\in\G_{N,{\rm good}}$, let $Q_N:=Q_{N,i(N)}$ and $J_N:=Q_N\times(-1,1)$. Since $\G_{N,{\rm good}}\subset\G_N^1\cap\G_N^2$ we deduce the validity of \eqref{final L1 close}. At the same time, by $L\ge\Phi(\de_0)$, by $G_N^+\cap G_N^-=\varnothing$, by $\tilde{u}_N^m=\tilde{u}_N$ on $G_N^m$ for $m\in\{+,-\}$, and by the very definition of $\mathcal{G}_{N,{\rm good}}$, we see that
\begin{eqnarray*}
  \Phi(\de_0)&\le&\liminf_{N\to\infty}\min_{m\in\{+,-\}}\frac{\mathcal{AC}_{\s_N}(\tilde{u}^m_N;J_N\cap G_N^m)}{2\,\s_N^n}
  \le\liminf_{N\to\infty}\frac12\sum_{m\in\{+,-\}}2\,\frac{\mathcal{AC}_{\s_N}(\tilde{u}^m_N;J_N\cap G_N^m)}{2\,\s_N^n}
  \\
  &=&\frac12\,\liminf_{N\to\infty}\frac{\mathcal{AC}_{\s_N}(\tilde{u}_N;J_N\cap (G_N^+\cup G_N^-))}{2\,\s_N^n}\le
  \frac12\,\liminf_{N\to\infty}\frac{\mathcal{AC}_{\s_N}(\tilde{u}_N;J_N)}{2\,\s_N^n}
  \\
  &\le& \Phi(\de_0)+\liminf_{N\to\infty}\frac{\sqrt{\a_N}}2=\Phi(\de_0)\,,
\end{eqnarray*}
which readily implies \eqref{final energy concentration}.

\medskip

\noindent {\it Step five (analysis of the second blow-up)}: With $Q_N=\xi_N+\QQ_{\s_N/2}^n$ and $J_N=Q_N\times(-1,1)$ as in step five, if we now set
\begin{eqnarray*}
\jj_N&=&(J_N-\xi_N)/\s_N=\QQ_{1/2}^n\times(-1/\s_N,1/\s_N)\,,
\\
\jj_N^\pm&=&\jj_N\cap\{x_{n+1}\gtrless 0\}\,,
\\
\mathbf{G}_N^\pm&=&[(J_N\cap G_N^\pm)-\xi_N]/\s_N\,,
\\
\uu_N(z)&=&\tilde{u}_N(\xi_N+\s_N\,z)\,,
\\
\uu_N^\pm(z)&=&\tilde{u}_N^\pm(\xi_N+\s_N\,z)
\\
&=&\uu_N(z)\,1_{\mathbf{G}_N^\pm}(z)+\de_N\,1_{\jj_N\setminus\mathbf{G}_N^\pm}(z)\,,
\end{eqnarray*}
(where $m\in\{+,-\}$ and $z\in\jj_N$), then by \eqref{final L1 close}, \eqref{final energy concentration}, \eqref{good trace}, and \eqref{parisatlanta1}, we find that
\begin{eqnarray}\label{final L1 close rescaled}
&&
\lim_{N\to\infty}\max\Big\{\sigma_N\,\int_{\mathbf{J}_N}|\uu_N^+-\de_0\,1_{\jj_N^-}|\,,
\sigma_N\,\int_{\mathbf{J}_N}|\uu_N^--\de_0\,1_{\jj_N^+}|\Big\}=0\,,
\\ \label{final energy concentration rescaled}
&&\lim_{N\to\infty}\max\Big\{\Big|2\,\Phi(\de_0)-\frac{\mathcal{AC}_{1}(\uu_N;\mathbf{J}_N)}2\Big|,
\Big|\Phi(\de_0)-\frac{\mathcal{AC}_1(\uu_N;\GGG_N^\pm)}2\Big|\Big\}=0\,,
\\ \label{good trace blown}
&&\mbox{$\jj_N\cap\pa^*\GGG_N^\pm$ is $\H^n$-contained in $\{\uu_N^*=\de_N\}$}\,,
\\\label{parisatlanta1 blown}
&&\mbox{$\GGG_N^\pm$ is $\L^{n+1}$-contained in $\{\uu_N<\de_N\}$}\,,
\end{eqnarray}
where $\sigma_N\to 0$ and $\de_N\to\de_0$ as $N\to\infty$. We now formulate a clam, where, for $a\in\R$, we use the notation,
\begin{equation}
  \label{traslazione e coordinata}
  \TT_a(x)=x-a\,e_{n+1}\,,\qquad \ttt(x)=x_{n+1}\,,\qquad\forall x\in\R^{n+1}\,.
\end{equation}

\medskip

\noindent {\it Claim}: there exists a sequence $\{a_N\}_N$ with $\sigma_N\,|a_N|\le 1/2$ such that
\begin{equation}
  \label{theclaim}
\lim_{N\to\infty}
\int_{\QQ_{1/2}^n\times Z}
|\uu_N\circ\TT_{a_N}-q_0\circ\ttt|^2
+|(\nabla\uu_N)\circ\TT_{a_N}-(q_0'\circ\ttt)\,e_{n+1}|^2\,dx=0\,,
\end{equation}
for every $Z\cc(-\infty,Q_0]$. Here we denote by $q:\R\to(0,1)$ the unique solution to
\begin{equation}
  \label{def of q}
  q'=\sqrt{W(q)}\quad\mbox{on $\R$}\,,\qquad q(0)=\frac14\,,\qquad \lim_{t\to-\infty}q(t)=0\,,
\end{equation}
and we define $q_0:\R\to(0,\de_0]$ as
\begin{eqnarray}
  \label{def of q0}
  q_0=1_{(-\infty,Q_0]}\,q+1_{(Q_0,+\infty)}\,\de_0\,,\qquad Q_0=q^{-1}(\de_0)\,.
\end{eqnarray}
(In particular, $Q_0>0$ by $\de_0\ge 1/2$, and $Q_0=+\infty$ if and only if $\de_0=1$.)

\medskip

\noindent {\it Conclusion of the theorem from the claim}: By \eqref{EL inner utildeN}, if we set $\mathbf{\Lambda}_N:=\lambda_{j_N}\,\e_{j_N}$, then
\[
\int_{\jj_N}{\rm ac}_{1}(\uu_N)\,\Div Z
  - 2\,\nabla\uu_N\,\cdot\, \nabla Z[\nabla \uu_N]
  =\mathbf{\Lambda}_N\, \int_{\jj_N} V(\uu_N)\,\Div Z\,,
\]
for every $Z\in C^\infty_c(\jj_N;\R^{n+1})$. In fact, setting $\UU_N(x)=\uu_N(x-a_N\,e_{n+1})$ and noticing that by $\sigma_N\,|a_N|\le1/2$ it holds that
\[
B_1(z_0)\subset\QQ_{1/2}^n\times(-1/2\,\sigma_N,1/2\,\sigma_N)\subset\jj_N-a_N\,e_{n+1}
\]
where $z_0=(Q_0-1)\,e_{n+1}$, we conclude that
\begin{equation}
  \label{EL inner UUN}
  \int_{B_1(z_0)}{\rm ac}_{1}(\UU_N)\,\Div Z
  - 2\,\nabla\UU_N\,\cdot\, \nabla Z[\nabla \UU_N]
  =\mathbf{\Lambda}_N\, \int_{B_1(z_0)} V(\UU_N)\,\Div Z\,,
\end{equation}
for every $Z\in C^\infty_c(B_1(z_0);\R^{n+1})$. Let $\vphi\in C^\infty_c(B_1(z_0))$ be radially symmetric decreasing with respect to $z_0$, and set $Z(x)=\vphi(x)\,e_{n+1}$. In this way, denoting by $\rho$ the reflection of $\R^{n+1}$ with respect to $\{x_{n+1}=Q_0-1\}$ and noticing that $\pa_{n+1}\vphi$ is odd with respect to such reflection, we deduce by \eqref{theclaim} that
\begin{eqnarray*}
&&\lim_{N\to\infty}\int_{B_1(z_0)} V(\UU_N)\,\Div Z=\int_{B_1(z_0)} V(q_0(x_{n+1}))\,\pa_{n+1}\vphi
\\
&&=\int_{B_1(z_0)\cap\{x_{n+1}>Q_0-1\}} \big\{V(q_0(x_{n+1}))-V\big(q_0(\rho(x)_{n+1})\big)\big\}\,\pa_{n+1}\vphi\,,
\end{eqnarray*}
Now, since $q_0$ is strictly increasing on $(-\infty,Q_0]$, and $V$ is strictly increasing on $[0,1]$, we have $V(q_0(x_{n+1}))>V(q_0(\rho(x)_{n+1}))$ for every $x\in B_1(z_0)\cap\{x_{n+1}>Q_0-1\}$. Since $\vphi$ being radially symmetric decreasing with respect to $z_0$ implies that $\pa_{n+1}\vphi\le 0$ on $\{x_{n+1}>Q_0-1\}$, the choice of $\vphi$ can thus be arranged so that
\begin{equation}
  \label{negative}
\lim_{N\to\infty}\int_{B_1(z_0)} V(\UU_N)\,\Div Z<0\,.
\end{equation}
At the same time, by \eqref{theclaim} and $q_0'=\sqrt{W(q_0)}$ we find that
\begin{eqnarray*}
&&\lim_{N\to\infty}\int_{B_1(z_0)}{\rm ac}_{1}(\UU_N)\,\Div Z-2\,\nabla\UU_N\,\cdot\, \nabla Z[\nabla \UU_N]
\\
&&=\int_{B_1(z_0)}\big[W(q_0)-(q_0')^2\big](x_{n+1})\,\pa_{n+1}\vphi=0\,,
\end{eqnarray*}
which combined with \eqref{EL inner UUN} and \eqref{negative} implies that $\mathbf{\Lambda}_N\to 0$ as $N\to\infty$, and completes the proof of the theorem.

\medskip

\noindent {\it Proof of the claim}: We begin by reducing the proof of \eqref{theclaim} to showing the existence of $\{b_N\}_N$ and $\{c_N\}_N$ with $\max\{|b_N|,|c_N|\}\,\sigma_N\le 1/2$ such that for every $W\cc[P_0,\infty)$ and $Z\cc(-\infty,Q_0]$ we have
\begin{eqnarray}
\label{theclaim plus}
  \lim_{N\to\infty}
  \int_{\QQ_{1/2}^n\times W}
  |\uu_N^+\circ\TT_{b_N}-p_0\circ\ttt|^2+|(\nabla\uu_N^+\circ\TT_{b_N})-(p_0'\circ\ttt)\,e_{n+1}|^2\,dx=0\,,
  \\
\label{theclaim minus}
  \lim_{N\to\infty}
  \int_{\QQ_{1/2}^n\times Z}
  |\uu_N^-\circ\TT_{c_N}-q_0\circ\ttt|^2+|(\nabla\uu_N^-\circ\TT_{c_N})-(q_0'\circ\ttt)\,e_{n+1}|^2\,dx=0\,,
\end{eqnarray}
where we set denote by $p:\R\to(0,1)$ the unique solution to
\begin{equation}
  \label{def of p}
  p'=-\sqrt{W(p)}\quad\mbox{on $\R$}\,,\qquad p(0)=\frac14\,,\qquad \lim_{t\to+\infty}p(t)=0\,,
\end{equation}
and we define $p_0:\R\to(0,\de_0]$ as
\begin{eqnarray}
  \label{def of p0}
  p_0=1_{[P_0,+\infty)}\,p+1_{(-\infty,P_0)}\,\de_0\,,\qquad P_0=p^{-1}(\de_0)\,.
\end{eqnarray}
To deduce \eqref{theclaim} from \eqref{theclaim minus} and \eqref{theclaim plus}, we first note that for any $Z \cc (-\infty,Q_0]$ and $W\cc [P_0,\infty)$, since $Z\subset\{q_0<\delta_0\}$ and $\{\uu_N^-<\de_N\}=\GGG_N^-$ (which follows from the definition \eqref{utwiddle pm def} of $\tilde{u}_N^-$), \eqref{theclaim minus} and the analogous statements for $\uu_N^+$ imply that
\begin{equation}
  \label{theclaim Gs}
  \lim_{N\to\infty}|(\QQ_{1/2}^n\times Z)\setminus \TT_{c_N}(\GGG_N^-)|=0=\lim_{N\to\infty}|(\QQ_{1/2}^n\times W)\setminus \TT_{b_N}(\GGG_N^+)|\,.
\end{equation}
As a consequence, the moving rectangles in $\jj_N$ on which $\uu_N^-$ and $\uu_N^+$ are locally converging to translations of $q_0 \circ \ttt$ and $p_0\circ \ttt$, respectively, cannot overlap too much: more precisely, given $C_1,C_2>0$ with $-2Q_0+C_1<C_2$, there exists $N_0(C_1,C_2)$ such that
\begin{align}\label{cNbN relation}
    c_N - b_N \notin [-2Q_0+C_1,C_2] \quad \forall N\geq N_0\,.
\end{align}
Indeed, if \eqref{cNbN relation} did not hold for some $C_1$ and $C_2$, then, up to a subsequence which we do not notate, we would have $c_{N} - b_{N}\to C'\in [-2Q_0+C_1,C_2]$. Then testing \eqref{theclaim Gs} with $W=[P_0+C_1/3,C_2]$ and $Z=[-C_2,Q_0-C_1/3]$ would give
\begin{eqnarray}\notag
0&=&\lim_{N\to\infty}|(\QQ_{1/2}^n\times [P_0+C_1/3,C_2])\setminus\TT_{b_N}(\GGG_N^+)| \\ \notag
&=&\lim_{N\to\infty}|(\QQ_{1/2}^n\times [b_N+P_0+C_1/3,b_N+C_2])\setminus \GGG_N^+|= \lim_{N\to\infty}|(\QQ_{1/2}^n\times (b_N+W))\setminus \GGG_N^+|\,,\\ \notag
0&=&\lim_{N\to\infty}|(\QQ_{1/2}^n\times [-C_2,Q_0-C_1/3])\setminus\TT_{b_N}(\GGG_N^-)|\\ \notag
&=&\lim_{N\to\infty}|(\QQ_{1/2}^n\times [c_N-C_2,c_N+Q_0-C_1/3])\setminus \GGG_N^-|=\lim_{N\to\infty}|(\QQ_{1/2}^n\times (c_N+Z))\setminus \GGG_N^-|\,.
\end{eqnarray}
Now since $b_N+W$ and $c_N+Z$ are intervals of equal length (by $P_0=-Q_0$), we may bound the length of their intersection from below by subtracting endpoints as follows:
\begin{eqnarray}\notag
  &&\liminf_{N\to \infty} \mathcal{L}^1\big((b_N+W)\cap (c_N+Z)\big)\\ \notag
  &&\quad\geq \liminf_{N\to \infty}  \min\{c_N+Q_0-C_1/3 - (b_N+P_0+C_1/3), b_N+C_2 - (c_N-C_2)\} \\ \notag
  &&\quad= \liminf_{N\to \infty} \min \{c_N - b_N + 2Q_0-2C_1/3, b_N - c_N + 2C_2\} \\ \notag
&&\quad=\min\{ C' - (-2Q_0+2C_1/3), 2C_2-C')\}\geq\min\{C_1/3,C_2\}>0\,.
\end{eqnarray}
But this contradicts $|\GGG_N^+ \cap \GGG_N^-| = 0$, since the previous two estimates yield
\begin{eqnarray*}
    0=\lim_{N\to \infty}|\GGG_N^+ \cap \GGG_N^-| \geq \liminf_{N\to \infty}\mathcal{H}^n(\QQ_{1/2}^n)\times\mathcal{L}^1\big((b_N+W)\cap (c_N+Z) \big)>0\,.
\end{eqnarray*}
Moving on to proving \eqref{theclaim}, since $\uu_N = \uu_N^- 1_{\GGG_N^-} + \uu_N 1_{\jj_N \setminus \GGG_N^-}$, \eqref{theclaim Gs} and \eqref{theclaim minus} imply that for any $Z'\cc(-\infty,Q_0]$,
\begin{align}\label{func conv}
    \lim_{N\to \infty}\int_{\QQ_{1/2}^n\times Z'}|\uu_N \circ \TT_{c_N}- q_0 \circ \ttt|^2 dx =0\,.
\end{align}
To finish proving \eqref{theclaim}, we fix $Z\cc(-\infty,Q_0]$. By \eqref{func conv}, it suffices to show that, for $Z\cc Z'\cc(-\infty,Q_0]$ to be chosen shortly,
\begin{align}\label{grad conv}
    \lim_{N\to \infty}\int_{\QQ_{1/2}^n\times Z'}|\nabla(\uu_N \circ \TT_{c_N})- q_0' \circ \ttt \,e_{n+1}|^2 dx =0\,.
\end{align}
We observe that as a consequence of the uniform energy bound \eqref{final energy concentration rescaled}, any subsequence $\{\uu_{N_k} \circ \TT_{c_{N_k}}\}_k$ of $\{\uu_N \circ \TT_{c_N}\}_N$ has a further subsequence with a weak $W^{1,2}(\QQ_{1/2}^n\times Z')$ limit, which by \eqref{func conv}, must be $q_0 \circ \ttt$. Therefore, the entire sequence $\{\uu_N \circ \TT_{c_N}\}_N$ weakly converges in $W^{1,2}(\QQ_{1/2}^n\times Z')$ to $q_0 \circ \ttt$; in particular $\nabla(\uu_N \circ \TT_{c_N}) \weak q_0' \circ \ttt\,e_{n+1}$ in $L^2(\QQ_{1/2}^n\times Z';\mathbb{R}^{n+1})$. To upgrade this weak convergence to \eqref{grad conv}, it is enough to show
\begin{equation}\label{conv of norms}
     \lim_{N\to \infty}\int_{\QQ_{1/2}^n\times Z'}|\nabla(\uu_N \circ \TT_{c_N}) |^2 dx =  \int_{\QQ_{1/2}^n\times Z'}|q_0' \circ \ttt\,e_{n+1} |^2 dx\,.
\end{equation}
Assuming for contradiction that \eqref{conv of norms} were false, then by the lower-semicontinuity of norms under weak convergence, we would have
\begin{align}\label{ac zw pre}
    \liminf_{N\to \infty}\int_{\QQ_{1/2}^n\times Z'}|\nabla(\uu_N \circ \TT_{c_N}) |^2 dx \geq \tau+ \int_{\QQ_{1/2}^n\times Z'}|q_0' \circ \ttt\,e_{n+1} |^2 dx\,,\quad \tau>0\,.
\end{align}
Since $q$ and $p$ are optimal Allen-Cahn profiles, we may now choose $0<C_1<2Q_0$ small enough and $C_2>0$ large enough such that the set $Z'=[-C_2/2,Q_0-C_1/2]\cc(-\infty,Q_0]$ compactly contains $Z$ and such that $Z'$ and $W'=[P_0+C_1/2,C_2/2]$ satisfy
\begin{align}\label{ac zw}
    \mathcal{AC}_1(q_0 \circ \ttt;\QQ_{1/2}^n\times Z')/2 + \mathcal{AC}_1(p_0\circ \ttt;\QQ_{1/2}^n\times W')/2 \geq 2\,\Phi(\delta_0) - \tau/2\,.
\end{align}
By \eqref{cNbN relation}, for each large $N$, either $c_N-b_N>C_2$ or $b_N-c_N > 2Q_0-C_1$. This implies that the intervals $c_N+Z'$ and $b_N+W'$ are disjoint for large $N$: indeed, two closed intervals $[\alpha_1,\alpha_2]$ and $[\alpha_3,\alpha_4]$ are disjoint if and only if $\max\{\alpha_3-\alpha_2,\alpha_1-\alpha_4\}>0$, and
\begin{eqnarray*}
    && \max\{c_N-C_2/2-(b_N+C_2/2),b_N+P_0+C_1/2 - (c_N+Q_0-C_1/2 )\}\\ \notag
    &&\quad = \max\{ c_N - b_N - C_2, b_N - c_N -2Q_0 + C_1\}>0\quad\textup{for large }N\,.
\end{eqnarray*}
Then using in order \eqref{final energy concentration rescaled} and the disjointness of $c_N+Z'$ and $b_N+W'$; \eqref{ac zw pre}, the lower semicontinuity of norms under weak convergence, and Fatou's lemma; and \eqref{ac zw}, we may compute
\begin{eqnarray}\notag
 2\,\Phi(\delta_0) 
 &\geq& \liminf_{N\to \infty}  \mathcal{AC}_1(\uu_N;\QQ_{1/2}\times (c_N+Z'))/2 + \mathcal{AC}_1(\uu_N;\QQ_{1/2}\times (b_N+W'))/2 \\ \notag
 &\geq& \tau+\mathcal{AC}_1(q_0 \circ \ttt;\QQ_{1/2}\times Z')/2 + \mathcal{AC}_1(p_0\circ \ttt;\QQ_{1/2}\times W')/2 \\ \notag
 &\geq& 2\,\Phi(\de_0) + \tau/2\,.
\end{eqnarray}
This is a contradiction since $\tau>0$. Thus \eqref{conv of norms} holds and gives \eqref{theclaim}.

\medskip

The rest of the proof is thus devoted to showing the validity of \eqref{theclaim minus}; the proof of \eqref{theclaim plus} is the same. To begin with, we show that, setting $\nabla'=(\pa_1,...,\pa_n)$, we have
\begin{equation}
  \label{one dim limits}
  \lim_{N\to\infty}\int_{\jj_N}|\nabla' \uu_N^-|^2=0\,,\qquad\limsup_{N\to\infty}\int_{\jj_N}|\nabla(\Phi\circ\uu_N^-)|\le\Phi(\de_0)\,.
\end{equation}
(In particular, local limits of $\uu_N^\pm$ will depend only on the $x_{n+1}$-variable.) Indeed, by \eqref{final L1 close rescaled}, for $\mathcal{H}^n$-a.e.~ $y\in\QQ_{1/2}^n$ we can find
\begin{eqnarray}\label{aNy bNy}
s_N^y\in\Big(-\frac1{\sigma_N},-\frac2{3\,\sigma_N}\Big)\,,\quad\mbox{s.t.}\quad \lim_{N\to\infty}\uu_N^-(y,s_N^y)=0\,,
\\
\nonumber
t_N^y\in\Big(\frac2{3\,\sigma_N},\frac1{\sigma_N}\Big)\,,\quad\mbox{s.t.}\quad \lim_{N\to\infty}\uu_N^-(y,t_N^y)=\de_0\,.
\end{eqnarray}
Since, for $\mathcal{H}^n$-a.e.~ $y\in\QQ_{1/2}^n$,
\begin{equation}
  \label{is abs cont}
  \mbox{$t\mapsto\uu_N^-(y,t)$ is absolutely continuous on $(-1/\sigma_N,1/\sigma_N)$ for every $N$}\,,
\end{equation}
by Fubini's theorem and Fatou's lemma,
\begin{eqnarray}\label{ac strip lower bound}
    \liminf_{N\to \infty}\int_{\jj_N}|\partial_{n+1} (\Phi \circ \uu_N^-)|\ge\int_{\QQ^n_{1/2}}\liminf_{N\to \infty} \int_{(s_N^y,t_N^y)} |\partial_{n+1} (\Phi \circ \uu_N^-)|
    \ge \Phi(\delta_0)\,.
\end{eqnarray}
Combining \eqref{ac strip lower bound} with \eqref{final energy concentration rescaled} and with $\nabla (\Phi \circ \uu_N^-) = 0$ a.e.~ on $\GGG_N^-$, we find
\begin{align}\label{string}
    \Phi(\delta_0)&\leq \liminf_{N\to \infty}\int_{\jj_N}|\partial_{n+1} (\Phi \circ \uu_N^-)|=\liminf_{N\to \infty}\int_{\GGG_N^-}|\partial_{n+1} (\Phi \circ \uu_N^-)|
    \\\nonumber
    &\leq \liminf_{N\to \infty} \int_{\GGG_N^-} \big(|\partial_{n+1}\uu_N^-|^2 + W(\uu_N^-)\big)/2\leq \liminf_{N\to \infty}\int_{\GGG_N^-} \mathrm{ac}_1(\uu_N^-)/2  \leq \Phi(\delta_0)\,.
\end{align}
from which we immediately deduce the first conclusion in \eqref{one dim limits}; the limsup inequality in \eqref{one dim limits} is of course derived along similar lines.

\medskip

Next, we claim that for every $\eta\in(0,\de_0)$, we can find sequences $\{c_N^-\}_N$ and $\{c_N^+\}_N$ with $-1/2\sigma_N\le c_N^-<c_N^+< 1/2\sigma_N$ such that
\begin{equation}
  \label{sntn def}
  \int_{\QQ_{1/2}^n\times(c_N^-,c_N^-+1)}\uu_N^-=\eta\,,\qquad \int_{\QQ_{1/2}^n\times(c_N^+,c_N^++1)}\uu_N^-=\de_0-\eta\,.
\end{equation}
To prove this, let us consider the continuous function $f_N$
defined by
\[
f_N(t)=\int_{\QQ_{1/2}^n\times(t,t+1)}\uu_N^-\,,\qquad |t|<\frac1{\sigma_N}-1\,.
\]
Denoting by $k_N$ the integer part of $1/(2\,\sigma_N)$, we have that
\begin{eqnarray*}
\sigma_N\,\int_{\jj_N}|\uu_N^--\de_0\,1_{\jj_N^+}|
&\ge&\sigma_N\,\Big\{\sum_{k=0}^{k_N}\int_{\QQ_{1/2}^n\times(k,k+1)}|\uu_N^--\de_0|
+\sum_{h=0}^{k_N}\int_{\QQ_{1/2}^n\times(-h-1,-h)}\uu_N^-\Big\}
\\
&\ge&\sigma_N\,\Big\{\sum_{k=0}^{k_N}|f_N(k)-\de_0|+\sum_{h=0}^{k_N}f_N(-h)\Big\}\,,
\end{eqnarray*}
so that, by $\sigma_N\,k_N\ge (1/2)-\sigma_N\ge 1/4$ ($N$ large), we see that
\[
\lim_{N\to\infty}\Big\{\inf_{0\le k\le k_N}|f_N(k)-\de_0|+\inf_{0\le h\le k_N}f_N(-h)\Big\}=0\,.
\]
In particular, for every $N$ large enough depending on $\eta$, we can find integers $k,h\in(0,1/(2\sigma_N))$ such that $f_N(k)> \de_0-\eta$ and $f_N(-h)<\eta$, and $c_N^+$ and $c_N^-$ satisfying \eqref{sntn def} are then found by the intermediate value theorem.

\medskip

By \eqref{final energy concentration rescaled}, both $\{\uu_N^-\circ\TT_{c_N^-}\}_N$ and $\{\uu_N^-\circ\TT_{c_N^+}\}_N$ are bounded in $W^{1,2}(\QQ_{1/2}^n\times Z)$ for every $Z\cc\R$. Thus, up to extracting a not relabeled subsequence and taking into account \eqref{one dim limits}, we can find $q_0^-,q_0^+\in (W^{1,2}_{\rm loc}\cap C^{0,1/2}_{\rm loc})(\R;[0,\de_0])$ such that, as $N\to\infty$,
\begin{eqnarray}\label{p0 conv}
&&
\left\{
\begin{split}
&\uu_N^-\circ\TT_{c_N^-}\weak q_0^-\circ\ttt\,,
\\
&\uu_N^-\circ\TT_{c_N^+}\weak q_0^+\circ\ttt\,,\qquad\mbox{weakly in $W^{1,2}(\QQ_{1/2}^n\times Z)$, $\forall\,Z\cc\R$}\,,
\end{split}
\right .
\\\label{ae convergence}
&&
\left\{
\begin{split}
  &\uu_N^-\circ\TT_{c_N^-}\to q_0^-\circ\ttt\,,
  \\
  &\uu_N^-\circ\TT_{c_N^+}\to q_0^+\circ\ttt\,,\qquad \mbox{a.e. on $\QQ_{1/2}^n\times\R$}\,.
\end{split}
\right .
\end{eqnarray}
In particular, by \eqref{sntn def} and \eqref{p0 conv}, we have that
\begin{equation}
  \label{limit average}
  \int_{(0,1)}q_0^-=\eta\,,\quad\int_{(0,1)}q_0^+=\de_0-\eta\,,\quad\mbox{$\exists\, s_*,t_*\in(0,1)$ s.t.}
  \left\{\begin{split}
  &q_0^-(s_*)=\eta\,,
  \\
  &q_0^+(t_*)=\de_0-\eta\,.
  \end{split}\right .
\end{equation}
On combining \eqref{limit average}, \eqref{ae convergence} and \eqref{is abs cont} we conclude that, for $\H^n$-a.e. $y\in\QQ_{1/2}^n$,
\begin{equation}
  \label{good limits}
  \eta=\lim_{N\to\infty}\uu_N^-(y,c_N^-+s_*)\,,\qquad\de_0-\eta=\lim_{N\to\infty}\uu_N^-(y,c_N^++t_*)\,.
\end{equation}
Since $(q_0^\pm)'\in L^2(\R)$, the limits
\[
q_0^\pm(+\infty):=\lim_{t\to+\infty}q_0^\pm(t)\,,\qquad q_0^\pm(-\infty):=\lim_{t\to-\infty}q_0^\pm(t)\,,
\]
exist, and we can prove that we always have
\begin{equation}
  \label{llll}
  \{q_0^+(\pm\infty),q_0^-(\pm\infty)\}\subset\{0,\de_0\}\,.
\end{equation}
Indeed, by \eqref{final energy concentration rescaled}, by $\GGG_N^-=\{\uu_N^-<\de_N\}$ and $\uu_N=\uu_N^-$ on $\GGG_N^-$, by Fatou's lemma, and by \eqref{ae convergence},
\begin{eqnarray*}
\Phi(\delta_0)&=&\lim_{N\to\infty}\frac{\mathcal{AC}_1(\uu_N;\GGG_N^-)}2
\ge\limsup_{N\to\infty}\frac12\int_{\{\uu_N^-<\de_N\}}W(\uu_N^-)
\\
&=&\limsup_{N\to\infty}\frac12\int_{\{\uu_N^-\circ\TT_{c_N^-}<\de_N\}}W\big(\uu_N^-\circ\TT_{c_N^-}\big)
\\
&\ge&\frac12\int_{\{\uu_N^-\circ\TT_{c_N^-}<\de_N\}}\liminf_{N\to\infty}W\big(\uu_N^-\circ\TT_{c_N^-}\big)\ge
\int_{\{q_0^-<\de_0\}}W(q_0^-)\,,
\end{eqnarray*}
and, similarly, we prove that $\int_{\{q_0^+<\de_0\}}W(q_0^+)<\infty$; since $W>0$ on $(0,\de_0)$, we deduce \eqref{llll}. We now need to split the proof of \eqref{theclaim minus} depending on the value of the limit inferior of $c_N^+-c_N^-\ge0$.

\medskip

\noindent {\it Proof of \eqref{theclaim minus} in the case when}
\begin{equation}
  \label{case one hp}
  \liminf_{N\to\infty}c_N^+-c_N^-\ge 1\,.
\end{equation}
In this case we must have $c_N^-+s_*<c_N^++t_*$ for every $N$ large enough. Therefore, by \eqref{good limits}, we thus find
\begin{align}
\label{in between bound}
\liminf_{N\to \infty}\int_{\QQ_{1/2}^n\times [c_N^- + s_*,c_N^++t_*]}|\nabla (\Phi \circ \uu_N^-)| \geq \Phi(\delta_0-\eta) - \Phi(\eta)\,.
\end{align}
We can now improve \eqref{llll} by showing that, if $\eta$ is sufficiently small in terms of $\de_0$ and $W$, then
\begin{equation}\label{computation of Lpm}
q_0^+(+\infty)=\delta_0\,,\qquad q_0^-(-\infty)=0\,.
\end{equation}
Indeed, by using, in the order, \eqref{limit average} and the absolute continuity of $\Phi\circ q_0^\pm$, \eqref{p0 conv} and the lower semicontinuity of the total variation, the second conclusion in \eqref{one dim limits}, and \eqref{in between bound}, we find
\begin{eqnarray}\nonumber
&&|\Phi(q_0^-(-\infty))-\Phi(\eta)|+|\Phi(q_0^+(+\infty))-\Phi(\de_0-\eta)|\le\int_{-\infty}^{s_*}|(\Phi\circ q_0^-)'|+\int_{t_*}^\infty|(\Phi\circ q_0^+)'|
\\\nonumber
&&\hspace{3cm}\le\liminf_{N\to\infty}\int_{\QQ_{1/2}^n\times[(-1/\sigma_N,c_N^-+s_*)\cup(c_N^++t_*,1/\sigma_N)]}|D(\Phi\circ\uu^-_N)|
\\\label{the chain}
&&\hspace{3cm}\le\Phi(\de_0)-\liminf_{N\to\infty}\int_{\QQ_{1/2}^n\times(c_N^-+s_*,c_N^++t_*)}|D(\Phi\circ\uu^-_N)|
\\\nonumber
&&\hspace{3cm}\le\Phi(\de_0)-\big(\Phi(\de_0-\eta)-\Phi(\eta)\big)\,.
\end{eqnarray}
This inequality, combined with \eqref{llll} and considered with $\eta$ small enough in terms of $\de_0$ and $W$, implies \eqref{computation of Lpm}.

\medskip

Armed with \eqref{computation of Lpm}, we prove that $q_0^+$ is strictly increasing on $\{0<q_0^+<\de_0\}$. Indeed, if this were not the case, then
\begin{equation}
  \label{look at t1t2}
  \mbox{$\exists\, t_1<t_2$ s.t. $q_0^+(t_1)\ge q_0^+(t_2)\in (0,\delta_0)$}\,.
\end{equation}
Setting for brevity $I_N[t_1,t_2]=\QQ_{1/2}^n\times[t_1+c_N^+,t_2+c_N^+]$ we could then estimate
\begin{eqnarray}\nonumber
\mathcal{A}_1(\uu_N^-;\GGG_N^-)&\ge&\mathcal{A}_1(\uu_N^-;\GGG_N^-\setminus I_N[t_1,t_2])+\mathcal{A}_1(\uu_N^-;\GGG_N^-\cap I_N[t_1,t_2])
\\\label{tab}
&\ge&2\,\int_{\jj_N\setminus I_N[t_1,t_2]}|D(\Phi\circ\uu_N^-)|+\int_{\GGG_N^-\cap I_N[t_1,t_2]}W(\uu_N^-)\,.
\end{eqnarray}
Now, with $s_N^y$ and $t_N^y$ as in \eqref{aNy bNy}, we have
\[
s_N^y<-\frac2{3\,\sigma_N}<-\frac1{2\sigma_N}<c_N^-<c_N^+<\frac1{2\,\sigma_N}<\frac2{3\,\sigma_N}<t_N^y\,,
\]
so that, thanks to $\s_N\to0^+$ as $N\to\infty$, for $N$ large enough we find $s_N^y<c_N^++t_1$ and $t_N^y>c_N^++t_2$ $\H^n$-a.e. on $\QQ_{1/2}^n$. In particular,
\begin{eqnarray}\nonumber
&&\liminf_{N\to\infty}\int_{\jj_N\setminus I_N[t_1,t_2]}|D(\Phi\circ\uu_N^-)|
\\\nonumber
&&\ge\liminf_{N\to\infty}
 \int_{\QQ_{1/2}^n}d\H^n_y\int_{c_N^++t_2}^{t_N^y}|D(\Phi\circ\uu_N^-)|+
 \int_{\QQ_{1/2}^n}d\H^n_y\int_{s_N^y}^{c_N^++t_1}|D(\Phi\circ\uu_N^-)|
 \\\nonumber
 &&\ge\int_{\QQ_{1/2}^n}\liminf_{N\to\infty}|\Phi(\uu_N^-(y,t_N^y))-\Phi(\uu_N^-(y,c_N^++t_2))|
 \\\nonumber
 &&+\int_{\QQ_{1/2}^n}\liminf_{N\to\infty}|\Phi(\uu_N^-(y,c_N^++t_1))-\Phi(\uu_N^-(y,s_N^y))|\,d\H^n_y
 \\\nonumber
 &&=|\Phi(\de_0)-\Phi(q_0^+(t_2))|+|\Phi(q_0^+(t_1))-\Phi(0)|
 \\\label{primo liminf}
 &&=\Phi(\de_0)-\Phi(q_0^+(t_2))+\Phi(q_0^+(t_1))\ge\Phi(\de_0)\,,
\end{eqnarray}
where we have used $q_0^+(t_2)\le q_0^+(t_1)$ and the fact that $\Phi$ is increasing on $[0,1]$. Concerning the second term in \eqref{tab} we notice that, thanks to $\GGG_N^-=\{\uu_N^-<\de_N\}$ we find
\[
\int_{\GGG_N^-\cap I_N[t_1,t_2]}W(\uu_N^-)=\int_{\{\uu_N^-\circ\TT_{c_N^+}<\de_N\}\cap[\QQ_{1/2}^n\times(t_1,t_2)]} W(\uu_N^-\circ\TT_{c_N^+})\,,
\]
so that
\begin{equation}
  \label{altro liminf}
  \liminf_{N\to\infty}\int_{\GGG_N^-\cap I_N[t_1,t_2]}W(\uu_N^-)\ge\int_{\{q_0^+<\de_0\}\cap(t_1,t_2)}W(q_0^+)=:c\,,
\end{equation}
where $c>0$ since $q_0^+(t_1),q_0^+(t_2)\in(0,\de_0)$, $W>0$ on $(0,\de_0)$, and $q_0^+$ is continuous. By combining \eqref{final energy concentration rescaled} with \eqref{tab}, \eqref{primo liminf} and \eqref{altro liminf} we conclude that
\[
\Phi(\de_0)=\lim_{N\to\infty}\mathcal{A}_1(\uu_N^-;\GGG_N^-)\ge\Phi(\de_0)+c>\Phi(\de_0)\,,
\]
thus obtaining a contradiction with \eqref{look at t1t2}.

\medskip

Having proved that $q_0^+$ is strictly increasing on the (possibly unbounded) interval $(a,b)=\{0<q_0^+<\de_0\}$, we see that it must be $q_0^+(t)\to 0^+$ as $t\to a^+$. Since $q_0^+(+\infty)=\de_0$ implies $q_0^+(t)\to\de_0$ as $t\to b^-$, we find
\begin{eqnarray*}
  \Phi(\de_0)=\int_{\{0<q_0^+<\de_0\}}|(\Phi\circ q_0^+)'|\le\int_{\{0<q_0^+<\de_0\}}\mathcal{AC}_1(q_0^+\circ\ttt)
  \le\Phi(\de_0)\,,
\end{eqnarray*}
so that $(q_0^+)'=\sqrt{W(q_0)}$ on $\{0<q_0^+<\de_0\}$. By the Cauchy uniqueness theorem and by definition \eqref{def of q} of the Allen--Cahn one-dimensional profile $q$, since $q_0^+$ is not identically equal to $0$, there exists $c_0\in\R$ such that
\[
q_0^+(t)=q(t-c_0)\,,\qquad\forall t\in\{0<q_0^+<\de_0\}=\big(-\infty,c_0+Q_0\big)\,,
\]
where, we recall, $Q_0:=q^{-1}(\de_0)$. Therefore, if we set $c_N=c_N^++c_0$, then we conclude that
\begin{equation}
  \label{the weak}
  \mbox{$\uu_N^-\circ\TT_{c_N}\weak q_0\circ\ttt$ weakly in $W^{1,2}(\QQ_{1/2}^n\times Z)$, $\forall\,Z\cc\R$}\,,
\end{equation}
where $q_0=1_{(-\infty,Q_0]}\,q+1_{(Q_0,\infty)}\,\de_0$ is defined as in \eqref{def of q0}. The fact that
\begin{eqnarray*}
\Phi(\de_0)&=&\int_{\QQ_{1/2}^n\times(-\infty,Q_0)}|\nabla(q_0\circ\ttt)|^2=\frac{\mathcal{AC}_1(q_0\circ\ttt;\QQ_{1/2}^n\times(-\infty,Q_0))}2
\\
&\le&\liminf_{N\to\infty}\frac{\mathcal{AC}_1(\uu_N^-\circ\TT_{c_N};\QQ_{1/2}^n\times(-\infty,Q_0))}2
\le\Phi(\de_0)\,,
\end{eqnarray*}
implies that
\[
\lim_{N\to\infty}\int_{\QQ_{1/2}^n\times(-\infty,Q_0)}|\nabla(\uu_N^-\circ\TT_{c_N})|^2=\int_{\QQ_{1/2}^n\times(-\infty,Q_0)}|\nabla(q_0\circ\ttt)|^2\,,
\]
and thus to improve the weak convergence stated in \eqref{the weak} to the strong convergence claimed in \eqref{theclaim minus}. This complete the proof of \eqref{theclaim minus} in the case when \eqref{case one hp} holds.

\medskip

\noindent {\it Proof of \eqref{theclaim minus} in the case when}
\begin{equation}
  \label{case two hp}
  \liminf_{N\to\infty}c_N^+-c_N^-< 1\,.
\end{equation}
In this case, up to extracting a subsequence in $N$, we can assume that $c_N^+-c_N^-\to d_0\in[0,1)$. In particular, we find
\[
q_0^+(t)=q_0^-(t-d_0)\,,\qquad\forall t\in\R\,,
\]
so that, by \eqref{limit average},
\begin{equation}
  \label{limit average case one 1}
  \int_{(-d_0,-d_0+1)}q_0^+=\eta\,,\qquad\int_{(0,1)}q_0^+=\de_0-\eta\,,
\end{equation}
and, in particular, $d_0\in(0,1)$ by taking $\eta<\de_0/2$. With respect to the case when \eqref{case one hp} holds, we now define $s_*$ and $t_*$ differently, by claiming that
\begin{equation}
  \label{s star and t star case one}
  \mbox{$\exists\,s_*<t_*$ s.t. $q_0^+(s_*)=\eta$ and $q_0^+(t_*)=\de_0-2\,\eta$}\,.
\end{equation}
Indeed, by \eqref{limit average case one 1} and the continuity of $q_0^+$, there is $s_*\in(-d_0,-d_0+1)$ such that $q_0^+(s_*)=\eta$, while again by \eqref{s star and t star case one},
\[
\int_{-d_0+1}^1 q_0^+=\int_0^1 q_0^+-\int_0^{-d_0+1}q_0^+=\de_0-\eta-\int_0^{-d_0+1} q_0^+\ge\de_0-2\,\eta\,,
\]
so that there is also $r_*\in(-d_0+1,1)$ (and thus, such that $r_*>s_*$) with the property that
\[
q_0^+(r_*)=\frac1{1-(-d_0+1)}\int_{-d_0+1}^1 q_0^+\ge\frac{\de_0-2\,\eta}{d_0}\ge\de_0-2\,\eta\,.
\]
Since $q_0^+(s_*)=\eta$, we can find $t_*\in(s_*,r_*)$ such that $q_0^+(t_*)=\de_0-2\,\eta$ and conclude the proof of \eqref{s star and t star case one}.

\medskip

Having proved \eqref{s star and t star case one}, we find
\begin{align}
\label{in between bound 2}
\liminf_{N\to \infty}\int_{\QQ_{1/2}^n\times [c_N^+ + s_*,c_N^++t_*]}|\nabla (\Phi \circ \uu_N^-)| \geq \Phi(\delta_0-2\,\eta) - \Phi(\eta)\,,
\end{align}
which we can use  in place of \eqref{in between bound} to argue as in \eqref{the chain} to prove \eqref{computation of Lpm} (that is, $q_0^+(+\infty)=\delta_0$ and $q_0^-(-\infty)=0$). Without modifications one repeats the rest of the proof, from establishing that $q_0^+$ is strictly increasing on $\{0<q_0^+\}$ to showing that $q_0^+$ is a translation of $q$ and a strong $W^{1,2}$-limit of $\TT_{c_N}\circ\uu_N^-$ for some finite translation $c_N$ of $c_N^+$. The proof of the theorem is thus complete.
\end{proof}

\section{A hierarchy of Plateau problems (Proof of Theorem \ref{theorem upsilon existence convergence})}\label{section existence and convergence for diffused} This section is devoted the proof of Theorem \ref{theorem upsilon existence convergence}. We premise a simple lemma that will be used in the course of the proof.

\begin{lemma}\label{lemma isoperimetry at infinity} If $\wire\subset\R^{n+1}$ is compact, $\Om=\R^{n+1}\setminus \wire$, $\e_j\to 0^+$ as $j\to\infty$, $u_j\in W^{1,2}_{\rm loc}(\Om;[0,1])$, $u_j\to u$ in $L^1_{\rm loc}(\Om)$ with $\sup_j\,\acj(u_j;\Om)<\infty$, $\V(u_j;\Om)\to v>0$, and
\begin{eqnarray*}
{\rm ac}_{\e_j}(u_j)\,\L^{n+1}\mres\Om\weakstar\mu\,,\qquad\mbox{as Radon measures in $\Om$}\,,
\end{eqnarray*}
as $j\to\infty$, then
\[
\liminf_{j\to\infty}\acj(u_j;\Om)\ge\mu(\Om)+\Theta(v,\e)\,,
\]
where $\Theta(v,\e)$ is the diffused Euclidean isoperimetric profile if $\e>0$ (see Appendix \ref{section diffuse isop}) and $\Theta(v,0)=(n+1)\,\om_{n+1}^{1/(n+1)}\,v^{n/(n+1)}$.
\end{lemma}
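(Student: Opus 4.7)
The plan is to split $\Om$ at a large ball $B_R\supset\wire$, bounding the interior contribution via lower semicontinuity of $\mu$ on open sets, and bounding the exterior contribution by applying the sharp Euclidean isoperimetric (equivalently, $BV$-Sobolev) inequality to a truncation of $w_j:=\Phi\circ u_j$. Recall $\V(u_j;A)=\int_A w_j^{(n+1)/n}$ and, from the Modica--Mortola identity \eqref{modica mortola identity}, $\acj(u_j;A)\ge 2|Dw_j|(A)$ for any Borel $A\subset\Om$.

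Fix $R>0$ with $\wire\subset B_R$, so $\rn\setminus B_R\subset\Om$. Decomposing the energy, applying superadditivity of the liminf, and using weak-star lower semicontinuity on open sets, we get
\[
\liminf_j\acj(u_j;\Om)\ge\mu(\Om\cap B_R)+2\liminf_j|Dw_j|(\rn\setminus B_R).
\]
To isoperimetrically control the second term, pick $\zeta_R\in C^\infty_c(\rn;[0,1])$ with $\zeta_R\equiv 0$ on $B_R$, $\zeta_R\equiv 1$ on $\rn\setminus B_{2R}$, $|\nabla\zeta_R|\le C/R$, and set $\bar w_j:=\zeta_R w_j$ (extended by zero on $\wire\subset B_R$). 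Then $\bar w_j\in BV(\rn)$ with
\[
|D\bar w_j|(\rn)\le|Dw_j|(\rn\setminus B_R)+\tfrac{C}{R}\int_{B_{2R}\setminus B_R}w_j,\quad\int_{\rn}\bar w_j^{(n+1)/n}\ge\V(u_j;\rn\setminus B_{2R}),
\]
so the sharp $BV$-Sobolev inequality with constant $(n+1)\om_{n+1}^{1/(n+1)}$ gives
\[
|Dw_j|(\rn\setminus B_R)+\tfrac{C}{R}\int_{B_{2R}\setminus B_R}w_j\ge\Theta\!\big(\V(u_j;\rn\setminus B_{2R}),0\big).
\]
Since $w_j\le\Phi(1)=1$, H\"older's inequality bounds $\int_{B_{2R}\setminus B_R}w_j\le|B_{2R}\setminus B_R|^{1/(n+1)}\V(u_j;\Om)^{n/(n+1)}\le C\,R^{n/(n+1)}v^{n/(n+1)}$, so the cutoff error is $\le C'R^{-1/(n+1)}$ uniformly in $j$.

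Passing to $\liminf_j$ (using $u_j\to u$ in $L^1_\loc(\Om)$ and continuity of $V$ to deduce $\V(u_j;\Om\cap B_{2R})\to\V(u;\Om\cap B_{2R})$, hence $\V(u_j;\rn\setminus B_{2R})\to v-\V(u;\Om\cap B_{2R})$, and using continuity of $\Theta(\cdot,0)$) and then letting $R\to\infty$ via monotone convergence for $\mu(\Om\cap B_R)\nearrow\mu(\Om)$ and $\V(u;\Om\cap B_{2R})\nearrow\V(u;\Om)$ produces the claimed inequality (with the isoperimetric contribution evaluated at the escaped mass $v-\V(u;\Om)$). The main difficulty is handling the cutoff error: since $w_j$ need only lie in $L^{(n+1)/n}$ and not in $L^1(\Om)$, a thin annular cutoff of fixed width would produce an error growing like $R^n$; widening the annulus so that $|\nabla\zeta_R|\sim 1/R$ and pairing with a H\"older step against the $L^{(n+1)/n}$-bound from the volume constraint is what yields a vanishing error.
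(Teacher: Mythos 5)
Your overall strategy—split $\Om$ at a large ball $B_R\supset\wire$, use Portmanteau/lower semicontinuity of $\mu$ on the bounded part, and combine the Modica--Mortola inequality with the sharp $BV$-Sobolev inequality via a cutoff on the unbounded part—is exactly the localization the paper alludes to, and you have also correctly read the isoperimetric contribution as being evaluated at the escaped mass $v-\V(u;\Om)$ (with the Modica--Mortola factor of $2$ in front, which is what the paper actually uses in the applications despite the abbreviated statement of the lemma). However, there is a concrete arithmetic error in the estimate of the cutoff error term that breaks the argument as written.

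You estimate $\int_{B_{2R}\setminus B_R}w_j\le |B_{2R}\setminus B_R|^{1/(n+1)}\,\V(u_j;\Om)^{n/(n+1)}\le C\,R^{n/(n+1)}\,v^{n/(n+1)}$. But $|B_{2R}\setminus B_R|\sim R^{n+1}$, so $|B_{2R}\setminus B_R|^{1/(n+1)}\sim R$, not $R^{n/(n+1)}$. With the correct exponent, the cutoff error becomes $\tfrac{C}{R}\cdot C\,R\,v^{n/(n+1)}=C'\,v^{n/(n+1)}$, which is $\mathrm{O}(1)$ and does \emph{not} vanish as $R\to\infty$. In particular, the claim that the cutoff error is $\le C'R^{-1/(n+1)}$ uniformly in $j$ is false, and the annulus-widening idea announced in your last paragraph (taking $|\nabla\zeta_R|\sim1/R$ paired with H\"older) does not by itself produce a vanishing error once you coarsen $\V(u_j;B_{2R}\setminus B_R)$ to $\V(u_j;\Om)$.

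The repair is simple but requires keeping the annular localization intact in the H\"older step and taking the limits in the right order. Write
\[
\frac{C}{R}\int_{B_{2R}\setminus B_R}w_j\le\frac{C}{R}\,|B_{2R}\setminus B_R|^{1/(n+1)}\,\V(u_j;B_{2R}\setminus B_R)^{n/(n+1)}\le C'\,\V(u_j;B_{2R}\setminus B_R)^{n/(n+1)}\,.
\]
This is \emph{not} small uniformly in $j$ for fixed $R$, but first letting $j\to\infty$ (using $u_j\to u$ in $L^1_{\rm loc}(\Om)$, continuity and boundedness of $V$ on $[0,1]$, and dominated convergence on the bounded set $B_{2R}\setminus B_R$) gives $\V(u_j;B_{2R}\setminus B_R)\to\V(u;B_{2R}\setminus B_R)$; then letting $R\to\infty$ kills the error because Fatou's lemma and $\V(u_j;\Om)\to v<\infty$ give $\V(u;\Om)\le v<\infty$, hence $\V(u;B_{2R}\setminus B_R)\le\V(u;\R^{n+1}\setminus B_R)\to0$. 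This iterated limit is all the argument needs. A minor point: as described, $\zeta_R$ cannot lie in $C^\infty_c(\R^{n+1})$ since it equals $1$ outside $B_{2R}$; you want a smooth (not compactly supported) cutoff, which works equally well for the Leibniz-rule estimate of $|D(\zeta_R w_j)|$.
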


\begin{proof}[Proof of Lemma \ref{lemma isoperimetry at infinity}]
This can be proved by combining a localization argument which is quite common in the theory of concentration-compactness (and whose details are therefore omitted) with \eqref{modica mortola identity} and the Euclidean isoperimetric inequality in its sharp and diffused versions.
\end{proof}

\begin{proof}
  [Proof of Theorem \ref{theorem upsilon existence convergence}] Momentarily assuming the validity of conclusion (i), we prove conclusions (ii), (iii), and (iv). To this end, let $\{u_j\}_j$  be a sequence of minimizers for $\Upsilon(v_j,\e_j,\delta_j)$, where, after achieving $\diam\wire=1$ by scaling, we can assume that, as $j\to\infty$,
  \begin{eqnarray}
   \label{e/v vanishes}
   &&v_j \to v_0 \in [0,\tau_0]\,,\qquad\frac{\e_j}{v_j} \to 0^+\,,
   \\
   \label{delta to 1}
   && \delta_j\to \delta_0\in\Big[\frac12,1\Big]\,,\qquad \min\{1-\delta_j, v_j\}\to 0\,.
   \end{eqnarray}
   In this way, the situation of conclusion (ii) is met when $v_0>0$ (in which case \eqref{delta to 1} forces $\de_0=1$) and the situation of conclusion (iii) is met when $v_0=0$; and, in both cases, \eqref{e/v vanishes} implies that $\e_j\to 0^+$. Since $\ell<\infty$, $\Om$ has smooth boundary, and $\e_j/v_j \to 0^+$, we can apply Theorem \ref{theorem approximation of gen soap} (if $v_0>0$) and Theorem \ref{theorem approximation of gen soap vuoto} (if $v_0=0$), to find
\begin{align}\label{we did the construction}
    \limsup_{j\to \infty} \Upsilon(v_j,\e_j,\delta_j)
    \leq
    \begin{cases}
    \Psi_{\rm bk}(v_0)\,, & \mbox{if  $v_0>0$}\,,
    \\
    2\,\Phi(\delta_0)\,\ell\,, & \mbox{if $v_0=0$}\,.
    \end{cases}
\end{align}
Therefore, by Theorem \ref{theorem epsilon to zero compactness}, up to extracting subsequences, there exist $\mu$ a Radon measure on $\Om$ and $(K,E)\in  \mathcal{K}_{\rm B}$ with $|E|\leq v_0$ and $K\cup E^{\one}$ is $\C$-spanning, such that $\mu$ is the weak-star limit of $\{ |D(\Phi\circ u_j)|\}_j$, $1_E$ is $L^1_{\rm loc}(\Om)$-limit of $\{u_j\}_j$, and
\begin{align}\label{ineq 1}
  \mu
  \geq
  \begin{cases}
  2\H^n\mres(K \cap E^\zero) + \H^n\mres (\partial^* E \cap \Omega)\,, &\mbox{if $v_0>0$}\,,
  \\
  2\,\Phi(\delta_0)\,\H^n\mres K\,, &\mbox{if $v_0=0$}\,.
  \end{cases}
\end{align}
We claim that $|E|=v_0$. If $v_0=0$ this is trivial, while if $v_0>0$ we can combine \eqref{ineq 1} with Lemma \ref{lemma isoperimetry at infinity} to conclude that
\begin{align}\notag
    \Psi_{\rm bk}(v_0) \geq \F_{\rm bk}(K,E) +(n+1)\,\omega_{n+1}\,(v-|E|)^{n/(n+1)}\,.
\end{align}
In particular, if $|E|<v_0$, then we can construct a minimizing sequence for $\Psi_{\rm bk}(v_0)$ with positive volume loss at infinity, thus contradicting Theorem \ref{theorem from MNR1}-(iii). Having proved $|E|=v_0$, and thus that $(K,E)$ is a minimizer of $\Psi_{\rm bk}(v_0)$, we can combine \eqref{we did the construction}, \eqref{modica mortola identity} and \eqref{ineq 1} to find, in the case $v_0>0$,
\begin{eqnarray}
  \label{lookback1}
  \Psi_{\rm bk}(v_0)&\ge&\limsup_{j\to \infty} \frac{\acj(u_j;\Om)}2\ge\limsup_{j\to\infty}|D(\Phi\circ u_j)|(\Om)
  \\\nonumber
  &\ge&\mu(\Om)\ge\F_{\rm bk}(K,E)=\Psi_{\rm bk}(v_0)\,;
\end{eqnarray}
and, in the case $v_0=0$,
\begin{equation}
    \label{lookback2}
    2\,\Phi(\delta_0)\,\ell\ge\limsup_{j\to \infty}\frac{\acj(u_j;\Om)}2\ge\limsup_{j\to\infty}|D(\Phi\circ u_j)|(\Om)\ge\mu(\Om)\ge 2\,\Phi(\delta_0)\,\ell\,.
\end{equation}
From \eqref{lookback1} and \eqref{lookback2} we find
\[
\lim_{j\to \infty} \Upsilon(v_j,\e_j,\delta_j)
    =
    \begin{cases}
    \Psi_{\rm bk}(v_0)\,, & \mbox{if  $v_0>0$}\,,
    \\
    2\,\Phi(\delta_0)\,\ell\,, & \mbox{if $v_0=0$}\,,
    \end{cases}
\]
thus proving conclusions (ii) and (iii), as well as, looking back at \eqref{modica mortola identity},
\[
\lim_{j\to\infty}\int_\Om\Big(\sqrt{\e_j}\,|\nabla u_j|-\sqrt{W(u_j)/\e_j}\Big)^2=0\,,
\]
from which conclusion (iv) follows immediately.

\medskip

\noindent {\it Proof of conclusion (i)}: Since the validity of the Euler--Lagrange equation in inner variation form is immediate from Lemma \ref{lemma volume fixing}-(i), it is really a matter of proving that for every positive $\tau_0$ there is a positive $\tau_1$ (depending on the data of the problem) such that $\Upsilon(v,\e,\de)$ admits a minimizer for every $(v,\e,\de)\in{\rm SFR}(\tau_0,\tau_1)$. We shall actually prove that for every such $(v,\e,\de)$, every minimizing sequence of $\Upsilon(v,\e,\de)$ converge (modulo extracting subsequences) to a minimizer. To do this, after a rescaling that sets $\diam\wire=1$, we argue by contradiction. This means assuming that there are a sequence
$\{(v_j,\e_j,\de_j)\}_j$ satisfying \eqref{e/v vanishes} and \eqref{delta to 1}, and, for each $j$, a minimizing sequence $\{u_j^k\}_k$ for $\Upsilon(v_j,\e_j,\delta_j)$, such that, up to extracting a diagonal subsequence, for each $j$ there is $u_j^0\in W^{1,2}_{\rm loc}(\Om;[0,1])$ such that $u_j^k\to u_j^0$ in $L^1_{\rm loc}(\Om)$ as $k\to\infty$, {\it but, for no index $j$, $u_j^0$ is a minimizer of $\Upsilon(v_j,\e_j,\de_j)$}.

\medskip

Now, by  Theorem \ref{theorem fixed epsilon compactness} and Theorem \ref{theorem epsilon to zero compactness}, we know that
\begin{equation}
  \label{wj is spanning}
  \mbox{$\{(u_j^0)^*\ge t\}$ is $\C$-spanning $\wire$ for every $t\in(1/2,\de_j)$}\,,
\end{equation}
for every $j$, while by lower semicontinuity of the Allen--Cahn energy we have
\begin{equation}
\label{we did the construction2}
\Upsilon(v_j,\e_j,\delta_j)\ge\frac{\acj(u_j^0;\Om)}2\,,\qquad\forall j\,.
\end{equation}
Therefore the only possibility for $u_j^0$ not to be a minimizer of $\Upsilon(v_j,\e_j,\delta_j)$ is that
\begin{eqnarray}
\label{converges to the wrong thing}
\vv_j^\infty:=v_j-\V(u_j^0;\Om)>0\,,\qquad\forall j\,.
\end{eqnarray}
We shall conclude the proof of the theorem by exploiting \eqref{converges to the wrong thing} to identify a subsequence $\{j(i)\}_{i\in \mathbb{N}}$ such that, for every $i$ large enough,
\begin{align}\label{final contradiction}
\lim_{i\to\infty}\frac{\mathcal{AC}_{\e_{j(i)}}(u^k_{j(i)})}2>\Upsilon(v_{j(i)},\e_{j(i)},\delta_{j(i)})\,,
\end{align}
thus obtaining the desired contradiction.

\medskip

The idea behind the proof of \eqref{final contradiction} under \eqref{converges to the wrong thing} is to ``bring back from infinity'' the lost volume $\vv_j^\infty$ in the form of a ``half-bubble'' that touches the wire frame, and then to exploit the fact that the isoperimetric profile of half-spaces is strictly less than the isoperimetric profile of $\mathbb{R}^{n+1}$. There are several issues that must be addressed to make this approach work. First is the fact that we do not yet know that the sharp interface limit is a good approximation for the behavior of the escaping volume at infinity, since the sharp interface problem is, roughly speaking, closely describing the escaping volume only if $\e_j / (\vv_j^\infty)^{1/(n+1)}\to 0$. Second, unlike sharp interface problems, in which escaping volumes can be ``brought back" and placed somewhere that does not disturb the rest of the configuration, Allen-Cahn minimizers will always have interacting tails. Without the regularity of minimizers (or even of limits of minimizing sequences), controlling these tail interactions will require care. Last but not least is the fact that the energy corresponding to a volume escaping at infinity corresponds to a lower order energy term whose presence does not contradict the leading order convergence of $\Upsilon(v_j,\e_j,\delta_j)$ to, respectively, $\Psi_{\rm bk}(v_0)$ or to $2\,\Phi(\delta_0)\,\ell$. Any sort of analysis leading to the existence of minimizers must therefore be fine enough to detect vanishingly small inefficiencies in minimizing sequences with escaping volumes.

\medskip

We divide the argument into steps. In step one, we sharply improve \eqref{we did the construction2} to \eqref{ball running} and establish minimality and criticality properties of $u_j^0$. In step two, we conclude the contradiction in the case when $v_0>0$ by using Theorem \ref{theorem from MNR1}-(iii). Step three deals with the case $v_0=0$, and it is in this case that the difficulties described in the previous paragraph are carefully addressed.
\medskip

\noindent {\it Step one}: Denoting by $\zeta_j$ a radial minimizer with maximum at the origin for the diffused Euclidean isoperimetric problem $\Theta(\vv_j^\infty,\e_j)$ and by $\Lambda_j$ the Lagrange multiplier of $\zeta_j$, so that $2\,\e_j^2\,\Delta\zeta_j=W'(\zeta_j)-\e_j\,\Lambda_j\,V'(\zeta_j)$ on $\R^{n+1}$ (see Appendix \ref{section diffuse isop}), we claim that,
\begin{eqnarray}
  \label{ball running}
    &&\Upsilon(v_j,\e_j,\delta_j) = \big(\ac(u_j^0;\Om)/2\big)+ \Theta(\vv_j^\infty,\e_j)\,,
    \\
  \label{wj minimizer of Upsilon vvjinf}
    &&\mbox{$u_j^0$ is a minimizer of $\Upsilon(v_j-\vv_j^\infty,\e_j,\de_j)$}\,,
\end{eqnarray}
for every $j$, where $u_j^0$ satisfies \eqref{EL inner} with $\l=\Lambda_j$, that is
\begin{equation}
  \label{EL inner wj}
      \int_\Omega\Big(\e_j\,|\nabla u_j^0|^2 + \frac{W(u_j^0)}{\e_j} \Big) \Div X - 2\,\e_j\,
    \nabla u_j^0\,\cdot\, \nabla X[\nabla u_j^0]=\Lambda_j\, \int_\Omega V(u_j^0)\,\Div X\,,
\end{equation}
whenever $X\in C_c^\infty(\mathbb{R}^{n+1};\mathbb{R}^{n+1})$ with $X \cdot \nu_\Omega= 0$ on $\partial \Omega$.

\medskip

Indeed, we can prove the $\le$-part of \eqref{ball running} by constructing a competitor for $\Upsilon(v_j,\e_j,\delta_j)$ obtained as a slight modification via Lemma \ref{lemma volume fixing}-(ii) of the {\it Ansatz} $x\mapsto u_j^0(x)+\zeta_j(x-k\,e)$ ($e\in\SS^n$, $k$ large). The matching lower bound is obtained by applying Lemma \ref{lemma isoperimetry at infinity} to the minimizing sequence $\{u_j^k\}_k$ of $\Upsilon(v_j,\e_j,\delta_j)$ (notice that the lemma is applied here ``at fixed $\e$''). Having proved \eqref{ball running}, we notice that, again by the construction of Theorem \ref{theorem approximation of gen soap vuoto},
\begin{equation}
  \label{abc}
  \Upsilon(v_j,\e_j,\de_j)\le\Upsilon(v_j-\vv_j^\infty,\e_j,\de_j)+\Theta(\vv_j^\infty,\e_j)\,.
\end{equation}
This inequality, combined with \eqref{ball running}, implies \eqref{wj minimizer of Upsilon vvjinf}. By \eqref{wj minimizer of Upsilon vvjinf}, \eqref{EL inner wj} holds indeed with some Lagrange multiplier $\l_j$: the fact that $\l_j=\Lambda_j$ thus follows by a standard first variation argument (it they were different, we could violate \eqref{ball running}).

\medskip

\noindent {\it Step two}: We conclude the proof in the case when $v_0>0$. Indeed, up to extracting a further subsequence there is $\vv^\infty_0\in[0,v_0]$ such that $\vv_j^\infty\to \vv^\infty_0$ as $j\to\infty$. Since $\V(u_j^0;\Om)=v_j-\vv_j^\infty\to v_0-\vv_0^\infty$ and (thanks to \eqref{we did the construction}) $\sup_j\acj(u_j^0;\Om)<\infty$, up to extracting subsequences we can apply Theorem \ref{theorem epsilon to zero compactness} to conclude the existence of  $(K,E)\in \mathcal{K}_{\rm B}$ such that $u_j^0\to 1_E$ in $L^1_{\rm loc}(\Om)$, $K \cup E^{\one}$ is $\C$-spanning $\wire$, $|E|\leq v_0-\vv_0^\infty$, and $\liminf_j\acj(u_j^0;\Om)/2\ge\mathcal{F}_{\rm B}(K,E)$ (when exploiting \eqref{weak star in diffuse compactness theorem}, recall that in the present argument $v_0>0$ implies $\de_j\to 1^-$). We can actually improve on this lower bound by using Lemma \ref{lemma isoperimetry at infinity}, thus concluding that
\begin{align}\label{this ineq}
    \liminf_{j\to \infty} \frac{\acj(u_j^0;\Om)}2 \geq \mathcal{F}_{\rm B}(K,E) + (n+1)\,\omega_{n+1}\,\big(v_0-\vv_0^\infty-|E|\big)^{n/(n+1)}\,.
\end{align}
By \eqref{we did the construction}, \eqref{ball running}, \eqref{this ineq}, the continuity of $\Theta$ (see Theorem \ref{theorem diffused isop}-(ii)), and the concavity of the Euclidean isoperimetric profile, we find that
\begin{align}\notag
    \Psi_{\rm B}(v_0) &\geq \limsup_{j\to \infty}\Upsilon(v_j,\e_j,\delta_j)
    = \liminf_{j\to \infty} \big(\acj(u_j^0;\Om)/2\big)+\Theta(\vv_j^\infty,\e_j)
    \\ \notag
    &\geq \mathcal{F}_{\rm B}(K,E) + (n+1)\,\omega_{n+1}^{1/(n+1)}\,\Big\{(\vv_0^\infty)^{n/(n+1)}+ \big(v_0-\vv_0^\infty-|E|\big)^{n/(n+1)}\big\}\\
     \label{expression}
    &\geq \mathcal{F}_{\rm B}(K,E) + (n+1)\,\omega_{n+1}^{1/(n+1)}\,2\,(v_0 - |E|)^{n/(n+1)}\,.
\end{align}
Now, thanks to Theorem \ref{theorem from MNR1}-(iii), no minimizing sequence in $\Psi_{\rm B}(v_0)$ can lose volume at infinity. Therefore \eqref{expression} implies that $|E|=v_0$ and $\vv_0^\infty=0$.

\medskip

We can use the latter information to obtain a contradiction by arguing as follows. Since $|E|=v_0>0$ implies that $1_E$ is not constant in $\Om$ and $u_j^0 \to 1_E$ in $L^1_{\rm loc}(\Om)$, by Lemma \ref{lemma volume fixing}-(ii) and $\V(u_j^0;\Om)=v_j\to v_0$ we can find diffeomorphisms $f_j:\Om\to\Om$ such that $\V(u_j^0\circ f_j;\Om)=v_j$ and
\[
|\acj(u_j^0\circ f_j;\Om)-\acj(u_j^0;\Om)|\le C_0\,\acj(u_j^0;\Om)\,|\V(u_j^0;\Om)-v_j|\le C\,\vv_j^\infty\,,
\]
where $C_0$ depends on $\Om$ and $E$ as in Lemma \ref{lemma volume fixing}-(ii), and $C=C_0\,\sup_j\,\acj(u_j^0;\Om)$. Since the homotopic spanning constraint is preserved under composition with a diffeomorphism of $\Om$, we find that $u_j^0\circ f_j$ is a competitor in $\Upsilon(v_j,\e_j,\delta_j)$, so that
\begin{align}\label{bad 1}
    2\,\Upsilon(v_j,\e_j,\delta_j) \leq \acj(u_j^0;\Om)+ C\,\vv_j^\infty\,;
\end{align}
at the same time, by \eqref{ball running}, we have
\begin{align}\label{bad 2}
2\,\Upsilon(v_j,\e_j,\delta_j) = \acj(u_j^0;\Om) + 2\,\Theta(\vv_j^\infty,\e_j) \ge \acj(u_j^0;\Om) + 2\, (n+1)\,\om_{n+1}^{1/(n+1)}\,(\vv_j^\infty)^{n/(n+1)}\,
\end{align}
where the combination of \eqref{bad 1} and \eqref{bad 2} leads to a contradiction since $\vv_j^\infty\to 0$.

\medskip

\noindent {\it Step three}: We are now left to consider the case when $v_0=0$. Thanks to
\eqref{wj minimizer of Upsilon vvjinf}, \eqref{ball running}, $v_0=0$, and \eqref{e/v vanishes}, $\{u_j^0\}_j$ is a sequence of minimizers for $\Upsilon(\vv_j,\e_j,\de_j)$ for some
\[
\vv_j:=v_j-\vv_j^\infty\in(0,v_j)
\]
such that $\acj(u_j^0;\Om)\le\Upsilon(v_j,\e_j,\de_j)$, $v_j\to 0^+$, and $\e_j/v_j\to 0^+$. Therefore, by Theorem \ref{theorem lambdaepsj to zero} and taking \eqref{EL inner wj} into account, we have that
\begin{equation}
  \label{bdd of l mult}
  \lim_{j\to\infty}\e_j\,\Lambda_j=0\,,
\end{equation}
as well as that (compare with \eqref{mu limit})
\begin{equation}
\label{mu limit 2}
2\,\Phi(\de_0)\,\H^n\mres K
={\rm w^*}\!\!\lim_{j\to\infty}|\nabla(\Phi\circ u_j^0)|\,\L^{n+1}\mres\Om
={\rm w^*}\!\!\lim_{j\to\infty}\frac{{\rm ac}_{\e_j}(u_j^0)}2\,\L^{n+1}\mres\Om\,,
\end{equation}
where $K$ is a minimizer of $\ell$.

\medskip

Recalling that $\zeta_j$ is a minimizer in $\Theta(\vv_j^\infty,\e_j)$ and that $\Lambda_j=\Lambda(\zeta_j)$, by \eqref{key inq for Lambda} (see Theorem \ref{theorem diffused isop}-(iii)) we have that
\begin{equation}
\label{key inq for Lambda proof}
  \Lambda_j\ge \frac{c(n,W)}{(\vv_j^\infty)^{1/(n+1)}}\,.
\end{equation}
This inequality, combined with \eqref{bdd of l mult}, gives in particular
\begin{align}\label{ball at infinity behaves}
\lim_{j\to\infty}\frac{\e_j}{(\vv_j^\infty)^{1/(n+1)}}=0\,.
\end{align}
Hence, for $j$ large enough, we have $\e_j<\s_0\,(2\,\vv_j^\infty)^{1/(n+1)}$ for $\s_0=\s_0(n,W)>0$ as in Theorem \ref{theorem diffused isop}-(iv). Setting $\s_j=\e_j/(2\,\vv_j^\infty)^{1/(n+1)}$, and recalling that $\zeta_{v,\e}$ denotes the unique modulo translations radially symmetric decreasing minimizer of $\Theta(v,\e)$ when $\e<\s_0\,v^{1/(n+1)}$, let us now consider that
\[
\zeta_{2\vv_j^\infty,\e_j}(x)=\zeta_{1,\s_j}\Big(x\big/(2\,\vv_j^\infty)^{1/(n+1)}\Big)\,,\qquad\forall x\in\R^{n+1}\,,
\]
where we have set $\pi_j(x)=x/(2\,\vv_j^\infty)^{1/(n+1)}$ ($x\in\R^{n+1}$); denoting by $B_{r(n)}$ the unit volume ball with center at the origin, the fact that $\s_j\to 0^+$ guarantees that
\[
\lim_{j\to\infty}\int_{\R^{n+1}}|\zeta_{1,\s_j}-1_{B_{r(n)}}|=0\,,\qquad
\g_j:=\max\Big\{\V(\zeta_{1,\s_j};B_{2\,r(n)}^c),\mathcal{AC}_{\s_j}(\zeta_{1,\s_j};B_{2\,r(n)}^c)\Big\}\to 0\,.
\]
Hence, by Lemma \ref{lemma volume fixing}, there exist $\eta_0>0$ such that for every $j$ large enough and every $|\eta|<\eta_0$ there is a radially symmetric diffeomorphism $f_j^\eta:B_{2\,r(n)}\to B_{2\,r(n)}$ with $\{f_j^\eta\ne\id\}\cc B_{2\,r(n)}$ and
\begin{eqnarray*}
&&\V(\zeta_{1,\s_j}\circ f_j^\eta)=\V(\zeta_{1,\s_j})+\eta=1+\eta\,,
\\
&&\big|\mathcal{AC}_{\s_j}(\zeta_{1,\s_j}\circ f_j^\eta)-\mathcal{AC}_{\s_j}(\zeta_{1,\s_j})\big|\le C(n)\,  \Theta(1,\s_j)\,|\eta|\,.
\end{eqnarray*}
By radial symmetry of $\zeta_{1,\s_j}\circ f_j^\eta$ and $\zeta_{1,\s_j}$, if we set $H=\{x_{n+1}>0\}$, then, for every $|\eta|<\eta_0$ and $j$ large enough,
\begin{eqnarray*}
&&\V(\zeta_{1,\s_j}\circ f_j^\eta;H)=\frac12+\eta\,,
\\
&&\mathcal{AC}_{\s_j}(\zeta_{1,\s_j}\circ f_j^\eta;H)\le\big(1+ C(n)\,|\eta|\big)\,  \Theta(1,\s_j)\,.
\end{eqnarray*}
Then the functions $\zeta_j^\eta=\zeta_{1,\s_j}\circ f_j^\eta\circ\pi_j$ satisfy
\begin{eqnarray*}
&&\V(\zeta_j^\eta;H)=2\,\vv_j^\infty\,\V(\zeta_{1,\s_j}\circ f_j^\eta;H)=\vv_j^\infty+\eta\,\vv_j^\infty\,,
\\
&&\mathcal{AC}_{\e_j}(\zeta_j^\eta;H)=(2\,\vv_j^\infty)^{n/(n+1)}\,\mathcal{AC}_{\s_j}(\zeta_j^\eta;H)\le \big(1+C(n)\,|\eta|\big)\,  \Theta(2\,\vv_j^\infty,\e_j)\,.
\end{eqnarray*}
At the same time, by \eqref{ball at infinity behaves} we can use \eqref{energy expansion for theta} to find
\begin{align}\label{conv to half profile 2}
\lim_{j\to\infty}\frac{\Theta(2\,\vv_j^\infty,\e_j)}{\Theta(\vv_j^\infty,\e_j)}=2^{n/(n+1)}\,,
\end{align}
so that there are $\beta(n)\in(0,1)$ and $J_0\in\N$ such that
\[
\frac{\Theta(2\,\vv_j^\infty,\e_j)}2 \leq (1-\beta(n))\,\Theta( \vv_j^\infty,\e_j)\,,\qquad\forall j\ge J_0\,,
\]
and, in summary,
\begin{equation}
  \label{beta inq}
  \frac{\mathcal{AC}_{\e_j}(\zeta_j^\eta;H)}2\le \big(1-\beta(n)+C(n)|\eta|\big)\,\Theta( \vv_j^\infty,\e_j)\,,\qquad\forall j\ge j_0\,,|\eta|<\eta_0\,.
\end{equation}
We next notice that, by the smoothness of $\pa\wire$ and up to a rigid motion that takes $0\in\pa\wire$ and $\nu_{\wire}(0)=e_{n+1}$, we can find positive constant $C$ and $r'$ depending on $\wire$ such that
\begin{equation}\label{cone containment}
\H^n((H \Delta \Om)\cap\pa B_r)\le C\,r^{n+1}\,,\qquad\forall r<r'\,,
\end{equation}
where $x=(x',x_{n+1})\in\R^{n+1}\equiv\R^n\times\R$. Therefore, if we set for brevity
\[
r_j:=(\vv_j^\infty)^{1/(n+1)}\,2\,r(n)
\]
them,, up to increase the value of $J_0$ so to have $r_j<r'$ when $j\ge J_0$, and noticing that $\mathrm{ac}_{\e_j}(\zeta_j^\eta)$ is a radial function, by $\zeta_{1,\s_j}=\zeta_{1,\s_j}\circ f_j^\eta$ on $B_{2\,r(n)}^c$ and by definition of $\g_j$,
\begin{eqnarray*}
&&\Big|\acj(\zeta_j^\eta;\Om)-\acj(\zeta_j^\eta;H)\Big|\le \int_{(\Om\Delta H)\cap B_{r_j}}\mathrm{ac}_{\e_j}(\zeta_j^\eta)+2\,\acj(\zeta_j^\eta;B_{r_j}^c)
\\
&&=\int_0^{r_j}\,\H^n((\Om\Delta H)\cap\pa B_r)\,\mathrm{ac}_{\e_j}(\zeta_j^\eta)\,dr+2\,(\vv_j^\infty)^{n/(n+1)}\,\mathcal{AC}_{\s_j}(\zeta_{1,\s_j}\circ f_j^\eta;B_{2\,r(n)}^c)
\\
&&\le\sup_{0<r<r_j}\Big\{\frac{\H^n((\Om\Delta H)\cap\pa B_r)}{\H^n(H\cap\pa B_r)}\Big\}\,\acj(\zeta_j^\eta;H)
+2\,(\vv_j^\infty)^{n/(n+1)}\,\g_j
\\
&&\le C(n,\wire)\,r_j\,\Theta(\vv_j^\infty,\e_j)+2\,(\vv_j^\infty)^{n/(n+1)}\,\g_j={\rm o}_{n,W}\big(\vv_j^\infty\big)^{n/(n+1)}\,.
\end{eqnarray*}
where in the last two inequalities we have used, in the order, by \eqref{cone containment}, \eqref{beta inq}, \eqref{energy expansion for theta}, and $\g_j\to 0$. By combining this inequality with \eqref{beta inq} we thus conclude that
\begin{align}\label{good cutoff property on W}
  \frac{\acj(\zeta_j^\eta;\Om)}{2}\le
  \Big(1-\beta(n)+C(n)\,|\eta|\Big)\,\Theta(\vv_j^\infty,\e_j)+{\rm o}_{n,W}\big(\vv_j^\infty\big)^{n/(n+1)}\,,
\end{align}
(where we recall that $\Theta(\vv_j^\infty,\e_j)/(\vv_j^\infty)^{n/(n+1)}\to c(n)>0$ as $j\to\infty$ by \eqref{energy expansion for theta}.) By an identical argument we also find that
\[
\V(\zeta_j^\eta;\Om)=\big(1+\eta\big)\,\vv_j^\infty+{\rm o}_{n,W}\big(\vv_j^\infty\big)\,.
\]
In summary, we can claim the existence of $J_0\in\N$ and $\eta_0>0$ such that for each $x\in\pa\wire$ we can find $\zeta_j^{x,\eta}$ with the properties that
\begin{eqnarray}\label{zeta jxeta small vol and ac}
&&\V(\zeta_j^{x,\eta};B_{r_j}(x)^c)={\rm o}_{n,W}\big(\vv_j^\infty\big)\,,\qquad
\ac(\zeta_j^{x,\eta};B_{r_j}(x)^c)={\rm o}_{n,W}\big(\vv_j^\infty\big)^{n/(n+1)}\,,\hspace{1cm}
\\\label{zeta jxeta volume}
&&\V(\zeta_j^{x,\eta};\Om)=\big(1+\eta\big)\,\vv_j^\infty+{\rm o}_{n,W}\big(\vv_j^\infty\big)
\\\label{zeta jxeta AC}
&&\frac{\acj(\zeta_j^{x,\eta};\Om)}{2}
\le\Big(1-\beta(n)+C(n)\,|\eta|\Big)\,\Theta(\vv_j^\infty,\e_j)+{\rm o}_{n,W}\big(\vv_j^\infty\big)^{n/(n+1)}\,,
\end{eqnarray}
where $r_j=(\vv_j^\infty)^{1/(n+1)}\,2\,r(n)$.

\medskip

The final step in the construction is making a choice of $x=x_j\in\pa\wire$ such that the interaction between $\zeta_j^{x,\eta}$ and $u_0^j$ in minimized. We claim that indeed $x_j\in\pa\wire$ can be found such that
\begin{equation}
  \label{good xjs}
  \V(u_0^j;\Om\cap B_{r_j}(x_j))={\rm o}(\vv_j^\infty)\,.
\end{equation}
We now show, first, how to derive a contradiction from \eqref{zeta jxeta small vol and ac}, \eqref{zeta jxeta volume}, \eqref{zeta jxeta AC}, and \eqref{good xjs}; and, finally, how to prove \eqref{good xjs}. In this way the proof of the theorem will be complete.

\medskip

\noindent {\it Derivation of a contradiction from \eqref{zeta jxeta small vol and ac}, \eqref{zeta jxeta volume}, \eqref{zeta jxeta AC}, and \eqref{good xjs}}: Let us consider the functions
\[
h_j^\eta=\max\big\{u_0^j,\zeta_j^{x_j,\eta}\big\}\,,\qquad j\ge J_0\,,|\eta|<\eta_0\,.
\]
Since $h_j^\eta\ge u_0^j$ on $\Om$, we have that $\{(h_j^\eta)^*\ge t\}$ is $\C$-spanning $\wire$ for every $t\in[1/2,\de_j)$. We claim that we can find $|\eta_j|<\eta_0$ such that
\begin{equation}
  \label{def of etaj}
  \V(h_j^{\eta_j};\Om)=v_j\,,\qquad \lim_{j\to\infty}\eta_j=0\,.
\end{equation}
We start noticing that by \eqref{zeta jxeta small vol and ac}, \eqref{good xjs}, $V(u_0^j)\le V(\zeta_j^{x_j,\eta})$ on $\{u_0^j\le\zeta_j^{x_j,\eta}\}$, and $V(u_0^j)\ge V(\zeta_j^{x_j,\eta})$ on $\{u_0^j\ge\zeta_j^{x_j,\eta}\}$, we find that
\begin{eqnarray*}
\V(h_j^\eta;\Om)=\V(u_j^0;\Om)+\V(\zeta_j^{x_j,\eta};\Om)+{\rm o}(\vv_j^\infty)\,.
\end{eqnarray*}
Therefore, by \eqref{zeta jxeta volume}, and recalling that, by definition $\V(u_j^0;\Om)=v_j-\vv_j^\infty$, we find that
\begin{equation}
  \label{backinto}
\V(h_j^\eta;\Om)=v_j+\eta\,\vv_j^\infty+{\rm o}(\vv_j^\infty)\,.
\end{equation}
In particular, up to increase the value of $J_0$, if $j\ge J_0$ we have $\V(h_j^{\eta_0/2};\Om)>v_j$ and $\V(h_j^{-\eta_0/2};\Om)<v_j$. By continuity of $\big(|\eta|\le\eta_0/2\big)\mapsto\V(h_j^\eta;\Om)$ we find $|\eta_j|\le\eta_0/2$ such that $\V(h_j^{\eta_j};\Om)=v_j$. Plugging this information back into \eqref{backinto} we find that $\eta_j\to 0$ as $j\to\infty$.

\medskip

To derive a contradiction we notice that, being $h_j^{\eta_j}$ admissible in $\Upsilon(v_j,\e_j,\de_j)$, by \eqref{ball running} and \eqref{zeta jxeta AC}
\begin{eqnarray*}
&&\ac(u_j^0;\Om)+2\,\Theta(\vv_j^\infty,\e_j)=2\,\Upsilon(v_j,\e_j,\de_j)\le \acj\big(h_j^{\eta_j};\Om\big)
\\
&&=\acj\big(u_j^0;\Om\cap\{u_j^0\ge\zeta_j^{x_j,\eta_j}\}\big)
+\acj\big(\zeta_j^{x_j,\eta_j};\Om\cap\{\zeta_j^{x_j,\eta_j}\ge u_0^j\}\big)
\\
&&\le\acj(u_j^0;\Om)+\acj(\zeta_j^{x_j,\eta_j};\Om)
\\
&&\le \acj(u_j^0;\Om)+
2\,\Big(1-\beta(n)+C(n)\,|\eta_j|\Big)\,\Theta(\vv_j^\infty,\e_j)+{\rm o}_{n,W}\big(\vv_j^\infty\big)^{n/(n+1)}\,,
\end{eqnarray*}
which leads to a contradiction with $\beta(n)\in(0,1)$ as soon as $j$ is large enough.

\medskip

\noindent {\it Proof of \eqref{good xjs}:} We finally prove the existence of $x_j\in\pa\wire$ such that \eqref{good xjs} holds. To this end, we recall the validity of \eqref{mu limit 2}, where $K$ is a minimizer of $\ell$. By exploiting this minimality property as done in \cite[Proof of Theorem 1.4, Step 6]{KingMaggiStuvard}, we see that $K$ does not concentrate area near $\pa\wire$, that is
\begin{align}
\label{nonconcentration}
\H^n\big(K \cap \{x:\dist(x,\partial \wire)<r\}\big)\le C\,r\,,\qquad\forall r>0\,,
\end{align}
with some $C$ depending on $K$. Moreover, as shown for example in \cite[Appendix B]{KingMaggiStuvard},
\begin{align}\label{bdry dens}
\H^n(K \cap B_r(x)) \geq c_0 r^n \quad\forall x\in \cl(K)\,,\,\, r\in (0,r_0)\,.
\end{align}
By combining \eqref{nonconcentration} and \eqref{bdry dens} we see that $K$ is not ``wetting'' the whole $\pa\wire$, that is
\begin{equation}
  \label{not full wire}
  (\pa\wire)\setminus\cl(K)\ne\varnothing\,.
\end{equation}
In particular, there are $x_0\in \partial \wire$ and $r_0>0$ such that $\cl B_{r_0}(x_0) \cap K=\varnothing$, so that, by \eqref{mu limit 2}, and taking also into account that $u_j^0\to 0$ in $L^1_{\rm loc}(\Om)$, we find
\begin{align}\label{ball sees no energy}
   \lim_{j\to\infty}\acj(u_j^0;B_{r_0}(x_0)\cap\Om)=0\,.
\end{align}
Correspondingly to $x_0$, and up to decreasing the value of $r_0$, we can find a cube $Q\subset\R^n$ and an embedding $g$ of $Q\times[0,r_0)$ into $\ov{\Om}\cap B_{r_0}(x_0)$ so that $g$ embeds $Q\times\{0\}$ into $(\pa\Om)\cap B_{r_0}(x_0)$ and $g$ is arbitrarily $C^1$-close to an isometry, in such a way that if $Q'$ is a cube contained in $Q$ with side length $2\,s$ and center $z$, then
\begin{equation}
  \label{almost an isometry}
  \Om\cap B_s(g(z))\subset g(Q'\times(0,s))\,.
\end{equation}
If $j$ is large enough, then we can find a partition $\F_j=\{Q_j^k\}_{k=1}^{N(j)}$ of $Q$ into subcubes of sidelength $s_j>r_j$ for some $s_j$ and $N_j$ satisfying
\begin{equation}
  \label{whatabout sj and Nj}
  s_j={\rm O}(\vv_j^\infty)^{1/(n+1)}\,,\qquad N_j={\rm O}(\vv_j^\infty)^{-n/(n+1)}\,.
\end{equation}
The subfamily $\G_j$ defined by
\[
\G_j=\Big\{Q_j^k:\acj\big(u_0^j;g(Q_j^k\times(0,s_j))\big)\le\frac{\acj(u_j^0;B_{r_0}(x_0)\cap\Om)^{1/2}}{N(j)}\Big\}
\]
is of course such that
\begin{equation}
  \label{Gj close to Nj}
  0\le 1-\frac{\#\,\G_j}{N(j)}\le\acj(u_j^0;B_{r_0}(x_0)\cap\Om)^{1/2}\,,
\end{equation}
and by \eqref{ball sees no energy} and \eqref{whatabout sj and Nj} we have
\begin{equation}
  \label{they are all good AC balls}
  \acj\big(u_0^j;g(Q_j^k\times(0,s_j))\big)={\rm o}\big(\vv_j^\infty\big)^{n/(n+1)}\,,\qquad\forall Q_j^k\in\G_j\,.
\end{equation}
We now claim that, for every $j$ large enough, there is $Q_j^{k(j)}\in\G_j$ such that
\begin{equation}
  \label{if not}
  \V(u_0^j;g(Q_j^{k(j)}\times(0,s_j))\big)={\rm o}\big(\vv_j^\infty\big)
\end{equation}
Denoting by $z_j$ the center of $Q_j^{k(j)}$ and setting $x_j=g(z_j)$, and by applying \eqref{almost an isometry} with $s=s_j>r_j$, we conclude that
\[
\V(u_0^j;\Om\cap B_{r_j}(x_j))\le \V\big(u_0^j;g(Q_j^{k(j)}\times(0,s_j))\big)={\rm o}\big(\vv_j^\infty\big)\,,
\]
and complete the proof of \eqref{good xjs}. To prove \eqref{if not}, we argue by contradiction. Should \eqref{if not} fail, then, up to extracting a subsequence in $j$ and for some constant $c_0>0$, we would have that
\begin{equation}
  \label{if not contra}
  \V(u_0^j;g(Q_j^k\times(0,s_j))\big)\ge c_0\,s_j^{n+1}\,,\qquad\forall Q_j^k\in\G_j\,,
\end{equation}
so that, for some $c_1\in(0,1)$,
\[
c_1\le\fint_{Q_j^k\times(0,s_j)} V(u_0^j\circ g)\,,\qquad\forall Q_j^k\in\G_j\,.
\]
Since $V(u_0^j\circ g)$ takes values in $[0,1]$ we find that
\begin{eqnarray*}
\frac{c_1}2\,|Q_j^k\times(0,s_j)|&\le&\Big|\Big\{(y,s)\in Q_j^k\times(0,s_j):V(u_0^j\circ g)\ge \frac{c_1}2\Big\}\Big|
\\
&\le&\H^n\Big(\Big\{y\in Q_j^k:\sup_{(0,s_j)}\,V((u_0^j)^*(g(y,\cdot)))\ge \frac{c_1}2\Big\}\Big)\,s_j\,,
\end{eqnarray*}
that is, for some $c\in(0,1)$ and recalling that $V$ is strictly increasing on $(0,1)$,
\[
  c\,\H^n(Q_j^k)\le \H^n\Big(\Big\{y\in Q_j^k:\sup_{(0,s_j)}\,(u_0^j)^*(g(y,\cdot))\ge c\Big\}\Big)\,,\qquad\forall Q_j^k\in\G_j\,.
\]
Adding up over all the cubes in $\G_j$, and recalling \eqref{Gj close to Nj} and that $\F_j$ is a partition of  $Q$, we thus conclude (up to further decrease the value of $c$) that
\begin{equation}
  \label{if not contra 2}
 \H^n\Big(\Big\{y\in Q:\sup_{(0,s_j)}\,(u_0^j)^*(g(y,\cdot))\ge c\Big\}\Big)\ge c\,\H^n(Q)\,,\qquad\forall j\,.
\end{equation}
However, by \eqref{ball sees no energy}, \eqref{modica mortola identity}, Fubini's theorem, the area formula, and the slicing theory for Sobolev functions (see, e.g. \cite[Section 4.9.2]{EvansGariepyBOOK}),
there is a set $Z\subset Q$ with full $\H^n$-measure in $Q$ such that, for every $y\in Z$, $(u_0^j)^*(g(y,\cdot))$ is absolutely continuous on $(0,r_0)$, $(u_0^j)^*(g(y,\cdot))\to 0$ $\H^1$-a.e. on $(0,r_0)$ as $j\to\infty$ (recall indeed that $\V(u_0^j;\Om)\to 0$), and
\[
2\,\int_{g(\{y\}\times(0,r_0))}|\nabla [\Phi\circ(u_0^j)^*]|\,d\H^1\leq \int_{g(\{y\}\times(0,r_0))}\e_j|\nabla (u_0^j)^*|^2 + \frac{W((u_0^j)^*)}{\e_j}\,d\mathcal{H}^1\to 0\,.
\]
In particular, we find that
\[
\lim_{j\to\infty}\sup_{(0,r_0)}(u_0^j)^*(g(y,\cdot))=0\,,\qquad \forall y\in Z\,,
\]
and obtain a contradiction with \eqref{if not contra 2}.
\end{proof}

\section{Euler-Lagrange equation and regularity (Proof of Theorem \ref{theorem main regularity})}\label{section EL proof} We finally prove Theorem \ref{theorem main regularity} and Proposition \ref{proposition conditional regularity}.

\begin{proof}[Proof of Theorem \ref{theorem main regularity}]
		By Theorem \ref{theorem upsilon existence convergence}.(i), there is $\l\in\R$ such that
		\begin{equation}
			\label{EL inner proof of reg}
			\int_\Omega\Big(\e|\nabla u|^2 + \frac{W(u)}{\e} \Big) \Div X - 2\,\e
			\nabla u\,\cdot\, \nabla X[\nabla u]=\lambda\, \int_\Omega V(u)\,\Div X\,,
		\end{equation}
		whenever $X\in C_c^\infty(\Om;\mathbb{R}^{n+1})$; also, \eqref{EL inner proof of reg} extends to $X\in C_c^1(\Omega;\mathbb{R}^{n+1})$ by density.
		
		\medskip
		
		\noindent {\it Step one}: We claim that if $\varphi \in C_c^1(\Om)$ and $h \in\Lip_c([0,1))$ with $\vphi\ge0$ and $h\ge 0$, then
		\begin{equation}\label{eq levelset Euler-Lagrange}
			0\le \int_{\Omega}  \varphi\, h'(u)\,|\nabla u|^2+ h(u)\,\nabla u\cdot\nabla \varphi + \vphi\,h(u)\,F_\e'(u)\,,
		\end{equation}
		where
		\[
		F_\e(t)=\frac1{2\,\e}\,\Big\{\frac{W(t)}\e-\l\,V(t)\Big\}\,,\qquad t\in[0,1]\,.
		\]
		To prove this, we start by noticing that there is $\s_0>0$ small enough (depending on $h$ and $\vphi$) such that, if $\s\in[0,\s_0)$, then
		\[
		\mbox{$u_\s=u+\s\,h(u)\,\vphi$ takes values in $[0,1]$}\,.
		\]
		Since $u_\s\ge u$ implies that  $\{u_\s^*\ge t\}$ is $\C$-spanning $\wire$ for every $t\in(1/2,\de)$, in order to make $u_\s$ admissible in $\Upsilon(v,\e,\de)$ we just need to compose with a diffeomorphism in order to restore the volume constraint. To this end, given $X\in C^\infty_c(\Om;\R^{n+1})$ with
		\begin{equation}\label{firt var par}
			\int_{\Omega}V(u)\,\Div\,X=1\,,
		\end{equation}
		let $\tau_0>0$ be small enough so that, defining $\Phi\in C^\infty((-\tau_0,\tau_0)\times\Om;\R^{n+1})$  by $\Phi(\tau,x)=\Phi_\tau(x)=x+\tau\,X(x)$, we have that $\Phi_\tau$ is a diffeomorphism of $\Om$ for every $|\tau|<\tau_0$. Denoting by $\Psi_\tau$ the inverse of $\Phi_\tau$, and letting $g(\s,\tau)=\V\big(u_\s\circ\Psi_\tau\big)$, we observe that $g(0,0)=v$ with $\pa_\tau g(0,0)=\int_\Om V(u)\,\Div X=1$ by \eqref{firt var par}. Therefore, by the implicit function theorem, up to decreasing the value of $\s_0$, we can find $m(\s)$ with $m(0)=0$, $|m(\s)|<\tau_0$, and $g(\s,m(\s))=v$ for every $\s\in[0,\s_0)$ -- in particular, differentiating $g(\s,m(\s))=v$ and recalling \eqref{firt var par}, we find
		\begin{equation}
			\label{mprime 0}
			0=m'(0)+\int_\Om h(u)\,\vphi\,V'(u)\,.
		\end{equation}
		Since $v_\s=u_\s\circ\Psi_{m(\s)}$ is admissible in $\Upsilon(v,\e,\de)$ for every $\s\in[0,\s_0)$, the minimality of $u=u_{\s=0}$ in $\Upsilon(v,\e,\de)$ implies that $f(\s)=\ac(u_\s\circ\Psi_{m(\s)})$ has a minimum on $[0,\s_0)$ at $\s=0$. By combining $f'(0)\ge0$ with \eqref{mprime 0}, \eqref{EL inner proof of reg} and \eqref{firt var par} we thus find
		\begin{eqnarray*}
			0&\leq &m'(0)\,\int_{\Omega} \Big\{\e\,|\nabla u|^2 +\frac{W(u)}{\e}\Big\}\,\Div X
			-2\,\nabla u \cdot \nabla X[\nabla u]
			\\
			&&+ \int_{\Omega} 2\,\e\, \nabla u\cdot \nabla [h(u)\,\vphi] +\frac{W'(u)}{\e}\,h(u)\,\vphi
			\\
			&=&-\l\,\int_\Om h(u)\,\vphi\,V'(u)+ \int_{\Omega} 2\,\e\, \vphi\,h'(u)|\nabla u|^2+2\,\e\,h(u)\,\nabla u\cdot\nabla\vphi+\frac{W'(u)}{\e}\,h(u)\,\vphi\,,
		\end{eqnarray*}
		that is \eqref{eq levelset Euler-Lagrange} by definition of $F_\e(t)$.
		
		\medskip
		
		\noindent {\it Step two}: We prove that
		\begin{equation}
			\label{eq one sided Euler-Lagrange}
			2\,\e^2 \,\Delta u\leq W'(u)-\l\,\e\,V'(u)\quad\mbox{as distributions on $\Om$}\,.
		\end{equation}
		Indeed, let $\{h_k\}_k\subset C^1_c([0,1))$ be a sequence such that $0\le h_k(t)\le h_{k+1}(t)\to 1$ for every $t\in[0,1)$ and such that $h_k'\le 0$ on $[0,1)$. Then for any $\varphi\in C_c^\infty(\Omega;[0,\infty))$, we have $\varphi h'_k(u)|\nabla u|^2\leq 0$. Therefore, letting $k\to\infty$ and applying \eqref{eq levelset Euler-Lagrange} we can deduce
		\begin{equation*}
			0\le \int_{\Omega \cap \{u<1\}} \nabla \varphi \cdot \nabla u\cdot + \vphi\,F_\e'(u)\qquad\forall\vphi\in C^1_c(\Om;[0,\infty))
		\end{equation*}
		by means of the dominated convergence theorem. Since $F_\e'(1)=0$ and $\nabla u=0$ a.e.~ on $\{u=1\}$, we immediately deduce \eqref{eq one sided Euler-Lagrange}.
		
		\medskip
		
		\noindent {\it Step three}: We prove that, if $x_0\in\Om$ and $r_0=\dist(x_0,\pa\Om)$, then the function
		\[
		g(r)= e^{-kr }\,\phi(r)\,,\quad\mbox{where}\quad \phi(r)= \fint_{B_r(x_0)} u\,,\qquad k=\sup_{[0,1]}|F_\e''|\,,
		\]
		is decreasing on $(0,r_1)$ where $r_1= \min\{r_0,2\}$. Indeed, assuming without loss of generality that $x_0=0$ and testing \eqref{eq one sided Euler-Lagrange} with a sequence $\{\varphi_k\}\subset C_c^1(\Om;[0,\infty))$ such that $\vphi_k(x)\to [(r^2-|x|^2)/2]_+$ uniformly and $\nabla \varphi_k \to \nabla [(r^2-|x|^2)/2]_+$ in $L^2$ yields
		\begin{equation}\label{eq derivative meanvalue esti}
			\int_{B_r} x\cdot\nabla u \leq  \int_{B_r} \frac{r^2-|x|^2}{2}|F_\e'(u)|\le k\,\frac{r^2}2\,\int_{B_r}u \,,\qquad\forall r<r_0\,,
		\end{equation}
		where we have used $F_\e'(0)=0$ and the fundamental theorem of calculus to bound $|F_\e'(u)|\leq ku$. Now, for a.e.  $r\in (0,r_0)$ we have that
		\begin{equation}\label{this}
			\phi'(r)=\fint_{B_1}(y\cdot\nabla u(r\,y))\, dy= \frac1{r}\,\fint_{B_r} (x\cdot\nabla u(x))\,dx\,.
		\end{equation}
		By combining \eqref{eq derivative meanvalue esti} with \eqref{this} and using $r_1\le 2$ we find $\phi'(t)\le k\,\phi(r)$ for a.e. $r<r_1$ and conclude.
		
		\medskip
		
		\noindent {\it Step four}: We prove that $u$ is (Lebesgue equivalent to) a lower semicontinuous function on $\Om$. Indeed, by step three, we can define $\tilde{u}:\Om\to[0,1]$ by setting
		\[
		\tilde{u}(x)=\lim_{r\to 0^+}e^{-k\,r}\fint_{B_r(x)}u\,,\qquad x\in\Om\,.
		\]
		Denoting by $\tilde{u}$ this limit, we have $\tilde{u}=u$ a.e. on $\Om$ by the Lebesgue points theorem. We conclude by proving that $\tilde{u}$ is lower semicontinuous: indeed, if $x_j\to x\in\Om$ as $j\to\infty$, then
		\[
		e^{-k\,r}\,\fint_{B_r(x)}u=\lim_{j\to\infty}e^{-k\,r}\,\fint_{B_r(x_j)}u\le\liminf_{j\to\infty} \tilde{u}(x_j)\,,
		\]
		where we have used $e^{-k\,r}\fint_{B_r(x_j)}u\le\tilde{u}(x_j)$ for every $r<\min\{2,\dist(x_j,\pa\Om)\}$. The conclusion follows by letting $r\to 0^+$.
		
		\medskip
		
		\noindent {\it Step five}: We prove that for every $\Om'$ connected component of $\Om$, either $u\equiv0$ on $\Om'$ or $u>0$ on $\Om'$; and that, if $\de<1$, then $u<1$ on $\Om$.
		
		\medskip
		
		To prove the first assertion we notice that the lower semicontinuity and the non-negativity of $u$ imply that $\{u=0\}$ is relatively closed in $\Om$. At the same time, if $x\in\{u=0\}$, then by step three $0=u(x)\ge e^{-k\,r}\,\fint_{B_r(x)}u\ge 0$ implies that $u\equiv 0$ on $B_r(x)$ for every $r\le\min\{\dist(x,\pa\Om),2\}$; in particular, $\{u=0\}$ is open. Since $\{u=0\}$ is both open and relatively closed in $\Om$, we conclude that $u\equiv 0$ on $\Om'$ or $u>0$ on $\Om'$ for any given connected component of $\Om$.
		
		\medskip
		
		Next, we show that if $\de<1$, then $\{u>\delta\}$ is open and
		\begin{equation}\label{eq EL above obstacle}
			2\e \Delta u = \frac{1}{\e}W'(u)-\lambda V'(u)\,,\quad\mbox{as distributions on $\{u>\de\}$}\,.
		\end{equation}	
		By a standard application of the strong maximum principle \cite[Theorem 6.2]{maggirestrepo}, \eqref{eq EL above obstacle} allows us to show that $u<1$ on $\Om$ if $\de<1$, finishing the proof of Theorem \ref{theorem main regularity}.(iii). We first notice that $\{u>\delta\}$ is open by step four. To prove \eqref{eq EL above obstacle}, the inequality \eqref{eq one sided Euler-Lagrange} reduces our task to showing that for every $B_r(x)\cc\{u>\de\}$ and every $\vphi\in C^\infty_c(B_r(x);[0,\infty))$,
		\begin{equation}
			\label{obs}
			0\ge \int_{\Omega}  \nabla u\cdot\nabla \varphi + \vphi\,F_\e'(u)\,.
		\end{equation}
		And indeed, by lower semicontinuity of $u$, there is $\de_0>0$ such that $u\ge\de+\de_0$ on $\cl(B_r(x))$. In particular, for every $\vphi\in C^\infty_c(B_r(x))$ with $\vphi\ge0$ there is $\s_0>0$ such that, for every $\s\in(0,\s_0]$, $u_\s=u-\s\,\vphi$ takes values in $(\de,1]$ on $B_r(x)$, and agrees with $u$ on $\Om\setminus B_r(x)$. It is therefore immediate to check that $\{u_\s^*\ge t\}=\{u^*\ge t\}$ for every $t\in(1/2,\de)$, so that $\{u_\s^*\ge t\}$ is $\C$-spanning $\wire$ for every $t\in(1/2,\de)$. We can then repeat the volume-fixing argument of step one and prove \eqref{obs}, as desired.
		
		\medskip

		\noindent {\it Step six}: We claim that for every $\varphi \in C_c^1(\Om)$ and $h \in\Lip_c([0,1]\setminus \{\delta\})$, it holds
		\begin{equation}\label{eq strong levelset Euler-Lagrange}
			0= \int_{\Omega}  \varphi\, h'(u)\,|\nabla u|^2+ h(u)\,\nabla u\cdot\nabla \varphi + \vphi\,h(u)\,F_\e'(u).
		\end{equation}
		By virtue of step 5, we can assume without loss of generality that $\varphi$ is supported in a connected component of $\Omega$ where $u>0$. Since $u$ is lower semicontinuous, then $\inf_{\text{supp}(\varphi)} u >0$. So, for $\sigma_0$ small enough, if $|\sigma|\leq \sigma_0$ then
		\[
		\mbox{$u_\s=u+\s\,h(u)\,\vphi$ \, takes values in $[0,1]$}\,.
		\]
		We wish to test the minimality of $u$ against $u_\sigma$, which requires verifying the spanning condition and then fixing volumes and testing as in step one. Regarding the spanning condition, if $\delta =1$, then $h \in\Lip_c([0,\delta'))$ for some $\delta'<\delta$, and so $\{u \geq \gamma\} =\{u_\s \geq \gamma\}$ for $\gamma \in [\delta',1]$ and small enough $\sigma$, implying that  $\{u_\s^*\ge t\}$ is $\C$-spanning $\wire$ for every $t\in(1/2,\de)$. If $\delta<1$, then $(\delta-\eta,\delta+\eta) \subset \text{supp}(h)^c$ for some $\eta>0$, and so again for small enough $\eta$ depending on $\sigma$ we find that $\{u \geq \gamma\} = \{u_\s \geq \gamma\}$ for $\de-\eta/2\leq \gamma \leq \delta+\eta/2$.  So we can repeat the volume-fixing argument of step one to obtain \eqref{eq strong levelset Euler-Lagrange}.
		
		\medskip
		
\noindent {\it Step seven:}  We finally show that $u$ satisfies \eqref{EL outer}. By the coarea formula and since $u\in W^{1,2}(\Om)$, $\L^1$-a.e. $t_0\in(0,\de_0)$ is a Lebesgue point of $t\to \int_{\{u=t\}}|\nabla u| d\mathcal{H}^{n}$. For such a value of $t_0$, let us consider the functions
		\begin{equation*}
			h_k(r)=
			\begin{cases}
				1, \quad \qquad \qquad r\in [0, t_0-2^{-k}],\\
				2^k(t_0-r), \, \, \quad r\in [t_0-2^{-k},t_0],\\
				0, \quad \qquad \qquad  r\in [t_0,1].
			\end{cases}
		\end{equation*}
		By plugging $h_k$ into \eqref{eq strong levelset Euler-Lagrange}, taking $k\to \infty$, and using the coarea formula, we deduce
		\begin{equation*}
			\int_{\{u=t_0\}} |\nabla u|\,\varphi\, d\mathcal{H}^{n}= \int_{\{u<t_0\}}\nabla u\cdot \nabla \varphi +\varphi F_\e'(u)\,.
		\end{equation*}
		Integrating between $0$ and $\delta$, using the coarea formula and Fubini's theorem, we find
		\begin{eqnarray}\notag
			\int_{ \{u<\delta\}}|\nabla u|^2\,\varphi &=&
\int_{0}^\delta dt\,\int_{\{u<t\}}\nabla u\cdot \nabla \varphi +\varphi F_\e'(u)
\\
\label{eq levelset value}
			&=& \int_{ \{u<\delta\}}(\delta-u)\Big(\nabla u\cdot \nabla \varphi +\varphi F_\e'(u)\Big)\,.
		\end{eqnarray}
		By analogous reasoning, we deduce
		\begin{equation*}
			-	\int_{\{u=t_0\}} |\nabla u|\,\varphi\, d\mathcal{H}^{n}= \int_{\{u>t_0\}}\nabla u\cdot \nabla \varphi +\varphi F_\e'(u)
		\end{equation*}
		for a.e. $t_0 \in ( \delta,1)$ and thus
		\begin{equation}\label{eq levelset value oposite}
			\int_{ \{u>\delta\}}|\nabla u|^2\,\varphi = \int_{ \{u>\delta\}}(\delta-u)\Big(\nabla u\cdot \nabla \varphi +\varphi F_\e'(u)\Big)\,.
		\end{equation}
		By combining \eqref{eq levelset value} and  \eqref{eq levelset value oposite}, we obtain \eqref{EL outer}, and complete the proof of the theorem.
\end{proof}

We finally prove Proposition \ref{proposition conditional regularity}.

\begin{proof}[Proof of Proposition \ref{proposition conditional regularity}] To prove statement (i), let us assume that  $u$ is continuous in $\Om$. This implies that $\{u=\delta\}$ is closed, and we can thus proceed as in step five to deduce that
\begin{equation}\label{eq full equation}
	\Delta u= F_\e'(u)\qquad\mbox{on $\{u\neq \delta\}$}\,.
\end{equation}
Since $W\in C^{2,1}[0,1]$ implies $V\in C^{2,\g(n)}[0,1]$ with $\g(n)=\min\{1,2/n\}$ (see \cite[Appendix 3]{maggirestrepo}), we have $F_\e'\in C^{1,\g(n)}[0,1]$. The continuity of $u$ implies that $\Delta u$ is continuous on $\{u\ne\de\}$, hence that $u\in C^{1,\a}_{\rm loc}(\{u\ne\de\})$ for every $\a<1$. Hence $F_\e'(u)\in C^{1,\a}_{\rm loc}(\{u\ne\de\})$ for every $\a<\g(n)$, and thus $u\in C^{3,\a}_{\rm loc}(\{u\ne\de\})$ for every $\a<\g(n)$, as claimed.

\medskip

Since statement (iii) follows easily from statement (ii), we give a detailed proof of the latter only. We employ an argument similar to \cite[Theorem 2.4]{alt1984variational}. Given $X\in C^\infty_c(\Om;\R^{n+1})$, let us set
\[
Y=\big\{|\nabla u|^2+2\,F_\e(u)\big\}\,X-2\,(X\cdot\nabla u)\,\nabla u\,.
\]
In this way $Y\in C^1(\{u\ne\de\};\R^{n+1})$, and by direct computation we find that, on $\{u\ne\de\}$,
\begin{eqnarray}\nonumber
  \Div\,Y&=&\big\{|\nabla u|^2+2\,F_\e(u)\big\}\,\Div\,X-2\,\nabla u\cdot\nabla X[\nabla u]+2\,(X\cdot\nabla u)\,(F_\e'(u)-\Delta u)
  \\\label{ka1}
  &=&
  \big\{|\nabla u|^2+2\,F_\e(u)\big\}\,\Div\,X-2\,\nabla u\cdot\nabla X[\nabla u]\,,
\end{eqnarray}
where in the second identity we have used \eqref{eq full equation}. Now, let us set
\[
\L[X]=\big\{|\nabla u|^2+2\,F_\e(u)\big\}\,\Div\,X-2\,\nabla u\cdot\nabla X[\nabla u]\,,
\]
so that, thanks to $\nabla u=0$ $\L^{n+1}$-a.e. on $\{u=\de\}$, we have
\begin{equation}
  \label{LXX}
  \L[X]\in L^1(\Om)\,,\qquad\L[X]=2\,F_\e(\de)\,\Div\,X\qquad\mbox{$\L^{n+1}$-a.e. on $\{u=\de\}$}\,.
\end{equation}
If we set $S_t=\{u>\de+t\}\cup\{u<\de-t\}$, $t>0$, then, by the inner variation critical point condition \eqref{EL inner proof of reg}, which gives $\int_\Om\L[X]=0$, and by $\Div Y=\L[X]$ on $\{u\ne\de\}$, we find
\[
\int_{S_t}\Div Y=\int_{S_t}\L[X]=-\int_{\{|u-\de|<t\}}\L[X]
\]
where, by \eqref{LXX},
\[
\lim_{t\to 0^+}\int_{\{|u-\de|<t\}}\L[X]\,d\L^{n+1}=\int_{\{u=\de\}}\L[X]\,d\L^{n+1}=2\,F_\e(\de)\,\int_{\{u=\de\}}\,\Div X\,d\L^{n+1}\,.
\]
We now write $Y=Y_1+Y_2$ with
\[
Y_1=|\nabla u|^2\,X-2\,(X\cdot\nabla u)\,\nabla u\,,\qquad Y_2=2\,F_\e(u)\,X\,.
\]
Since $u\in W^{1,2}(\Om)$ and $X\in C^\infty_c(\Om)$ it turns out that $Y_2\in W^{1,1}(\Om;\R^{n+1})$ with
\[
\lim_{t\to 0^+}\int_{S_t}\Div\,Y_2=\int_\Om\Div\,Y_2=0\,.
\]
We have thus proved that
\begin{equation}
  \label{ka7}
  \lim_{t\to 0^+}\int_{S_t}\Div Y_1=2\,F_\e(\de)\,\int_{\{u=\de\}}\,\Div X\,d\L^{n+1}\,,
\end{equation}
where we are stressing that the integral over $\{u=\de\}$ is respect with the Lebesgue measure. Now, for a.e. $t>0$, we have that $S_t$ is a set of finite perimeter in $\Om$, with $\pa^*S_t=\pa^*\{u>\de+t\}\cup\pa^*\{u<\de-t\}$ and
\begin{eqnarray*}
\nu_{S_t}=-\frac{\nabla u}{|\nabla u|}\,,&&\qquad\mbox{$\H^n$-a.e. on $\pa^*\{u>\de+t\}$}\,,
\\
\nu_{S_t}=\frac{\nabla u}{|\nabla u|}\,,&&\qquad\mbox{$\H^n$-a.e. on $\pa^*\{u<\de-t\}$}\,,
\end{eqnarray*}
and thus
\begin{eqnarray*}
\int_{S_t}\Div\,Y_1&=&\int_{\pa^*\{u<\de-t\}}Y_1\cdot\frac{\nabla u}{|\nabla u|}-\int_{\pa^*\{u>\de+t\}}Y_1\cdot\frac{\nabla u}{|\nabla u|}
  \\
  &=&
  -\int_{\pa^*\{u<\de-t\}}(X\cdot\nabla u)\,|\nabla u|+\int_{\pa^*\{u>\de+t\}}(X\cdot\nabla u)\,|\nabla u|\,.
\end{eqnarray*}
By \eqref{ka7}, if we assume that $|\{u=\de\}|=0$, we conclude that
\begin{equation}
  \label{ka8}
  \lim_{t\to 0^+}\int_{\pa^*\{u<\de-t\}}(X\cdot\nabla u)\,|\nabla u|-\int_{\pa^*\{u>\de+t\}}(X\cdot\nabla u)\,|\nabla u|
=0\,,
\end{equation}
with the limit taken with $t$ such that $S_t$ has finite perimeter. This is \eqref{eq fb condition}, and the proof of the proposition is complete.
\end{proof}

\appendix

\section{The diffused interface Euclidean isoperimetric problem}\label{section diffuse isop} In this appendix we collect some important properties of the diffused interface Euclidean isoperimetric problem considered in \cite{maggirestrepo}, i.e.
\begin{equation}\label{mr}
    \Theta(v,\e) := \inf \Big\{\frac{\ac(u)}2 : \V(u) = v\Big\}\,,
\end{equation}
(where $\V(u)=\V(u;\R^{n+1})$ and $\ac(u)=\ac(u;\R^{n+1})$), including the uniqueness of minimizers and the characterization of minimizers as the only critical points in the ``geometric regime'' where $\e\ll v^{1/(n+1)}$, which is one the main results proved in \cite{maggirestrepo}.

\begin{theorem}\label{theorem diffused isop} If $W\in C^{\,2,1}[0,1]$ satisfies \eqref{W nondegeneracy assumptions} and \eqref{W normalization}, then the following holds:

\medskip

\noindent {\bf (i):} for every $v$ and $\e$ positive, there exists a radial decreasing symmetric minimizer $\zeta$ of $\Theta(v,\e)$ such that $\zeta\in C^{2,\alpha}_\loc(\mathbb{R}^{n+1};(0,1))$ for some $\a\in(0,1)$;

\medskip
\noindent {\bf (ii):} $\Theta$ is continuous on $(0,\infty)\times(0,\infty)$ with $\Theta(v,\e)=r^n\,\Theta(v/r^{n+1},\e/r)$ for every $r>0$; moreover, for every $\e>0$, the function $v>0\mapsto\Theta(v,\e)/v$ is strictly decreasing;

\medskip

\noindent {\bf (iii):} if $\zeta$ is a minimizer of $\Theta(v,\e)$, then there exists $\Lambda(\zeta)\in \mathbb{R}$ such that for all $X\in C_c^\infty(\mathbb{R}^{n+1};\mathbb{R}^{n+1})$
\begin{equation}
  \label{inner appendix}
\int_{\mathbb{R}^{n+1}}\mathrm{ac}_\e(\zeta)\, \Div X - 2\,\e
    \nabla \zeta\,\cdot\, \nabla X[\nabla \zeta]=\Lambda(\zeta)\, \int_{\mathbb{R}^{n+1}} V(\zeta)\,\Div X\,,
\end{equation}
as well as
\begin{equation}
  \label{outer appendix}
  2\,\e^2\,\Delta \zeta=W'(\zeta)-\e\,\Lambda(\zeta)\,V'(\zeta)\,,\qquad\mbox{on $\R^{n+1}$}\,.
\end{equation}
Moreover, for some positive constant $c=c(n,W)$, we have
\begin{equation}
\label{key inq for Lambda}
  \Lambda(\zeta)\ge \frac{c}{v^{1/(n+1)}}\,.
\end{equation}

\medskip

\noindent {\bf (iv):} there is $\s_0=\s_0(n,W)>0$ such that if $0<\e<\s_0\,v^{1/(n+1)}$, then there is a unique modulo translation radial decreasing symmetric minimizer $\zeta_{v,\e}$ of $\Theta(v,\e)$ with maximum at the origin, which satisfies
\begin{equation}
  \label{energy expansion for theta}
  \Theta(v,\e)=v^{n/(n+1)}\Big\{(n+1)\,\om_{n+1}^{1/(n+1)}+{\rm O}_{n,W}\Big(\frac{\e}{v^{1/(n+1)}}\Big)\Big\}\,,
\end{equation}
as $\e/v^{1/(n+1)}\to 0$.

%
%
\end{theorem}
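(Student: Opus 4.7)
The plan is to combine direct-method arguments with a Pohozaev-type identity, deferring the deep content of (iv) to the companion paper \cite{maggirestrepo}.

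For (i), I would take a minimizing sequence $\{u_j\}$ for $\Theta(v,\e)$ and replace each $u_j$ by its Schwarz symmetrization $u_j^*$. Since $u_j^*$ shares the distribution of $u_j$, both $\V(u_j^*)=v$ and $\int W(u_j^*)/\e=\int W(u_j)/\e$, while the Polya--Szego inequality gives $\int|\nabla u_j^*|^2\leq\int|\nabla u_j|^2$, so $\{u_j^*\}$ is still minimizing and radial decreasing. The diffused isoperimetric inequality $\Theta(v,\e)\geq c(n)\,v^{n/(n+1)}$---a consequence of \eqref{modica mortola identity} applied to $\Phi\circ u$ combined with the sharp $L^{(n+1)/n}$-isoperimetric inequality, noting that $\|\Phi\circ u\|_{L^{(n+1)/n}}^{(n+1)/n}=\V(u)=v$---rules out vanishing. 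The radial decreasing structure yields a strong $L^1_\loc$-limit $\zeta$ with $\V(\zeta)=v$, which attains the infimum by lower semicontinuity. Schauder theory applied to the outer Euler--Lagrange equation \eqref{outer appendix} (with no obstacle term, since no spanning condition is imposed) together with $V\in C^{2,\alpha(n)}$ upgrades $\zeta$ to $C^{2,\alpha}_\loc(\R^{n+1})$, and the strong maximum principle forces $\zeta\in(0,1)$.

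For (ii), the scaling identity is a direct change of variables: if $u_r(x)=u(x/r)$, then $\V(u_r)=r^{n+1}\V(u)$ and $\ac(u_r)=r^n\mathcal{AC}_{\e/r}(u)$. Continuity of $\Theta$ on $(0,\infty)\times(0,\infty)$ is then obtained by combining rescaling-based upper approximations with lower semicontinuity from minimizing sequences. The strict monotonicity of $v\mapsto\Theta(v,\e)/v$ follows from the strict subadditivity
\begin{equation*}
\Theta(v_1+v_2,\e)<\Theta(v_1,\e)+\Theta(v_2,\e)\,,
\end{equation*}
obtained by placing two rescaled minimizers far apart and using the strict concavity of $\sigma\mapsto\sigma^{n/(n+1)}$.

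For (iii), the inner form \eqref{inner appendix} follows from first-variation arguments based on flows $f_t(x)=x+tX(x)$ together with Lemma \ref{lemma volume fixing}(i); the outer form \eqref{outer appendix} arises from outer perturbations $\zeta\mapsto\zeta+s\varphi$ with $\varphi\in C^\infty_c(\R^{n+1})$, where no obstacle is present. For the key estimate \eqref{key inq for Lambda}, I plug $X(x)=x\,\chi_R(x)$ into \eqref{inner appendix} for a radial cutoff $\chi_R\to 1$, using the exponential decay of $\zeta$, $\nabla\zeta$ and $V(\zeta)$ at infinity (derived from \eqref{outer appendix} via a barrier argument exploiting $W''(0)>0$) to justify the limit. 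Since $\Div(x)=n+1$ and $\nabla(x)=\mathrm{Id}$ produces $\nabla\zeta\cdot\nabla(x)[\nabla\zeta]=|\nabla\zeta|^2$, substituting $\e\int|\nabla\zeta|^2=2\Theta(v,\e)-\int W(\zeta)/\e$ yields the Pohozaev identity
\begin{equation*}
(n+1)\,\Lambda(\zeta)\,v\;=\;2(n-1)\,\Theta(v,\e)\;+\;2\int_{\R^{n+1}}\frac{W(\zeta)}{\e}\,,
\end{equation*}
so that $\Theta(v,\e)\geq c(n)v^{n/(n+1)}$ immediately gives \eqref{key inq for Lambda} when $n\geq 2$. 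In the case $n=1$ the gradient term drops out of Pohozaev, and the required lower bound $\int W(\zeta)/\e\geq c(W)\,v^{1/2}$ must be obtained separately, via a one-dimensional equipartition argument combined with the sharp isoperimetric bound applied to $\Phi\circ\zeta$.

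Finally, part (iv)---uniqueness modulo translation of radial decreasing minimizers in the regime $\e<\sigma_0\,v^{1/(n+1)}$, together with the asymptotic expansion \eqref{energy expansion for theta}---is the main content of \cite{maggirestrepo}, which I would invoke directly. The upper bound in \eqref{energy expansion for theta} comes from inserting a one-dimensional Allen--Cahn transition profile across the boundary of the volume-$v$ Euclidean ball and computing the $O(\e)$-excess; the matching lower bound is obtained by a Fuglede-type quantitative isoperimetric argument. Uniqueness is proved by reducing \eqref{outer appendix} to a radial ODE and running a shooting argument in the thin-interface regime. The main obstacles in the overall proof are the Pohozaev-based bound on $\Lambda$ in the case $n=1$, where the gradient term vanishes and a separate equipartition estimate is needed, together with the quantitative stability estimates underlying (iv), which require delicate matching between the diffused minimizer and its sharp-interface counterpart.
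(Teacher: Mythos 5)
Your overall architecture is sound (symmetrize, take limits, Pohozaev, defer (iv) to \cite{maggirestrepo}), but there are two concrete gaps.

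First, in (i), the nontrivial point is not ``vanishing'' but volume loss at infinity: radial decreasing symmetry plus $L^1_{\rm loc}$-convergence give you a limit $\zeta$ with $\V(\zeta)\le v$, and you assert $\V(\zeta)=v$ without argument. The paper closes this via a cost-per-unit-volume estimate: on any region $A\subset\{0\le u\le\beta\}$ with $\beta$ small one has $\ac(u;A)/\V(u;A)\ge c/(\e\,\beta^{2/n})$, which follows from the quadratic nondegeneracy $W(t)\sim t^2$ near $0$ and $V(t)\lesssim t^{2(n+1)/n}$. Applied to the tails $A=\R^{n+1}\setminus B_R$, where $\sup_j g_j(R)\to 0$ by monotonicity, this forces $\sup_j\V(u_j;\R^{n+1}\setminus B_R)\to 0$. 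Without some estimate of this type you cannot conclude tightness, and the diffused isoperimetric lower bound by itself does not do it.

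Second, and more seriously, your deduction of the strict monotonicity of $v\mapsto\Theta(v,\e)/v$ from strict subadditivity is not valid. Strict subadditivity alone does \emph{not} imply that $f(v)/v$ is strictly decreasing: writing $v'=v+w$ gives $f(v')/v'<[f(v)+f(w)]/v'$, and to proceed you need $f(w)/w\le f(v)/v$, which for $w<v$ is exactly the reverse of what you want. Moreover the \emph{strictness} of the subadditivity for the diffused profile at arbitrary $\e$ is itself a claim requiring proof (placing two profiles far apart only gives $\le$, and concavity of $v\mapsto v^{n/(n+1)}$ controls only the leading term). The paper avoids this entirely with a slicing argument: letting $Z(t)$ be the ratio of the $\mathrm{ac}_\e$-density to the $V$-density on the slab $\{x_1=t\}$, the cost-per-unit-volume estimate forces $Z(t)\to+\infty$ along a sequence, so $Z$ is not constant and $\inf_\R Z<\Theta(v,\e)/v$; inserting a slab of width $2\delta$ at an optimal $t_0$ then produces a competitor for $\Theta(\tilde v,\e)$ whose energy grows only at rate $Z(t_0)<\Theta(v,\e)/v$, giving strict monotonicity directly.

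On (iii), your Pohozaev identity $(n+1)\Lambda v=2(n-1)\Theta+2\int W/\e$ matches the paper's, and you correctly flag that the $n=1$ case needs a separate lower bound on $\int W(\zeta)/\e$. The paper obtains it (for all $n$) by testing the outer Euler--Lagrange equation with $\zeta\,\phi_k^2$ to get a Caccioppoli-type bound $\e\int\phi_k^2|\nabla\zeta|^2\lesssim\int W/\e+\Lambda v$, then combines this with the Pohozaev bound $\int W/\e\lesssim\Lambda v$; your proposed ``equipartition'' route is plausible but would need to be executed with comparable care, in particular you must justify the sign $\Lambda\ge 0$ (which follows from Pohozaev) and the integrability/decay of $\zeta$ at infinity before letting the cutoff go. The deferral of (iv) to \cite[Theorem 1.1]{maggirestrepo} is exactly what the paper does.
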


\begin{proof} It is convenient to notice that by \eqref{W nondegeneracy assumptions} there are $\beta_0\in(0,1)$ and $c_0>0$ (depending on $W$) such that
\begin{equation}
  \label{utili}
  \frac{t^2}{c_0}\ge W(t)\ge c_0\,t^2\,,\qquad \frac{V(t)}{c_0}\ge t\,V'(t)\qquad\forall t\in(0,\b_0)\,.
\end{equation}

\medskip

\noindent {\it Step one}: We prove the existence of $c=c(n,W)$ such that, if $u$ is a competitor of $\Theta(v,\e)$ and $\beta\in(0,\beta_0)$, then
\begin{equation}
\label{cost per unit volume equation}
    \frac{\ac(u;A)}{\mathcal{V}(u;A)} \geq \frac{c}{\e\,\b^{2/n}}\,,\qquad\forall A\subset\{0\le u\le \beta\}\,.
\end{equation}
Indeed, by \eqref{utili}, if $A\subset\{0\le u\le \b\}$ for some $\b\in(0,\b_0)$, then
\begin{align}\notag
    \ac(u;A) \geq \frac1\e\,\int_A W(u)\geq \frac{c_0}{\e}\,\int_A u^2\,dx\,,
\end{align}
while $V(t)=(\int_0^t\sqrt{W})^{(n+1)/n}\le C t^{2(n+1)/n}$ ($t<\beta_0$) implies $\V(u;A)\le C\,\beta^{2/n}\,\int_A u^2$.

\medskip

\noindent {\it Step two}: We prove\footnote{We notice that the analysis performed in \cite{maggirestrepo}, which is focused on uniqueness and stability issues, is limited to the regime where $\e/v^{1/(n+1)}$ is small enough in terms of $n$ and $W$.} conclusion (i). By the P\'{o}lya-Szeg\"{o} inequality \cite{BZ} we can consider a minimizing sequence $\{u_j\}_j$ of $\Theta(v,\e)$ such that each $u_j$ is radial decreasing symmetric with respect to the origin. Up to extracting subsequences we can assume that $u_j\to \zeta$ in $L^1_{\rm loc}(\R^{n+1})$, where $\zeta$ is radial decreasing symmetric with respect to the origin and such that $\ac(\zeta)\le2\,\Theta(v,\e)$. We prove that $\V(\zeta)=v$, and thus that $\zeta$ is a minimizer of $\Theta(v,\e)$, by showing that
\begin{equation}\label{tightness}
\lim_{R\to\infty}\sup_j\V(u_j;\R^{n+1}\setminus B_R)=0\,.
\end{equation}
To prove \eqref{tightness}, let us set $u_j(x)=g_j(|x|)$ and $\zeta(x)=g(|x|)$, and notice that, since $g_j\to g$ a.e. on $(0,\infty)$ with $g_j$ and $g$ decreasing on $(0,\infty)$ and $g(R)\to 0$ as $R\to\infty$, it holds that $\sup_j\,g_j(R)\to 0$ as $R\to\infty$. In particular, for every $R$ large enough to ensure $\sup_j\,g_j(R)\le\b_0$ we can apply \eqref{cost per unit volume equation} to conclude that
\[
\V(u_j;\R^{n+1}\setminus B_R)\le \frac{\e\,g_j(R)^{2/n}}{c_0}\,\sup_i\ac(u_i)\,,
\]
which implies \eqref{tightness} thanks (again) to $\sup_j\,g_j(R)\to 0$ as $R\to\infty$. The fact that $\zeta\in C^{2,\a}_{\rm loc}(\R^{n+1};(0,1))$ for some $\a\in(0,1)$ is proved as in \cite[Proof of Theorem 2.1, Step four]{maggirestrepo}.

\medskip

\noindent {\it Step three}: We prove conclusion (ii). Since the scaling property and the continuity of $\Theta$ can be proved as in \cite[Appendix A]{maggirestrepo} and \cite[Step 3, Proof of Theorem 2.1]{maggirestrepo}, we focus on showing that, for $\e>0$ fixed, $v\mapsto \Theta(v,\e)/v$ is strictly decreasing on $(0,\infty)$. Indeed, by Fubini's theorem, if $\zeta$ is a minimizer for $\Theta(v,\e)$ and we set
\[
Z(t)=\frac{(1/2)\int_{\{x_1=t\}}\mathrm{ac}_\e(\zeta) \,d\H^n}{\int_{\{x_1=t\}}V(\zeta)\,d\H^n}\,,\qquad t\in\R\,,
\]
then, trivially, $\Theta(v,\e)\ge v\,\inf_\R Z$, with equality if and only if $Z$ is constant on $\R$. Since $\zeta$ is radial decreasing symmetric, we have $\beta(t)=\sup_{\{x_1=t\}}\zeta\to 0$ as $t\to\infty$. Since $\V(\zeta)<\infty$, we can find $t_j\to\infty$ with $\int_{\{x_1=t_j\}}V(\zeta)\,d\H^n \to 0$ as $j\to\infty$. Correspondingly, $\beta(t_j)\to 0^+$ and, by \eqref{cost per unit volume equation}, $Z(t_j)\to+\infty$ as $j\to\infty$. In particular, $Z$ is not constant on $(0,\infty)$, so that $\Theta(v,\e)>v\,\inf_\R Z$, i.e., there is $t_0\in\R$ such that
\begin{equation}
  \label{ehh}
  \Theta(v,\e)>v\,Z(t_0)\,.
\end{equation}
Now, given $\tilde{v}>v$, if we pick $\de=(\tilde{v}-v)/(2\,\int_{\{x_1=t_0\}}V(\zeta))$, decompose $x=(x_1,x')\in\R\times\R^n\equiv\R^{n+1}$, and set
\begin{equation}\notag
   u(x) = \begin{cases}
    \zeta(t_0,x')\,, & \textup{if } t_0-\delta \leq x_1 \leq t_0+\delta\,,
    \\
    \zeta(x_1+\delta,x')\,, & \textup{if } x_1 \leq t_0-\delta\,,
    \\
    \zeta(x_1-\delta,x')\,, & \textup{if } x_1 \geq t_0+\delta\,,
    \end{cases}
\end{equation}
then
\begin{equation}
  \label{eh}
  \ac\big(u;\{|x_1-t_0|<\de\}\big)=2\,\de\,Z(t_0)\,\int_{\{x_1=t_0\}}V(\zeta)=Z(t_0)\,(\tilde{v}-v)\,,
\end{equation}
and, similarly, $\V(u)=\V(\zeta)+2\,\de\,\int_{\{x_1=t_0\}}V(\zeta)=\tilde{v}$. Since $u$ is admissible in $\Theta(\tilde{v},\e)$, by \eqref{eh} we find, as desired,
\[
\frac{\Theta(\tilde{v},\e)}{\tilde{v}}\leq \frac{\ac(u)/2}{\tilde{v}} \leq \frac{\Theta(v,\e)+ Z(t_0)\,(\tilde{v}-v)}{\tilde{v}} < \frac{\Theta(v,\e)}{v}\,,
\]
where in the last inequality we have used \eqref{ehh}.

\medskip

\noindent {\it Step four}: The validity of \eqref{inner appendix} is immediate from Lemma \ref{lemma volume fixing}, while \eqref{outer appendix} can be deduced from \eqref{inner appendix} via integration by parts. To prove \eqref{key inq for Lambda} (and thus complete the proof of conclusion (iii)) it is enough to show that, for some constant $C=C(n,W)$, it holds
\begin{equation}
  \label{enough}
  C\,\Lambda(\zeta)\,v\ge\ac(\zeta)\,.
\end{equation}
(Indeed, $\ac(\zeta)/v=\Theta(v,\e)/v\ge c(n)\,v^{1/(n+1)}$ thanks to $\Theta(v,\e)\ge c(n)\,v^{n/(n+1)}$.) To prove \eqref{enough} we first notice that testing \eqref{outer appendix} with suitable radial vector fields as done in \cite[Equation (2.32)]{maggirestrepo} one finds
\begin{eqnarray}\nonumber
(n+1)\,\Lambda(\zeta)\, v&=&n\,\mathcal{AC}_{\e}(\zeta)+\int_{\mathbb{R}^{n+1}}\frac{W(\zeta)}{\e} - \e\,|\nabla \zeta|^2
\\\label{lambda j expression}
&\ge&(n-1)\,\mathcal{AC}_{\e}(\zeta)+\int_{\mathbb{R}^{n+1}}\frac{W(\zeta)}{\e}\,.
\end{eqnarray}
Clearly \eqref{lambda j expression} implies \eqref{enough} when $n\ge2$, but leaves open the case $n=1$; however, it always ensures that $\Lambda(\zeta)\ge0$. Next, we notice that by testing \eqref{outer appendix} with $\vphi=\zeta\,\phi_k^2$ for $\phi_k\in C_c^\infty(B_{k+1};[0,1])$ with $\phi_k=1$ on $B_k$ and $\Lip(\phi_k)\le 2$ for every $k$, and keeping in mind that $V'\ge0$ and that $\Lambda(\zeta)\ge0$, we find
\[
2\,\e\, \int_{\R^{n+1}} \phi_k^2\,|\nabla \zeta|^2  \leq  2\,\e\,\int_{\R^{n+1}} \phi_k\,\zeta\,|\nabla \zeta|\,|\nabla\phi_k|+
\int_{\R^{n+1}}\frac{W'(\zeta)\,\zeta\,\phi_k}{\e}+ \Lambda(\zeta)\,\int_{\R^{n+1}}V'(\zeta) \zeta\,.
\]
Since for $k\ge k(\zeta)$ we have $\zeta\le\b_0$ on $\R^{n+1}\setminus B_k$, and we can thus use \eqref{utili} and the Cauchy--Schwartz inequality to deduce, for some $C=C(W)$,
\begin{eqnarray}\label{concludefrom}
\e\, \int_{\R^{n+1}}\phi_k^2 |\nabla\zeta|^2 \leq
 C\,\e\,\int_{\{\zeta<\b_0\}} \zeta^2+\int_{\R^{n+1}}\frac{[W'(\zeta)]^+\,\zeta}{\e}+
 C\,\Lambda(\zeta)\,\V(\zeta)\,,
\end{eqnarray}
where we have used $c_0\,t\,V'(t)\le t^{2(n+1)/n}\le V(t)/c_0$ for every $t\in(0,1)$ (which, in turns, easily follows from $W(t)\ge c_0\,t^2$ on $(0,\beta_0)$ and $W(t)\le t^2/c_0$ on $(0,1)$). Finally, by \eqref{W nondegeneracy assumptions}, and up to decreasing the value of $\b_0$, we have $W'<0$ on $(1-\b_0,1)$. Using this fact in combination with $\inf_{(\b_0,1-\b_0)}W>0$ and $t\,W'(t)\le W(t)/c_0$ for $t\in(0,\beta_0)$, we see that
\begin{eqnarray*}
\int_{\R^{n+1}}[W'(\zeta)]^+\,\zeta&\le&\int_{\{\zeta<\b_0\}}W'(\zeta)\,\zeta+ \Lip(W)\,\int_{\{\b_0\le \zeta\le 1-\b_0\}}\,\zeta
\\
&\le&\Big\{\frac1{c_0}+ \frac{\Lip(W)\,(1-\b_0)}{\inf_{(\b_0,1-\b_0)}W}\Big\}\,\int_{\R^{n+1}}W(\zeta)\,,
\end{eqnarray*}
and thus conclude from \eqref{concludefrom} that
\begin{eqnarray*}
\e\, \int_{\R^{n+1}}\phi_k^2\, |\nabla \zeta|^2  \le C\, \int_{\R^{n+1}}\frac{W(\zeta)}{\e}+  C\,\Lambda(\zeta)\,v\,.
\end{eqnarray*}
By letting $k\to\infty$, by adding $\int_{\R^{n+1}}W(\zeta)/\e$ to both sides of this inequality, and by noticing that \eqref{lambda j expression} implies $\int_{\R^{n+1}}W(\zeta)/\e\le C(n)\,\Lambda(\zeta)\,v$ for every $n\ge 1$, we conclude the proof of \eqref{enough}.

\medskip

\noindent {\it Step five}: The outer form of the Euler--Lagrange equation \eqref{outer appendix} follows from  Lemma \ref{lemma volume fixing}, and a classical computation (based on integration by parts made possible by the $C^2$-regularity of $\zeta$) allows one to derive \eqref{inner appendix} from \eqref{outer appendix}. This completes the proof of conclusion (iii). Conclusions (iv) is contained in \cite[Theorem 1.1]{maggirestrepo}.
\end{proof}

\bibliographystyle{alpha}
\bibliography{references}

\end{document}